\documentclass[10pt]{amsart}
 
\usepackage{amsmath,amsfonts,amsthm,amssymb,amscd}
\usepackage{hyperref}
\usepackage{mathtools}

\usepackage{tikz}
\usepackage{tikz-cd}
\usetikzlibrary{positioning}
\usetikzlibrary{matrix,arrows,decorations.pathmorphing}

\usepackage{braket}
\usepackage{tabularx}
\usepackage[alphabetic]{amsrefs}
\usepackage{pgfplots}
\usepgfplotslibrary{polar}
\pgfplotsset{compat=newest}
\usepackage{graphicx}

\usepackage{subcaption}

\theoremstyle{plain}
\newtheorem{theorem}{Theorem}[section]
\newtheorem{proposition}[theorem]{Proposition}

\newtheorem{lemma}[theorem]{Lemma}

\newtheorem{corollary}[theorem]{Corollary}

\newtheorem*{claim*}{Claim}

\newtheorem*{theorem*}{Theorem}

\theoremstyle{definition}
 \newtheorem{definition}[theorem]{Definition}

\newtheorem{remark}[theorem]{Remark}
\newtheorem{example}[theorem]{Example}

\theoremstyle{plain}
\newtheorem{thmx}{Theorem}
\newtheorem{defx}[thmx]{Definition}
\newtheorem{corx}[thmx]{Corollary}
\newtheorem{propx}[thmx]{Proposition}

\newtheorem{remx}[thmx]{Remark}

\newcommand{\dist}{\mathsf{dist}}

\newcommand{\link}{\mathsf{link}}
\newcommand{\st}{\mathsf{st}}
\newcommand{\nclose}[1]{\ensuremath{\langle\!\langle#1\rangle\!\rangle}}

\newcommand{\Q}{\mathbb{Q}}  
\newcommand{\Z}{\mathbb{Z}}   
\newcommand{\N}{\mathbb{N}}   


\title[Topological groups with a compact open subgroup]{Topological groups with a compact open subgroup,  Relative hyperbolicity and Coherence}

\author{Shivam Arora and Eduardo Mart\'inez-Pedroza}

\email{eduardo.martinez@mun.ca}

\email{sarora17@mun.ca}

\address{Department of Mathematics and Statistics. Memorial University of Newfoundland. St. John's, NL. Canada}

\keywords{hyperbolic group, TDLC group, locally compact group, coherence, one-relator products, relative hyperbolicity}
 
\date{\today}

\begin{document}

\maketitle
 
\begin{abstract} 
The main objects of study in this article are pairs $(G, \mathcal{H})$ where $G$ is a topological group with a compact open subgroup, and $\mathcal{H}$ is a finite collection of open subgroups. We develop geometric techniques to study the notions of $G$ being compactly generated and compactly presented relative to $\mathcal H$. This includes topological characterizations in terms of discrete actions of $G$ on complexes, quasi-isometry invariance of certain graphs associated to the pair $(G,\mathcal H)$ when $G$ is compactly generated relative to $\mathcal H$, and  extensions of known results from the discrete case. For example, generalizing results of Osin for discrete groups, we show that in the case that $G$ is compactly presented relative to $\mathcal H$:
\begin{itemize}
    \item if $G$ is compactly generated, then each subgroup $H\in \mathcal H$ is compactly generated;
    \item if each subgroup $H\in \mathcal H$ is compactly presented, then $G$ is compactly presented.
\end{itemize}
The article also introduces an approach to relative hyperbolicity for pairs $(G, \mathcal H)$ based on Bowditch's work using discrete actions on hyperbolic fine graphs. For example, we prove that if $G$ is hyperbolic relative to $\mathcal H$ then $G$ is compactly presented relative to $\mathcal H$.

As an application of the results of the article we prove combination results for coherent topological groups with a compact open subgroup, and extend McCammond-Wise perimeter method to this general framework. 
\end{abstract}

\maketitle

\section{Introduction}

This article is part of the program of generalizing geometric techniques in the study of discrete groups to the larger class of locally compact groups. 

As a convention, all topological groups considered in the article are Hausdorff, and all group actions on CW-complexes are assumed to be cellular. Throughout the article, we work in the class of topological groups with a compact open subgroup. Note that such groups are locally compact, and by van Dantzig’s Theorem~\cite{Vd36}, all totally disconnected locally compact groups (TDLC groups) belong to this class. 
The class of TDLC groups has been a topic of interest in the last
three decades since the work of G. Willis~\cite{MR1299067},  of M. Burger and S. Mozes~\cite{MR1839489}, and P.E Caprace and N. Monod~\cite{MR2739075}. TDLC groups include profinite
groups, discrete groups, algebraic groups over non-archimedean local fields,
and automorphism groups of locally finite graphs.

Let $G$ be a topological group. The group $G$  is \emph{compactly generated} if it admits a compact generating set; it is  \emph{compactly presented} if it admits  a standard presentation $\langle S\mid R\rangle$  with $S$ a compact subset of $G$ and $R$ a set of words in $S$ of uniformly  bounded length. A topological group is said to be \emph{coherent} if every closed    compactly generated subgroup is compactly presented. 

The following remark is a consequence of standard results in the literature, see Proposition~\ref{prop:ConShortExact}. 

\begin{remx}\label{rem:CoCompactcoherent}
    Let $G$ be a topological group and $N \trianglelefteq G$ be a compact normal subgroup. Then $G$ is coherent if and only if $G/N$ is coherent.
\end{remx}


The simplest coherent groups are compact groups.
In the discrete case, any virtually free group is also coherent, a statement that could be generalized in the framework of amalgamated free products. For topological groups, if a locally compact group splits as an amalgam or an HNN extension, then the splitting subgroup is an open subgroup, see the work of Alperin~\cite[Corollary 3]{Alperin82}. 

\begin{thmx}[Combination of Coherent Groups]\label{thm:combination}
If $G=A \ast_C B$ is a topological group that splits as an amalgamated free product of two coherent open subgroups $A$ and $B$ with compact intersection $C$, then $G$ is coherent. \end{thmx}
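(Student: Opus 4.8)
The plan is to prove that $G = A \ast_C B$ is coherent by taking an arbitrary closed compactly generated subgroup $K \leq G$ and showing it is compactly presented. Since $C$ is compact and $A, B$ are open, the amalgam structure gives $G$ an action on its Bass–Serre tree $T$, where vertex stabilizers are conjugates of $A$ and $B$ and edge stabilizers are conjugates of $C$, hence compact. The key geometric input is that $K$ inherits an action on $T$ with compact (indeed the edge stabilizers are subgroups of conjugates of $C$) edge stabilizers, so $K$ itself splits as a graph of groups whose vertex groups are the intersections $K \cap gAg^{-1}$ and $K \cap gBg^{-1}$ and whose edge groups are contained in conjugates of $C$, and are therefore compact.

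First I would make precise the sense in which $K$ acts on $T$: restrict the $G$-action, noting that $K$ is closed and hence a topological group with a compact open subgroup in its own right, and that the stabilizers in $K$ of vertices and edges of $T$ are the intersections of $K$ with the corresponding (open, respectively compact) stabilizers in $G$. The next step is to invoke the structure theory (a topological/locally-compact version of Bass–Serre theory, valid because edge stabilizers are compact open) to realize $K$ as the fundamental group of a graph of groups with compact edge groups and vertex groups of the form $K \cap gAg^{-1}$ and $K \cap gBg^{-1}$. Because $K$ is compactly generated and acts with compact stabilizers, the quotient graph $K \backslash T$ can be taken to be a \emph{finite} graph of groups — this finiteness is what compact generation buys us, and it is the crucial reduction.

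With $K$ exhibited as a finite graph of groups, each vertex group $K \cap gAg^{-1}$ is a closed subgroup of the coherent open subgroup $A$ (after conjugating), and each such intersection is compactly generated: this should follow from the finiteness of the graph of groups together with compact generation of $K$, which forces the vertex groups to be compactly generated relative to the finitely many adjacent compact edge groups, hence compactly generated. By coherence of $A$ and $B$, every vertex group is therefore compactly presented. The edge groups are compact, so they are automatically compactly presented.

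The final step is to assemble these pieces: a finite graph of compactly presented groups amalgamated over compact subgroups yields a compactly presented group. This is the combination-theorem analogue that I expect the paper's earlier relative-presentation machinery to supply — concretely, one feeds the compact presentations of the vertex groups, together with the finitely many stable letters and the relations identifying the compact edge groups in adjacent vertex groups, into a standard presentation of $K$ with compact generating set and boundedly long relators. The main obstacle, and the step deserving the most care, is establishing that compact generation of $K$ forces a \emph{finite} graph-of-groups decomposition with \emph{compactly generated} vertex groups; this is the point where one must argue carefully that the $K$-action on $T$ is cocompact on a suitable invariant subtree and invoke the appropriate accessibility/finiteness input, rather than merely assert it. Everything downstream is then a formal consequence of coherence of $A$ and $B$ and the compactness of the edge groups.
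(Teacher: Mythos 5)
Your proposal is correct, and its skeleton is the same as the paper's: the paper likewise restricts the action to the Bass--Serre tree, uses a compact generating set of the closed subgroup $K$ to produce a $K$-cocompact invariant subtree, deduces that the vertex $K$-stabilizers are compactly generated, invokes coherence of $A$ and $B$ to make them compactly presented, and reassembles. The difference is packaging. You quote two graph-of-groups lemmas directly: (i) compact generation of $K$ forces compact generation of the vertex groups of the finite decomposition, and (ii) a finite graph of compactly presented groups with compact edge groups has compactly presented fundamental group. The paper instead observes that the subtree is a fine hyperbolic Cayley--Abels graph for the pair $(K,\mathcal{H})$, where $\mathcal{H}$ is a set of conjugacy representatives of vertex stabilizers, so $K$ is hyperbolic relative to $\mathcal{H}$, and then applies Corollary~\ref{cor:N}; this phrasing buys the stronger $\mathcal{J}$-coherence statement, Theorem~\ref{thmx:conclusion1}, at no extra cost. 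Your (i) and (ii) are exactly the paper's Proposition~\ref{prop:VertexGroupsAreCG} (equivalently Corollary~\ref{cor:IlariaGeeralisation}) and Lemma~\ref{lem:GisCP}, which is what Corollary~\ref{cor:N} unwinds to for a tree action, so your route is the ``unpackaged'' version of the paper's. Two cautions about your flagged gaps. First, no accessibility input is needed for cocompactness: since $U=K\cap C$ is compact open in $K$, choose a finite $S$ with $K=\langle S\cup U\rangle$, let $D$ be the finite subtree spanned by an edge fixed by $U$ together with its $S$-translates, and take the invariant subtree $\bigcup_{k\in K}kD$; connectedness follows because $S\cup U$ generates. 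Second, your justification of (i) --- that the vertex groups are ``compactly generated relative to the finitely many adjacent compact edge groups'' --- is not a formal consequence of finiteness of the quotient graph; it is precisely the nontrivial Osin-type statement (Theorem~\ref{thmX:FinRelPresHisCPimpliesGisCP}(2), or Proposition~\ref{prop:VertexGroupsAreCG}, proved in the paper by normal-form arguments), so it must be cited or proved rather than asserted.
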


We give a proof of Theorem~\ref{thm:combination} at the end of the introduction. There are classical examples that illustrate the above proposition, for instance  the splitting of the discrete group $SL_2(\Z)$ as $C_4\ast_{C_2}C_6$ where $C_n$ denotes a cyclic group of order $n$.  In the non-discrete case, if $\Q_p$ denotes the field of $p$-adic numbers and $\Z_p$ the $p$-adic integers, the group $SL_2(\Q_p)$ splits as an amalgamated free product of two open subgroups isomorphic to the compact group $SL_2(\Z_p)$ along their common intersection, see for example the work of Serre~\cite{Serre}.
Let us remark that coherence of $SL_2(\Q_p)$ follows directly from a result attributed to J.Tits, see~\cite[Thm. T]{Pr82}.

Particular classes of small cancellation groups are known to be coherent by the work of McCammond and Wise~\cite{MccWise05}. We are able to use some of their methods to prove coherence in some classes of small cancellation quotients of amalgamated free products of coherent topological groups. For background on small cancellation quotients of amalgamated free products we refer the reader to the book by Lyndon and Schupp~\cite{LySc01}. We   say that an element $r$ of an amalgamated free product of groups $A\ast_CB$  satisfies the \emph{$C'(\lambda)$ small cancellation 
condition} if the smallest symmetrized subset of $A\ast_CB$ containing $r$ satisfies the condition as defined in~\cite[Ch. V. 11]{LySc01}.

\begin{thmx}\label{thmX:main}\label{thm:main}
    Let $A \ast_C B$ be a topological group that splits as an amalgamated free product of coherent   open subgroups $A$ and $B$ with compact intersection $C$. For any $r\in A\ast_CB$, there is $M>0$ such that if  $m>1$ and  $r^m$ satisfies the $C'(\lambda)$ small cancellation condition for $\lambda$ satisfying $12 \lambda M<1$, then the  quotient topological group $(A\ast_CB)/\nclose{r^m}$ is coherent. 
\end{thmx}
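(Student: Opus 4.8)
The plan is to deduce coherence of the quotient $\bar G = (A\ast_C B)/\nclose{r^m}$ from coherence of $G = A\ast_C B$, the latter being supplied by Theorem~\ref{thm:combination}. First I would check that $\bar G$ lies in our class of topological groups with a compact open subgroup: since $A$ and $B$ are open and $C = A\cap B$ is compact and open, the small cancellation hypothesis on $r^m$ guarantees (via the normal form theory for $C'(\lambda)$ quotients of $A\ast_C B$ in~\cite[Ch. V.11]{LySc01}) that $\nclose{r^m}$ meets $C$ trivially, so the image $\bar C$ of $C$ is a compact open subgroup of $\bar G$. With this in hand, proving coherence amounts to taking an arbitrary closed compactly generated subgroup $\bar H \leq \bar G$ and exhibiting a compact presentation for it; by the topological characterizations developed earlier, this reduces to producing a $\bar H$-cocompact action on a simply connected complex with a uniform bound on the attaching maps.

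The geometric model I would use is the standard $2$-complex $X$ of the presentation of $\bar G$ realized over the amalgamated product structure: start from the coset complex of $(G;A,B)$ built on the Bass--Serre tree of the splitting, and attach $2$-cells labelled by the conjugates of $r^m$. The $C'(\lambda)$ condition controls van Kampen diagrams over $X$ in the usual Greendlinger fashion, and here the constant $M$ enters as an invariant of $r$ measuring its complexity relative to the splitting---concretely the syllable length of $r$ in the normal form of $A\ast_C B$, which bounds the boundary contribution of each $2$-cell labelled $r^m$. The inequality $12\lambda M < 1$ is exactly what is needed so that, after the $M$-dependent rescaling forced by working over $A\ast_C B$ rather than a free product, the effective small cancellation strength is strong enough to run the perimeter inequality of~\cite{MccWise05}; when $r$ is a single syllable and $M=\tfrac12$ this recovers the classical $C'(1/6)$ threshold.

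The heart of the argument is the adaptation of the McCammond--Wise perimeter method. I would associate to the closed compactly generated subgroup $\bar H$ an immersion $Y \to X$ of a $\bar H$-cocompact complex, and define a perimeter function on such immersions measuring the weighted boundary complexity of $Y$ relative to the attached cells. The small cancellation hypothesis yields the key \emph{perimeter-reduction} step: whenever $Y$ fails to be compact (equivalently, $\bar H$ fails to be compactly presented), there is a combinatorial move---a fold followed by the attachment of a missing $2$-cell along a Greendlinger arc---that strictly decreases the perimeter while preserving the subgroup. Since the perimeter takes values in a well-ordered set of non-negative integers (counted modulo the compact open subgroup $\bar C$), the process terminates, leaving a compact complex that presents $\bar H$ with attaching maps of uniformly bounded length, i.e. a compact presentation.

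The main obstacle will be establishing the perimeter inequality itself in this general framework, rather than the termination argument, which is formal once the inequality holds. Two features complicate the classical combinatorics: the ``pieces'' and cells of $X$ carry the richer structure of $C$-cosets rather than single letters, so the perimeter bookkeeping must be weighted by the $M$-invariant of $r$ and reconciled with the factor arising from Greendlinger's lemma over $A\ast_C B$; and the translation from discrete finiteness to topological compactness requires that every reduction move be equivariant for the $G$-action and descend through the compact open subgroup $\bar C$, so that ``finite complexity'' of the quotient complex is genuinely equivalent to compact generation and presentation of $\bar H$. Verifying that the reduction move respects these constraints, and that $12\lambda M<1$ is precisely the threshold making the weighted perimeter strictly decrease, is where the technical work concentrates.
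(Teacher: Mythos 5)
Your geometric model is the right one---the paper also works with the $2$-complex $X$ whose $1$-skeleton is the quotient of the Bass--Serre tree of $A\ast_C B$ and whose $2$-cells are attached along the orbit of $r^m$, and the McCammond--Wise machinery does drive the argument---but there are two genuine gaps. First, your endgame cannot close as described. Even granting the perimeter reduction, what it produces is an $\bar H$-cocompact simply connected complex whose $0$-cell stabilizers are the intersections of $\bar H$ with conjugates of the images of $A$ and $B$, and these are in general \emph{non-compact}. By the paper's own characterization (Theorem~\ref{prop:TopCharComplex}), such a complex only certifies that $\bar H$ is compactly presented \emph{relative to} those vertex stabilizers; it is not ``a compact complex that presents $\bar H$.'' Converting relative compact presentation into absolute compact presentation is exactly where coherence of $A$ and $B$ must enter, and your proposal never uses it: your only appeal to coherence is to coherence of $A\ast_C B$ via Theorem~\ref{thm:combination}, which is both unusable (coherence does not pass to quotients---every finitely generated group is a quotient of a coherent one) and in fact never used in the paper's proof. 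What the paper does instead: it quotes the cocompact quasi-isometrically embedded subcomplex as a black box from~\cite[Theorem 3.3]{MARTINEZPEDROZA20112396} (it does not redo the perimeter method), deduces that $\bar H$ is hyperbolic relative to a finite collection $\mathcal K$ of $0$-cell stabilizers and hence compactly presented relative to $\mathcal K$ (Theorem~\ref{prop:RelHypimpliesFinitePres}); then Theorem~\ref{prop:FiniteRelPresGisCGimpliesHisCG}(2) gives that each $K\in\mathcal K$ is compactly generated, $\mathcal J$-coherence of $A$ and $B$ gives that each $K$ is compactly presented, and Theorem~\ref{prop:FinRelPresHisCPimpliesGisCP}(1) finally gives that $\bar H$ is compactly presented.

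Second, your constant $M$ is wrong. You take $M$ to be the syllable length $|r|$, ``bounding the boundary contribution of each $2$-cell,'' but the role of $M$ in the hypothesis $6\lambda' M<1$ of the perimeter machinery is the \emph{thinness} of $X$: the number of $2$-cells containing a fixed $1$-cell. Because edge stabilizers here are compact open rather than trivial, two $2$-cells can share an edge while differing by an element of the (infinite, compact) edge stabilizer; the paper's constant is $M=k|r|$, where $k=\max\left\{[\widetilde G_t:\widetilde G_\gamma]\right\}$ is the maximal index of the pointwise stabilizer of the path $\gamma$ from $y$ to $r^2y$ inside an edge stabilizer, and proving that $X$ is $M$-thin for this $M$ (Proposition~\ref{prop:M-thin}) is a substantive part of the proof, not bookkeeping. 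With $M=|r|$ the complex need not be $M$-thin, so $12\lambda M<1$ does not place you in the range of validity of the perimeter inequality. A smaller omission in the same direction: the perimeter results require a \emph{faithful} action, so the paper first passes to $Q=G/N$ with $N=\bigcap_{g\in G}g\phi(C)g^{-1}$ compact and transfers coherence across this quotient via Lemma~\ref{rem:CoCompactcoherent2}; your sketch of ``descending through $\bar C$'' does not address this step.
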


This result in  the case that $A$ and $B$ are free groups is a result of McCammond and Wise~\cite[Theorem 8.3]{MccWise05}, and the generalization where $A$ and $B$ are coherent discrete groups is a result of Wise and the second author~\cite[Theorem 1.8]{MARTINEZPEDROZA20112396}. The proofs of these results rely on a technique known as the perimeter method~\cite{MccWise05} which was  motivated by the well known question of Gilbert Baumslag of whether all one relator groups are coherent~\cite{MR0364463}, see the survey on coherence by Wise~\cite{Wise} and the recent work by Louder and Wilton~\cite{MR4216595}. 
 
 Theorems~\ref{thm:combination} and~\ref{thmX:main} are consequences of extensions of 
 techniques in the study of discrete groups to the framework of locally compact groups. We summarize  this work below, where the main objects of study  are pairs $(G,\mathcal{H})$ where $G$ is a topological group with a compact open subgroup and $\mathcal{H}$ is a finite collection of open subgroups. In order to avoid repetition, we introduce  the following terminology.

\begin{defx}[Proper pair]\label{def:properPair}
A pair $(G,\mathcal H)$ is called a \emph{proper pair} if 
\begin{enumerate}
    \item $G$ is a topological group with a compact open subgroup;
    \item $\mathcal{H}$ is a finite collection  of open subgroups of $G$;
    \item No pair of distinct non-compact  subgroups in $\mathcal{H}$ are conjugate in $G$.
\end{enumerate}
\end{defx}
Note that for a proper pair $(G,\mathcal H)$, we allow  $\mathcal H$ to be the empty collection.

\subsection{Compact relative generating sets.}

Let $G$ be a topological group and let $\mathcal{H}$ be a collection of subgroups.
We shall say that $G$ is \emph{compactly generated relative to $\mathcal{H}$} if there is a compact subset $S\subset G$ such that $G$ is algebraically generated by $S\cup \bigcup\mathcal{H}$. Relative compact generation can be  topologically characterized in terms of the existence of Cayley-Abels graphs in the case that $G$ contains a compact open subgroup and $\mathcal{H}$ consists of open subgroups. In order to state our result, let us recall some standard terminology. Let $X$ be a CW-complex with a $G$-action by cellular automorphisms. The $G$-action on $X$ is \emph{discrete} if pointwise stabilizers of cells are open subgroups;  in this case we say that $X$ is a \emph{discrete $G$-complex}. The $G$-complex $X$ is \emph{cocompact} if there is a compact subset $K$ of $X$ whose image under the action of $G$ covers $X$. A graph is a 1-dimensional CW-complex, and the graph is simplicial if there are no loops or multiple edges between the same vertices. 

\begin{defx}[Cayley-Abels graph]\label{defx:CayleyAbels}
Let $G$ be a topological group with a compact open subgroup, and let $\mathcal{H}$ be a finite collection of subgroups.  A \emph{Cayley-Abels graph} of  $G$ with respect to $\mathcal H$ is a connected cocompact simplicial discrete $G$-graph $\Gamma$ such that: 
\begin{enumerate}
    \item edge $G$-stabilizers are compact,
    \item vertex $G$-stabilizers are either compact or conjugates of subgroups in $\mathcal{H}$,
    \item every $H\in \mathcal{H}$ is the $G$-stabilizer of a vertex of $\Gamma$, and
    \item any pair of vertices of $\Gamma$ with the same $G$-stabilizer $H\in\mathcal H$ are in the same $G$-orbit if $H$ is non-compact.
\end{enumerate}
A consequence of these   conditions is:
\begin{enumerate}
    \item[(5)] If $\Gamma$ contains at least one edge, then a vertex has finite degree if and only if its $G$-stabilizer is compact. 
\end{enumerate}
\end{defx}

The notion of Cayley-Abels graph  generalizes the notion of Cayley graph of a finitely generated group. Other examples are discussed in detail in Section~\ref{sec03:CayleyAbels}. Let us mention that the Bass-Serre tree of a finite graph of groups with finite edge groups is a Cayley-Abels graph for its fundamental group with respect to the vertex groups; Farb's coned-off Cayley graph of a relatively hyperbolic group~\cite{Farb} is  a Cayley-Abels graph for the group with respect to its peripheral structure; and  
Kr\"{o}n and M\"{o}ller's rough Cayley graph of a compactly generated TDLC group  is a Cayley-Abels graph for the group with respect to the empty collection~\cite{BM08}. 

\begin{thmx}[Topological characterization of relative  compact  generation] \label{thmX:TopCharGraph} 
Let $(G,\mathcal H)$ be a proper pair. The following statements are equivalent:
\begin{enumerate}
    \item $G$ is compactly generated relative to $\mathcal{H}$.
    \item There exists a Cayley-Abels graph of $G$ with respect to $\mathcal{H}$.
\end{enumerate}
 \end{thmx}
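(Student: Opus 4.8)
The plan is to prove the two implications separately, in each case exploiting the interplay between the compact open subgroup and the discreteness of the relevant $G$-action.

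For $(2)\Rightarrow(1)$ I would start from a Cayley--Abels graph $\Gamma$ and extract a compact relative generating set by a Bass--Serre type generation argument. Since $\Gamma$ is cocompact and discrete, the quotient $G\backslash\Gamma$ is a finite graph, so I can fix a finite tree of representatives $\tilde T\subseteq\Gamma$ meeting each vertex orbit exactly once. Using conditions (2), (3), (4) of Definition~\ref{defx:CayleyAbels} together with clause (3) of Definition~\ref{def:properPair}, I would arrange the representatives with non-compact stabilizer to be stabilized precisely by the non-compact members of $\mathcal H$. The key point, and the main obstacle in this direction, is that such representatives have infinite degree, so one cannot record a connecting element for every incident edge. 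Instead I would use that edge $G$-stabilizers are compact and the action is cocompact to conclude that the edges incident to each representative fall into finitely many orbits under its vertex stabilizer; choosing one edge per such orbit yields a finite set $\Sigma$ of connecting elements. Setting $S$ to be $\Sigma$ together with the (compact) stabilizers of the representatives that have compact stabilizer gives a compact set, and I claim $L:=\langle S\cup\bigcup\mathcal H\rangle=G$. This is proved by showing, by induction on the graph distance to $\tilde T$ and using connectivity of $\Gamma$, that $L$ acts transitively on every vertex orbit; a final coset computation at the base vertex upgrades vertex-transitivity to $L=G$.

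For $(1)\Rightarrow(2)$ I would construct the graph by hand. First I would replace the given compact open subgroup $U_0$ by $U:=U_0\cap\bigcap_{H\in\mathcal H}H$, which is again compact and open (a finite intersection of open subgroups, closed inside the compact $U_0$) and satisfies $U\subseteq H$ for every $H\in\mathcal H$. The vertex set is $G/U \sqcup \bigsqcup_{H\in\mathcal H} G/H$ with $G$ acting by left translation; stabilizers are the conjugates of $U$ (compact) and of the $H$ (conjugates of members of $\mathcal H$), so conditions (2), (3), (4) of Definition~\ref{defx:CayleyAbels} are immediate, the last because each $G/H$ is a single orbit and distinct non-compact members of $\mathcal H$ are non-conjugate. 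I would introduce two families of edges: Cayley edges joining $gU$ to $gsU$ for $s$ in a fixed compact symmetric generating set $S$, and cone edges joining the vertex $gH$ to each $U$-coset contained in $gH$ (well defined precisely because $U\subseteq H$). Compactness of $S$ shows it meets finitely many left cosets of $U$, and the stabilizer of $gH$ acts transitively on the $U$-cosets it contains, so each edge type contributes finitely many orbits; hence the action is cocompact. Edge stabilizers are intersections of a conjugate of $U$ with a conjugate of $U$ or of some $H$, hence compact, giving condition (1), and removing loops and parallel edges makes the graph simplicial without affecting these properties. Finally, condition (5) is checked directly: a $U$-vertex is incident to finitely many Cayley edges and to exactly one cone vertex per $H\in\mathcal H$, while the cone vertex $gH$ has degree $[H:U]$, which is finite if and only if $H$ is compact.

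The crux of the second implication, and the place where hypothesis (1) is genuinely used, is connectivity. I would verify it by writing an arbitrary $g\in G$ as a product $a_1\cdots a_n$ with each $a_i\in S\cup\bigcup\mathcal H$ and building a path from $eU$ to $gU$ through the partial products $g_i=a_1\cdots a_i$: consecutive vertices $g_{i-1}U$ and $g_iU$ are joined by a Cayley edge when $a_i\in S$, and by a length-two path through the cone vertex $g_{i-1}H$ when $a_i\in H$. Since every cone vertex is adjacent to a $U$-vertex, this shows $\Gamma$ is connected and completes the construction. I expect the two technical hearts to be the finite-orbit reduction for incident edges in $(2)\Rightarrow(1)$ and the cocompactness and well-definedness bookkeeping for the cone edges in $(1)\Rightarrow(2)$; both are ultimately powered by compactness of $U$ together with openness of the subgroups involved.
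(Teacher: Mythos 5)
Your $(1)\Rightarrow(2)$ is, up to cosmetics, the paper's own argument (the first half of Proposition~\ref{prop:LastOne}): the paper likewise replaces the compact relative generating set $A$ by a finite set $S$ with $A\subseteq SU$, takes vertex set $G/U\cup G/\mathcal{H}$ with Cayley edges $\{gU,gsU\}$ and cone edges, and proves connectivity by the same induction on word length in $S\cup\bigcup\mathcal H$. The only differences are that the paper keeps an arbitrary compact open $U$ and defines the cone edges equivariantly as $\{gU,gH\}$, which makes your shrinking of $U$ into $\bigcap_{H\in\mathcal H}H$ and the containment bookkeeping unnecessary, and that it certifies the construction via Theorem~\ref{thm:Cayley-Abels} rather than checking Definition~\ref{defx:CayleyAbels} directly; your direct verification is equally valid for the statement as phrased.

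Your $(2)\Rightarrow(1)$ takes a genuinely different route: the paper invokes Proposition~\ref{thm:Bridson} (Bridson--Haefliger) to generate $G$ by finitely many vertex stabilizers plus a finite set, whereas you run the \v{S}varc--Milnor-type induction by hand. The skeleton is sound, and your key finiteness claim is true, though for a different reason than you give: each $G$-orbit of edges meets the set of edges incident to a fixed vertex $v$ in at most two $G_v$-orbits, so cocompactness alone gives finitely many $G_v$-orbits of incident edges; compactness of edge stabilizers plays no role there (it is only needed to make your eventual generating set compact). However, one step fails as stated: it is in general impossible to choose a \emph{tree} of representatives whose vertices with non-compact stabilizer are stabilized \emph{exactly} by members of $\mathcal H$. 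Example: $G=\langle a\rangle\ast\langle b\rangle$, $A=\langle a\rangle$, $B=\langle b\rangle$, $x=ab$, and $\mathcal H=\{A,\,x^{-1}Bx\}$. This is a proper pair, and the Bass--Serre tree of the splitting is a Cayley--Abels graph for it. Its quotient is a single edge, so a tree of representatives is an edge $\{gA,gB\}$; requiring $G_{gA}=A$ forces $g\in N_G(A)=A$, requiring $G_{gB}=x^{-1}Bx$ forces $g\in x^{-1}N_G(B)=x^{-1}B$, and $A\cap x^{-1}B=\emptyset$ because $x\notin BA$. So the tree requirement and the exact-stabilizer requirement are incompatible here.

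The gap is local and repairable in two standard ways, either of which completes your argument. First option: drop the tree. Your induction on distance never uses the tree structure, only a set $D$ of orbit representatives, one per orbit, such that $G_{\tilde v}\leq L$ for each $\tilde v\in D$; and a mere \emph{set} of representatives with exact stabilizers always exists, since by Definition~\ref{defx:CayleyAbels}(2)--(4) and Definition~\ref{def:properPair}(3) every orbit with non-compact stabilizer contains a vertex whose stabilizer is exactly some $H\in\mathcal H$. Second option: keep the tree, write $G_{\tilde v_i}=g_iH_ig_i^{-1}$ for the representatives with non-compact stabilizer, and add the finitely many conjugators $g_i$ to $\Sigma$; then $G_{\tilde v_i}\leq\langle \{g_i\}\cup H_i\rangle\leq L$ and the induction goes through verbatim. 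It is worth noting that the paper's appeal to Proposition~\ref{thm:Bridson} sidesteps your obstruction precisely because that proposition accepts an arbitrary open set $U$ with $GU=\Gamma$, so one may take $U$ to be the union of open stars of a non-tree set of representatives with exact stabilizers; the same conjugator remark is implicitly needed to pass from its conclusion to the paper's final sentence.
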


In the case that $G$ is a discrete group  and $\mathcal H$ is empty, the above result is the well-known fact that a group is finitely generated if and only if it acts properly and cocompactly on a connected graph. In the case that $G$ is discrete and $\mathcal{H}$ is a finite collection of subgroups, the result appears implicit in the work of Hruska on relatively hyperbolic groups~\cite{Hru} where the  resulting graphs are called \emph{coned-off Cayley graphs}.   
In the case that $G$ is a  TDLC group  and $\mathcal{H}$ is empty, this is a result of Kr\"{o}n and M\"{o}ller's from~\cite{BM08} where they show that  the resulting graphs can be assumed to be vertex transitive and call them \emph{rough Cayley graphs}; these graphs are also known as \emph{Cayley-Abels graphs} after related work of Herbert Abels~\cite{MR344375}. In the case that $G$ has a compact open subgroup and $\mathcal{H}$ is empty, the theorem is a result of Cornulier and de la  Harpe~\cite[Proposition 2.E.9]{CoHa16}.

 The next result is on the quasi-isometry invariance of  {Cayley-Abels graphs} of a proper pair $(G,\mathcal H)$. These graphs are not necessarily locally finite.  The  notion of fineness, introduced by Bowditch~\cite{Bo12} and defined below, is a generalization of local finiteness of graphs which turns out to be relevant in our context, see Theorem~\ref{thmX:CAcomplexchar} and the comments after its  statement. 
 
 \begin{defx}[Bowditch fineness]\cite{Bo12} A simplicial graph $\Gamma$ is fine if for any pair of vertices $u,v$ and any integer $n$, there are finitely many embedded paths of length $n$ from $u$ to $v$.
\end{defx}

 \begin{thmx}[Quasi-isometry invariance]\label{thmX:uniqueCAgraph}
Let $(G,\mathcal H)$ be a proper pair. If $\Gamma$ and $\Delta$ are Cayley-Abels graphs of $G$ with respect to $\mathcal{H}$, then
\begin{enumerate}
    \item $\Gamma$ and  $\Delta$ are quasi-isometric, and
    \item   $\Gamma$ is fine if and only if $\Delta$ is fine.
\end{enumerate}
\end{thmx}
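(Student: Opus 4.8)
The plan is to prove both statements by an equivariant, relative version of the \v{S}varc--Milnor lemma. Throughout I equip the vertex set of each Cayley--Abels graph with the combinatorial path metric; since both graphs are connected this is finite-valued, and since $G$ acts by graph automorphisms it acts by isometries. Cocompactness gives finitely many $G$-orbits of vertices and of edges in each of $\Gamma$ and $\Delta$, which is what makes all constants below uniform. Two elementary facts will be used repeatedly: \textbf{(a)} a compact subgroup $K\le G$ has finite orbits on the vertices of a discrete $G$-graph, because $K\cap G_v$ is open in $K$ and hence of finite index; and \textbf{(b)} for each non-compact $H\in\mathcal{H}$ the infinite-degree vertices of $\Gamma$ whose stabilizer is conjugate to $H$ form a single $G$-orbit $\cong G/H$, by conditions (3)--(5) of Definition~\ref{defx:CayleyAbels} together with Definition~\ref{def:properPair}(3); the same holds in $\Delta$, so the infinite-degree vertices of $\Gamma$ and of $\Delta$ are $G$-equivariantly identified as $\bigsqcup_{H}G/H$.

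For statement (1) I build coarse maps $\phi\colon\Gamma\to\Delta$ and $\psi\colon\Delta\to\Gamma$ that are coarse inverses. On each infinite-degree orbit I set $\phi(g\,p_H^{\Gamma})=g\,p_H^{\Delta}$, where $p_H^{\Gamma},p_H^{\Delta}$ are the vertices with stabilizer exactly $H$; by fact (b) this is well defined, genuinely $G$-equivariant, and bijective onto the infinite-degree vertices of $\Delta$. On each finite-degree orbit $G/K_i$ (so $K_i$ compact open) I pick any image $w_i\in\Delta$ and a transversal and extend; fact (a) shows $\diam_{\Delta}(K_i\cdot w_i)$ is finite, so $\phi$ is coarsely $G$-equivariant with a single defect $D=\max_i \diam_{\Delta}(K_i\cdot w_i)$ (infinite-degree orbits contribute defect $0$). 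Since there are finitely many edge-orbits, coarse equivariance bounds $\dist_{\Delta}(\phi u,\phi v)$ for adjacent $u,v$ by a constant, and connectedness upgrades this to $\dist_{\Delta}(\phi u,\phi v)\le C\,\dist_{\Gamma}(u,v)$ for all $u,v$. I construct $\psi$ symmetrically with $\psi|_{\mathrm{periph}}=(\phi|_{\mathrm{periph}})^{-1}$. Coarse equivariance of the composite then gives, on each orbit, $\dist_{\Gamma}\big(\psi\phi(gv),gv\big)\le D'+\dist_{\Gamma}(\psi\phi(v),v)$, a constant since distances are finite and there are finitely many orbit representatives; thus $\psi\phi$ and $\phi\psi$ lie at bounded distance from the identities. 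Coarse-Lipschitz maps that are mutual coarse inverses are quasi-isometries, and as the vertex sets are $1$-dense this proves $\Gamma$ and $\Delta$ are quasi-isometric.

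For statement (2), by symmetry of the hypotheses it suffices to prove one implication, say $\Delta$ fine $\Rightarrow\Gamma$ fine, and I argue the contrapositive by transporting embedded paths along the quasi-isometry of statement (1). Given infinitely many distinct embedded paths of length $n$ between fixed $u,v\in\Gamma$, I push them forward by $\phi$ and straighten them (joining consecutive images by geodesics of length $\le C$) to walks of length $\le Cn$ between the fixed vertices $\phi(u),\phi(v)$ of $\Delta$; deleting backtracks yields embedded paths of bounded length. The crux is to certify that infinitely many \emph{genuinely distinct} such paths survive in $\Delta$, and here I will use the bijection of fact (b) on infinite-degree vertices, the finiteness of degrees at all other vertices, and compactness of edge stabilizers, so that by cocompactness and fact (a) the relevant multiplicities are finite and the infinitely many source paths cannot collapse onto finitely many target paths.

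The main obstacle is exactly this last point: fineness is \emph{not} a quasi-isometry invariant in general, and the danger is that $\phi$ is infinitely-to-one, collapsing the source paths into a bounded region of $\Delta$ whose balls need not be finite (fine graphs need not be locally finite). I expect to neutralize this by arranging $\phi$ to respect the finite-degree/infinite-degree dichotomy --- sending finite-degree vertices to finite-degree vertices whenever $\Delta$ has any, and otherwise routing the argument through $\psi$, which is bijective on the infinite-degree vertices that then carry the whole graph --- and by using compactness of edge stabilizers to bound the fibers. Equivalently, I will reformulate fineness of a Cayley--Abels graph as the finiteness, for each edge-orbit representative $e$ and each $n$, of the number of $G_e$-orbits of embedded circuits of length $n$ through $e$ (legitimate because $G_e$ is compact and so has finite orbits on the finite set of such circuits); this exposes the multiplicities being controlled and turns the transport of non-fineness across $\phi$ into a bookkeeping argument rather than a metric one.
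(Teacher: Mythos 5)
Your part~(1) is essentially correct and takes a genuinely different route from the paper: you build mutually coarse-inverse, coarsely $G$-equivariant maps directly, using the $G$-equivariant identification of the infinite-degree vertex sets (this is exactly the paper's Lemma~\ref{lem:infiniteValence2}) together with finiteness of compact-group orbits in discrete $G$-graphs. The paper instead obtains (1) and (2) simultaneously by interpolation: it glues $\Gamma$ and $\Delta$ along their infinite-degree vertices and realizes the glued graph as a finite sequence of equivariant edge-orbit attachments starting from either graph (Corollary~\ref{prop:EdgeAttachments}), each attachment being a quasi-isometry by Theorem~\ref{thm.2hs}. As a proof of (1) alone, your construction is a legitimate and somewhat more elementary alternative.

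Part~(2), however, has a genuine gap, and it sits exactly where you locate the ``crux''; the remedies you sketch do not close it. The step that fails is ``straighten the $\phi$-images and excise loops to get embedded paths'': this operation can be infinite-to-one even when $\phi$ has finite fibers, because excision discards vertices. Concretely, in an infinite star (a fine graph) there are infinitely many walks of length $3$ from the center $c$ to a fixed leaf $a$, namely $c,x,c,a$ over all leaves $x$, with pairwise distinct vertex sets, and all of them excise to the single embedded path $c,a$; so producing infinitely many bounded-length walks with distinct vertex sets between two fixed vertices of $\Delta$ certifies nothing about fineness of $\Delta$. This is precisely why fineness is not a quasi-isometry invariant, and neither proposed fix addresses it: arranging $\phi$ to respect the degree dichotomy only bounds the fibers of $\phi$ (and is not always possible --- take $\Delta$ a Bass--Serre tree of $A\ast_C B$ with $A,B$ non-compact, all of whose vertex stabilizers are non-compact, and $\Gamma$ its barycentric subdivision, which has finite-degree vertices), while your reformulation of fineness via $G_e$-orbits of circuits, though correct, is never turned into an argument that infinitely many $G_e$-orbits of circuits in $\Gamma$ yield infinitely many orbits of circuits in $\Delta$. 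If you repair the straightening by making it equivariant --- fixing connecting paths on finitely many orbit representatives and translating --- you have reconstructed the paper's $\alpha$-replacement, and the real work is then still ahead: the finiteness induction on the sets $W_j$ and $Z_j$, resting on compactness of edge stabilizers via Lemma~\ref{lem:compactness}, which occupies the paper's Section~\ref{sec:appendix}. The paper avoids transporting paths across an arbitrary quasi-isometry altogether: it proves the one-attachment statement (Theorem~\ref{thm.2hs}) that attaching a single $G$-orbit of edges, with representative $\{u,v\}$ or $\{u,H\}$ for $H$ compact open, preserves fineness at every vertex, and then deduces the theorem by finitely many such attachments from either side of the glued graph. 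That one-step control is the missing idea in your proposal.
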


This is a new result even in the discrete case. For pairs $(G,\mathcal H)$ with $G$ a TDLC group $G$ and  $\mathcal H$ the   empty collection, the statement is a result of Kr\"{o}n and M\"{o}ller's~\cite{BM08}. The case for pairs $(G,\mathcal H)$ where $G$ is a finitely generated  discrete group and $\mathcal H$ is a malnormal collection is a consequence of~\cite[Prop. 5.6]{SamLouisEdu}.
The quasi-isometry invariance of  Cayley-Abels graphs  allow us to define geometric invariants for pairs $(G,\mathcal{H})$ where $G$ is a topological group with a compact open subgroup and $\mathcal{H}$ is a finite collection of open subgroups.
For example, hyperbolicity in the class of topological groups with a compact open subgroup can be defined as the groups that admit a hyperbolic Cayley-Abels graph (with respect to the empty collection) an approach considered in~\cite{ACCMP}. The  approach to hyperbolic groups   for locally compact groups  developed by Caprace, Cornulier, Monod and Tessera~\cite{MR3420526}   when restricted to locally compact groups with a compact open subgroup is equivalent. We use Theorem~\ref{thmX:uniqueCAgraph} to  start the development of  a theory of relatively hyperbolic groups for topological groups with a compact open subgroup, see subsection~\ref{intro-relhyp}.

\subsection{Compact relative presentations}

Let $G$ be a topological group and let $\mathcal H=\{H_1,\ldots, H_n\}$ be  a finite collection  of open subgroups. We say that $G$ is \emph{compactly presented relative to $\mathcal{H}$} if there is a short exact sequence 
\[ 1 \to \nclose{R} \to \pi_1(\mathcal{G},\Lambda) \xrightarrow{\phi} G \to 1      \]
where $\pi_1(\mathcal{G},\Lambda)$ is the fundamental group of a finite graph of topological groups $(\mathcal{G},\Lambda)$
endowed with the topology induced by the vertex groups (see Definition~\ref{def:TopGraphGroups} and Proposition~\ref{prop:GTopology})  with the following properties.
\begin{itemize}
\item There are vertices $\{v_1,v_2, \cdots v_n\}$ of $\Lambda$ and isomorphisms of topological groups  $\phi_i\colon \mathcal{G}_{v_i}\to H_i$ such that \[\begin{tikzcd}[column sep=small]
\mathcal{G}_{v_i} \arrow[hookrightarrow]{r}{i_{v_i}}  \arrow{d}{\phi_i} 
&[1em]  \pi_1(\mathcal{G}, \Lambda) \arrow{d}{\phi} \\
H_i \arrow[hookrightarrow]{r} &  G
\end{tikzcd}\] is a commutative diagram up to an inner automorphism of $G$.
\item For every $v \in V(\Lambda)$, the map $\phi \circ i_v$ is injective.

\item For each edge $e \in E(\Lambda)$ and each vertex $v \neq v_i$ in $V(\Lambda)$, the edge group $\mathcal{G}_e$ and the vertex group $\mathcal{G}_{v}$ are compact topological groups.

\item $\phi$ is   a continuous open   epimorphism whose restriction to each vertex group of $(\mathcal{G},\Lambda)$ is injective.

\item $\nclose{R}$ is a discrete normal subgroup generated by a finite subset $R$ of $\pi_1(\mathcal{G},\Lambda)$.
\end{itemize}
The pair $\langle  (\mathcal{G},\Lambda,\phi) \mid R \rangle$ is called a \emph{compact generalized presentation} of $G$ with respect to $\mathcal{H}$.   

In the case that $G$ is compactly presented with respect to the empty collection,  we say that $G$ is \emph{compactly presented}. This is  equivalent to the definition stated at the beginning of the introduction, see Corollary~\ref{prop:CompactPres}. In the case that $G$ is discrete and $\mathcal{H}$ is empty, the definition is equivalent to $G$ being the quotient of a virtually free group of finite rank by a normal subgroup generated by a finite number of elements. In the case that $G$ is a discrete group and $\mathcal{H}$ is not empty, it is an observation that our definition of $G$ being finitely presented relative to $\mathcal{H}$ coincides with the approach by Osin~\cite{Os06}. We previously mentioned that in the case that $G$ is a TDLC group and $\mathcal{H}$ is empty, this approach was used by Castellano~\cites{CaIWei16,castellano_2020}.

\begin{thmx}[Topological Characterization of Relative compact presentation]\label{thmX:CAcomplexchar}
Let $G$ be a topological group with a finite collection $\mathcal{H}$ of open subgroups. The following statements are equivalent:
\begin{enumerate}
    \item $G$ is compactly presented with respect to $\mathcal{H}$.
    \item There exists a    Cayley-Abels graph $\Gamma$ of $G$ with respect to $\mathcal{H}$ which is  the 1-skeleton of a simply-connected cocompact  discrete $G$-complex.
    \end{enumerate}
\end{thmx}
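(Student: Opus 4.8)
The plan is to prove the two implications separately, in each case passing through the Bass--Serre tree of a finite graph of topological groups and using the topological Bass--Serre dictionary already developed in the article. First suppose (2) holds, so that $X$ is a simply-connected cocompact discrete $G$-complex whose $1$-skeleton $\Gamma=X^{(1)}$ is a Cayley-Abels graph of $G$ with respect to $\mathcal H$. I would form the finite quotient graph $\Lambda=\Gamma/G$ and equip it with the graph-of-topological-groups structure $(\mathcal G,\Lambda)$ whose vertex and edge groups are the corresponding $G$-stabilizers, carrying the subspace topology from $G$. Discreteness of the action makes each such group open, while Definition~\ref{defx:CayleyAbels} makes the edge groups compact and identifies the non-compact vertex groups with conjugates of the $H_i$; thus the structural requirements in the definition of a compact generalized presentation hold by construction. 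Topological Bass--Serre theory then yields a continuous open epimorphism $\phi\colon \pi_1(\mathcal G,\Lambda)\to G$ restricting injectively to each vertex group, together with a $G$-equivariant map from its Bass--Serre tree $T$ onto $\Gamma$ realizing $\Gamma$ as an intermediate quotient $T/\ker\phi$.

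To finish (2)$\Rightarrow$(1), I would note that $N:=\ker\phi$ meets every vertex and edge stabilizer of $T$ trivially, since these map injectively into $G$; hence $N$ acts freely on the tree $|T|$, is therefore free, and via the covering $|T|\to|\Gamma|$ is identified with the topological fundamental group $\pi_1(\Gamma)$. Because $X$ is simply connected and cocompact, the boundary loops of the $2$-cells, which fall into finitely many $G$-orbits, together normally generate $\pi_1(\Gamma)=N$. Reading a set $R$ of orbit representatives as elements of $\pi_1(\mathcal G,\Lambda)$ gives a finite set with $\nclose{R}=N$, and $N$ is discrete because it meets the open vertex groups trivially. This is precisely the data of a compact generalized presentation $\langle(\mathcal G,\Lambda,\phi)\mid R\rangle$.

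Conversely, starting from a compact generalized presentation, let $\pi:=\pi_1(\mathcal G,\Lambda)$ act on its Bass--Serre tree $T$, a simply-connected cocompact discrete $\pi$-complex with compact or peripheral open cell stabilizers, and set $N:=\nclose{R}=\ker\phi$. As before, $N$ meets every stabilizer trivially and so acts freely on $|T|$, whence $\Gamma:=T/N$ is a connected cocompact $G$-graph (with $G=\pi/N$) whose vertex stabilizers are the $\phi$-images of the vertex groups and whose edge stabilizers are compact; after a barycentric subdivision to guarantee it is simplicial, $\Gamma$ satisfies the conditions of a Cayley-Abels graph. Freeness of the $N$-action on the simply-connected $|T|$ identifies $\pi_1(\Gamma)$ with $N$. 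I would then attach $2$-cells $G$-equivariantly along the loops in $\Gamma$ determined by the finitely many elements of $R$, producing a cocompact discrete $G$-complex $X$ with $X^{(1)}=\Gamma$; computing $\pi_1(X)=N/\langle\!\langle R\rangle\!\rangle_N$ and using that $N=\nclose{R}$ equals the $N$-normal closure of the $G$-translates of $R$ shows $\pi_1(X)$ is trivial, so $X$ is the desired simply-connected complex.

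The combinatorial heart---relating $2$-cells to relators and the concluding normal-closure computation---runs parallel to the discrete characterization of relative finite presentability, so I expect the principal obstacle to be strictly topological: verifying throughout that the maps furnished by Bass--Serre theory are continuous and open, that cell stabilizers are open so the actions are genuinely discrete, and that the graph-of-topological-groups topology on $\pi_1(\mathcal G,\Lambda)$ interacts correctly with the subspace topologies on the stabilizers. Establishing that $N$ acts freely---equivalently, that it meets every conjugate of a vertex group trivially---and that the resulting quotient graph can be made simplicial are the two places where I anticipate spending the most care.
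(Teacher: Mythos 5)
Your proposal is correct and follows essentially the same path as the paper: direction (1)$\Rightarrow$(2) is the paper's Proposition~\ref{prop:RelPresComplex} (quotient the Bass--Serre tree by the freely acting kernel, attach 2-cells along the $G$-orbits of the loops determined by $R$, and deduce simple connectivity from the identification $\ker(\phi)\cong\pi_1(\Gamma)$), while direction (2)$\Rightarrow$(1) is Proposition~\ref{prop:ComPresTop} (lift boundary paths of the finitely many $G$-orbits of 2-cells through the covering $\mathcal{T}\to\Gamma$ to obtain a finite normal generating set of $\ker(\phi)$). The only inessential difference is that in (2)$\Rightarrow$(1) you rebuild the compact generating graph from the stabilizer data of $\Gamma$ --- which is the paper's Proposition~\ref{prop:BassTheory}, used in Theorem~\ref{thm:Cayley-Abels} --- whereas the paper's Definition~\ref{def:RelativeGraphs} of a Cayley-Abels graph already carries that structure, so it can skip this step.
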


In the discrete case, that (2) implies (1)  can be found in~\cite[Prop. 4.16]{SamLouisEdu}. Note that for a discrete group $G$ with an empty collection a stronger version of Theorem~\ref{thmX:CAcomplexchar} holds in the sense that  the second item can be expressed with a universal quantifier. In order to obtain this type of equivalence in our context we need to impose an additional hypothesis:
\begin{corx} \label{thmX:TopCharComplex} 
Let $(G,\mathcal H)$ be a proper pair. Suppose there is a fine Cayley-Abels graph  of $G$ with respect to $\mathcal{H}$. The following statements are equivalent:
\begin{enumerate}
    \item $G$ is compactly presented with respect to $\mathcal{H}$.
    \item Any  Cayley-Abels graph of $G$ with respect to $\mathcal H$ is the 1-skeleton of a simply-connected cocompact  discrete $G$-complex.
\end{enumerate}
\end{corx}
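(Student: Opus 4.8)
The plan is to deduce the corollary from the existential statement of Theorem~\ref{thmX:CAcomplexchar} together with the quasi-isometry invariance in Theorem~\ref{thmX:uniqueCAgraph}; the only new content is upgrading ``there exists'' to ``any'', and I expect the fineness hypothesis to be exactly what makes this upgrade possible. I would begin by recording a consequence of the hypothesis: since $(G,\mathcal{H})$ admits a fine Cayley-Abels graph and, by Theorem~\ref{thmX:uniqueCAgraph}(2), fineness is invariant among the Cayley-Abels graphs of the pair, \emph{every} Cayley-Abels graph of $G$ with respect to $\mathcal{H}$ is fine; in particular such graphs exist, so $G$ is compactly generated relative to $\mathcal{H}$ by Theorem~\ref{thmX:TopCharGraph}. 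The implication (2)$\Rightarrow$(1) is then immediate, since the hypothesis furnishes at least one Cayley-Abels graph, which by (2) is the $1$-skeleton of a simply-connected cocompact discrete $G$-complex, and Theorem~\ref{thmX:CAcomplexchar} gives compact presentability relative to $\mathcal{H}$.

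For the substantial implication (1)$\Rightarrow$(2), let $\Gamma$ be an arbitrary Cayley-Abels graph of $G$ with respect to $\mathcal{H}$; by the first step $\Gamma$ is fine. Theorem~\ref{thmX:CAcomplexchar} provides \emph{some} Cayley-Abels graph $\Gamma_0$ which is the $1$-skeleton of a simply-connected cocompact discrete $G$-complex $X_0$. Cocompactness of $X_0$ bounds the lengths of the attaching maps of its finitely many $2$-cell orbits, so there is a constant $N$ such that every closed path in $\Gamma_0$ is null-homotopic in $X_0$ through a product of $G$-translates of the boundaries of these $2$-cells, each of length at most $N$; that is, $\Gamma_0$ is coarsely simply connected with an explicit combinatorial filling bound.

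I would then transport this bounded filling property to $\Gamma$ along a quasi-isometry $\Gamma\to\Gamma_0$ supplied by Theorem~\ref{thmX:uniqueCAgraph}(1). Coarse simple connectivity with a bounded filling length is a quasi-isometry invariant of the underlying length space (by the same mechanism used to prove that finite presentability is a quasi-isometry invariant), so there is a constant $N'$ such that every closed path in $\Gamma$ decomposes as a product of embedded loops of length at most $N'$. Note the $G$-action need not enter the transfer itself, only the intrinsic metric of $\Gamma$. Now I build the complex: attach a $2$-cell to $\Gamma$ along each embedded loop of length at most $N'$, $G$-equivariantly, and let $X$ be the resulting $G$-complex with $X^{(1)}=\Gamma$. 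The filling bound guarantees $X$ is simply connected; the pointwise stabilizer of each attached $2$-cell is a finite intersection of the open vertex and edge stabilizers of $\Gamma$, hence open, so the action on $X$ is discrete. For cocompactness one uses that $\Gamma$ is fine and cocompact: for each $n\le N'$ there are finitely many vertex orbits and, by fineness, finitely many embedded paths of length $n$ between any fixed pair of endpoints, hence finitely many $G$-orbits of embedded loops of length at most $N'$, so only finitely many orbits of $2$-cells are attached. Thus $X$ is a simply-connected cocompact discrete $G$-complex with $1$-skeleton $\Gamma$, giving (2).

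The main obstacle is precisely the cocompactness in this last step: for a general Cayley-Abels graph the high-valence vertices could a priori support infinitely many short loops within a single bounded length, forcing infinitely many $2$-cell orbits and destroying cocompactness. Fineness is exactly the hypothesis that excludes this, which is why Theorem~\ref{thmX:CAcomplexchar} only asserts the existence of one good graph whereas the corollary needs fineness to reach the universal statement. A secondary technical point, which I expect to be routine, is reducing an arbitrary closed path of bounded length to a product of embedded circuits of no greater length, so that attaching cells only along embedded loops suffices for the filling bound.
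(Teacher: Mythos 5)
Your proposal is correct and takes essentially the same route as the paper: the paper also gets (2)$\Rightarrow$(1) directly from Theorem~\ref{prop:TopCharComplex}, and for (1)$\Rightarrow$(2) it transfers large-scale simple connectedness (your ``coarse simple connectivity with bounded filling'') from the good graph to an arbitrary Cayley-Abels graph $\Gamma$ across the quasi-isometry of Corollary~\ref{cor:QI-Fineness}, then builds exactly your complex as $\Omega_k(\Gamma)$ (Definition~\ref{def:OmeganGamma}), using fineness plus cocompactness to bound the $2$-cell orbits. One small repair to your finiteness count: enumerate the short embedded loops by cutting them at a representative \emph{edge} $\{u,w\}$ (there are finitely many $G$-orbits of edges, and fineness gives finitely many embedded paths of bounded length from $u$ to $w$), not at a representative vertex, since an infinite-degree vertex of a fine graph can lie on infinitely many short circuits (picture infinitely many triangles wedged at a point), which is precisely how the paper's proof phrases this step.
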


We are not aware whether the assumption on fineness of all Cayley-Abels graphs is necessary. There is a well known relationship between fineness and isoperimetric functions which was first made explicit by Groves and Manning~\cite[Proposition 2.50, Question 2.51]{GrMa09}. This relation has also been studied in~\cites{MP15, SamLouisEdu}.  The 2-complexes given by Corollary~\ref{thmX:TopCharComplex}, which are  not necessarily locally finite, satisfy the hypothesis of the following result.

\begin{propx}\cite{MP15} \cite{SamLouisEdu}\label{propx:Deltafine}
Let $X$ be a cocompact simply-connected $G$-complex. Suppose that each edge of $X$ is attached to finitely many 2-cells. Then the 1-skeleton of $X$ is a fine graph if and only if the combinatorial Dehn function of $X$ takes only finite values. 
\end{propx}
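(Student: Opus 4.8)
The plan is to prove the two implications separately, after recording two uniform constants furnished by the hypotheses. Since the $G$-action is cocompact there are finitely many $G$-orbits of $2$-cells, so there is a bound $L$ on the number of edges in the attaching map of any $2$-cell; and since each edge lies in finitely many $2$-cells and the number of $2$-cells incident to an edge is constant along its $G$-orbit, cocompactness upgrades this to a uniform bound $K$ on the number of $2$-cells containing any single edge. Throughout I use the combinatorial van Kampen lemma: as $X$ is simply connected, every combinatorial loop bounds a finite singular disc diagram $D\to X$, so each loop has finite combinatorial area, and the statement that the Dehn function takes only finite values means precisely that $\delta(n):=\sup\{\mathrm{Area}(\gamma):|\gamma|\le n\}<\infty$ for every $n$.

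For the implication ``finite Dehn function $\Rightarrow$ fine'', fix vertices $u\neq v$ and an integer $n$, and fix once and for all one edge-path $p_0$ from $v$ to $u$ (possible since the $1$-skeleton is connected). Given any embedded path $p$ of length $n$ from $u$ to $v$, the loop $pp_0$ has length at most $n+|p_0|$, so it bounds a minimal diagram $D\to X$ with at most $N:=\delta(n+|p_0|)$ faces; as each face has at most $L$ sides, there are only finitely many combinatorial types of such $D$. I would then bound the number of combinatorial maps $D\to X$ whose restriction to the boundary arc corresponding to $p_0$ is the fixed path $p_0$: anchoring at a boundary edge of $D$ mapping to a fixed edge $e^{\ast}$ of $p_0$, the face of $D$ meeting that edge maps to one of the $\le K$ two-cells containing $e^{\ast}$, and sweeping across edge-adjacent faces inward from the $p_0$-arc, each newly reached face shares an already-determined edge with a determined face and so admits at most $K$ images. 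With at most $N$ faces this yields at most $K^{N}$ maps, hence finitely many loops $pp_0$, and hence finitely many embedded paths $p$ from $u$ to $v$ of length $n$, which is the defining property of fineness.

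For the converse, ``fine $\Rightarrow$ finite Dehn function'', I first observe that fineness together with cocompactness produces only finitely many $G$-orbits of embedded circuits of length $\le n$: every such circuit contains an edge, there are finitely many orbits of edges, and fineness gives finitely many embedded paths of bounded length between the endpoints of each edge-representative, hence finitely many embedded circuits of length $\le n$ through it. Since combinatorial area is $G$-invariant and finite on each circuit, the areas of embedded circuits of length $\le n$ take finitely many values and so are bounded by some $N_n$. Finally, an arbitrary loop $\gamma$ of length $\le n$ reduces, after deleting backtracks (which does not change the area) and repeatedly splitting at repeated vertices, into at most $n$ embedded circuits each of length $\le n$; subadditivity of area under concatenation at a common vertex gives $\mathrm{Area}(\gamma)\le nN_n$, whence $\delta(n)\le nN_n<\infty$.

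The main obstacle is the finiteness-of-maps step in the first implication, where all three hypotheses enter simultaneously: bounded area from the Dehn function, bounded face length and uniform edge valence from cocompactness, and finitely many $2$-cells per edge driving the propagation. The delicate point is that the image-propagation is only controlled when it proceeds across shared \emph{edges}; if $D$ had a cut vertex, two blocks would meet only at a vertex, and a vertex may lie in infinitely many $2$-cells, breaking the count. I would therefore reduce to nonsingular diagrams---homeomorphic to a disc and free of dangling edges---so that the face-adjacency graph is connected and the sweep reaches every face; for minimal fillings of the loops above this reduction is standard. Finally, it is worth noting that the hypothesis ``each edge lies in finitely many $2$-cells'' is needed only for this direction: a fan of triangles glued along one common edge has finite Dehn function yet a non-fine $1$-skeleton, precisely because that shared edge lies in infinitely many $2$-cells.
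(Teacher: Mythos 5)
Your second implication (fineness implies finiteness of the Dehn function) is correct, and it is essentially the argument the paper outsources to~\cite[Lemma 2.3]{SamLouisEdu}: finitely many $G$-orbits of embedded circuits of length at most $n$, $G$-invariance and finiteness of area, and subadditivity under removing backtracks and splitting at repeated vertices. The gap is in the first implication, exactly at the point you flag and then dismiss: the claim that the minimal fillings of the loops $pp_0$ may be taken nonsingular is false. Since $p_0$ is fixed while $p$ varies, the concatenation $pp_0$ is in general not an embedded loop --- $p$ may meet $p_0$ at interior vertices or run along part of it --- and a pinched loop may admit no nonsingular filling at all. Concretely, if $X$ contains two $2$-cells whose closures meet only at a vertex $w$, and $pp_0$ traverses both of their boundary circuits through $w$, then \emph{every} van Kampen diagram for $pp_0$ has a cut vertex over $w$ (by Remark~\ref{rem:DiscDiag}, nonsingularity is equivalent to injectivity of the boundary cycle $\partial D \to D$, which fails here); minimality of area does not remove the pinch. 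At a cut vertex your sweep stops, because a vertex --- unlike an edge --- may lie in infinitely many $2$-cells; and infinite vertex degree is precisely the case of interest, since a locally finite graph is automatically fine. So as written, the bound $K^{N}$ does not control the number of paths $p$, and the first implication is not proved.

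The repair is to replace ``paths closed up by a fixed $p_0$'' with embedded circuits. By~\cite[Proposition 2.1]{Bo12} (already invoked in the paper for the equivalence of its two definitions of fineness), $\Gamma$ is fine if and only if for every edge $e$ and every $n$ there are only finitely many embedded circuits of length at most $n$ containing $e$. If $\gamma$ is an embedded circuit, then for any filling $D \to X$ the injectivity of the composite $S^1 \to D \to X$ forces injectivity of $\partial D \to D$, so $D$ is genuinely homeomorphic to a disc; every edge of $D$ then lies in a face, the dual face-adjacency graph of $D$ is connected, and some face contains a preimage of $e$. Your $K$ and $L$ bookkeeping now shows that all $2$-cells in the image of a filling with at most $N = \delta_X(n)$ faces lie within $N$ edge-sharing steps of the at most $K$ cells containing $e$, a finite collection of $2$-cells; hence every embedded circuit of length at most $n$ through $e$ lies in the fixed finite subgraph formed by their boundaries, and finiteness follows with no counting of maps at all. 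Note that even once repaired, your route differs from the paper's: the paper does not argue this direction directly but quotes~\cite[Proposition 2.1]{MP15}, stated for homological filling functions, together with Gersten's comparison $\FV_X(k) \preceq \delta_X(k)$ from~\cite{Ge96}; your diagrammatic argument is more self-contained, at the cost of the care described above.
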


Corollary~\ref{thmX:TopCharComplex}  in the case that  $G$ is a TDLC group and $\mathcal H$ is the empty collection is a result of Castellano and Cook~\cite[Proposition 3.4]{Cook}, and for generally locally compact groups with empty $\mathcal H$, there is a version by Cornulier and de la Harpe, see~\cite[Corollary 8.A.9]{CoHa16}. 

In the context of topological groups admitting compact presentations, Theorem~\ref{thmX:FinRelPresHisCPimpliesGisCP} below is our main result.  In the discrete case, it is a result of Osin~\cite[Theorem 1.1 and Theorem 2.40]{Os06}. Let us remark that our argument uses different techniques than the ones in the cited article.  

  \begin{thmx} \label{thmX:FinRelPresHisCPimpliesGisCP}
   Let $(G,\mathcal H)$ be a proper pair. Suppose that $G$ is compactly presented relative to $\mathcal{H}$.
\begin{enumerate}
\item If each $H \in \mathcal{H}$ is compactly presented then $G$ is compactly presented.
\item If $G$ is compactly generated, then each $H \in \mathcal{H}$ is compactly generated. 
\end{enumerate}
\end{thmx}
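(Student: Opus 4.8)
The strategy is to translate both statements into the language of discrete $G$-complexes via the characterizations in Theorems~\ref{thmX:CAcomplexchar} and~\ref{thmX:TopCharGraph}, and then to prove them by surgery on such complexes. Write $\mathcal H=\{H_1,\dots,H_n\}$. Since $G$ is compactly presented relative to $\mathcal H$, Theorem~\ref{thmX:CAcomplexchar} provides a Cayley-Abels graph $\Gamma$ of $G$ with respect to $\mathcal H$ that is the $1$-skeleton of a simply-connected cocompact discrete $G$-complex $X$. Fix peripheral vertices $v_1,\dots,v_n$ of $X$ with $G_{v_i}=H_i$, and for a fixed index write $v=v_i$ and $H=H_i$. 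Recall that, by Corollary~\ref{prop:CompactPres}, being \emph{compactly presented} is the same as being compactly presented relative to the empty collection, and that Theorem~\ref{thmX:TopCharGraph} with $\mathcal H=\emptyset$ characterizes compact generation of a group as the existence of a connected cocompact discrete graph with compact cell stabilizers; by item (5) of Definition~\ref{defx:CayleyAbels} such a graph is automatically locally finite.

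\textbf{Proof of (1).} Here I would fill in the non-compact vertices of $X$ with presentation complexes of the peripheral groups. As each $H_i$ is compactly presented, Theorem~\ref{thmX:CAcomplexchar} (empty collection) gives a simply-connected cocompact discrete $H_i$-complex $Y_i$ with compact cell stabilizers; after possibly enlarging $Y_i$ within its quasi-isometry type (Theorem~\ref{thmX:uniqueCAgraph}) I would arrange an $H_i$-equivariant matching of $N_i:=\link(v_i,X)\cap X^{(0)}$, a cocompact discrete $H_i$-set with compact point stabilizers, with a cocompact set of vertices of $Y_i$. Deleting the open stars of the orbit $G\cdot v_i$ from $X$ and equivariantly gluing in the copies $g\cdot Y_i$ along these matchings produces a cocompact discrete $G$-complex $\hat X$ with only finitely many new cell orbits, all of compact stabilizer; its $1$-skeleton is then a Cayley-Abels graph of $G$ with respect to $\emptyset$. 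The one step needing care is that $\hat X$ remains simply connected: since each deleted open star is contractible and each inserted $Y_i$ is simply connected, a Seifert--van Kampen argument along the (connected, after a subdivision) gluing loci should give $\pi_1(\hat X)=\pi_1(X)=1$. Theorem~\ref{thmX:CAcomplexchar} applied with the empty collection then shows that $G$ is compactly presented.

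\textbf{Proof of (2).} Now $G$ is compactly generated, so Theorem~\ref{thmX:TopCharGraph} (empty collection) provides a connected, locally finite, cocompact discrete $G$-graph $\Gamma_0$ with compact stabilizers; the goal is to produce such a graph for $H$. I would fix a $G$-equivariant combinatorial map $f\colon\Gamma_0\to\Gamma$ (defined on orbit representatives and extended by bounded paths, using connectedness of both graphs and finiteness of the orbit sets) and then build an auxiliary $H$-graph $\Theta$ with vertex set $N=\link(v,X)\cap X^{(0)}$. Its edges are of two types: \emph{corner edges}, one for each subpath $w\,v\,w'$ traversed by the attaching map of a $2$-cell of $X$ incident to $v$; and \emph{absolute edges}, obtained by transporting the adjacency of $\Gamma_0$ to $N$ through $f$, recording pairs of neighbours of $v$ that are boundedly close, at distance at most a fixed $D$, in $\Gamma_0$. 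Cocompactness of $X$ bounds the number of $H$-orbits of corner edges, while local finiteness and cocompactness of $\Gamma_0$ bound the number of $H$-orbits of absolute edges, and in both cases the edge stabilizers are compact because they fix vertices of $N$. Thus $H$ acts on $\Theta$ cocompactly with compact stabilizers, and it remains only to prove that $\Theta$ is connected.

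\textbf{Main obstacle.} The crux, and the step I expect to be hardest, is the connectivity of $\Theta$, which is the geometric incarnation of Osin's van Kampen/syllable argument. Given $h\in H$ and $w\in N$, I must join $w$ to $hw$ in $\Theta$: I would take a $\Gamma_0$-path from $w$ to $hw$ witnessing the absolute generation, cap it off with the edges at $v$ to form a loop in $X$, fill that loop using simple-connectivity of $X$, and read a $\Theta$-path off the filling, so that each incidence of the filling disk with $v$ contributes a corner edge and each portion away from $v$ is transported by $f$ into bounded pieces of $\Gamma_0$, hence into absolute edges once $D$ exceeds the diameters of the $f$-images of the finitely many $G$-orbits of $2$-cells. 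Verifying that every move produced this way is one of the finitely many $H$-orbits of corner or absolute edges—so that the filling genuinely yields a path in $\Theta$—is the delicate point, and it is exactly where the finiteness of the relative presentation (finitely many $2$-cell orbits) and the compact absolute generation (local finiteness of $\Gamma_0$) are simultaneously used. Once $\Theta$ is connected, Theorem~\ref{thmX:TopCharGraph} shows that $H$ is compactly generated, completing~(2).
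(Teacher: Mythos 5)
Your proposal takes a genuinely different route from the paper (which proves (1) purely algebraically via Lemma~\ref{lem:GisCP} and Proposition~\ref{prop:ConShortExact}, and proves (2) via links of peripheral vertices, a cut-point tree, and the normal-form Proposition~\ref{prop:VertexGroupsAreCG}), but both halves of your argument have genuine gaps, not just technical ones.

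In part (1), the surgery as described glues each copy of $Y_i$ to the complement of the open stars along a \emph{vertex set} only, and that is fatal for the van Kampen step: a $0$-dimensional gluing locus is totally disconnected (subdivision does not change this), so the amalgamation kills nothing in $\pi_1$, while the $2$-cells of $X$ incident to peripheral vertices have been destroyed by deleting the open stars and are replaced by nothing. Concretely, if $X$ is a single triangle with peripheral vertex $v_i$ and opposite edge $\{w,w'\}$, your $\hat X$ is $Y_i$ with an arc attached at two points, so $\pi_1(\hat X)\cong\Z$ even though $\pi_1(X)=1$. The missing construction is to re-attach, for each of the finitely many $G$-orbits of corners $w\,v_i\,w'$ of $2$-cells at peripheral vertices, a $2$-cell whose boundary is rerouted through a path in $Y_i$ joining the images of $w$ and $w'$; only after gluing along these link graphs (not just their vertices) does collapsing the simply connected copies of $Y_i$ identify $\pi_1(\hat X)$ with $\pi_1(X)$. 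A second gap: the matching $N_i\to Y_i^{(0)}$ need not exist, since a compact open stabilizer $U=H_i\cap G_w$ need not fix any vertex of $Y_i$ (for instance $H_i=\Q_p$ acting on a graph with vertex set $\Q_p/\Z_p$ and $U=p^{-1}\Z_p$); Theorem~\ref{thmX:uniqueCAgraph} is a quasi-isometry statement and cannot manufacture prescribed vertex stabilizers, so one must first attach the orbits $H_i/U$ as new vertices and then restore simple connectivity of the enlarged $Y_i$ with further $2$-cells.

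In part (2) the decisive gap is exactly where you flagged it, but it is worse than delicate: test the construction on $G=A\ast_C B$ with $\mathcal H=\{A,B\}$ and $X=\Gamma$ the Bass--Serre tree, a legitimate Cayley--Abels complex with \emph{no} $2$-cells. Then $\Theta$ has no corner edges at all, and taking $H=A$, $v$ the vertex with $G_v=A$, and $\Gamma_0$ with vertex set $G/U$ for $U\le C$ compact open, the absolute edges join the base vertex of $N\cong A/C$ exactly to $\{a'B : a'\in A\cap B\,B_D\,B\}$, where $B_D=\{g\in G : d_{\Gamma_0}(U,gU)\le D\}$ is compact by local finiteness; a normal-form computation shows $A\cap B\,B_D\,B$ is compact, so connectivity of $\Theta$ for some finite $D$ is \emph{equivalent} to compact generation of $A$. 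Your graph therefore reformulates the statement rather than proving it, and the filling argument supplies no traction here: every path in the tree from $w$ to $hw$ must cross $v$ repeatedly, there are no corner edges to record the crossings, and what the transported $\Gamma_0$-path controls is proximity in $\Gamma_0$ of lifts of the far-away vertices $f(y_j)$, not proximity of lifts of the neighbours of $v$ toward which they point --- which is what your absolute edges require. Two further unaddressed points: an equivariant $f\colon\Gamma_0\to\Gamma$ need not exist (a compact open subgroup need not fix a vertex of $\Gamma$; repairable by choosing $U$ inside a vertex stabilizer of $\Gamma$), and the finiteness of $H$-orbits of absolute edges cannot follow from ``cocompactness of $\Gamma_0$'', because $H$ does not act cocompactly on $\Gamma_0$. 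The paper confronts precisely this tree-like situation by different means: compactly generated setwise stabilizers of link components (Proposition~\ref{lem:stabLinkisCG}), the cut-point tree when links are disconnected, and descent of compact generation to vertex stabilizers via Proposition~\ref{prop:VertexGroupsAreCG}.
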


\subsection{Relative hyperbolicity}\label{intro-relhyp}

 Let $(G, \mathcal{H})$ be a proper pair. The topological group $G$ is   \emph{   hyperbolic relative} to $\mathcal{H}$ if there exists a  Cayley-Abels graph of $G$ with respect to $\mathcal H$ that is fine and hyperbolic. 
  
 \begin{remx}  Let $(G, \mathcal{H})$ be a proper pair.
 If $G$ is hyperbolic relative to    $\mathcal H$   then: 
 \begin{enumerate}
\item  $G$ is compactly generated relative to $\mathcal H$ by Theorem~\ref{thmX:TopCharGraph};  and 
\item   Cayley-Abels graphs of $G$ with respect to $\mathcal{H}$ are fine and hyperbolic by  Theorem~\ref{thmX:uniqueCAgraph}.
\end{enumerate}
 \end{remx}

Relatively hyperbolic groups were introduced by Gromov~\cite{Gromov}. Our definition of relative hyperbolicity when restricted to discrete groups coincides with an approach by Bowditch~\cite{Bo12} which is also equivalent to the approach by Osin~\cite{Os06}. In the case of that a topological group $G$ with a compact open subgroup is hyperbolic relative to  the empty collection, the group $G$ is hyperbolic in the sense of Baumgartner, M\"oller and Willis~\cite{BaMo12}, and in the sense of Cornulier and Tessera~\cite{CorTess11}.

\begin{thmx}\label{thmX:RelHypimpliesFinitePres}
Let $(G,\mathcal H)$ be a proper pair. Suppose $G$ is hyperbolic relative to  $\mathcal{H}$. Then $G$ is compactly presented relative to $\mathcal{H}$.
\end{thmx}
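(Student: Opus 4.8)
The plan is to reduce to the topological characterization of relative compact presentation. By hypothesis $G$ is hyperbolic relative to $\mathcal{H}$, so there is a Cayley--Abels graph $\Gamma$ of $G$ with respect to $\mathcal{H}$ which is fine and $\delta$-hyperbolic for some $\delta \geq 0$. By Theorem~\ref{thmX:CAcomplexchar} (the implication $(2)\Rightarrow(1)$) it suffices to exhibit a simply-connected cocompact discrete $G$-complex whose $1$-skeleton is a Cayley--Abels graph of $G$ with respect to $\mathcal{H}$. I would take that $1$-skeleton to be $\Gamma$ itself and build the complex $X$ by equivariantly attaching $2$-cells to $\Gamma$ along short cycles.

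Concretely, first I would use $\delta$-hyperbolicity to fix a constant $N=N(\delta)$ with the following filling property: the $2$-complex obtained from $\Gamma$ by gluing a disc along every embedded cycle of length at most $N$ is simply connected. The mechanism is the standard geodesic-combing argument: given any edge-loop, comb it to a basepoint by geodesics; since the endpoints of consecutive geodesics are at distance $1$, $\delta$-thinness of the associated triangles forces these geodesics to fellow-travel within a bounded neighbourhood, so the region between them subdivides into cells of perimeter bounded by a function of $\delta$. This produces a van Kampen-type diagram over the loops of length $\leq N$, and, after splitting any non-embedded short cell at a repeated vertex into embedded cycles of no greater length, shows every loop is null-homotopic. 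Let $X$ be $\Gamma$ with a $2$-cell attached along each embedded cycle of length $\leq N$, done $G$-equivariantly so that $g$ carries the cell of a cycle $c$ to the cell of $gc$; then $X$ is simply connected and $G$ acts cellularly with $\Gamma$ as its $1$-skeleton.

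It remains to check that $X$ is cocompact and a discrete $G$-complex. For cocompactness I would use fineness: deleting the edge $e=\{a,b\}$ from an embedded cycle of length $n$ containing $e$ exhibits it as an embedded path of length $n-1$ from $a$ to $b$, so by fineness each edge lies in only finitely many embedded cycles of length $\leq N$. Since $\Gamma$ is cocompact there are finitely many $G$-orbits of edges, and every embedded cycle of length $\leq N$ contains an edge that can be translated into a fixed set of orbit representatives; hence there are finitely many $G$-orbits of such cycles, that is, finitely many $G$-orbits of $2$-cells, and $X$ is cocompact. For discreteness, the pointwise stabilizer of a $2$-cell attached along $c$ equals the finite intersection of the pointwise stabilizers of the edges of $c$, each of which is open because the $G$-action on $\Gamma$ is discrete; a finite intersection of open subgroups is open, so $X$ is a discrete $G$-complex. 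Applying Theorem~\ref{thmX:CAcomplexchar} to $X$ then yields that $G$ is compactly presented relative to $\mathcal{H}$.

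The main obstacle I anticipate is the filling step: making precise, in the not-necessarily-locally-finite setting, the claim that $\delta$-hyperbolicity of $\Gamma$ yields a uniform bound $N$ on the lengths of the cells needed to fill every loop. The combing argument only requires simple connectivity rather than a linear isoperimetric inequality, so a quadratic-area filling is acceptable; the delicate points are that geodesics in $\Gamma$ are honest edge-paths and that the bounded-perimeter cells produced by thin-triangle subdivision can be replaced by embedded cycles of length $\leq N$ without increasing their number of $G$-orbits. \emph{Fineness} plays no role in the simple connectivity of $X$, but it is exactly what converts ``boundedly many cell types'' into ``finitely many $G$-orbits of cells'', and hence into cocompactness.
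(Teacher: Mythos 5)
Your proposal is correct and follows essentially the same route as the paper: both constructions are exactly the complex $\Omega_N(\Gamma)$ obtained by equivariantly attaching $2$-cells along embedded cycles of bounded length, with hyperbolicity giving simple connectivity, fineness plus cocompactness of $\Gamma$ giving finitely many $G$-orbits of $2$-cells, and the conclusion following from the topological characterization of relative compact presentation. The only difference is that the paper simply cites Bowditch's filling result (\cite[Proposition 3.1]{Bo12}, stated as Lemma~\ref{lem:hypCircuits}) for the step you identify as the main obstacle and sketch via geodesic combing, so that step is already available off the shelf.
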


Putting together Theorems~\ref{thmX:FinRelPresHisCPimpliesGisCP} and~\ref{thmX:RelHypimpliesFinitePres} we obtain:

\begin{corx}\label{cor:N}
   Let $(G,\mathcal H)$ be a proper pair. Suppose $G$ is hyperbolic relative to  $\mathcal{H}$.
\begin{enumerate}
    \item If each $H \in \mathcal{H}$ is compactly presented then $G$ is compactly presented. 
    \item If $G$ is compactly generated, then each $H \in \mathcal{H}$ is compactly generated.
\end{enumerate}    
\end{corx}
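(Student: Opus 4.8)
The plan is to derive this statement as an immediate consequence of the two preceding results, using relative hyperbolicity only to supply the hypothesis of relative compact presentation. First I would invoke Theorem~\ref{thmX:RelHypimpliesFinitePres}: since $(G,\mathcal H)$ is a proper pair and $G$ is hyperbolic relative to $\mathcal H$, the group $G$ is compactly presented relative to $\mathcal H$. This is precisely the standing hypothesis required by Theorem~\ref{thmX:FinRelPresHisCPimpliesGisCP}, so once it is established both parts of the corollary reduce to the corresponding parts of that theorem.

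With relative compact presentation in hand, each conclusion follows directly. For part~(1), I would add the assumption that every $H\in\mathcal H$ is compactly presented and apply part~(1) of Theorem~\ref{thmX:FinRelPresHisCPimpliesGisCP} to deduce that $G$ is compactly presented. For part~(2), I would add the assumption that $G$ is compactly generated and apply part~(2) of the same theorem to deduce that every $H\in\mathcal H$ is compactly generated. No additional hypotheses on the pair are introduced, so the properness of $(G,\mathcal H)$ that is carried through both theorems remains available without comment.

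Since the substantive content is packaged entirely inside the two invoked theorems, there is no genuine obstacle at this stage; the only thing to check is that the hypotheses line up, namely that $(G,\mathcal H)$ stays a proper pair throughout (which is assumed) and that relative hyperbolicity does yield relative compact presentation via Theorem~\ref{thmX:RelHypimpliesFinitePres}. The real work lives in the proofs of Theorems~\ref{thmX:RelHypimpliesFinitePres} and~\ref{thmX:FinRelPresHisCPimpliesGisCP}, where the geometry of fine hyperbolic Cayley-Abels graphs and the machinery of compact generalized presentations are deployed; the corollary itself is simply the composition of those two implications.
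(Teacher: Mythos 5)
Your proof is correct and matches the paper exactly: the paper obtains Corollary~\ref{cor:N} precisely by "putting together" Theorem~\ref{thmX:RelHypimpliesFinitePres} (relative hyperbolicity implies relative compact presentation) with the two parts of Theorem~\ref{thmX:FinRelPresHisCPimpliesGisCP}. Nothing further is needed.
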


\subsection{Proof of Theorem~\ref{thm:combination} and $\mathcal J$-coherence}
Let us conclude the introduction by proving   Theorem~\ref{thm:combination}  using the results that have been stated. We   prove the slightly more general result Theorem~\ref{thmx:conclusion1}. 

Let $G$ be a topological group and let $\mathcal J$ be a class of   subgroups. We say that $G$ is \emph{$\mathcal J$-coherent} if every compactly generated subgroup of $G$ that belongs to $\mathcal J$ is compactly presented. In particular, being coherent means being $\mathcal J$-coherent for $\mathcal J$ the class of all closed subgroups. For $A \leq G$, we say that $A$ is $\mathcal J$-coherent if $A$ is $\mathcal K$-coherent for $\mathcal K = \{Q\in \mathcal J \mid Q\leq A\}$. Classes of subgroups  satisfying the hypothesis of the theorem below include discrete subgroups, open subgroups and closed subgroups.

\begin{thmx}\label{thmx:conclusion1}
Let $G=A \ast_C B$ be a topological group that splits as an amalgamated free product of two open subgroups $A$ and $B$ with compact intersection $C$. Let $\mathcal J$ be a collection of closed subgroups of $G$ with the following properties:
\begin{enumerate}
    \item  $\mathcal J$ is closed under conjugation and finite intersections,  and
   \item $A$ and $B$ belong to $\mathcal{J}$.
\end{enumerate}
If $A$ and $B$ are $\mathcal J$-coherent, then $G$ is $\mathcal J$-coherent. 
\end{thmx}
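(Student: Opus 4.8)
The plan is to reduce $\mathcal J$-coherence of $G$ to the two implications recorded in Corollary~\ref{cor:N}, applied not to $G$ itself but to each candidate subgroup. Let $T$ denote the Bass--Serre tree of the splitting $G = A\ast_C B$. Since $A$ and $B$ are open and $C$ is compact, $G$ acts on $T$ by a discrete action without inversions whose edge stabilizers are the conjugates of $C$ (hence compact) and whose vertex stabilizers are the conjugates of $A$ and of $B$; as $T$ is a tree it is fine and $0$-hyperbolic. Fix a compact open subgroup $U \leq G$. We must show that every closed compactly generated $K \in \mathcal J$ is compactly presented, and I first observe that $K$ is itself a topological group with compact open subgroup $K\cap U$, so the machinery of the paper applies to $K$.

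Restricting the action to $K$, I would distinguish two cases, after noting that $\mathcal J$-coherence is conjugation invariant: since $\mathcal J$ is closed under conjugation and each inner automorphism $c_g$ preserves compact generation, compact presentation, and membership in $\mathcal J$, every conjugate $gAg^{-1}$ and $gBg^{-1}$ is again $\mathcal J$-coherent. If $K$ fixes a vertex $v$ of $T$, then $K$ lies in the vertex stabilizer, a conjugate of $A$ or $B$; being a compactly generated member of $\mathcal J$ inside a $\mathcal J$-coherent group, $K$ is compactly presented and we are done.

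The substantive case is when $K$ fixes no vertex, and here the main step—and the main obstacle—is to produce a cocompact invariant subtree from compact generation alone. Choose a symmetric compact generating set $S=S^{-1}$ of $K$ and a base vertex $v_0$. Because the action is discrete, $\mathrm{Stab}_K(v_0)$ is open, so $s\mapsto s v_0$ is locally constant on the compact set $S$, whence the orbit $\{s v_0 : s\in S\}$ is finite. Thus $Y=\bigcup_{s\in S}[v_0, s v_0]$ is a finite subtree, and $T_K := K\cdot Y$ is a connected $K$-invariant subtree on which $K$ acts cocompactly: connectedness holds because $S$ generates $K$ and each $s v_0$ lies in $Y$, and cocompactness because $Y$ is compact.

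It remains to read off a relatively hyperbolic structure from $T_K$ and invoke Corollary~\ref{cor:N}. The induced action of $K$ on $T_K$ is discrete with compact edge stabilizers $K\cap gCg^{-1}$; a non-compact vertex stabilizer cannot fix an edge, so it fixes a unique vertex, and I would let $\mathcal H_K$ be a set of representatives of the finitely many $K$-conjugacy classes of non-compact vertex stabilizers occurring in $T_K$. Then $(K,\mathcal H_K)$ is a proper pair and $T_K$ is a fine hyperbolic Cayley--Abels graph for it in the sense of Definition~\ref{defx:CayleyAbels}, so $K$ is hyperbolic relative to $\mathcal H_K$. Since $K$ is compactly generated, Corollary~\ref{cor:N}(2) shows each $H\in\mathcal H_K$ is compactly generated; each such $H$ has the form $K\cap gAg^{-1}$ or $K\cap gBg^{-1}$, hence lies in $\mathcal J$ (using closure under conjugation and finite intersection together with $A,B\in\mathcal J$) and inside the $\mathcal J$-coherent group $gAg^{-1}$ or $gBg^{-1}$, so $H$ is compactly presented. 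Corollary~\ref{cor:N}(1) then yields that $K$ is compactly presented, completing the proof. The delicate points to verify carefully are the finiteness and cocompactness of the third paragraph and the proper-pair bookkeeping for $\mathcal H_K$; everything else is formal.
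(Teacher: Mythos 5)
Your proof is correct and follows essentially the same route as the paper's: split into the case where $K$ fixes a vertex of the Bass--Serre tree and the case where it does not, build a connected $K$-invariant cocompact subtree, recognize it as a fine hyperbolic Cayley--Abels graph so that $K$ is hyperbolic relative to the vertex stabilizers, and then apply both parts of Corollary~\ref{cor:N} together with the closure properties of $\mathcal J$. The only (harmless) variation is in how the cocompact subtree is produced: the paper generates $Q$ by a finite set $S$ together with the compact open subgroup $U=Q\cap C$ and translates a finite subtree containing an edge fixed by $U$, whereas you use local constancy of the orbit map on a compact generating set; both are instances of the same \v{S}varc--Milnor-type argument.
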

\begin{proof}
Let $T$ be the Bass-Serre tree of the splitting $G=A\ast_C B$. Observe that the $G$-action on $T$ is discrete since $A$, $B$ and $C$ are open subgroups of $G$. Moreover, edge stabilizers in $T$ are compact since $C$ is compact.

Let $Q$ be a compactly generated   subgroup of $G$ that belongs to $\mathcal J$. First suppose that  $Q$ fixes a vertex of $T$. In this case $Q$ is conjugate to a subgroup of $A$ or $B$. Since $A$ and $B$ are $\mathcal J$-coherent,  we have that $Q$ is compactly presented. 

Suppose that $Q$ does not fix a vertex of $T$ and let $U=Q\cap C$. Since $Q$ is closed, we have that $U$ is a compact open subgroup of $Q$. Since $Q$ is compactly generated and $U$ is open in $Q$, there is a finite subset $S\subset Q$ such that $S\cup U$ generates $Q$. Let $e$ be an edge of $T$ which is stabilized by $U$, let $D$ be the minimal connected subgraph of $T$ containing $e$ and $g.e$ for all $g\in S$. Since $S$ is finite, $D$ is a finite subtree. Since $S\cup U$ generates $Q$, it follows that   $\Delta=\bigcup_{g\in Q} gD$ is a 
subtree of $T$. Moreover,  $Q$ acts on $\Delta$ discretely, cocompactly and with compact edge stabilizers. Let $\mathcal{H}$ be a collection of representatives of conjugacy classes of vertex $Q$-stabilizers of $\Delta$. Since $\mathcal{H}$ is finite,    $(Q,\mathcal H)$ is a proper pair and therefore $\Delta$ is a  Cayley-Abels graph of $Q$ with respect to $\mathcal{H}$. Since trees are hyperbolic and fine, it follows that $Q$ is hyperbolic relative to $\mathcal{H}$. Since $Q$ is compactly generated, the second part of Corollary~\ref{cor:N} implies that each $H\in \mathcal{H}$ is compactly generated. By definition,   each $H\in\mathcal{H}$ is the intersection of $Q$ with a conjugate of $A$ or $B$, in particular, $\mathcal H \subset \mathcal J$. Therefore, up to conjugation in $G$, each $H\in \mathcal{H}$ is a compactly generated subgroup of $A$ or $B$ that belongs to $\mathcal J$. Since $A$ and $B$ are  $\mathcal J$-coherent, each subgroup in $\mathcal H$ is compactly presented. Then the first part of Corollary~\ref{cor:N} implies that $Q$ is compactly presented. \end{proof}

 Let us remark that in Section~\ref{sec:main} we  prove  an analog of Theorem~\ref{thmX:main} for $\mathcal J$-coherence, see Theorem~\ref{thm:mainCoherence2}.

 \subsection*{Organization} The rest of the article is organized into nine   sections. Each section contains the proof of one the theorems stated in the introduction, except  Section~\ref{sec:CompactGenGraph} which contains some preliminaries; Section~\ref{sec03:CayleyAbels} which introduces an alternative definition of Cayley-Abels graph and discusses a proof of the equivalence with the definition in the introduction;  and  Section~\ref{sec:appendix} that contains a technical result about Bowditch's fineness that is used in Section~\ref{sec:QI-Fineness}.  The mapping of results in the introduction with the main body of the article is described in the table:
 
 \begin{center}
\begin{tabular}{ |l c l c| } 
 \hline
 Definition~\ref{defx:CayleyAbels} & Section~\ref{sec03:CayleyAbels} &  Definition~\ref{def:RelativeGraphs} and Theorem~\ref{thm:Cayley-Abels} &   \\
 \hline
 Theorem~\ref{thmX:TopCharGraph} & Section~\ref{sec04:RelGen}    & 
 Theorem~\ref{prop:TopCharGraph}& \\
 \hline
  Theorem~\ref{thmX:uniqueCAgraph}& Section~\ref{sec:QI-Fineness}&
 Corollary~\ref{cor:QI-Fineness} & \\ \hline
 Theorem~\ref{thmX:CAcomplexchar} & Section~\ref{sec:TopCharComplex} & Theorem~\ref{prop:TopCharComplex} & \\ 
Corollary~\ref{thmX:TopCharComplex} & & Corollary~\ref{cor:TopcharCWcomplex} & \\  
 \hline
 Theorem~\ref{thmX:FinRelPresHisCPimpliesGisCP} & Section~\ref{Sec:Osin2} & Theorem~\ref{prop:FiniteRelPresGisCGimpliesHisCG}  &\\
 \hline
 
 Theorem~\ref{thmX:RelHypimpliesFinitePres} & Section~\ref{sec:relhp} & Theorem~\ref{prop:RelHypimpliesFinitePres} & \\
Proposition~\ref{propx:Deltafine} & &
Proposition~\ref{thm:Deltafine} & \\ 
 \hline
 Theorem~\ref{thmX:main} & Section~\ref{sec:main} & Theorem~\ref{thm:mainCoherence2} & \\
 \hline
\end{tabular}
\end{center}

 \subsection*{Acknowledgements}
Most results in the article are based on  work in the PhD thesis of the first author. We thank the referees of the article for the tremendous feedback, suggestions and corrections. Both authors thank Ilaria Castellano, Sam Hughes, Mikhail  Kotchetov, Luis Jorge S\'anchez Salda\~na,  and Nicolas Touikan  for a number of helpful discussions on the topics of this work and for comments and corrections on a preliminary version of the manuscript.  The second author acknowledges funding by the Natural Sciences and Engineering Research Council of Canada, NSERC.

\section{Graphs of topological groups and Compact generating graphs}
\label{sec:CompactGenGraph}

This is a preliminary section on the notion of graph of topological groups, the canonical topology on their fundamental groups, and  the notion of compact generating graph, Definition~\ref{def:GenGraph},  which are used in the rest of the article. 

For background on  graphs of groups we refer the reader to Serre's book on trees~\cite{Serre}. 
Let us recall that a graph $\Lambda$ in the sense of Serre consists of a vertex set $V=V(\Lambda)$, 
an edge set $E=E(\Lambda)$, a map $E\to E$, $e\mapsto \overline  e$ such that $\overline{\overline e }=e$, 
and maps $o\colon E\to V$ and $t\colon E\to V$ such that $o(e)=t(\overline e)$. Graphs in the sense of Serre have   natural realizations as 1-dimensional CW-complexes; we work with these realizations throughout the article. 

\begin{definition}[Graph of topological groups]\label{def:TopGraphGroups}
Let $\Lambda= (V,E)$ be a connected graph. A \emph{graph of topological groups} $(\mathcal{G}, \Lambda)$ based on the graph $\Lambda$ consists of 
\begin{enumerate}
\item a topological group $\mathcal{G}_v$ for every vertex $v \in V$.
\item a topological group $\mathcal{G}_e$ for every edge $e \in E$ with $\mathcal{G}_e = \mathcal{G}_{\bar{e}}$
\item an open continuous monomorphism $\eta_e\colon \mathcal{G}_e \to \mathcal{G}_{t(e)}$ for every edge $e \in  E$. 
\end{enumerate}

Denote by $\pi_1(\mathcal{G},\Lambda, a)$ the fundamental group of the graph of  groups $(\mathcal{G},\Lambda)$, where $a$ is a vertex of $\Lambda$, see~ \cite{BASS1993}. There are 
canonical monomorphisms \[i_v\colon \mathcal{G}_v \hookrightarrow \pi_1(\mathcal{G},\Lambda, a)\] up to conjugation. 
Since the group is independent of the choice of $a$, we  simply denote it by $\pi_1(\mathcal{G},\Lambda)$. 
\end{definition}

The fundamental group of a topological graph of groups $(\mathcal{G},\Lambda)$ admits a canonical topology as described in the following proposition. Recall that an \emph{embedding} of topological spaces is a continuous map $f \colon X \to Y$ that is a homeomorphism onto its image. An embedding is   \emph{open} if $f$ is an open map. 
Specifically an open embedding is an injective, open, and continuous map.  

\begin{proposition}\cite[Propositions 8.B.9 and 8.B.10]{CoHa16} \label{prop:GTopology}
Let $(\mathcal{G}, \Lambda)$ be a finite  graph of topological groups. There exists a unique topology on $\pi_1(\mathcal{G},\Lambda)$ such that $\mathcal{G}_v \hookrightarrow \pi_1(\mathcal{G},\Lambda)$ is an open topological embedding for each vertex $v$. Moreover if vertex and edge groups are locally compact then $\pi_1(\mathcal{G},\Lambda)$ is locally compact.
\end{proposition}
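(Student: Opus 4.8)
The plan is to build the topology from a neighborhood basis of the identity and verify the classical axioms guaranteeing that such a basis defines a group topology, after which uniqueness and local compactness follow formally. Recall that a filter base $\mathcal{B}$ of subsets of a group $G$, each containing $1$, is the neighborhood basis at $1$ of a unique group topology provided that for each $V \in \mathcal{B}$ there is $W \in \mathcal{B}$ with $WW^{-1} \subseteq V$, and for each $V \in \mathcal{B}$ and each $g \in G$ there is $W \in \mathcal{B}$ with $gWg^{-1} \subseteq V$. I would take $\mathcal{B}$ to be the filter of neighborhoods of $1$ in a fixed vertex group $\mathcal{G}_a$, transported to $\pi_1(\mathcal{G},\Lambda)$ via $i_a$. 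The first axiom is immediate from $\mathcal{G}_a$ being a topological group, so the content lies in the conjugation axiom.

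Rather than verifying the conjugation axiom directly for the full fundamental group, I would reduce to two building blocks. Choosing a spanning tree $T \subseteq \Lambda$, the group $\pi_1(\mathcal{G},\Lambda)$ is obtained from the vertex groups by a finite sequence of amalgamated products along the edges of $T$, followed by HNN extensions along the remaining edges. At each stage the relevant edge group embeds as an open subgroup of the ambient partial fundamental group, since $\eta_e$ is an open embedding into a vertex group and each vertex group is open in the partially built group. Thus it suffices to prove the statement for a single amalgam $A \ast_C B$ and a single HNN extension, in each case with edge group $C$ open in the adjacent vertex groups; the general case then follows by induction, with uniqueness and the openness of all vertex groups propagating through the sequence because an open subgroup of an open subgroup is open.

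For the amalgam $G = A \ast_C B$ with $C$ open in both $A$ and $B$, I would take $\mathcal{B}$ to be the neighborhoods of $1$ in $C$ (equivalently, those neighborhoods of $1$ in $A$ lying in $C$). To check the conjugation axiom I would write an arbitrary $g \in G$ as a product $g_1 \cdots g_n$ of elements of $A \cup B$ and argue by induction on $n$: given $V \in \mathcal{B}$, continuity of conjugation in $A$ (resp.\ $B$) together with openness of $C$ produces, for each factor $g_i$, a neighborhood $W_i \subseteq C$ of $1$ with $g_i W_i g_i^{-1} \subseteq W_{i-1}$, so that $W = W_n$ satisfies $gWg^{-1} \subseteq V$. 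The HNN case is analogous, the only new point being conjugation by the stable letter $t$: here conjugation by $t$ restricts to the homeomorphism $\eta' \circ \eta^{-1}$ between the open subgroups $\eta(C)$ and $\eta'(C)$ of the base group, so a neighborhood of $1$ can again be pulled back. Once $G$ carries this topology, $A$ is a union of left cosets of $C$, hence open, and its subspace topology agrees with the original one; likewise for $B$.

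Uniqueness is forced, since any group topology in which $A$ is open and the inclusion $A \hookrightarrow G$ is a topological embedding must have the neighborhoods of $1$ in $C$ (open in $A$) as a neighborhood basis at $1$, hence coincides with the constructed topology. Finally, if all vertex and edge groups are locally compact, then the open subgroup $i_a(\mathcal{G}_a)$ of $\pi_1(\mathcal{G},\Lambda)$ is locally compact, so a compact neighborhood of $1$ in $\mathcal{G}_a$ is a compact neighborhood of $1$ in $\pi_1(\mathcal{G},\Lambda)$; translating it shows the whole group is locally compact. I expect the main obstacle to be the conjugation axiom, specifically the bookkeeping that reduces conjugation by an arbitrary element to conjugation by generators and stable letters while staying inside the topology supplied by the open-embedding hypothesis on the edge maps; isolating the amalgam and HNN building blocks via a spanning tree is precisely what makes this manageable.
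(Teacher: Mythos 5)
Your proof is correct and is essentially the argument behind the result the paper cites: the paper itself gives no proof of this proposition, deferring to \cite[Propositions 8.B.9 and 8.B.10]{CoHa16}, where the topology is likewise produced by transporting the identity-neighborhood filter of an open (edge or vertex) subgroup and invoking the Bourbaki neighborhood-basis criterion, with the conjugation axiom verified on generators. Your reduction to amalgam and HNN building blocks along a spanning tree is the standard way to organize that verification, and uniqueness and local compactness follow exactly as you say.
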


From here on, all graphs of topological groups $(\mathcal{G}, \Lambda)$ are assumed to satisfy that $\Lambda$ is a finite connected graph and we always consider $\pi_1(\mathcal{G}, \Lambda)$ as a topological group with the topology provided by Proposition~\ref{prop:GTopology}.

\begin{definition}[Compact generating graph] \label{def:GenGraph}
Let $G$ be a topological group with a finite collection $\mathcal{H} = \{H_1,H_2,\cdots H_n\}$ of  subgroups. A \emph{compact generating graph of $G$ relative to} $\mathcal{H}$ is a triple $(\mathcal{G},\Lambda,\phi)$ where $(\mathcal{G},\Lambda)$ is a finite graph of topological groups, $\phi\colon \pi_1(\mathcal{G},\Lambda)  \to G$ is a continuous open surjective homomorphism, and the following properties hold:
\begin{enumerate}
\item \label{eq:comDiag} There are vertices $\{v_1,v_2, \cdots v_n\}$ of $\Lambda$ and isomorphisms of topological groups  $\phi_i\colon \mathcal{G}_{v_i}\to H_i$ such that 
\[\begin{tikzcd}[column sep=small]
\mathcal{G}_{v_i} \arrow[hookrightarrow]{r}{i_{v_i}}  \arrow{d}{\phi_i} 
&[1em]  \pi_1(\mathcal{G}, \Lambda) \arrow{d}{\phi} \\
H_i \arrow[hookrightarrow]{r} &  G
\end{tikzcd}\]
is a commutative diagram up to an inner automorphism of $G$.
\item For every $v \in V(\Lambda)$, the map $\phi \circ i_v$ is injective.

\item For each edge $e \in E(\Lambda)$ and each vertex $v \neq v_i$ in $V(\Lambda)$, the edge group $\mathcal{G}_e$ and the vertex group $\mathcal{G}_{v}$ are compact topological groups.
\end{enumerate}
\end{definition}

\begin{example}\label{exemp:fgGroup}
Let $G$ be a finitely generated group with a finite generating set $X = \{x_1,x_2, \ldots ,x_t\}$. Consider a compact generating graph $(\mathcal{G},\Lambda,\phi)$  of $G$ relative to the empty collection with $\Lambda$ given by the Figure~\eqref{fig:a}, 
where all vertex and edge groups are  trivial. Note that $\pi_1(\mathcal{G},\Lambda)$ is isomorphic to the free group $F(X)$ generated over $X$ and $\phi \colon F(X) \to G$ is the natural quotient map.
\end{example}

\begin{example}
\label{exemp:RelfgGroup}
Let $G$ be a discrete group finitely generated with respect to a finite collection of groups $\mathcal{H}= \{H_1, H_2\cdots H_n\}$ in the sense of Osin~\cite[Definition 2.1]{Os06}. In particular, there exists a finite set $X \subseteq G$ such that $(\bigcup\limits_{i=1}^{n}H_i) \cup X$ generates $G$. A   compact generating graph $(\mathcal{G},\Lambda,\phi)$  of $G$ relative to $\mathcal{H}$ is given by the graph $\Lambda$ described in Figure~\eqref{fig:b}  
where  edge groups and the central vertex group are trivial, and the other vertex groups are  given by $H \in \mathcal{H}$. The homomorphism is the natural quotient map $ \phi \colon \pi_1(\mathcal{G},\Lambda) \to G$.
\end{example}

\begin{example}
\label{exemp:TDLCfgGroup}
Let $G$ be a topological group with a compact open subgroup $U$ and a finite set $X$ such that $X \cup U$ generates $G$.  A compact generating graph $(\mathcal{G},\Lambda,\phi)$  of $G$ relative to the empty collection is obtained by taking the graph   $\Lambda$ described in Figure~\eqref{fig:a}  
with vertex group $U$ and edge groups given by $U \cap U^x$ for $x \in X$. 
\end{example}

\begin{figure} 
    \centering
    \begin{subfigure}[b]{0.3\textwidth}
        \begin{tikzpicture}
\def\bouquetN{5}
\pgfmathsetmacro{\MaxAng}{360/\bouquetN}
\draw[black,thick] (0,0)  
to[out={\MaxAng*2-\MaxAng/4},in={\MaxAng*2-90}] ({\MaxAng*2+2}:1);
\draw[black,thick] (0,0) 
to[out={\MaxAng*2+\MaxAng/4},in={\MaxAng*2+90}] ({\MaxAng*2}:1)node[left= 1pt]{$x_{t}$};
\draw[black,thick] (0,0)  
to[out={\MaxAng*3-\MaxAng/4},in={\MaxAng*3-90}] ({\MaxAng*3+2}:1);
\draw[black,thick] (0,0) 
to[out={\MaxAng*3+\MaxAng/4},in={\MaxAng*3+90}] ({\MaxAng*3}:1)node[left= .5pt]{$x_{1}$};

\draw[black,thick] (0,0)  
to[out={\MaxAng*4-\MaxAng/4},in={\MaxAng*4-90}] ({\MaxAng*4+2}:1);
\draw[black,thick] (0,0) 
to[out={\MaxAng*4+\MaxAng/4},in={\MaxAng*4+90}] ({\MaxAng*4}:1)node[below = .5pt]{$x_{2}$};

\draw[black,thick] (0,0)  
to[out={\MaxAng*5-\MaxAng/4},in={\MaxAng*5-90}] ({\MaxAng*5+2}:1);
\draw[black,thick] (0,0) 
to[out={\MaxAng*5+\MaxAng/4},in={\MaxAng*5+90}] ({\MaxAng*5}:1)node[below right= .5pt]{$x_{3}$};

\foreach \X in {-1,0,1}
{\draw[black,fill=black]  ({\MaxAng+\X*\MaxAng/4}:0.8)  circle (0.5pt); }
\end{tikzpicture}
        \caption{}
        \label{fig:a}
    \end{subfigure}
    ~ 
    \begin{subfigure}[b]{0.3\textwidth}
        \begin{tikzpicture}
\def\bouquetN{5}
\pgfmathsetmacro{\MaxAng}{360/\bouquetN}
\draw[black,thick] (0,0)  
to[out={\MaxAng*2-\MaxAng/4},in={\MaxAng*2-90}] ({\MaxAng*2+2}:1);
\draw[black,thick] (0,0) 
to[out={\MaxAng*2+\MaxAng/4},in={\MaxAng*2+90}] ({\MaxAng*2}:1)node[left= 1pt]{$x_{t}$};

\draw[black,thick] (0,0)  
to[out={\MaxAng*3-\MaxAng/4},in={\MaxAng*3-90}] ({\MaxAng*3+2}:1);
\draw[black,thick] (0,0) 
to[out={\MaxAng*3+\MaxAng/4},in={\MaxAng*3+90}] ({\MaxAng*3}:1)node[below =1pt]{$x_{3}$};

\draw[black,thick] (0,0)  
to[out={\MaxAng*4-\MaxAng/4},in={\MaxAng*4-90}] ({\MaxAng*4+2}:1);
\draw[black,thick] (0,0) 
to[out={\MaxAng*4+\MaxAng/4},in={\MaxAng*4+90}] ({\MaxAng*4}:1)node[below = .5pt]{$x_{2}$};

\draw[black,thick] (0,0)  
to[out={\MaxAng*5-\MaxAng/4},in={\MaxAng*5-90}] ({\MaxAng*5+2}:1);
\draw[black,thick] (0,0) 
to[out={\MaxAng*5+\MaxAng/4},in={\MaxAng*5+90}] ({\MaxAng*5}:1)node[below right= .5pt]{$x_{1}$};

\draw[black,thick] (0,0) -- (.1,1) node[above right = .1pt]{$H_2$};
\draw[black,thick] (.1,1) circle (0.5pt);
\draw[black,thick] (0,0) -- (-.4,1) node[above]{$H_1$};
\draw[black,thick] (-.4,1) circle (0.5pt);
\draw[black,thick] (0,0) -- (1,0.5) node[right]{$H_n$};
\draw[black,thick] (1,0.5) circle (0.5pt);
\foreach \X in {5.5,6.5,7.5}
{\draw[black,fill=black]  ({\MaxAng+\X*\MaxAng/4}:1.2)  circle (0.5pt); }
\foreach \X in {-0,-1,-2}
{\draw[black,fill=black]  ({\MaxAng+\X*\MaxAng/4}:0.8)  circle (0.5pt); }

\end{tikzpicture}
        \caption{}
        \label{fig:b}
    \end{subfigure}
    ~ 
    \caption{Compact generating graphs}\label{fig:animals}
\end{figure}
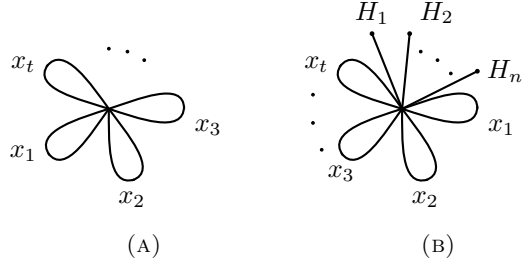 

\begin{example}\label{examp:CompGenGraphAmalgamation}
Let $A \ast_C B$ be an amalgamated free product of topological  groups $A$ and $B$ over a common compact open subgroup with the  topology such that the  inclusions $A\hookrightarrow A\ast_C B$ and 
$B\hookrightarrow A\ast_C B$ are open topological embeddings, see   Proposition~\ref{prop:GTopology}.    Let $N$  be a normal subgroup of $A\ast_C B$ and $G =(A \ast_C B) /N$ the quotient topological group. If the compositions  $A\hookrightarrow A\ast_C B \to G$ and $B\hookrightarrow A\ast_C B \to G$ are injective and  $\phi\colon A\ast_CB\to G$ is the quotient map  then, abusing notation, $(A\ast_C B, \phi)$ is a compact generating graph for $G$ with respect to the collection of open subgroups $\{A,B\}$.
\end{example}

The definition of compact generating graph implies the following observations.

\begin{remark} 
\label{rem:GenGraph}
Let $G$ be a topological group admitting a compact generating graph $(\mathcal{G}, \Lambda, \phi)$ relative to a finite collection $\mathcal{H}$ of  subgroups.
\begin{enumerate}
    \item If $G$ has no compact open subgroup, then $\mathcal{H}=\{G\}$ and $\Lambda$ has no edges.

 \item Each $H\in \mathcal{H}$ is an open subgroup of $G$.

\item If no pair of distinct subgroups in $\mathcal{H}$ are conjugate in $G$, then $(G,\mathcal H)$ is a proper pair.
\end{enumerate}
\end{remark}

\section{Cayley-Abels Graphs}
\label{sec03:CayleyAbels}

In this section, we introduce an  alternative definition of Cayley-Abels graph, Definition~\ref{def:RelativeGraphs}, and prove that it is equivalent to Definition~\ref{defx:CayleyAbels} in the introduction for proper pairs, see Theorem~\ref{thm:Cayley-Abels}.

The following definition relies on the notion of Bass-Serre tree associated to a graph of groups~\cite{Serre}. 
For the rest of the article when considering a Bass-Serre tree, we mean the realization as a CW-complex of dimension at most one as described in the book by Serre~\cite[Pg. 14]{Serre}.  

\begin{definition}[Cayley-Abels graph induced by a compact generating graph]\label{def:RelativeGraphs}
Let  $(\mathcal{G},\Lambda,\phi)$ be a compact generating graph of a topological group $G$ relative to a finite collection $\mathcal{H}$ of open subgroups, and let $\mathcal{T}$ be the corresponding Bass-Serre tree. The \emph{Cayley-Abels graph  $\Gamma(\mathcal{G},\Lambda,\phi)$ of $G$ with respect to $\mathcal{H}$ induced by $ (\mathcal{G},\Lambda,\phi)$} is defined as the  $G$-graph $\mathcal{T}/\ker(\phi)$. A $G$-graph  isomorphic to   $\Gamma(\mathcal{G},\Lambda,\phi)$ for some compact generating graph $(\mathcal{G},\Lambda,\phi)$ of $G$ with respect to $\mathcal{H}$  is called a \emph{Cayley-Abels graph of $G$ with respect to $\mathcal{H}$}. 
\end{definition}

\begin{example}[Cayley graphs of finitely generated groups]  The Cayley graph of a discrete group $G$ with respect to a finite generating set $X$ is defined as a graph $\Gamma$  with vertex set $V(\Gamma)= G$ and edge set $E(\Gamma)= \{\{g,gx\} \mid g \in G, x \in X\}$. Observe that 
the  Cayley-Abels graph of $G$ induced by  the  generating graph of $G$ described in Example~\ref{exemp:fgGroup}     is $G$-isomorphic to the Cayley graph with respect to the generating set $X$.
\end{example}

\begin{example}[Farb's Coned-off Cayley Graph~\cite{Farb}]
Let $G$ be a group, let $\mathcal{H} = \{H_1,H_2 \cdots, H_n\}$ be a finite collection of subgroups, and let $X\subset G$ be a relative finite generating set of $G$ with respect to $\mathcal{H}$. The \emph{Coned-off Cayley graph $\hat\Gamma(G,\mathcal{H}, X)$} of $G$ with respect  $\mathcal{H}$ is  the graph $\hat\Gamma$ with the vertex set $V(\hat\Gamma) =  (\bigcup\limits_{i= 1}^{n} G/H_i) \cup G$ and edge set $E(\hat\Gamma) = \{ \{g, gx\} \mid g \in G, x \in X\} \cup \{\{k, gH\} \mid  g \in G, H \in \mathcal{H}, k \in gH\}$. 
The Cayley-Abels graph of $G$ with respect to  $\mathcal{H}$ induced by the generating graph described in Example~\ref{exemp:RelfgGroup}    is $G$-isomorphic to the Coned-off Cayley graph $\hat\Gamma (G,\mathcal{H},X)$.
\end{example}

\begin{example}[Kr\"{o}n and M\"{o}ller's rough Cayley  graph~\cite{BM08}]
Let $G$ be a compactly generated totally disconnected locally compact group. Then $G$ contains a compact open subgroup $U$ and a finite subset $X\subset G$ such that any element of $G$ is in a left coset $wU$ where $w$  is a word in $X$, see~\cite[Lemma 2]{BM08}. The  \emph{rough Cayley  graph $\Gamma(G,U,X)$} is defined as the graph $\Gamma$ with the vertex set $V(\Gamma) = G/U$ and the edge set $E(\Gamma) = \{\{gU,gxU\} \mid g \in G, x \in X \}$; the resulting graph is a connected vertex-transitive locally finite $G$-graph~\cite[Construction 1]{BM08}. The quotient of $\Gamma$ by $G$ induces a compact generating graph of $G$ with respect to the empty collection that coincides, up to $G$-isomorphism,  with the one described in Example~\ref{exemp:TDLCfgGroup}, and $\Gamma(G,U,X)$ is the induced Cayley-Abels graph.  
\end{example}

\begin{theorem}[Characterization of Cayley-Abels graphs]\label{thm:Cayley-Abels}
Let $(G,\mathcal H)$ be a proper pair.
For a $G$-graph $\Gamma$, the following statements are equivalent:
\begin{enumerate}
    \item \label{item:CA1} Up to isomorphism of $G$-graphs, $\Gamma$ is a Cayley-Abels graph of $G$ relative to  $\mathcal{H}$ induced by a compact  generating graph in the sense of Definition~\ref{def:RelativeGraphs}.

    \item \label{item:CA2} $\Gamma$ is a connected discrete cocompact simplicial $G$-graph with compact  edge stabilizers,  vertex stabilizers are either compact or conjugates of subgroups in $\mathcal{H}$,  every $H\in \mathcal{H}$ is the $G$-stabilizer of a vertex, and any pair of vertices with the same $G$-stabilizer  $H\in\mathcal H$ are in the same $G$-orbit if $H$ is non-compact. \end{enumerate}
Moreover, if $\Gamma$ is nontrivial (it contains at  least one edge) and satisfies the above conditions, then for any vertex $v\in\Gamma$, the stabilizer $G_v$ is compact if and only if $v$ has finite degree.
\end{theorem}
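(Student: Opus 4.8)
The plan is to prove the two implications separately by Bass--Serre theory and then the ``moreover'' clause by a direct counting argument. For the implication (1)$\Rightarrow$(2), suppose $\Gamma = \mathcal{T}/\ker(\phi)$ arises from a compact generating graph $(\mathcal{G},\Lambda,\phi)$ with Bass--Serre tree $\mathcal{T}$. I would first record the key computation that, for a cell $\sigma$ of $\mathcal{T}$ with image $[\sigma]$ in $\Gamma$, the $G$-stabilizer of $[\sigma]$ equals $\phi(P)$ where $P=\mathrm{Stab}_{\pi_1(\mathcal{G},\Lambda)}(\sigma)$; indeed $g\ker(\phi)$ fixes $[\sigma]$ precisely when $g\in \ker(\phi)\cdot P$. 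Since by Bass--Serre theory $P$ is a conjugate of a vertex or edge group, every clause of (2) follows: connectedness is inherited from the tree; cocompactness holds because $\Gamma/G=\mathcal{T}/\pi_1(\mathcal{G},\Lambda)=\Lambda$ is finite; edge stabilizers are $\phi$-images of the compact edge groups, hence compact; and a vertex stabilizer is either conjugate to some $H_i$ by condition (1) of Definition~\ref{def:GenGraph}, or the image of a compact vertex group $\mathcal{G}_v$ with $v\neq v_i$, hence compact. Discreteness holds because each $i_v(\mathcal{G}_v)$ is open in $\pi_1(\mathcal{G},\Lambda)$ by Proposition~\ref{prop:GTopology} and $\phi$ is open, so all stabilizers are open in $G$. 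That each $H\in\mathcal{H}$ occurs as a vertex stabilizer is witnessed by $v_i$, and the single-orbit condition for a non-compact $H_i$ uses properness of the pair: a vertex with non-compact stabilizer conjugate to $H_i$ cannot come from a compact vertex group nor from a distinct $v_j$, since distinct non-compact members of $\mathcal{H}$ are non-conjugate. The one clause requiring genuine care is simpliciality of $\Gamma$, which I discuss below.

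For the implication (2)$\Rightarrow$(1), I would run the standard construction of a graph of groups from a group acting on a connected graph, carried out topologically. Set $\Lambda=\Gamma/G$, finite by cocompactness, choose orbit representatives of the vertices and edges, and assign to each its $G$-stabilizer, with edge monomorphisms induced by the inclusions $G_{\tilde e}\hookrightarrow G_{t(\tilde e)}$ (post-composed with the conjugation identifying $t(\tilde e)$ with its chosen representative). Discreteness of the action makes every stabilizer an open subgroup of $G$, so these inclusions are open continuous monomorphisms and $(\mathcal{G},\Lambda)$ is a graph of topological groups. The canonical homomorphism $\phi\colon\pi_1(\mathcal{G},\Lambda)\to G$ restricts to the inclusion on each vertex group, is surjective because $G$ is generated by the vertex stabilizers together with the stable letters, and is continuous and open by the universal property of the topology from Proposition~\ref{prop:GTopology} together with openness of the stabilizers; the classical comparison of Bass--Serre trees then yields a $G$-isomorphism $\mathcal{T}/\ker(\phi)\cong\Gamma$. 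To see that $(\mathcal{G},\Lambda,\phi)$ is a compact generating graph I would choose the representatives $v_1,\dots,v_n$ to be the vertices actually stabilized by $H_1,\dots,H_n$ (available by hypothesis, and unambiguous up to orbit by the non-compact single-orbit condition together with properness), giving condition (1) with $\mathcal{G}_{v_i}=H_i$; condition (2) because $\phi\circ i_v$ is an isomorphism onto a stabilizer, hence injective; and condition (3) because edge stabilizers and the non-peripheral vertex stabilizers are compact by (2).

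For the ``moreover'' clause, assume $\Gamma$ is nontrivial and satisfies (2) and fix a vertex $v$. Since $\Gamma$ is connected with at least one edge, $v$ has at least one incident edge, the group $G_v$ permutes the edges incident to $v$, and the $G_v$-orbit of an incident edge $e$ has cardinality $[G_v:G_e]$ because $G_e\leq G_v$ is the $G_v$-stabilizer of $e$. There are finitely many such orbits since $\Gamma/G$ is finite, so $\deg(v)=\sum[G_v:G_e]$ over finitely many orbit representatives. If $G_v$ is compact then each $G_e$, being compact and open in $G$ and hence open in $G_v$, has finite index, so $\deg(v)$ is finite; if $G_v$ is non-compact then the compact subgroup $G_e$ has infinite index in $G_v$, so $\deg(v)$ is infinite. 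This gives the stated equivalence.

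The main obstacle I anticipate is the topological bookkeeping in (2)$\Rightarrow$(1): showing that the canonical surjection $\phi$ is simultaneously continuous and open for the topology of Proposition~\ref{prop:GTopology}, and that the peripheral vertices can be matched bijectively and $G$-equivariantly with $\mathcal{H}$; both rest on discreteness of the action (to make stabilizers open) and on properness (to make the identification of non-compact peripheral stabilizers unambiguous). A secondary but genuinely delicate point is ensuring simpliciality of the induced graph in (1)$\Rightarrow$(2), which I would address by analysing the stabilizer description of $\Gamma$ to exclude loops and multiple edges, adjusting the compact generating graph if necessary.
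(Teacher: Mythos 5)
Your proposal is correct and takes essentially the same route as the paper: (1)$\Rightarrow$(2) is the paper's Proposition~\ref{rem:RelativeCayley}, including your stabilizer computation $G_{[\sigma]}=\phi(P)$ and the degree count of Lemma~\ref{lemm:infiniteValence}, while (2)$\Rightarrow$(1) is Bass's construction upgraded topologically exactly as in Proposition~\ref{prop:BassTheory}, with your continuity/openness step for $\phi$ made precise by the paper's Lemma~\ref{lem:topologyOpen}. The only local deviations are harmless: you argue the single-orbit clause by orbit bookkeeping over $\Lambda$ plus properness, where the paper instead lifts both vertices to the Bass--Serre tree and uses that a non-compact open stabilizer cannot fix two distinct tree vertices (edge stabilizers being compact); and your concern about prescribing the representatives $v_i$ in Bass's construction is absorbed by the up-to-inner-automorphism slack in Definition~\ref{def:GenGraph}, just as in the paper.
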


 The proof of Theorem~\ref{thm:Cayley-Abels} is divided into two propositions which are the contents of subsections~\ref{subsec:CayleyGraphs} and~\ref{subsec:BassTheory} respectively. 

\subsection{From compact generating graph to discrete cocompact action on a graph.}\label{subsec:CayleyGraphs}

Versions of the following proposition can be found in the literature for discrete groups, for example see~\cite[Proof of Theorem 7.1]{ACCMP} and~\cite[Proposition 4.9]{SamLouisEdu}. 

\begin{proposition}\label{rem:RelativeCayley}
Let  $ (\mathcal{G},\Lambda,\phi)$ be a compact generating graph of $G$ relative to a finite collection $\mathcal{H}$ of open subgroups, and let $\mathcal{T}$ be the corresponding Bass-Serre tree.
 Consider the Cayley-Abels graph $\Gamma= \Gamma(\mathcal{G},\Lambda,\phi)$, the quotient map $\rho\colon \mathcal{T}\to \Gamma$, and the sequence
 \[ 1 \to \ker(\phi) \to \pi_1(\mathcal{G},\Lambda) \xrightarrow{\phi} G \to 1.     \]
 Then the following statements hold:
\begin{enumerate}
    \item The group $\ker(\phi)$ is a discrete and closed subgroup of $\pi_1(\mathcal{G},\Lambda)$ which acts freely on $\mathcal{T}$, and $\rho$ is a covering map.
    
    \item \label{item:iso} If $x$ is a vertex of $\Gamma$, then there is a group isomorphism \[\ker(\phi) \to \pi_1(\Gamma, x) \qquad g \mapsto [\gamma_g], \] where $\gamma_g=\rho\circ \alpha_g$ and $\alpha_g$ is the shortest path from $x$ to $g.x$ in $\mathcal T$.
    
    \item \label{item:G-action} The group $G$ acts on $\Gamma$ discretely, cocompactly, edge stabilizers are compact, and vertex stabilizers are either compact or conjugates of subgroups in $ \mathcal{H}$.
    
    \item For any $H \in \mathcal{H}$, there is a vertex $v \in \Gamma$ such that $G_v =  H$. 
    
    \item \label{item:Non-compact} Suppose $(G,\mathcal H)$ is a proper pair. Then any pair of vertices with the same  $G$-stabilizer   $H\in \mathcal H$ are in the same $G$-orbit if $H$ is non-compact.

    \item Suppose $G \not\in\mathcal H$. For any vertex $v$ of $\Gamma$, the $G$-stabilizer $G_v$ is compact if and only if $v$ has finite degree.
\end{enumerate}
\end{proposition}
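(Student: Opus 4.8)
The plan is to derive the final statement directly from the description of the $G$-action on $\Gamma$ recorded in item (3), together with the hypothesis $G \notin \mathcal{H}$; items (1)--(5) may be taken as already proved. The central object is the set $E_v$ of edges of $\Gamma$ incident to a vertex $v$, on which the stabilizer $G_v$ acts, and I would analyze $\deg(v) = |E_v|$ orbit by orbit. Two preliminary observations drive the argument. First, for any $e \in E_v$ the whole orbit $G_v \cdot e$ lies in $E_v$, since if $g \in G_v$ then $ge$ still has $v = gv$ as an endpoint, and by the orbit-stabilizer theorem its cardinality is the index $[G_v : G_v \cap G_e]$. Second, because the action is discrete the edge stabilizer $G_e$ is open in $G$, and by item (3) it is compact; as open subgroups are closed and $G_v$ is itself clopen, the intersection $K_e := G_v \cap G_e$ is a compact open subgroup of $G_v$.

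I would then isolate the elementary fact on which the whole dichotomy rests: a compact open subgroup $K$ of a topological group $L$ has finite index if and only if $L$ is compact. Indeed, an open subgroup of a compact group has finite index, and conversely finitely many cosets of a compact $K$ cover $L$ by compact sets, forcing $L$ to be compact. Applying this with $L = G_v$ and $K = K_e$, the orbit $G_v \cdot e$ is finite exactly when $G_v$ is compact and infinite exactly when $G_v$ is non-compact.

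For the implication ``$G_v$ non-compact $\Rightarrow \deg(v) = \infty$'' this already suffices as soon as $v$ has one incident edge: choosing any $e \in E_v$ produces an infinite orbit $G_v \cdot e \subseteq E_v$. To guarantee such an edge I would invoke connectivity of $\Gamma$ together with $G \notin \mathcal{H}$: if $\Gamma$ has at least two vertices then $\deg(v) \geq 1$, while if $\Gamma$ is a single vertex then $G_v = G$, and by item (3) a non-compact $G_v$ would be a conjugate of some $H \in \mathcal{H}$, whence $G = H \in \mathcal{H}$, contradicting the hypothesis; so in the edgeless case $G_v$ is forced to be compact and the asserted equivalence holds on that side. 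For the converse ``$G_v$ compact $\Rightarrow \deg(v) < \infty$'' I need in addition that $E_v$ is a finite union of $G_v$-orbits. This follows from cocompactness: there are finitely many $G$-orbits of edges, and the edges of $E_v$ lying in a fixed $G$-orbit split into at most two $G_v$-orbits, according to which endpoint of a representative is carried to $v$. Then $\deg(v)$ is a finite sum of indices $[G_v : K_e]$, each finite by the lemma, hence finite.

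I expect the main obstacle to be the bookkeeping in the compact direction, namely establishing that $G_v$ acts on $E_v$ with finitely many orbits and extracting this cleanly from cocompactness, together with the degenerate edgeless case where the hypothesis $G \notin \mathcal{H}$ is precisely what prevents a non-compact group from stabilizing an isolated vertex. The genuinely substantive point, the index dichotomy for compact open subgroups, is short; the care lies in verifying the topological claims ($G_e$ and $G_v$ clopen, $K_e$ compact open) that make it applicable.
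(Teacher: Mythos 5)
Your proposal addresses only item (6) of the proposition, and that is the genuine gap: you declare at the outset that ``items (1)--(5) may be taken as already proved,'' but those items \emph{are} the statement being established, not prior results available for citation, and in the paper's proof they carry most of the work. Item (1) requires showing that $\ker(\phi)$ meets every vertex stabilizer of $\pi_1(\mathcal{G},\Lambda)$ trivially (because $\phi$ is injective on vertex groups), so that it acts freely on $\mathcal{T}$ and the quotient map $\rho$ is a covering; item (2) is then covering-space theory; item (3) requires transporting discreteness, cocompactness, and compactness of stabilizers from the $\pi_1(\mathcal{G},\Lambda)$-action on $\mathcal{T}$ down to the induced $G$-action on $\Gamma=\mathcal{T}/\ker(\phi)$, using that $\phi$ is open and restricts to isomorphisms on cell stabilizers; and item (5) needs the proper-pair hypothesis together with the observation that a non-compact open subgroup cannot stabilize two distinct vertices of $\mathcal{T}$, since it would then fix an edge and hence sit inside a compact edge stabilizer. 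None of this is addressed, and your argument for (6) repeatedly invokes conclusions of item (3) (discreteness, cocompactness, compact edge stabilizers) that are never established. As a proof of Proposition~\ref{rem:RelativeCayley} the proposal is therefore incomplete.

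The fragment you did write is correct, and it is essentially the paper's own argument for item (6), which the paper isolates as Lemma~\ref{lemm:infiniteValence}: the orbit-stabilizer bijection between $G_v\cdot e$ and the cosets of $G_v\cap G_e$ in $G_v$, the dichotomy that a compact open subgroup has finite index exactly when the ambient group is compact, and cocompactness to bound the number of $G_v$-orbits of edges at $v$. Two details in your write-up are in fact tidier than the paper's: you make explicit that each $G$-orbit of edges contributes at most two $G_v$-orbits at $v$ (the paper leaves this implicit), and you treat the edgeless case directly --- if $\Gamma$ is a single vertex then $G_v=G$, and a non-compact $G_v$ would be conjugate to, hence equal to, some $H\in\mathcal{H}$, contradicting $G\notin\mathcal{H}$ --- whereas the paper asserts that $G\notin\mathcal{H}$ forces $\Gamma$ to contain an edge, which fails in the degenerate situation $\mathcal{H}=\emptyset$ with $G$ compact (harmlessly, since the equivalence is trivial there). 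So the verdict is: a sound proof of one of six assertions, matching the paper's method for that assertion, but not a proof of the proposition.
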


Recall that a group action on a complex has no inversions if for every cell its setwise stabilizer coincides with  its pointwise stabilizer.

\begin{lemma} \label{lemm:infiniteValence}
Let $G$ be a group acting on a connected graph $K$ discretely, cocompactly, and with compact edge stabilizers. For any vertex $v\in K$ incident to at least one edge, $v$ has infinite degree if and only if its stabilizer $G_v$ is non-compact.
\end{lemma}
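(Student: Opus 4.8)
For a vertex $v$ let $E_v$ denote the set of edges incident to $v$, so that the degree of $v$ is $|E_v|$. I will use repeatedly that, because the action is discrete, every cell stabilizer $G_v$ and $G_e$ is an \emph{open} subgroup of $G$, and that an open subgroup of a topological group is automatically closed. Two elementary topological facts will do all the work: an open subgroup of a compact group has finite index (the cosets form a disjoint open cover), and conversely a subgroup of finite index in a compact group, being a finite union of translates of a closed finite-index piece, is compact. Finally, for $e\in E_v$ one has $G_e\leq G_v$ (a pointwise edge stabilizer fixes both endpoints, in particular $v$), and \emph{within $G_v$} the setwise stabilizer of $e$ equals $G_e$: an element fixing $v$ that preserves $e$ cannot swap its endpoints, so it fixes the other endpoint too. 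Thus no inversion issues arise as long as we work inside $G_v$. I would prove the two implications separately.

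\textbf{Non-compact stabilizer $\Rightarrow$ infinite degree (contrapositive).} The plan is to assume $E_v$ is finite and deduce $G_v$ is compact. The group $G_v$ permutes the finite set $E_v$, giving a homomorphism $G_v\to\operatorname{Sym}(E_v)$ whose kernel is $N:=\bigcap_{e\in E_v}G_e$; hence $[G_v:N]<\infty$. Since $E_v$ is finite and each $G_e$ is open, $N$ is a finite intersection of open subgroups and is therefore open, and picking any fixed $e_0\in E_v$ (which exists as $v$ is incident to an edge) gives $N\leq G_{e_0}$, a compact group. As $N$ is open it is closed, so $N$ is a closed subgroup of the compact group $G_{e_0}$ and thus compact. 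A compact subgroup of finite index in $G_v$ forces $G_v$ to be compact, completing this direction.

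\textbf{Compact stabilizer $\Rightarrow$ finite degree.} Here I would exploit cocompactness: choose a finite subgraph $L\subseteq K$ with $G\cdot L=K$ (a compact set meets only finitely many cells, so it lies in a finite subcomplex that still $G$-covers $K$). Every $e\in E_v$ has the form $e=\gamma f$ for some edge $f$ of $L$ and $\gamma\in G$, and since $v\in\overline{e}=\gamma\overline{f}$ we get that $\gamma^{-1}v$ is an endpoint of $f$, hence $\gamma^{-1}v\in V(L)$, a \emph{finite} set. Therefore $\gamma$ ranges over the finite union of cosets $\bigcup_{u}g_uG_u$, taken over those $u\in V(L)$ lying in the $G$-orbit of $v$ (where $g_u u=v$). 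For each such $u$ the stabilizer $G_u=g_u^{-1}G_vg_u$ is conjugate to $G_v$, hence compact, and the edges produced from a single coset $g_uG_u$ together with a fixed edge $f$ of $L$ number $[G_u:G_u\cap G_{\{f\}}]$, which is finite because $G_u\cap G_{\{f\}}$ is open (it contains the open subgroup $G_f$) inside the compact group $G_u$. Summing the finitely many finite contributions bounds $|E_v|$, so $v$ has finite degree.

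\textbf{Main obstacle.} The delicate direction is the last one: a priori the edges at $v$ could accumulate through their \emph{other} endpoints, which may have non-compact stabilizers, so compactness of $G_v$ alone does not obviously bound the valence from $v$'s side. The decisive point is that any translating element $\gamma$ must carry $v$ into the finite vertex set $V(L)$, which confines $\gamma$ to finitely many cosets of stabilizers that are all \emph{conjugate to $G_v$} and hence compact; this is exactly what converts cocompactness into a finite count. The no-inversion observation inside $G_v$ and the interplay between openness (to get closedness and finite index) and compactness are the other ingredients that must be handled with care.
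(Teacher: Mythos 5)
Your proof is correct and in essence coincides with the paper's: both directions come down to orbit--stabilizer at $v$ together with the two facts that an open subgroup of a compact group has finite index and that a group containing a compact finite-index subgroup is compact, and your finite subcomplex $L$ with $G\cdot L=K$ is exactly the cocompactness input that the paper uses (implicitly) to see that the edges at $v$ fall into finitely many $G_v$-orbits; you have merely run each implication in contrapositive form relative to the paper, and with more explicit bookkeeping.

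Two small caveats. First, your identification of the kernel of $G_v\to\operatorname{Sym}(E_v)$ with $N=\bigcap_{e\in E_v}G_e$ fails when some edge at $v$ is a loop, since an element of $G_v$ may invert such an edge while fixing $v$; the lemma is stated for graphs in the CW sense, where loops are allowed, and this is precisely why the paper's proof opens by replacing $K$ with its barycentric subdivision when the action has inversions. You should either do the same, or run your kernel argument with setwise edge stabilizers, which contain the pointwise ones with index at most $2$, so openness and compactness survive. Second, your standing fact that ``a subgroup of finite index in a compact group is compact'' is false as literally stated (compact groups can have dense finite-index subgroups, so closedness is needed); this does no harm, because every time you invoke it you actually use the correct statement, namely that a topological group containing a compact subgroup of finite index is itself compact.
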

\begin{proof}
If the action has inversions, replace $K$ with its barycentric subdivision. Let $v$ be vertex in $K$ and $e$ be an edge incident to $v$, then there is bijection between the set of left cosets $G_v/G_e$  and the $G_v$-orbit of $e$ given by $gG_e \mapsto ge$. Let us suppose $v$ has infinite degree. Since the action is cocompact, there exists an edge $e$ adjacent to $v$ with an infinite orbit. Thus $[G_v:G_e]$ is infinite. Since $G_e$ is open, the cosets of $G_e$ form an infinite cover of $G_v$ with no finite subcover. Conversely suppose $G_v$ is non-compact and let $e$ be an edge adjacent to $v$. Since $G_e$ is compact,  $[G_v:G_e]$ is infinite. Thus $v$ has infinite degree.
\end{proof}

\begin{proof}[Proof of Proposition~\ref{rem:RelativeCayley}]
Let $\widetilde{G} = \pi_1(\mathcal{G},\Lambda)$ and let $\tilde{x} \in \mathcal{T}$ be any vertex. Since $\phi$ is injective on the vertex stabilizer $\widetilde{G}_{\tilde{x}}$, we have $\ker(\phi) \cap \widetilde{G}_{\tilde{x}} = 1$. Hence $\ker(\phi)$ acts freely and cellularly on $\mathcal{T}$. Therefore $\rho\colon \mathcal T \to \Gamma$ is a covering map, and $\ker(\phi)$ is a discrete and closed subgroup of $\widetilde{G}$. The isomorphism $\ker(\phi)\to \pi_1(\Gamma,x)$ is a well known consequence of covering space theory.

For item~\eqref{item:G-action},  we first observe the following properties about the $\widetilde{G}$-action on $\mathcal{T}$. The action is cocompact since $\mathcal{T}/\widetilde{G} \simeq \Lambda$ is a finite graph; and since vertex and edge stabilizers of $\mathcal{T}$ in $\widetilde{G}$ are isomorphic to vertex and edge groups, by Proposition~\ref{prop:GTopology}, they are open subgroups of $\widetilde{G}$, in particular the $\widetilde{G}$-action on $\mathcal{T}$ is discrete. Moreover, by Definition~\ref{def:GenGraph}, for the $\widetilde{G}$-action on $\mathcal T$, edge stabilizers are compact and vertex stabilizers are either compact or conjugates of subgroups in $\mathcal{H}$. Moreover, for every $H\in\mathcal{H}$, the tree $T$ contains a vertex with $\widetilde{G}$-stabilizer equal to $H$.

Let $N$ denote the normal subgroup $\ker(\phi)$ and let $\rho \colon \mathcal{T} \to \Gamma$ be the quotient map. The $\widetilde G$-action on $\mathcal{T}$ induces an action of $G =\widetilde{G}/N $ on $\Gamma = \mathcal{T}/N$.
The covering map $\rho\colon \mathcal T \to \Gamma$ is  equivariant with respect to $\phi$ and the restriction $\phi\colon \widetilde{G}_v \to G_{\phi(v)}$ is an isomorphism for any vertex $v$ of $T$.

Let $y$ be any vertex or edge in $\Gamma$, and let $\tilde{y}$ in $\mathcal{T}$ such that $\rho(\tilde{y})=y$. Then  $ \widetilde G_{\tilde{y}}N$ is an open subgroup and   $G_y = \phi(\widetilde{G}_{\tilde{y}}N) = \phi(\widetilde{G}_{\tilde{y}})$. Therefore  $G_y$ is open as $\phi$ is an open map, and if $\widetilde{G}_y$ is compact, then $G_y$ is compact as well. To summarize,  $G$-action on $\Gamma$ is discrete; is cocompact  since $\Lambda$ is finite; has compact edge stabilizers, and each vertex stabilizer is either compact or a conjugate of some $H \in \mathcal{H}$, and each $H\in\mathcal{H}$ is the $G$-stabilizer of a vertex of $\Gamma$. 

Now we  prove item~\eqref{item:Non-compact}.   Each subgroup $H\in \mathcal{H}$ is naturally identified with an isomorphic subgroup of $\widetilde G$, see Definition~\ref{def:GenGraph}; to simplify notation, we assume $H$ is a subgroup of $\widetilde{G}$ and $\phi$ restricted to $H$ is the identity map.   

Let $u_1$ and $u_2$ be vertices of $\Gamma$ with the same $G$-stabilizer $H\in \mathcal{H}$ and suppose $H$ is non-compact. Let $v_1$ and $v_2$ vertices of $\mathcal T$ mapping to $u_1$ and $u_2$ respectively. Suppose that $v_1$ has $\widetilde{G}$-stabilizer the vertex group $H_1^{g_1}$, and $v_2$ has  $\widetilde{G}$-stabilizer $ H_2^{g_2}$ where $g_1,g_2\in \widetilde{G}$ and $H_1,H_2\in \mathcal{H}$. Then $\phi(H_1^{g_1})=H$ and $\phi(H_2^{g_2})=H$; it follows that $H_1$, $H_2$ and $H$ are   subgroups in $\mathcal{H}$ that are pairwise conjugate in $G$. Since $(G,\mathcal H)$ is a proper pair, $H_1=H_2=H$. Then $g_1^{-1}v_1$ and $g_2^{-1}v_2$ have both $\widetilde{G}$-stabilizer $H$. Since $H$ is non-compact and open and all edge 
$\widetilde{G}$-stabilizer of the the tree $\mathcal{T}$ are compact open, it follows that $g_1^{-1}v_1$ and $g_2^{-1}v_2$ are the same vertex. Hence $v_1$ and $v_2$ are in the same $\widetilde{G}$-orbit, and therefore $u_1$ and $u_2$ are in the same $G$-orbit.

For the last item, suppose that $G\not\in\mathcal H$. It follows that $\Gamma$ is not a single vertex, and hence every vertex is adjacent to an edge. Then  Lemma~\ref{lemm:infiniteValence} implies that a vertex of $\Gamma$ has finite degree if and only if it has non-compact stabilizer.
\end{proof}

\subsection{From a cocompact discrete action on a graph to a compact generating graph}\label{subsec:BassTheory}

\begin{proposition}\label{prop:BassTheory}
Let $G$ be a topological group. Let   $\Gamma$ be a cocompact, connected, discrete $G$-graph with compact edge stabilizers and without inversions. 

If $\mathcal{H}$ is a set of representatives of distinct conjugacy classes of vertex stabilizers such that each non-compact stabilizer is represented. Then  there exists a compact generating graph of $G$ relative to $\mathcal{H}$ such that the induced Cayley-Abels graph (in the sense of Definition~\ref{def:RelativeGraphs}) is isomorphic to $\Gamma$ as a  $G$-graph.
\end{proposition}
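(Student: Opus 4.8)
The plan is to run the classical Bass--Serre (Bass covering-theory) construction in the topological category. Set $\Lambda = G\backslash\Gamma$; since the action is cocompact and without inversions, $\Lambda$ is a finite graph in the sense of Serre, and I write $p\colon\Gamma\to\Lambda$ for the quotient. First I would fix combinatorial data: a spanning tree $\Theta\subseteq\Lambda$, a lift $\tilde v\in\Gamma$ of each vertex $v\in V(\Lambda)$ chosen so that the lifts of the vertices of $\Theta$ span a subtree of $\Gamma$, and a lift $\tilde e\in\Gamma$ of each edge $e\in E(\Lambda)$ with $t(\tilde e)=\widetilde{t(e)}$. For each $e$ this yields an element $g_e\in G$ with $g_e\cdot\widetilde{o(e)}=o(\tilde e)$, taken to be trivial whenever $e\in\Theta$. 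Since $\mathcal H$ represents the conjugacy classes of vertex stabilizers and every non-compact stabilizer is represented, I would choose the lifts so that for each $H_i\in\mathcal H$ there is a vertex $v_i\in V(\Lambda)$ with $G_{\tilde v_i}=H_i$.

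Next I would build the graph of topological groups $(\mathcal G,\Lambda)$ with vertex groups $\mathcal G_v = G_{\tilde v}$, edge groups $\mathcal G_e = G_{\tilde e}$ (each with the subspace topology from $G$), and edge monomorphisms $\eta_e\colon\mathcal G_e\hookrightarrow\mathcal G_{t(e)}$ given by the inclusion $G_{\tilde e}\hookrightarrow G_{\widetilde{t(e)}}$, while the inclusion at the initial vertex is $c\mapsto g_e^{-1}cg_e\in G_{\widetilde{o(e)}}$. Because the action is discrete all stabilizers are open, and by hypothesis edge stabilizers are compact, so the edge groups are compact open and the $\eta_e$ are open topological embeddings; hence $(\mathcal G,\Lambda)$ is a genuine graph of topological groups and Proposition~\ref{prop:GTopology} equips $\pi_1(\mathcal G,\Lambda)$ with its canonical topology. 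I would then define $\phi\colon\pi_1(\mathcal G,\Lambda)\to G$ from the standard presentation relative to $\Theta$: let $\phi$ be the inclusion on each vertex group $\mathcal G_v=G_{\tilde v}\le G$ and set $\phi(t_e)=g_e$ on the stable letters. The defining relation $t_e\,\eta_{\bar e}(c)\,t_e^{-1}=\eta_e(c)$ is respected, since applying $\phi$ gives $g_e(g_e^{-1}cg_e)g_e^{-1}=c$, so $\phi$ is a well-defined homomorphism.

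With $\phi$ in hand I would verify continuity, openness and surjectivity, together with the conditions of Definition~\ref{def:GenGraph}. Continuity holds because $\phi$ restricts to the continuous inclusion $G_{\tilde v}\hookrightarrow G$ on the open subgroup $\mathcal G_v$, hence is continuous at the identity; openness holds because $\phi$ maps the open subgroup $\mathcal G_v$ homeomorphically onto the open subgroup $G_{\tilde v}\le G$ and left translates of $\mathcal G_v$ form a neighborhood basis; surjectivity follows from connectedness of $\Gamma$, which forces $G$ to be generated by the vertex stabilizers $G_{\tilde v}$ together with the transition elements $g_e$. The commutative diagrams for the isomorphisms $\phi_i\colon\mathcal G_{v_i}\to H_i$ then hold up to inner automorphism by construction. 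Finally I would identify the induced Cayley--Abels graph with $\Gamma$: let $\mathcal T$ be the Bass--Serre tree of $(\mathcal G,\Lambda)$, so that by Proposition~\ref{rem:RelativeCayley} the kernel $\ker\phi$ acts freely on $\mathcal T$ and $\Gamma(\mathcal G,\Lambda,\phi)=\mathcal T/\ker\phi$. Defining a $\phi$-equivariant graph morphism $\psi\colon\mathcal T\to\Gamma$ on vertices by $w\mathcal G_v\mapsto\phi(w)\,\tilde v$ (and analogously on edges), which is well defined since $\phi(\mathcal G_v)=G_{\tilde v}$ fixes $\tilde v$ and which preserves incidence by compatibility of the lifts and transition elements, a short coset computation shows $\psi(w\mathcal G_v)=\psi(w'\mathcal G_{v'})$ precisely when $v=v'$ and $w^{-1}w'\in\mathcal G_v\ker\phi$, i.e. exactly when the two cosets lie in one $\ker\phi$-orbit. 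Hence $\psi$ descends to a $G$-graph isomorphism $\mathcal T/\ker\phi\xrightarrow{\ \cong\ }\Gamma$.

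The step I expect to be the main obstacle is pinning down the correspondence between the non-compact vertex groups of $(\mathcal G,\Lambda)$ and the collection $\mathcal H$, so that Definition~\ref{def:GenGraph}(3) genuinely holds, namely that every vertex of $\Lambda$ outside $\{v_1,\dots,v_n\}$ carries a compact group. This is where the hypothesis that each non-compact stabilizer is represented by a distinct conjugacy class in $\mathcal H$ must be used carefully: one needs that each non-compact conjugacy class of stabilizers is realized by a \emph{single} vertex orbit, so that the $v_i$ exhaust the non-compact vertex groups. Verifying this, alongside the compatibility of the canonical topology of Proposition~\ref{prop:GTopology} with $\phi$ being an open continuous epimorphism, is the delicate part of the argument, whereas the combinatorial identification $\mathcal T/\ker\phi\cong\Gamma$ and the surjectivity of $\phi$ are routine consequences of Bass--Serre theory.
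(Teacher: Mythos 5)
Your route is essentially the paper's: both run Bass's construction on the quotient $\Lambda=G\backslash\Gamma$ (lifted spanning tree, lifted edges, transition elements, vertex and edge groups the stabilizers of the lifts, conjugation on one side of each edge map), observe that discreteness of the action makes this a graph of \emph{topological} groups with compact open edge groups and open embeddings, topologize $\pi_1(\mathcal{G},\Lambda)$ by Proposition~\ref{prop:GTopology}, and check that $\phi$ is a continuous open epimorphism. The only structural difference is that the paper outsources the combinatorial core to Bass's structure theorem \cite[Theorem 3.6]{BASS1993}, which directly supplies the short exact sequence $1\to\pi_1(\Gamma,a')\to\pi_1(\mathcal{G},\Lambda)\to G\to 1$ together with a $\phi$-equivariant morphism $\mathcal{T}\to\Gamma$ restricting to isomorphisms on stabilizers, and packages continuity and openness of $\phi$ in Lemma~\ref{lem:topologyOpen}; you instead re-derive these facts by hand (definition of $\phi$ on the presentation, surjectivity from connectedness, the explicit map $\psi$ and its coset computation). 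That is more work, but it is the same argument.

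Two points need repair or clarification. First, your requirement that the lifts satisfy $G_{\tilde v_i}=H_i$ \emph{on the nose} is generally unachievable, because the lifts are constrained to span a subtree: already for the Bass--Serre tree of $A\ast_C B$ with $\mathcal{H}=\{A,\,gBg^{-1}\}$ and $g\notin AB$, no lift of the spanning edge realizes both representatives exactly. This costs nothing, since Definition~\ref{def:GenGraph}(1) only asks for isomorphisms $\phi_i\colon\mathcal{G}_{v_i}\to H_i$ making the diagram commute \emph{up to an inner automorphism} of $G$; take $\phi_i$ to be the conjugation identifying the stabilizer of the chosen lift with $H_i$, which is exactly how the paper reads. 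Second, the ``delicate part'' you flag is genuinely the crux, and it cannot be verified from the stated hypotheses alone: condition (3) of Definition~\ref{def:GenGraph} holds precisely when each conjugacy class of non-compact vertex stabilizers meets a \emph{single} $G$-orbit of vertices of $\Gamma$. If $\Gamma$ had two vertex orbits whose (non-compact) stabilizers are conjugate, no compact generating graph could induce $\Gamma$ at all, since by Proposition~\ref{rem:RelativeCayley} the induced Cayley--Abels graph has only one orbit of vertices with non-compact stabilizer for each non-compact member of $\mathcal{H}$. The paper's proof passes over this silently (``the hypothesis \dots\ imply that \dots\ the vertex groups are either compact or in $\mathcal{H}$''), and the point is that in every application the input graph does have the single-orbit property: it is forced by condition (2) of Theorem~\ref{thm:Cayley-Abels}, and it is automatic for trees and, more generally, for fine graphs with compact edge stabilizers, where a non-compact stabilizer fixes a unique vertex. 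So the correct move at your sticking point is not to try to derive this property, but to record it as (an implicit strengthening of) the hypothesis on $\mathcal{H}$, exactly as the paper's usage presupposes.
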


\begin{lemma}\label{lem:topologyOpen}
Suppose $G, H$, and $K$ are topological groups such that the diagram  \[\begin{tikzcd}[column sep=small]
K \arrow[hookrightarrow]{r}{i}  \arrow[hookrightarrow]{rd}{j} 
& H \arrow{d}{\phi} \\
& G
\end{tikzcd}\] of homomorphisms of groups commutes. If $i,j$ are open and continuous,  then $\phi$ is open and continuous.
\end{lemma}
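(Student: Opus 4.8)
The plan is to exploit the fact that an open continuous monomorphism identifies its source with an open subgroup of its target, and that for a homomorphism of topological groups both continuity and openness are conditions at the identity that propagate by translation. Write $i(K)\le H$ and $j(K)\le G$. Since $i$ and $j$ are injective, continuous and open, each is a homeomorphism onto its image, and $i(K)$, $j(K)$ are open subgroups (they are open and contain the respective identities). By commutativity of the diagram, $\phi\circ i=j$, so on the open subgroup $i(K)$ we have the identity $\phi|_{i(K)}=j\circ i^{-1}$.

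For continuity, I would first observe that $\phi|_{i(K)}=j\circ i^{-1}$ is continuous, being a composition of the continuous map $j$ with the continuous inverse $i^{-1}\colon i(K)\to K$ of the embedding $i$. Since $i(K)$ is an open neighborhood of $1_H$, this shows $\phi$ is continuous at $1_H$. A homomorphism of topological groups that is continuous at the identity is continuous everywhere: for $h\in H$ and a neighborhood $V$ of $\phi(h)$, one pulls back $\phi(h)^{-1}V$ to a neighborhood of $1_H$ and left-translates by $h$. Hence $\phi$ is continuous.

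For openness, I would first note that $\phi|_{i(K)}=j\circ i^{-1}$ is an open map into $G$: an open subset $O$ of $i(K)$ is open in $H$ (as $i(K)$ is open in $H$), $i^{-1}(O)$ is open in $K$, and $j(i^{-1}(O))$ is open in $G$ because $j$ is an open map. It then suffices to check that $\phi$ carries every open neighborhood $W$ of $1_H$ to a set open in $G$, since for a homomorphism this forces $\phi(U)$ to be open for arbitrary open $U$ via the translation $\phi(U)=\phi(h)\,\phi(h^{-1}U)$ with $h\in U$. The hard part, and the only real subtlety, will be to produce an open neighborhood of an arbitrary point $g=\phi(h)\in\phi(W)$ inside $\phi(W)$ when $h\notin i(K)$, so that the openness established only on the subgroup $i(K)$ still applies. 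Here $h^{-1}W\cap i(K)$ is an open neighborhood of $1_H$ lying in $i(K)$, so $\phi(h^{-1}W\cap i(K))$ is open in $G$ by the previous observation; consequently
\[
\phi(h)\,\phi\big(h^{-1}W\cap i(K)\big)=\phi\big(W\cap h\,i(K)\big)
\]
is a left translate of an open set, hence open in $G$, contains $g$ (since $1_H\in h^{-1}W\cap i(K)$), and is contained in $\phi(W)$. This exhibits $\phi(W)$ as a neighborhood of each of its points, so $\phi(W)$ is open, and the translation argument above then gives that $\phi$ is an open map, completing the proof.
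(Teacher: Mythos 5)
Your proof is correct and follows essentially the same route as the paper's: both arguments rest on the observation that $\phi$ restricted to the open subgroup $i(K)$ equals $j\circ i^{-1}$, giving continuity at the identity, and both establish openness by intersecting an arbitrary open set with the cosets $h\,i(K)$ and translating back to $i(K)$ (your sets $W\cap h\,i(K)$ are exactly the paper's $U_h$). The only cosmetic difference is that you phrase the conclusion pointwise (a neighborhood of each point of $\phi(W)$) while the paper writes $\phi(U)=\bigcup_{h\in H}\phi(U_h)$; these are the same argument.
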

\begin{proof}
To prove that $\phi$ is continuous, we show that $\phi$ is continuous at 1. Let $U \subseteq G$ be an open set containing 1.  Since $j$ is continuous, $j^{-1}(U)$ is open. Let $W = i(j^{-1}(U))$. Then $W$ is open since $i$ is an open map; and by the commutative diagram,  $ 1 \in W \subseteq \phi^{-1}(U)$. Since $U$ was an arbitrary neighborhood of 1 in G, this shows that  $\phi$ is continuous at 1. 
 To prove that $\phi$ is open, first observe that if V is open in H and $V\subset i(K)$, then $i$ being continuous and $j$ being open imply that 
$\phi(V) = j( i^{-1}(V))$  is open in $G$. Moreover, for any $h\in H$, $\phi(h.V)=\phi(h).\phi(V)$ is open in $G$. For an arbitrary open subset $U$ of $H$,
let $U_h=U\cap h .i(K)$. Observe that $h^{-1}.U_h$ is open in $H$ and $h^{-1}.U_h \subset i(K)$.  Therefore $\phi (U_h)$ is open in $G$, and hence $\phi (U)=\bigcup \limits_{h\in H} \phi (U_h)$ is open in $G$.
\end{proof}

\begin{proof}[Proof of Proposition~\ref{prop:BassTheory}]
For discrete groups, the result follows from a well known construction of Bass~\cite[Section 3]{BASS1993} that we recall below. The case for topological groups follows from the same construction after addressing  topological matters. 

 Let $\Lambda$ be the quotient graph given by $\Gamma/G$, and let $r \colon \Gamma \to \Lambda$ be the quotient map. Choose a tree $T$ and a connected graph $S$ such that $T \subseteq S \subseteq \Gamma$, $r \colon T \to \Lambda$ is bijective on vertices, and $r \colon S \to \Lambda$ bijective on edges.
For any $v \in \Lambda$ and $e \in \Lambda$, the vertex group $\mathcal{G}_v$ and edge group $\mathcal{G}_e$ are defined to be the vertex stabilizer $G_{v'}$ and the edge stabilizer $G_{e'}$, where $v'$ and $e'$ are preimages of $v$ and $e$ under $r$ in $T$ and $S$ respectively. For every vertex $s \in S$, choose $g_s\in G$ such that $g_ss \in T$ and assume $g_s= 1$ if $s \in T$. Let $e$ be an edge of $\Lambda$ with endpoint vertices $u$ and $v$. Let $e'$ be the preimage of $e$ in $S$ with endpoints $u'$ and $w'$. Since $e'$ has at least one endpoint in $T$,  without loss of generality assume that $u' \in T$. Define the morphism $\mathcal{G}_e \to \mathcal{G}_u$ to be the inclusion. The morphism from $\mathcal{G}_e \to \mathcal{G}_v$ is defined as 
 $h \mapsto g_{w'}hg_{w'}^{-1} $. Thus $(\mathcal{G}, \Lambda)$ defines a graph of groups.  
Observe that if $G$ is a topological group with discrete action on $\Gamma$ then $G_{e'}$ and $G_{v'}$ are topological groups with the subspace topology and the morphisms $G_{e'} \to G_{v'}$ are open topological embeddings. In particular, $(\mathcal{G}, \Lambda)$ is a graph of topological groups.

 Fix a vertex $a \in \Lambda$ and let $a' \in \Gamma$ such that $r(a') = a$.  Let $\widetilde{G}=\pi_1(\mathcal{G},\Lambda, a)$ be the fundamental group of graph of groups, and let  $\mathcal{T}$ be the corresponding Bass-Serre tree. Then by~\cite[Theorem 3.6]{BASS1993} there is a short exact sequence

\[ 1 \to \pi_1(\Gamma,a') \to \widetilde{G} \xrightarrow{\phi} G \to 1,     \]
and a $\phi$-equivariant morphism of graphs $\rho \colon \mathcal{T} \to \Gamma $, such that for any vertex $y \in \mathcal{T}$, the restriction of $\phi$ to stabilizers $\widetilde{G}_y \xrightarrow{\phi} G_{\rho(y)}$ is an   isomorphism of groups.

Now we construct a compact generating graph of $G$ relative to $\mathcal{H}$. Consider the graph of topological groups $(\mathcal{G}, \Lambda)$ constructed above. Since $G$ acts on $\Gamma$ cocompactly, $(\mathcal{G}, \Lambda)$ is a finite topological graph of groups. The hypothesis on the  $G$-stabilizers of vertices and edges of $\Gamma$ imply that the edge groups of $(\mathcal{G}, \Lambda)$ are compact and the vertex groups are either compact or in $\mathcal{H}$.
Let $w \in \Gamma$ be any vertex and $\tilde{w}\in \mathcal{T}$ be one of its pre-images under $\rho$. Then $\widetilde{G}_{\tilde{w}} \xrightarrow{\phi} G_w$ is an isomorphism of topological groups and we have that 
\[\begin{tikzcd}[column sep=small]
\widetilde{G}_{\tilde{w}} \arrow[hookrightarrow]{r}{}  \arrow[rightarrow]{d}
& \tilde{G} \arrow{d}{\phi} \\
G_w \arrow[hookrightarrow]{r}{} & G
\end{tikzcd}\]
is a commutative diagram.

Since $\widetilde{G}$-action on $\mathcal{T}$ is discrete, $\widetilde{G}_{\tilde{w}}$ is open in $\widetilde{G}$, and thus by Lemma~\ref{lem:topologyOpen}, $\phi$ is continuous and open. Therefore $(\mathcal{G}, \Lambda , \phi)$ is a compact generating graph of $G$ relative to $\mathcal{H}$. Since $\rho \colon \mathcal{T} \to \Gamma $ is a $\phi$-equivariant morphism of graphs, the corresponding Cayley-Abels graph is isomorphic to $\Gamma$ as a $G$-graph.
\end{proof}

\subsection{Proof of Theorem~\ref{thm:Cayley-Abels}}\label{subsec:proof of TopChar}
 
That \eqref{item:CA1} implies \eqref{item:CA2} follows directly from Proposition~\ref{rem:RelativeCayley}.  
Conversely, suppose  \eqref{item:CA2}  holds. For each $H \in \mathcal{H}$, let $v_H$ be a vertex in $\Gamma$ with $G$-stabilizer $H$, and let $S=\{v_H \mid H \in \mathcal{H}\}$. Since no two distinct subgroups in $\mathcal{H}$ are conjugate, no two distinct elements of $S$ are in same $G$-orbit. For any vertex $v$ of $\Gamma$ with non-compact $G$-stabilizer, $G_v$ is conjugate to a subgroup in $\mathcal{H}$. Since any pair of vertices with the same  $G$-stabilizer $H\in \mathcal H$ are in the same $G$-orbit if $H$ is non-compact; it follows that every vertex of $\Gamma$ with non-compact stabilizer is in the $G$-orbit of an element in $S$.  By Proposition~\ref{prop:BassTheory}, there exists a compact generating graph   $(\mathcal{G}, \Lambda, \phi )$ of $G$ relative to $\mathcal{H}$ such that the corresponding Cayley-Abels graph is isomorphic to $\Gamma$. 
The statement for vertices of $\Gamma$ on the equivalence of having compact $G$-stabilizer and having finite degree in $\Gamma$ is part of the conclusion of Proposition~\ref{rem:RelativeCayley}. \qed

\section{Relative compact generation} \label{sec04:RelGen}

In this section, we state and prove a version of  Theorem~\ref{thmX:TopCharGraph}. 

\begin{definition}[Relative Compact Generation] A topological group $G$ is \emph{compactly generated relative to a collection of subgroups} $\mathcal H$ if there is a compact subset $A\subset G$ such that $G$ is algebraically generated by $ A\cup\bigcup\mathcal{H}$; in this case we say that $A$ is a \emph{compact generating set of $G$ relative to $\mathcal H$}. 
\end{definition}

The \v{S}varc-Milnor Lemma implies that a discrete group is finitely generated if and only if it acts cellularly, cocompactly and with finite vertex stabilizers on a connected graph~\cite[Proposition 8.19]{BrHa99}. The main result of this section, Theorem~\ref{prop:TopCharGraph}, generalizes this fact for proper pairs $(G, \mathcal H)$, see Definition~\ref{def:properPair}.

A $G$-action on a cell complex $X$ is called \emph{discrete} if it is a cellular action such that the pointwise stabilizer of each cell is an open subgroup of $G$. A graph is a 1-dimensional cell complex.  

\begin{theorem}[Topological Characterization] \label{prop:TopCharGraph} 
Let $(G,\mathcal H)$ be a proper pair. The following statements are equivalent:
\begin{enumerate}
    \item \label{TopoChar01} There is a compact generating set of $G$ relative to $\mathcal{H}$.
    \item \label{TopoChar02} There exists a compact generating graph of $G$ relative to $\mathcal{H}$.
    \item \label{TopoChar03} There is a connected discrete cocompact simplicial $G$-graph $\Gamma$ with compact  edge stabilizers,  vertex stabilizers are either compact or conjugates of subgroups in $\mathcal{H}$,  every $H\in \mathcal{H}$ is the $G$-stabilizer of a vertex, and any pair of vertices with the same $G$-stabilizer $H\in\mathcal H$ are in the same $G$-orbit if $H$ is non-compact.
\end{enumerate}
 \end{theorem}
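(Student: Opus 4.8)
The plan is to prove the three statements equivalent by first quoting the portion that is already established and then reducing to a single nontrivial equivalence. The equivalence $\eqref{TopoChar02}\Leftrightarrow\eqref{TopoChar03}$ is immediate from Theorem~\ref{thm:Cayley-Abels}: a $G$-graph satisfies condition~\eqref{TopoChar03} precisely when it is of type~\eqref{item:CA2} there, and by that theorem this holds if and only if the graph is the Cayley–Abels graph induced by a compact generating graph~\eqref{item:CA1}; the existence of such a graph is exactly~\eqref{TopoChar02}. So it remains to prove $\eqref{TopoChar01}\Leftrightarrow\eqref{TopoChar02}$.

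For $\eqref{TopoChar02}\Rightarrow\eqref{TopoChar01}$ I would argue by pushing a standard generating set forward through $\phi$. Given a compact generating graph $(\mathcal{G},\Lambda,\phi)$, recall that $\pi_1(\mathcal{G},\Lambda)$ is generated as an abstract group by the union of its vertex groups $\mathcal{G}_v$ together with a finite set $\{t_e\}$ of stable-letter elements (finite because $\Lambda$ is finite). Applying the surjection $\phi$, the images $\phi(\mathcal{G}_v)$ of the compact vertex groups ($v\neq v_i$) are compact, the finitely many $\phi(t_e)$ form a finite set, and $\phi(\mathcal{G}_{v_i})=g_iH_ig_i^{-1}$ for some $g_i\in G$ since the diagram in Definition~\ref{def:GenGraph} commutes only up to an inner automorphism. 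Collecting the compact vertex-group images, the finite set $\{\phi(t_e)\}$, and the finitely many conjugators $g_i$ into one compact set $A$ yields $G=\langle A\cup\bigcup\mathcal{H}\rangle$, using $\langle g_iH_ig_i^{-1},g_i\rangle=\langle H_i,g_i\rangle$; hence $G$ is compactly generated relative to $\mathcal{H}$.

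For $\eqref{TopoChar01}\Rightarrow\eqref{TopoChar02}$ I would construct a compact generating graph explicitly. Let $A$ be a compact relative generating set and let $U$ be a compact open subgroup of $G$, which exists because $(G,\mathcal{H})$ is a proper pair. Covering $A$ by finitely many left cosets of $U$ gives a finite set $X$ with $A\subseteq XU$, so $X\cup U\cup\bigcup\mathcal{H}$ generates $G$. Build a finite graph of topological groups $(\mathcal{G},\Lambda)$ with a central vertex $v_0$ carrying $\mathcal{G}_{v_0}=U$, one vertex $v_i$ for each $H_i\in\mathcal{H}$ joined to $v_0$ by an edge with edge group $U\cap H_i$, and one loop at $v_0$ for each $x\in X$ with edge group $U\cap U^x$. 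Since $U$, $H_i$ are open (hence closed) subgroups and $U^x$ is open, all edge groups are compact and all structure maps are restrictions of the identity or of conjugation, so they are open continuous monomorphisms. Define $\phi\colon\pi_1(\mathcal{G},\Lambda)\to G$ to be the inclusion on each vertex group and to send the stable letter of the loop at $x$ to $x$; Bass--Serre theory makes $\phi$ well defined, and it is surjective because its image contains $X\cup U\cup\bigcup\mathcal{H}$. Continuity and openness of $\phi$ follow from Lemma~\ref{lem:topologyOpen} applied to the open embeddings $U\hookrightarrow\pi_1(\mathcal{G},\Lambda)$ and $U\hookrightarrow G$. The remaining axioms of Definition~\ref{def:GenGraph} are then immediate.

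I expect the main obstacle to be the topological bookkeeping in $\eqref{TopoChar01}\Rightarrow\eqref{TopoChar02}$: reducing the compact generating set modulo the compact open subgroup $U$, arranging the edge groups $U\cap U^x$ and $U\cap H_i$ so that every structure map is an open continuous monomorphism, and extracting continuity and openness of $\phi$ from Lemma~\ref{lem:topologyOpen}. By comparison, $\eqref{TopoChar02}\Rightarrow\eqref{TopoChar01}$ is a formal push-forward of generators and $\eqref{TopoChar02}\Leftrightarrow\eqref{TopoChar03}$ is a direct appeal to Theorem~\ref{thm:Cayley-Abels}.
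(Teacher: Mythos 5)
Your proposal is correct, and both of its nontrivial directions take a genuinely different route from the paper. The paper also disposes of $\eqref{TopoChar02}\Leftrightarrow\eqref{TopoChar03}$ by Theorem~\ref{thm:Cayley-Abels}, but it then proves $\eqref{TopoChar01}\Leftrightarrow\eqref{TopoChar03}$ via Proposition~\ref{prop:LastOne}, staying entirely on the $G$-graph side of the Bass--Serre correspondence: for $\eqref{TopoChar01}\Rightarrow\eqref{TopoChar03}$ it builds the coned-off graph with vertex set $G/U\cup G/\mathcal{H}$ and edges $\{gU,gsU\}$, $\{gU,gH\}$, proves connectivity by induction on word length, and then invokes Theorem~\ref{thm:Cayley-Abels} (i.e.\ Bass's construction, Proposition~\ref{prop:BassTheory}) to recover a compact generating graph; for the converse it applies the \v{S}varc--Milnor-type generation lemma (Proposition~\ref{thm:Bridson}) to a Cayley--Abels graph. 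You instead work on the quotient side: your star-shaped graph of groups with central vertex $U$, loops with edge groups $U\cap U^x$, and edges with groups $U\cap H_i$ is precisely the quotient graph of groups of the paper's coned-off graph, and your converse direction replaces the geometric lemma by pushing the algebraic generating set of $\pi_1(\mathcal{G},\Lambda)$ (vertex groups plus finitely many stable letters) through $\phi$. What your route buys is self-containedness for $\eqref{TopoChar01}\Leftrightarrow\eqref{TopoChar02}$: no appeal to Proposition~\ref{thm:Bridson} and no connectivity induction; the cost is that you must verify the topological axioms of Definition~\ref{def:GenGraph} by hand (open continuous monomorphisms on edge groups, well-definedness of $\phi$ from the Bass--Serre relations, and openness/continuity via Lemma~\ref{lem:topologyOpen}), all of which you do or correctly reduce to standard facts. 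What the paper's route buys is the reusable intermediate fact that the explicit coned-off graph is itself a Cayley--Abels graph, and a proof of $\eqref{TopoChar03}\Rightarrow\eqref{TopoChar01}$ that works directly from an arbitrary graph satisfying $\eqref{TopoChar03}$ without first passing through the graph-of-groups structure. One detail worth highlighting as genuinely necessary in your argument (and easy to overlook): since the diagram in Definition~\ref{def:GenGraph} commutes only up to inner automorphisms, the images $\phi(\mathcal{G}_{v_i})$ are conjugates $g_iH_ig_i^{-1}$ rather than the $H_i$ themselves, so the conjugators $g_i$ must be adjoined to the compact set $A$; you handle this correctly via $g_iH_ig_i^{-1}\subseteq\langle H_i\cup\{g_i\}\rangle$.
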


Let us record two immediate consequences of Theorem~\ref{prop:TopCharGraph}.

\begin{corollary}[Kr\"{o}n and M\"{o}ller]\cite[Corollary 1]{BM08}\label{thm:C.A.-Graph}
     Let $G$ be a totally disconnected locally compact group. Suppose G acts on a connected locally finite graph $\Gamma$ such that the stabilizers of vertices are compact open subgroups and $G$ has only finitely many orbits on $V(\Gamma)$. Then $G$ is compactly generated.
\end{corollary}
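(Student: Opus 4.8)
The plan is to derive Corollary~\ref{thm:C.A.-Graph} as a direct specialization of Theorem~\ref{prop:TopCharGraph}, taking $\mathcal H$ to be the empty collection. First I would observe that under the hypotheses of the corollary, $G$ acts on the connected graph $\Gamma$ with compact open vertex stabilizers, finitely many vertex orbits, and $\Gamma$ locally finite; I must check that $(G,\emptyset)$ is a proper pair and that $\Gamma$ verifies condition~\eqref{TopoChar03} of Theorem~\ref{prop:TopCharGraph}, so that I may conclude condition~\eqref{TopoChar01}, i.e.\ that $G$ is compactly generated relative to $\emptyset$, which by definition means $G$ is compactly generated.

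The verification of the hypotheses proceeds as follows. Since vertex stabilizers are compact open subgroups, $G$ has a compact open subgroup, so condition~(1) of Definition~\ref{def:properPair} holds; $\mathcal H=\emptyset$ trivially satisfies conditions~(2) and~(3), so $(G,\emptyset)$ is a proper pair. Next, with $\mathcal H$ empty, condition~\eqref{TopoChar03} reduces to the requirement that $\Gamma$ be a connected discrete cocompact simplicial $G$-graph with all vertex and edge stabilizers compact. The action is discrete because vertex stabilizers (and hence, being subgroups, edge stabilizers) are open; it is cocompact because there are only finitely many vertex orbits and $\Gamma$ is locally finite, so finitely many edge orbits as well. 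Edge stabilizers are compact since they are closed (in fact open) subgroups of the compact vertex stabilizers. The one genuinely nontrivial point is \emph{simpliciality}: Theorem~\ref{prop:TopCharGraph}\eqref{TopoChar03} demands a simplicial graph, whereas the hypotheses of the corollary do not a priori forbid loops, multiple edges, or inversions.

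The main obstacle, then, is reducing to the simplicial case. I would handle this by passing to the barycentric subdivision $\Gamma'$ of $\Gamma$ (possibly applied twice), which eliminates loops, multiple edges, and inversions while preserving connectedness, local finiteness, discreteness of the action, cocompactness, and compactness of all cell stabilizers; here I use that stabilizers of the new barycenter vertices are the old edge stabilizers, which are compact. One must check that barycentric subdivision does not introduce new vertex orbits beyond a controlled finite number and that the new vertex stabilizers remain compact open---both of which follow since barycenters of edges are fixed setwise by the corresponding edge stabilizers and these are compact open. After subdivision, $\Gamma'$ satisfies all conditions in~\eqref{TopoChar03} with $\mathcal H=\emptyset$, so Theorem~\ref{prop:TopCharGraph} yields that $G$ is compactly generated relative to the empty collection, which is precisely the assertion that $G$ is compactly generated. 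I expect the whole argument to be short, with essentially all the content being this bookkeeping to meet the simpliciality hypothesis of the cited theorem.
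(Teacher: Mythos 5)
Your overall strategy coincides with the paper's: the paper records this corollary as an immediate consequence of Theorem~\ref{prop:TopCharGraph} (condition~\eqref{TopoChar03} with $\mathcal H=\emptyset$, which is exactly your reduction), and your verification of properness, connectedness and cocompactness is fine. The gap is in your treatment of edge stabilizers. You justify both their openness and their compactness by saying they are subgroups of the compact open vertex stabilizers; but a subgroup of an open subgroup need not be open, and nothing in the hypotheses guarantees it is even closed. This is not a pedantic point, because it fails precisely in the situation you single out as the crux (loops and multiple edges). Concretely, let $G=\prod_{i\in\N}\Z/2\Z$, let $\Gamma$ have two vertices $u,v$ joined by two parallel edges $e_1,e_2$, and let $G$ fix the vertices and swap $e_1,e_2$ according to a discontinuous homomorphism $\chi\colon G\to \Z/2\Z$ (such homomorphisms exist). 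All hypotheses of the corollary hold, with vertex stabilizers equal to $G$, yet the stabilizer of $e_1$ (setwise or pointwise) is $\ker\chi$, which is neither open nor closed nor compact. Consequently, in your barycentric subdivision the barycenter of $e_1$ is a vertex whose stabilizer is $\ker\chi$, so the subdivided graph is not a discrete $G$-graph with compact open cell stabilizers and condition~\eqref{TopoChar03} cannot be verified for it; your argument stalls. (The conclusion of the corollary still holds in this example only because $G$ happens to be compact; the failure is in the proof, not the statement.)

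The repair is short and worth stating. If $\Gamma$ is simplicial, the pointwise stabilizer of an edge with endpoints $u,v$ is exactly $G_u\cap G_v$, an intersection of two compact open subgroups, hence compact open; the setwise stabilizer contains it with index at most two and is a finite union of its cosets, hence is also compact open, so a single barycentric subdivision (to remove inversions) keeps all cell stabilizers compact open and Theorem~\ref{prop:TopCharGraph} applies. If $\Gamma$ is not simplicial, do not subdivide first: replace $\Gamma$ by its underlying simple graph (same vertex set, one edge for each pair of distinct adjacent vertices, loops discarded). This preserves connectedness, local finiteness, the vertex stabilizers and the finiteness of the set of vertex orbits, and since an automorphism of a simple graph is determined by its action on vertices, the pathology above cannot occur; then proceed as in the simplicial case. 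Alternatively, you can bypass Theorem~\ref{prop:TopCharGraph} entirely: taking $U$ to be the union of the open stars of orbit representatives $v_1,\dots,v_n$, Proposition~\ref{thm:Bridson} shows $G$ is generated by $S=\{g\in G\mid gU\cap U\neq\emptyset\}$, and local finiteness places $S$ inside a finite union of cosets of the compact groups $G_{v_i}$, hence inside a compact set; this direct argument needs no hypothesis on edge stabilizers at all.
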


 Remark~\ref{rem:GenGraph} implies that  a topological group  admitting a compact generating graph with respect to the empty collection contains a compact open subgroup, therefore:    

\begin{corollary}\label{cor:CompactGenerationEmptyCollection}
Let $G$ be a topological group. The following statements are equivalent:
\begin{enumerate}
    \item $G$ contains a compact open subgroup and a compact generating set.
    \item $G$ admits a compact generating graph relative to the empty collection.
\end{enumerate}
\end{corollary}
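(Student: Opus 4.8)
The plan is to derive both implications directly from Theorem~\ref{prop:TopCharGraph} applied to the pair $(G,\emptyset)$, using Remark~\ref{rem:GenGraph} to supply a compact open subgroup in the one direction where it is not handed to us. The guiding observation is that a compact generating set of $G$ relative to the empty collection is exactly a compact generating set of $G$ in the usual sense: since $\bigcup\emptyset=\emptyset$, we have $A\cup\bigcup\emptyset=A$, so $G$ is generated by $A\cup\bigcup\emptyset$ precisely when $G$ is generated by $A$. Consequently the two conditions in the corollary match conditions~\eqref{TopoChar01} and~\eqref{TopoChar02} of that theorem, provided one first checks that $(G,\emptyset)$ is a proper pair.

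For the implication $(1)\Rightarrow(2)$, I would begin by noting that the hypothesis already furnishes a compact open subgroup, so $(G,\emptyset)$ meets all three requirements of Definition~\ref{def:properPair}, the third being vacuously satisfied by the empty collection; hence it is a proper pair. The given compact generating set is then a compact generating set relative to $\emptyset$, so condition~\eqref{TopoChar01} of Theorem~\ref{prop:TopCharGraph} holds, and the theorem produces a compact generating graph of $G$ relative to $\emptyset$, which is condition~(2).

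For the converse $(2)\Rightarrow(1)$, the hypothesis only provides a compact generating graph relative to $\emptyset$, and the compact open subgroup must be recovered before Theorem~\ref{prop:TopCharGraph} can be invoked. The key step here is Remark~\ref{rem:GenGraph}(1): were $G$ to have no compact open subgroup, any compact generating graph would force $\mathcal H=\{G\}$, contradicting $\mathcal H=\emptyset$. Thus $G$ contains a compact open subgroup, so $(G,\emptyset)$ is again a proper pair, and the implication $(2)\Rightarrow(1)$ of Theorem~\ref{prop:TopCharGraph} yields a compact generating set of $G$ relative to $\emptyset$, i.e.\ an ordinary compact generating set.

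The only genuine subtlety — and the step I expect to require the most careful phrasing — is this recovery of the compact open subgroup in the converse: without it, $(G,\emptyset)$ is not even a proper pair and Theorem~\ref{prop:TopCharGraph} is unavailable, so the entire argument hinges on Remark~\ref{rem:GenGraph}(1). The remainder is bookkeeping, namely identifying the relative notions for $\mathcal H=\emptyset$ with their absolute counterparts and confirming that clause~(3) of the proper-pair definition holds vacuously.
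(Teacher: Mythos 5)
Your proof is correct and follows essentially the same route as the paper: the paper likewise deduces the corollary from Theorem~\ref{prop:TopCharGraph} applied to the proper pair $(G,\emptyset)$, using Remark~\ref{rem:GenGraph}(1) to recover the compact open subgroup in the direction $(2)\Rightarrow(1)$. Your write-up merely makes explicit the bookkeeping (vacuous proper-pair conditions, identification of relative and absolute generation for $\mathcal H=\emptyset$) that the paper leaves implicit.
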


\subsection{Proof of Theorem~\ref{prop:TopCharGraph}}\label{subsec:TopCompactGen} The equivalence of \eqref{TopoChar02} and \eqref{TopoChar03} is a direct consequence of Theorem~\ref{thm:Cayley-Abels}.
The equivalence of \eqref{TopoChar01} and \eqref{TopoChar03} follows from Proposition~\ref{prop:LastOne} stated below.

\begin{proposition}\cite[Theorem 8.10]{BrHa99}\label{thm:Bridson}
Let $X$ be a topological space, let $G$ be a group acting on $X$ by homeomorphisms, and let $U$ be an open subset such that $X= GU$. If $X$ is connected, then the set $S = \{g \in G \mid g.U \cap U \neq \emptyset \}$ generates $G$.
\end{proposition}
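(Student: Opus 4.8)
The plan is to use a standard connectedness argument. Let $H=\langle S\rangle$ be the subgroup of $G$ generated by $S$; the goal is to prove $H=G$. I may assume $U$ is nonempty, since otherwise $X=GU=\emptyset$ and the statement is vacuous. Note that $1\in S$ (because $U\cap U=U\neq\emptyset$) and that $S=S^{-1}$ (since $gU\cap U\neq\emptyset$ if and only if $U\cap g^{-1}U\neq\emptyset$), so $H$ consists precisely of the finite products of elements of $S$.

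First I would consider the set $Y=HU=\bigcup_{h\in H}hU$ and show that it is both open and closed in $X$. Openness is immediate: each $hU$ is open because $h$ acts as a homeomorphism, so $Y$ is a union of open sets. For closedness, the key observation is that if $hU\cap gU\neq\emptyset$ for some $h\in H$ and $g\in G$, then applying the homeomorphism $h^{-1}$ gives $U\cap(h^{-1}g)U\neq\emptyset$, whence $h^{-1}g\in S\subseteq H$ and therefore $g\in H$. Taking the contrapositive, for every $g\in G\setminus H$ the translate $gU$ is disjoint from $Y$. Since $X=GU$, this exhibits the complement $X\setminus Y$ as $\bigcup_{g\in G\setminus H}gU$, a union of open sets, hence open; so $Y$ is closed.

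Because $X$ is connected and $Y$ is a nonempty clopen subset of $X$ (it contains $U$), I conclude $Y=X$. Finally, to deduce $H=G$, I would take an arbitrary $g\in G$ and any point $x\in U$. Then $gx\in gU\subseteq X=HU$, so there is $h\in H$ with $gx\in hU$, i.e. $h^{-1}gx\in U$. Since also $x\in U$, the point $h^{-1}gx$ lies in $U\cap(h^{-1}g)U$, so this intersection is nonempty and $h^{-1}g\in S\subseteq H$; therefore $g\in H$. This shows $G\subseteq H$ and completes the argument.

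As for difficulties, the proof is elementary and I expect no serious obstacle. The only point requiring genuine care is the closedness step: one must verify that the disjointness of $gU$ from $Y$ for each $g\notin H$, combined with the covering hypothesis $X=GU$, really does present $X\setminus Y$ as a union of translates $gU$. Once that is in place, connectedness does all the work, and the final paragraph is just a retranslation of $Y=X$ back into the statement $g\in H$.
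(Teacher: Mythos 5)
Your proof is correct. The paper does not supply its own argument for this proposition---it is quoted directly from Bridson--Haefliger~\cite{BrHa99}---and your clopen/connectedness argument (showing $HU$ is open and closed, then translating $HU=X$ back into $G=\langle S\rangle$) is essentially the standard proof given in that reference, so nothing further is needed.
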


\begin{proposition}\label{prop:LastOne}
Let $(G, \mathcal{H})$ be a proper pair. The following statements are equivalent.
\begin{enumerate}
    \item  There is a compact subset $A\subset G$ such that $G=\langle A\cup \bigcup \mathcal{H}\rangle$.
    \item There is a Cayley-Abels graph $\Gamma$ of $G$ relative to $\mathcal{H}$.
\end{enumerate}
\end{proposition}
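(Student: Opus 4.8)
The plan is to prove the two directions of the equivalence separately, exploiting the characterization of Cayley-Abels graphs from Theorem~\ref{thm:Cayley-Abels}, which tells us that having a Cayley-Abels graph relative to $\mathcal H$ is the same as having a connected discrete cocompact simplicial $G$-graph with the stated stabilizer conditions.

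For the implication $(2)\Rightarrow(1)$, I would start from a Cayley-Abels graph $\Gamma$ and produce a compact relative generating set by applying \v{S}varc--Milnor-type reasoning, i.e. Proposition~\ref{thm:Bridson}. Concretely, fix a vertex $v_0$ in each $G$-orbit of vertices; since $\Gamma$ is cocompact there are finitely many such orbits. Take $U$ to be an open neighbourhood of the finitely many chosen orbit representatives together with the (finitely many, by cocompactness) edges of a compact connected subcomplex surjecting onto $\Gamma/G$, so that $X=\Gamma=GU$ and $U$ is open. Proposition~\ref{thm:Bridson} then says the set $S=\{g\in G\mid g.U\cap U\neq\emptyset\}$ generates $G$. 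The issue is that $S$ need not be compact when some vertex stabilizers are noncompact (those conjugate to subgroups in $\mathcal H$). The key observation is that $S$ is covered by finitely many double cosets of the form $G_{v}\, g\, G_{w}$ where $v,w$ range over orbit representatives; discreteness of the action (open pointwise stabilizers, compact edge stabilizers) forces $S$ to lie in $A\cup\bigcup\mathcal H$ for a compact set $A$. I would make this precise by noting that each vertex stabilizer is either compact (hence contributes a compact piece) or a conjugate of some $H\in\mathcal H$, and only finitely many conjugates are involved; absorbing the noncompact stabilizers into $\bigcup\mathcal H$ and the remaining finitely many translating elements and compact stabilizers into a compact set $A$ yields $G=\langle A\cup\bigcup\mathcal H\rangle$.

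For the converse $(1)\Rightarrow(2)$, I would build the graph directly from a compact relative generating set $A$. After enlarging $A$ I may assume it is symmetric and contains a compact open subgroup $V$ (which exists by hypothesis on the pair). The natural candidate is a ``coned-off Cayley--Abels'' construction: take as vertex set the disjoint union of $G/V$ together with $G/H$ for each noncompact $H\in\mathcal H$, and join vertices according to the generators in $A$ and according to membership in the cosets of the $H$'s, mirroring the coned-off Cayley graph of Example following Definition~\ref{def:RelativeGraphs}. One checks that $G$ acts on this graph simplicially, cocompactly (finitely many orbits of vertices and edges because $A$ is compact hence a finite union of $V$-double cosets, and $\mathcal H$ is finite), with compact edge stabilizers and with vertex stabilizers either conjugates of $V$ (compact) or conjugates of the $H\in\mathcal H$. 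Connectivity is exactly the statement that $A\cup\bigcup\mathcal H$ generates $G$. Finally, I would verify the orbit-uniqueness condition of Theorem~\ref{thm:Cayley-Abels}(2) for noncompact stabilizers, using that $(G,\mathcal H)$ is a proper pair so distinct noncompact members of $\mathcal H$ are nonconjugate. By Theorem~\ref{thm:Cayley-Abels} this graph is a Cayley-Abels graph of $G$ relative to $\mathcal H$.

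I expect the main obstacle to be the $(2)\Rightarrow(1)$ direction, specifically showing that the generating set $S$ produced by Proposition~\ref{thm:Bridson} can be written as $A\cup\bigcup\mathcal H$ with $A$ \emph{compact}. The subtlety is entirely topological rather than combinatorial: one must use that the stabilizers are open and that the action is discrete to control $S$, and carefully separate the compact stabilizer contributions from the noncompact peripheral ones so that what is left over after subtracting $\bigcup\mathcal H$ genuinely has compact closure. The choice of the open set $U$ must be made with some care so that $U$ meets only finitely many orbits and so that the resulting $S$ decomposes as desired; getting $U$ to simultaneously be open, to cover $\Gamma$ under the $G$-action, and to yield a tractable $S$ is the heart of the argument.
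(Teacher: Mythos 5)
Your proposal follows the same two-pronged route as the paper: for $(1)\Rightarrow(2)$ an explicit coned-off coset graph certified by Theorem~\ref{thm:Cayley-Abels}, and for $(2)\Rightarrow(1)$ an application of Proposition~\ref{thm:Bridson} followed by sorting the resulting generators into a compact piece plus the peripheral subgroups. Your treatment of $(2)\Rightarrow(1)$ is in fact \emph{more} detailed than the paper's, which disposes of this direction in two sentences (finitely many vertex stabilizers plus a finite set generate $G$; keep the compact stabilizers and the finite set). Your double-coset decomposition of $S$ and the absorption of conjugates $gHg^{-1}$ by adjoining the finitely many conjugating elements to $A$ are exactly the details the paper leaves implicit, and they do go through, since cocompactness gives finitely many stabilizer-orbits of edges at each chosen vertex. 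One phrasing slip: $S$ need not \emph{lie in} $A\cup\bigcup\mathcal H$ for any compact $A$ (an element of a double coset $hHh^{-1}gG_v$ generally belongs to no member of $\mathcal H$); what is true, and what your final sentence correctly states, is that $\langle S\rangle\leq\langle A\cup\bigcup\mathcal H\rangle$. Note also that you have the difficulty inverted relative to the paper: its real labor is in $(1)\Rightarrow(2)$, namely the connectivity of the constructed graph, which the paper proves by induction on word length in $S\cup\bigcup\mathcal H$ and which you only assert.

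There is one step that fails as written. In your construction for $(1)\Rightarrow(2)$ you take cone vertices $G/H$ only for the \emph{noncompact} $H\in\mathcal H$. But Definition~\ref{def:properPair} allows $\mathcal H$ to contain compact subgroups (only distinct noncompact members must be nonconjugate), and condition \eqref{item:CA2} of Theorem~\ref{thm:Cayley-Abels} --- equivalently condition (3) of Definition~\ref{defx:CayleyAbels} --- requires that \emph{every} $H\in\mathcal H$, compact or not, be the $G$-stabilizer of a vertex. In your graph the vertex stabilizers are precisely the conjugates of $V$ and of the noncompact members of $\mathcal H$, so a compact $H\in\mathcal H$ that is not a conjugate of $V$ stabilizes no vertex, and the concluding appeal to Theorem~\ref{thm:Cayley-Abels} is unavailable: what you have built is a Cayley-Abels graph of $G$ relative to the noncompact part of $\mathcal H$, not relative to $\mathcal H$ itself. (Connectivity is also affected, since edges only see $A$ and the noncompact members, but this is harmless: enlarge $A$ by the compact members of $\mathcal H$.) The repair is exactly what the paper does: take the vertex set $G/U\sqcup G/\mathcal{H}$, with a cone vertex $gH$ for every $H\in\mathcal H$ and every coset, so that each $H\in\mathcal H$ is realized as the stabilizer of the vertex $H$; all conditions of Theorem~\ref{thm:Cayley-Abels}\eqref{item:CA2} then hold and the proof closes.
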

\begin{proof}
Let $U$ be a compact open subgroup of $G$. Suppose there is a relative compact generating set $A$ of $G$ with respect to $\mathcal{H}$.  Compactness of $A$ implies that  there is a finite subset $S\subset G$ such that $A\subset SU$.  Define $\Gamma$ as the $G$-graph with vertex set $V(\Gamma)=G/U \cup G/\mathcal{H}$ and edge set $E(\Gamma)=\{ \{gU, gsU\} \mid g\in G,\ s\in S \} \cup \{\{gU, gH\}\mid g\in G,\ H\in\mathcal{H}\}$. Note that $G$ acts discretely, there are only finitely many orbits of vertices and edges on $\Gamma$, edge stabilizers are compact open, vertex stabilizers are conjugates of $U$ or $H\in \mathcal{H}$; moreover, for every $H\in\mathcal{H}$ there is a vertex in $\Gamma$ with stabilizer equal to $H$.

It is left to prove that $\Gamma$ is connected. Observe that it is enough to show that there is a  path from the vertex $U$ to the vertex $gU$ for every $g\in G$. We argue by induction on the length of $g$ as a word in the generating set $S\cup \bigcup \mathcal{H}$. Observe that if there is a path $\alpha$ in $\Gamma$ from $U$ to $gU$ and $x\in S\cup \bigcup \mathcal{H}$ then there is a path from $U$ to $gxU$, namely the concatenation of the paths $\alpha$ and $g\beta$ where $\beta$ is a path from $U$ to $xU$ given by  
\[\beta=[U, H, xU],\qquad \text{ or }\qquad  \beta=[U,xU]\]
if $x\in H \in \mathcal{H}$ or $x\in S$ respectively. By Theorem~\ref{thm:Cayley-Abels}, $\Gamma$ is a   Cayley-Abels graph of $G$ relative to $\mathcal{H}$.

Conversely, suppose  $\Gamma$ is a  Cayley-Abels graph  of $G$ relative to $H$. Since $\Gamma$ is connected and cocompact, Proposition~\ref{thm:Bridson} implies that $G$ is generated by  finite number of vertex stabilizers $G_{v_1},\ldots , G_{v_k}$ and a finite set $S$. Then $S$ together with the union of the $G_{v_i}$ that are compact  is a compact generating set of $G$  relative to $\mathcal{H}$.
\end{proof}

\section{Equivariant edge attachments and fineness} \label{sec:appendix}

This section revisits an argument from~\cite{MR21} in order to prove that certain natural extensions of $G$-graphs preserve fineness. The main result of this section is Theorem~\ref{thm.2hs}. Throughout the section, $G$ denotes a topological group. A metric space is \emph{locally finite} if balls of finite radius are finite. All graphs in this section are 1-dimensional simplicial complexes. Let $\Gamma$ be a simplicial graph, let $v$ be a vertex of  $\Gamma$, and let \begin{align*}
T_v \Gamma = \{w \in V(\Gamma) \mid \{v,w\}\in E(\Gamma)\}.
\end{align*}
denote the set of the  vertices adjacent to $v$.
For $x,y \in T_v \Gamma$,  the \emph{angle metric} $\angle_{T_v\Gamma}  (x,y)$ is the combinatorial length of the shortest path in the graph  $\Gamma - \{v\}$ between $x$ and $y$, with $\angle_{T_v\Gamma} (x,y) = \infty$ if there is no such path. 

\begin{definition}[Bowditch fineness]\cite{Bo12} A simplicial graph $\Gamma$ is \emph{fine at $v$} if $(T_v \Gamma,\angle_{T_v\Gamma})$ is a locally finite metric space. 
A graph $\Gamma$ is   \emph{fine} if it is fine at every vertex. 
\end{definition}

For the equivalence between this definition of fineness and the one given in the introduction of this article see~\cite[Prop. 2.1]{Bo12}.

\begin{definition}[Equivariant attachment of edges]\label{def:uH-Attachment}
Let $G$ be a group and let $\Gamma$ and $\Delta$ be simplicial  $G$-graphs. 
\begin{enumerate}
    \item  Let $u \in V(\Gamma)$ and let $H \leq G$ be a subgroup.  The  $G$-graph $\Delta$ is \emph{obtained from $\Gamma$ by attaching an edge  $G$-orbit with representative $\{u,H\}$} if \[V(\Delta) = V(\Gamma) \sqcup G/H, \qquad 
 E(\Delta) = E(\Gamma) \sqcup \{\{gu,gH\}| g \in G\}\]
 where $G/H$ denotes the $G$-set of left cosets of $H$ in $G$.
 
 \item Let $u,v\in V(\Gamma)$ distinct vertices. The  $G$-graph $\Delta$ is \emph{obtained from $\Gamma$ by attaching a $G$-orbit of edges with representative $\{u,v\}$} if 
 \[ V(\Delta) = V(\Gamma),\qquad E(\Delta)  = E(\Gamma) \cup \left\{\{g.u,g.v\} \mid g \in G \right\}  \]
 \end{enumerate}
\end{definition}

 \begin{theorem}  \label{thm.2hs}
Let  $\Gamma$ be a connected discrete simplicial $G$-graph  with compact edge stabilizers. Let   $u, v, a \in V(\Gamma)$, and  let $H\leq G$ be a compact open subgroup. Let $\Delta$ be a $G$-graph obtained from $\Gamma$
\begin{enumerate}
     \item by attaching   a $G$-orbit of edges with representative $\{u,v\}$; or
    \item by attaching a $G$-orbit of edges with representative $\{u,H\}$.
\end{enumerate}
Then  $\Gamma$ is fine at   $a$ if and only if $\Delta$ is fine at $a$. Moreover, the inclusion $\Gamma \hookrightarrow \Delta$ is a quasi-isometry.
\end{theorem}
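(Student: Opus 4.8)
The plan is to establish the two conclusions—preservation of fineness at each vertex and the quasi-isometry of the inclusion—separately, treating the two attachment constructions in parallel since they are structurally similar.

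First I would address the quasi-isometry claim, which should be the easier half. The inclusion $\Gamma \hookrightarrow \Delta$ is the identity on $V(\Gamma)$ (and $\Delta$ adds only the cosets $G/H$ in case (2)), so the map is surjective on vertices up to bounded distance. To see it is a quasi-isometric embedding, I would observe that $\Delta$ is obtained from $\Gamma$ by adding a single $G$-orbit of edges. Because the $G$-action on $\Gamma$ is cocompact and the new edges form one orbit, any new edge $\{g.u, g.v\}$ (resp.\ the path through a new coset vertex $gH$) can be replaced by a path in $\Gamma$ of length at most $D := \dist_\Gamma(u,v) + 2$ (resp.\ a bound coming from $\dist_\Gamma(u, \cdot)$ over the finitely many relevant configurations), since $u,v$ are fixed representatives and the estimate is $G$-invariant. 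Thus distances in $\Delta$ are pinched between $\tfrac{1}{D}$ and $1$ times distances in $\Gamma$, giving the quasi-isometry. I would be slightly careful in case (2) that the newly added vertices $gH$ are all within distance $1$ of $\Gamma$, so they do not escape a bounded neighborhood.

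Next, for fineness, I would reduce to the local statement at a single vertex $a$, which is exactly the form in which the definition is phrased (fineness at $v$ via local finiteness of $(T_v\Gamma, \angle_{T_v\Gamma})$). The key point is to compare the angle metric on the link of $a$ in $\Gamma$ versus in $\Delta$. Adding edges can only decrease angles (more paths avoiding $a$ become available) and can enlarge $T_a$; so the nontrivial direction is showing that if $\Gamma$ is fine at $a$ then $\Delta$ is fine at $a$, i.e.\ that the new edges do not create infinitely many short detours. Here I would lean on the hypothesis that edge stabilizers are compact and $H$ is compact open, together with discreteness and cocompactness, to control how many new edges are incident to any bounded neighborhood of $a$. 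This is precisely the type of bookkeeping carried out in~\cite{MR21}, so I would follow that argument: express a short path in $\Delta-\{a\}$ as an alternation of $\Gamma$-subpaths and new edges, bound the number of new edges used using the compact-stabilizer hypothesis, and conclude that each $\angle_{T_a\Delta}$-ball is finite.

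The main obstacle I expect is this last fineness-preservation step in the direction $\Gamma$ fine $\Rightarrow$ $\Delta$ fine. The difficulty is that the added graph $\Delta$ need not be locally finite, so controlling embedded paths that repeatedly traverse the new $G$-orbit of edges requires genuinely using that the edge stabilizers are compact (hence the new edges incident to a fixed vertex are finite in number per orbit) and that $H$ is compact open in case (2). Converting a path-counting statement into a local-finiteness statement for the angle metric, uniformly over the $G$-orbit, is where the technical care lies; I would isolate the combinatorial core as a lemma about how attaching one $G$-orbit of edges with compact stabilizer affects the angle metric, and then invoke the cited argument from~\cite{MR21} to finish, rather than reproving it from scratch.
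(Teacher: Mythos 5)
Your quasi-isometry argument is essentially the paper's (replace each new edge, or each corner $[g.v_i,gH,g.v_j]$ through a cone vertex, by a $G$-translate of a fixed path in $\Gamma$), and you correctly isolate $\Gamma$ fine at $a$ $\Rightarrow$ $\Delta$ fine at $a$ as the only nontrivial direction. But that direction contains a genuine gap, located exactly where you placed your trust. Your key finiteness claim is backwards: you assert that compactness of edge stabilizers lets you ``control how many new edges are incident to any bounded neighborhood of $a$,'' and in particular that ``the new edges incident to a fixed vertex are finite in number per orbit.'' This is false. In construction (2), if $a=g_0.u$ and $G_u$ is non-compact (vertex stabilizers are only assumed open, not compact), then $a$ is adjacent to $[G_u : G_u\cap H]$ distinct cone vertices, which is infinite; similarly in construction (1) when $G_u$ is non-compact. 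The true local-finiteness statement goes the other way: each cone vertex $gH$ has finite degree $[H : H\cap G_u]$, because $H$ is compact and $H\cap G_u$ is open. Consequently one cannot deduce finiteness of $\angle_{T_a\Delta}$-balls from any count of new edges near $a$. The paper instead shows that among the possibly infinitely many cone vertices adjacent to $a$, only finitely many occur as first vertices of escaping paths of length at most $k$ from $a$ to a fixed target $b$ (the set $X_0$). That finiteness is proved by projecting such paths into $\Gamma$ via the $\alpha$-replacement, using fineness of $\Gamma$ at $a$ to trap the projections in a finite set $W_1$, and then running a pigeonhole argument against the key compactness lemma: for a $2$-path $[x,y,z]$ and a $1$-path, the set $\{g.z \mid g.x, g.y \text{ prescribed}\}$ is finite, since $G_x\cap G_y$ is compact and $G_x\cap G_y\cap G_z$ is open. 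Nothing in your sketch produces this step; and, minor point, the cocompactness you invoke is neither a hypothesis of the theorem nor needed.

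Moreover, the deferral to~\cite{MR21} cannot close this gap for construction (2). That reference treats only the first construction, for discrete groups with finite edge stabilizers; the paper states explicitly that the cone-vertex case had not been addressed even for discrete groups, and it supplies the additional ingredients required: the finite degree of cone vertices, the reduction of the fineness criterion to targets $b\in V(\Gamma)$ (possible precisely because cone vertices have finite degree), the modified $\alpha$-replacement through cone vertices, and the finiteness of $X_0$. So ``invoke the cited argument to finish'' is unavailable for exactly the case that needs proof; what can be borrowed is the $W_i$/$Z_i$ bookkeeping skeleton, but the new lemmas above must be proved, not cited.
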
 

Since fineness at a vertex is preserved under taking subgraphs, the if part of the theorem is trivial. Before moving to the proof of the theorem, let us state and prove a corollary.

\begin{corollary}\label{prop:EdgeAttachments}
 Let $\Gamma_1$ and $\Gamma_2$ be  cocompact connected discrete  simplicial $G$-graphs with compact edge stabilizers.
 Let $\mathcal{V}_\infty(\Gamma_i)$ be the  set of vertices of $\Gamma_i$ with non-compact stabilizer. If $\mathcal{V}_\infty(\Gamma_i)$ is empty for $i=1,2$, or  
 there is a $G$-equivariant bijection $\mathcal{V}_\infty(\Gamma_1) \xrightarrow{\eta} \mathcal{V}_\infty(\Gamma_2)$, then:
 \begin{enumerate}
    \item $\Gamma_1$ and $\Gamma_2$ are quasi-isometric, and
    \item $\Gamma_1$ is fine if and only if $\Gamma_2$ is fine.
\end{enumerate}
\end{corollary}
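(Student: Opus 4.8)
The plan is to deduce the corollary from Theorem~\ref{thm.2hs} by building a common "enlargement" of $\Gamma_1$ and $\Gamma_2$ obtained from each by finitely many equivariant edge attachments, and then transporting both the quasi-isometry and the fineness statements across these attachments. The key observation is that Theorem~\ref{thm.2hs} says each single equivariant edge-orbit attachment (of either type) is a quasi-isometry and preserves fineness at every vertex, so it suffices to realize a bridge between $\Gamma_1$ and $\Gamma_2$ as a finite sequence of such moves.

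First I would reduce to connecting the two graphs through a single auxiliary graph $\Delta$ containing copies of both. Fix a compact open subgroup $U \leq G$. Since each $\Gamma_i$ is cocompact, connected, discrete with compact edge stabilizers, Lemma~\ref{lemm:infiniteValence} identifies $\mathcal{V}_\infty(\Gamma_i)$ as exactly the infinite-degree vertices, and cocompactness means there are finitely many $G$-orbits of vertices and edges. I would form $\Delta$ by starting from the disjoint union $\Gamma_1 \sqcup \Gamma_2$ and attaching edges to glue them: for the compact-stabilizer vertices, attach finitely many $G$-orbits of edges of type~(2) (edges of the form $\{u,H\}$ with $H$ a compact open subgroup, realizing the identification of $G/U$-type vertex sets), and for the non-compact-stabilizer vertices, use the given $G$-equivariant bijection $\eta$ to attach type~(1) edge-orbits connecting each $v\in\mathcal{V}_\infty(\Gamma_1)$ to $\eta(v)\in\mathcal{V}_\infty(\Gamma_2)$. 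Because there are finitely many orbits, this requires only finitely many attachments, each covered by Theorem~\ref{thm.2hs}.

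Next I would apply Theorem~\ref{thm.2hs} iteratively. Each attachment preserves fineness at every vertex and is a quasi-isometry; composing finitely many quasi-isometries gives a quasi-isometry, and fineness-at-every-vertex is preserved throughout, so $\Delta$ is fine if and only if $\Gamma_1$ is fine, and likewise if and only if $\Gamma_2$ is fine, while $\Gamma_1 \hookrightarrow \Delta$ and $\Gamma_2 \hookrightarrow \Delta$ are both quasi-isometries. Composing one with a quasi-inverse of the other yields a quasi-isometry $\Gamma_1 \to \Gamma_2$ and the equivalence of fineness. In the degenerate case where both $\mathcal{V}_\infty(\Gamma_i)$ are empty, only type~(2) attachments are needed, and the argument is identical.

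The main obstacle I anticipate is ensuring that $\Delta$ is genuinely connected and that the gluing can be arranged $G$-equivariantly with only finitely many orbits of attached edges while still matching up the correct vertices — in particular, verifying that the compact-stabilizer vertices of $\Gamma_1$ and $\Gamma_2$ can be bridged by type~(2) edges (which attach a $G/H$-orbit of new vertices rather than directly identifying existing ones), so some care is needed to connect the two graphs via common vertices of the form $gU$ rather than expecting an a priori bijection on compact-stabilizer vertices. This is where I would spend most of the effort: choosing representatives $u \in V(\Gamma_1)$ and attaching a $\{u,U\}$-edge orbit so that the resulting $G/U$ vertices serve as shared connective tissue, and then doing the same from $\Gamma_2$, so that $\Delta$ becomes connected and the hypotheses of Theorem~\ref{thm.2hs} are met at each stage.
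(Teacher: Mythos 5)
Your overall strategy --- realize a common enlargement of $\Gamma_1$ and $\Gamma_2$ by a finite sequence of the equivariant edge attachments of Theorem~\ref{thm.2hs} --- is the right one, but your specific construction of $\Delta$ has a genuine gap, and it sits exactly at the non-compact-stabilizer vertices (not at the compact-stabilizer vertices, which is where you anticipated trouble). You keep $\Gamma_1$ and $\Gamma_2$ disjoint inside $\Delta$ and join each $v\in\mathcal{V}_\infty(\Gamma_1)$ to $\eta(v)\in\mathcal{V}_\infty(\Gamma_2)$ by a new edge. Such a $\Delta$ can never be produced from $\Gamma_1$ by the moves of Theorem~\ref{thm.2hs}: a type~(1) attachment adds no vertices at all, and a type~(2) attachment only adds a vertex orbit $G/H$ with $H$ \emph{compact} open, so the vertices of $\Gamma_2$ with non-compact stabilizers --- which your $\Delta$ contains as vertices distinct from those of $\Gamma_1$ --- can never be introduced. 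Starting instead from $\Gamma_1\sqcup\Gamma_2$ is not permitted either, since Theorem~\ref{thm.2hs} requires the ambient graph to be connected; and even a hypothetical disconnected variant would only show that $\Delta$ is fine if and only if \emph{both} $\Gamma_i$ are fine, which does not yield the implication ``$\Gamma_1$ fine $\Rightarrow$ $\Gamma_2$ fine''. Finally, the bridge edges themselves are illegal: since $\eta$ is a $G$-equivariant bijection, $G_{\eta(v)}=G_v$, so the edge $\{v,\eta(v)\}$ has pointwise stabilizer $G_v$, which is non-compact; the intermediate graphs would then violate the standing hypothesis of compact edge stabilizers that Theorem~\ref{thm.2hs} (and its proof machinery, e.g.\ Lemma~\ref{lem:compactness}) needs at every subsequent attachment.

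The repair, which is what the paper does, is to \emph{identify} rather than \emph{bridge}: let $\Gamma$ be the quotient of $\Gamma_1\sqcup\Gamma_2$ obtained by gluing each $v\in\mathcal{V}_\infty(\Gamma_1)$ to $\eta(v)$. Then $\Gamma$ contains $\Gamma_1$ as a subgraph, every vertex of $\Gamma$ not in $\Gamma_1$ has compact open stabilizer, and so $\Gamma$ is obtained from $\Gamma_1$ by finitely many type~(2) attachments (one per $G$-orbit of compact-stabilizer vertices of $\Gamma_2$, finitely many by cocompactness) followed by finitely many type~(1) attachments (the remaining edge orbits of $\Gamma_2$, whose pointwise stabilizers are compact because they are edge stabilizers of $\Gamma_2$, even when both endpoints lie in the identified set $\mathcal{V}_\infty$). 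By symmetry the same holds with the roles of $\Gamma_1$ and $\Gamma_2$ exchanged, and iterating Theorem~\ref{thm.2hs} gives both the quasi-isometry and the fineness equivalence. Your degenerate case $\mathcal{V}_\infty(\Gamma_i)=\emptyset$ is essentially correct: there fineness is automatic since both graphs are locally finite by Lemma~\ref{lemm:infiniteValence}, and bridging through new vertices does work precisely because every vertex that must be added has compact open stabilizer (though you also need type~(1) moves to recover the remaining edges of $\Gamma_2$).
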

\begin{proof}
Suppose that there is a $G$-equivariant bijection $\mathcal{V}_\infty(\Gamma_1) \xrightarrow{\eta} \mathcal{V}_\infty(\Gamma_2)$. 
Let $\Gamma$ be the simplicial $G$-graph obtained by taking disjoint union of $\Gamma_1$ and $\Gamma_2$ identified along $\mathcal{V}_\infty(\Gamma_1)$ and $ \mathcal{V}_\infty(\Gamma_2)$ through $\eta$. Since $\mathcal{V}_\infty(\Gamma_i)$ is non-empty, $\Gamma$ is a connected $G$-graph. By cocompactness of $\Gamma_2$, the $G$-graph $\Gamma$  can be constructed from $\Gamma_1$ by finite sequence of  $G$-edge attachments. Since $\mathcal{V}_\infty(\Gamma_2)$ contains all vertices of $\Gamma_2$ with non-compact stabilizer, we only need to perform equivariant edge attachment of edges satisfying the hypothesis of Theorem~\ref{thm.2hs}. By induction, $\Gamma$ is fine if and only if $\Gamma_1$ is fine; and the inclusion $\Gamma_1\hookrightarrow \Gamma$ is a quasi-isometry. By symmetry,  $\Gamma_2$ is fine if and only if $\Gamma$ is fine, and  $\Gamma_2$ is quasi-isometric to $\Gamma$.

Suppose that $\mathcal{V}_\infty(\Gamma_i)$ is empty for $i=1,2$.  By Lemma~\ref{lemm:infiniteValence}, $\Gamma_i$ is a locally finite graph and hence is fine. To prove that $\Gamma_1$ and $\Gamma_2$ are quasi-isometric we  use an argument similar to the previous paragraph. Let $u_i$ be a vertex of $\Gamma_i$ and let $\Gamma$ be the $G$-graph obtained as the disjoint union of $\Gamma_1$, $\Gamma_2$ together with the $G$-set of edges $\{ \{gu_1, gu_2\} \mid g\in G\}$. Observe that $\Gamma$ is connected cocompact $G$-graph such that every vertex has compact $G$-stabilizer. It follows that $\Gamma$ can be constructed from $\Gamma_1$ by a finite sequence of $G$-edge attachments satisfying the hypothesis of Theorem~\ref{thm.2hs} and therefore $\Gamma_1\hookrightarrow \Gamma$ is a quasi-isometry.
\end{proof}

There is a version of the previous corollary  for pairs $(G,\mathcal H)$ with $G$ discrete in~\cite[Proposition 5.6]{SamLouisEdu}.

Theorem~\ref{thm.2hs} addresses two constructions that preserve fineness of vertices, we refer to them as the first and second construction according to the enumeration in the statement. There are versions of Theorem~\ref{thm.2hs}  in the case of the {\bf first construction} and under the assumption that $G$ is discrete:
\begin{enumerate}
    \item Bowditch shows that if $\Gamma$ has finitely many $G$-orbits of vertices and edges and is fine, then $\Delta$ is fine; see~\cite[Lemma 4.5]{Bo12}.  An alternative argument (that works only in the torsion-free case) for this statement can be found in ~\cite{MaWi10}. 
    \item The statement of Theorem~\ref{thm.2hs}  for the first construction in the case that $G$ is discrete can be found in~\cite[Proposition 4.2]{MR21}. 
\end{enumerate}
 The proof of    Theorem~\ref{thm.2hs} for both constructions follows the same strategy as the argument in \cite[Proof of Proposition 4.2]{MR21}.   We only prove  Theorem~\ref{thm.2hs} for the second construction, i.e., the case that $\Delta$ is obtained by attaching a $G$-orbit of edges with representative $\{u,H\}$. This case has not been   addressed even in the case that the group is discrete. While there is a significant overlap with the argument in~\cite[Proof of Proposition 4.2]{MR21}, we decided to include a complete proof since there is number of additional lemmas that are required besides addressing   topological matters arising from replacing  finiteness of edge stabilizers   with the assumption that edge stabilizers are compact and open.
 For the convenience of the reader we included some arguments from~\cite{MR21} in some cases almost verbatim.

\subsection{Preliminaries}

Let us fix some notation for paths in a simplicial graph $\Gamma$. A \emph{path} or an \emph{edge-path} from a vertex $v_0$ to a vertex $v_n$ of   $\Gamma$ is a sequence of vertices $[v_0, v_1 \dots , v_n]$, where ${v_i}$ and $v_{i+1}$ are adjacent (in particular distinct) vertices for all $i \in \{0, \dots , n-1 \}$. The path is \emph{embedded} if all vertices of the path are distinct. The \emph{length} of a path is  the total number of vertices in the sequence minus one. A path of length $k$ is called a \emph{$k$-path}. If $\alpha=[u_1,\ldots ,u_k]$ and $\beta=[v_1,\ldots v_\ell]$ are paths with $u_k=v_1$, then $[\alpha, \beta]$ denotes the concatenated path $[u_1,\ldots,u_k,v_2,\ldots v_\ell]$.

In this section we  use an equivalent formulation of fineness from~\cite{MR21} that we described below. 

A  path $[u, u_1 \dots , u_k]$ in a graph  $\Gamma$ is an  \emph{escaping path from $u$ to $v$} if $v=u_k$  and $u_i \neq  u$ for every $i \in \{1, \dots , k\}$.
For vertices $u$ and $v$ of $\Gamma$ and $k \in \mathbb{Z}_{+}$,  define:
\[
\begin{split}
\vec{uv} (k)_\Gamma = \{w \in T_u \Gamma \mid w \text{ belongs } & \text{to an escaping } \\
& \text{path from }u\text{ to }v\text{ of length $\leq k$}\}.
\end{split}
\]

\begin{remark}\label{rem:thanks}
If $w\in \vec{uv}(k)_\Gamma$ then there is an escaping path $\delta$  from $u$ to $v$ of length at most $k$ such that $w$ is the vertex immediately after $u$. Indeed, if $\gamma$ is an escaping path $[u,u_1,\cdots, u_i, \cdots, u_m]$ from $u$ to $v$, $m\leq k$ and $w=u_i$, then $[u,u_i,\ldots , u_m]$ is such an escaping path.  
\end{remark}

\begin{proposition}\cite[Lemma 4.4]{MR21}\label{lem:4.4Farhan}
A graph $\Gamma$ is fine at $u \in V(\Gamma)$ if and only if $\vec{uv}(k)_\Gamma$ is a finite set for every integer $k>0$ and every vertex $v \in V(\Gamma)$. 
\end{proposition}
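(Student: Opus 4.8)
The plan is to prove both implications by translating between escaping paths based at $u$ and the angle metric on $T_u\Gamma$, using Remark~\ref{rem:thanks} to normalize each escaping path so that the relevant neighbor of $u$ appears immediately after $u$. The crucial observation is that deleting the initial vertex $u$ from an escaping path from $u$ to $v$ of length $\le k$ produces a path inside $\Gamma - \{u\}$ of length $\le k-1$ whose second endpoint lies in $T_u\Gamma$; this is exactly the quantity measured by $\angle_{T_u\Gamma}$.

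For the forward implication (fineness at $u$ implies finiteness of $\vec{uv}(k)_\Gamma$), I would fix $v$ and $k$ and show that any two elements $w, w' \in \vec{uv}(k)_\Gamma$ satisfy $\angle_{T_u\Gamma}(w, w') \le 2(k-1)$. Indeed, by Remark~\ref{rem:thanks} each of $w, w'$ can be taken as the vertex immediately after $u$ on an escaping path to $v$ of length $\le k$; removing $u$ gives paths $[w, \ldots, v]$ and $[w', \ldots, v]$ in $\Gamma - \{u\}$ of length $\le k-1$ each, and concatenating one with the reverse of the other yields a walk from $w$ to $w'$ in $\Gamma - \{u\}$ of length $\le 2(k-1)$. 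Consequently, if $\vec{uv}(k)_\Gamma$ is nonempty, fixing any $w_0$ in it shows the whole set lies in the closed ball of radius $2(k-1)$ about $w_0$ in $(T_u\Gamma, \angle_{T_u\Gamma})$. Local finiteness of this metric space, i.e. fineness at $u$, then forces the ball, and hence $\vec{uv}(k)_\Gamma$, to be finite.

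For the converse I would verify directly that $(T_u\Gamma, \angle_{T_u\Gamma})$ is locally finite. Given $x \in T_u\Gamma$ and a radius $r$, consider any $y \in T_u\Gamma$ with $\angle_{T_u\Gamma}(x, y) \le r$; there is a path $[y, \ldots, x]$ in $\Gamma - \{u\}$ of length $\le r$. Since $y$ is a neighbor of $u$ distinct from $u$, prepending $u$ yields an escaping path $[u, y, \ldots, x]$ from $u$ to $x$ of length $\le r+1$ in which $y$ is the vertex immediately after $u$, so $y \in \vec{ux}(r+1)_\Gamma$. Thus the ball of radius $r$ about $x$ is contained in $\vec{ux}(r+1)_\Gamma$, which is finite by hypothesis; hence every ball of finite radius is finite and $\Gamma$ is fine at $u$.

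The argument is short, and I do not anticipate a serious obstacle: the only points requiring care are the bookkeeping of path lengths when concatenating (tracking the $-1$ from deleting $u$ and the factor $2$ from gluing two tails at $v$) and the routine checks that the constructed sequences are genuine escaping paths, namely that no vertex after $u$ equals $u$. The latter is automatic once the tail lies in $\Gamma - \{u\}$ and the initial neighbor is in $T_u\Gamma$, together with the convention that $\Gamma$ is simplicial so adjacent vertices are distinct.
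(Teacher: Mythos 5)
Your proof is correct. Note that the paper itself contains no proof of this proposition: it is imported as a citation of \cite{MR21}*{Lemma 4.4}, so there is no in-paper argument to compare against, and your write-up serves as a self-contained verification of the cited statement. The dictionary you set up is exactly the right one. In the forward direction, the bound $\angle_{T_u\Gamma}(w,w')\leq 2(k-1)$ obtained by deleting $u$ from two normalized escaping paths (via Remark~\ref{rem:thanks}) and gluing the tails at $v$ is valid: the tails lie in $\Gamma-\{u\}$ because escaping paths avoid $u$ after the initial vertex, and the concatenation, even if it repeats vertices, dominates the length of a shortest path in $\Gamma-\{u\}$, so the whole set $\vec{uv}(k)_\Gamma$ sits in a single ball of the angle metric, which is finite by local finiteness. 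In the converse, prepending $u$ to a path in $\Gamma-\{u\}$ realizing $\angle_{T_u\Gamma}(x,y)\leq r$ does produce a genuine escaping path of length at most $r+1$ (adjacency of $u$ and $y$ holds since $y\in T_u\Gamma$, and no later vertex can equal $u$), giving the inclusion of the $r$-ball about $x$ into $\vec{ux}(r+1)_\Gamma$; since the angle metric is integer-valued, checking integer radii suffices, so local finiteness follows. The only steps you leave implicit (a walk between two vertices contains a path of no greater length; the degenerate case $k=1$) are routine.
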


\begin{remark}\label{rem:simplification}
For vertices $u$ and $v$ of $\Gamma$ and $k \in \mathbb{Z}_{+}$, 
\[ \vec{uv}(k+1)_\Gamma =  \bigcup \left\{ \vec{uw }(k)_\Gamma \colon w\in T_v\Gamma \right\}. \]
\end{remark}

We conclude the preliminaries of this section with the following lemma.

\begin{lemma}\label{lem:compactness}
Let $\Gamma$ be a connected discrete $G$-graph with compact edge stabilizers. Let $c=[x,y,z]$ be a 2-path and let $e=[u,v]$ be a 1-path.   
The set
\[  B=\{  g.z\in V(\Gamma)  \mid g\in G,\  g.x=u,\   g.y=v  \} \]
is finite.
\end{lemma}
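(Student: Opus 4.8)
The plan is to show that the set $B$ is contained in a single orbit of the vertex $z$ under the compact edge stabilizer $G_{\{x,y\}}$, and then to argue that this orbit is finite because the stabilizer of $z$ inside $G_{\{x,y\}}$ is open, hence of finite index. First I would dispose of the trivial case where $B$ is empty, and otherwise fix one element $g_0\in G$ with $g_0.x=u$ and $g_0.y=v$. The key observation is that if $g$ is any element with $g.x=u$ and $g.y=v$, then $g_0^{-1}g$ fixes both $x$ and $y$; since $\Gamma$ is simplicial and $x\neq y$, this forces $g_0^{-1}g$ to lie in the pointwise stabilizer $K:=G_{\{x,y\}}$ of the edge $\{x,y\}$. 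Consequently $g.z\in g_0.(K.z)$, so that $B\subseteq g_0.(K.z)$ and it suffices to prove that the $K$-orbit of $z$ is finite.

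For the finiteness of $K.z$ I would use the two standing hypotheses in tandem. By assumption the edge stabilizer $K$ is compact. On the other hand, discreteness of the $G$-action guarantees that the vertex stabilizer $G_z$ is open in $G$, so that $K_z:=K\cap G_z$, the stabilizer of $z$ inside $K$, is an open subgroup of the compact group $K$. An open subgroup of a compact group has finite index, since its cosets form a partition of $K$ into open sets and compactness leaves only finitely many; hence $K.z$ is in bijection with $K/K_z$ and is finite. This gives $|B|\leq |K.z|<\infty$.

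I do not anticipate a serious obstacle here; the only points requiring a little care are the identification of the stabilizer condition with a coset of the edge stabilizer (one uses that the orientation $g.x=u$, $g.y=v$ is fixed, so $g_0^{-1}g$ fixes the edge pointwise rather than merely setwise, and that the pointwise stabilizer, being a closed subgroup of the compact setwise stabilizer, is itself compact) and the elementary fact that an open subgroup of a compact group has finite index. Note that the full $2$-path structure of $c=[x,y,z]$ is not actually needed for the argument: one uses only that $\{x,y\}$ is an edge, which is what supplies the compactness of $K$, together with the openness of $G_z$.
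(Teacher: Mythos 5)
Your proof is correct and follows essentially the same route as the paper's: both reduce the set of relevant group elements to the coset $g_0(G_x\cap G_y)$ of the compact edge stabilizer and then exploit that $G_x\cap G_y\cap G_z$ is open by discreteness, the paper phrasing the final step as extracting a finite subcover of the compact coset by cosets of this open subgroup, which is exactly your observation that an open subgroup of a compact group has finite index. Your closing remarks (orientation giving the pointwise rather than setwise stabilizer, and the $2$-path structure being used only through the edge $\{x,y\}$ and the vertex $z$) are accurate and consistent with how the paper applies the lemma.
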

\begin{proof}
Suppose $B$ is non empty, and let 
$A=\{g\in G \colon g.x=u,\   g.y=v \}$.
 Fix an element $g_0\in A$ and observe that $A = g_0 (G_x\cap G_y)$.  Since $\Gamma$ has compact edge stabilizers, $G_x\cap G_y$ is a compact subgroup, and therefore $A$ is compact. 
 
 Since $\Gamma$ is a discrete $G$-graph, it follows that $G_x\cap G_y\cap G_z$ is an open subgroup.   Observe that   
\[A= \bigcup_{g\in A} g(G_x\cap G_y\cap G_z),\]
and hence, by compactness, there is a finite set $\{g_1,\ldots , g_n\}$ such that $A=\bigcup_{i=1}^n g_i(G_x\cap G_y\cap G_z)$. It follows that $B=\{g_1.z, g_2.z, \ldots , g_n.z\}$
\end{proof}

\subsection{Proof of Theorem~\ref{thm.2hs}, fineness part}

Let  $\Gamma$ be a connected discrete $G$-graph  with compact edge stabilizers. Let $u \in V(\Gamma)$, let $H\leq G$ be a compact open subgroup, and let $\Delta$ be the $G$-graph obtained from $\Gamma$ by attaching a new $G$-orbit of an edge with representative $\{u, H\}$.

\begin{lemma}
$\Delta$ is connected discrete $G$-graph with compact edge stabilizers.
\end{lemma}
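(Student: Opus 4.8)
The plan is to verify directly, from the description $V(\Delta)=V(\Gamma)\sqcup G/H$ and $E(\Delta)=E(\Gamma)\sqcup\{\{gu,gH\}\mid g\in G\}$ in Definition~\ref{def:uH-Attachment}, that $\Delta$ is a simplicial $G$-graph that is connected, discrete, and has compact edge stabilizers. The relevant $G$-action is the one extending the action on $\Gamma$ by left multiplication on the coset space $G/H$, and it clearly permutes the new edges, so the only preliminary point is simpliciality. Here I would note that each new edge $\{gu,gH\}$ has one endpoint $gu\in V(\Gamma)$ and one endpoint $gH\in G/H$ lying in \emph{distinct} summands of the disjoint union; consequently no new edge is a loop, and since the edge set is a \emph{set} of such pairs with endpoints of distinct type, two new pairs coincide exactly when they are equal as unordered pairs. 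Hence there are no multiple edges, and $\Delta$ is simplicial.

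For connectedness, I would use that $\Gamma$ is connected and that every new vertex $gH$ is joined by the edge $\{gu,gH\}$ to the vertex $gu\in V(\Gamma)$; thus each new vertex lies in the same component as $\Gamma$, and $\Delta$ is connected. For discreteness, the cells inherited from $\Gamma$ keep their stabilizers, which are open because the $G$-action on $\Gamma$ is discrete. The stabilizer of a new vertex $gH$ is the conjugate $gHg^{-1}$, which is open since $H$ is open. The endpoints of a new edge $\{gu,gH\}$ lie in distinct summands, so there are no inversions and its setwise stabilizer equals its pointwise stabilizer $G_{gu}\cap gHg^{-1}$, an intersection of two open subgroups and hence open. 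Therefore every cell of $\Delta$ has open stabilizer and the $G$-action is discrete.

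Finally, for compactness of edge stabilizers, the old edges keep their compact stabilizers by hypothesis, so only a new edge $\{gu,gH\}$ needs attention. Its stabilizer $G_{gu}\cap gHg^{-1}$ is contained in $gHg^{-1}$, which is compact because $H$ is compact; moreover it is closed in $G$, being an open subgroup (open subgroups are closed). A closed subset of a compact set is compact, so $G_{gu}\cap gHg^{-1}$ is compact, completing the verification. The argument is essentially routine; the one place requiring slight care is this last point, where I combine the openness (hence closedness) of the new edge stabilizer with compactness of the ambient conjugate $gHg^{-1}$ to conclude that the new edge stabilizers are compact, the place where the hypothesis that $H$ is \emph{compact open} rather than merely finite or open is used.
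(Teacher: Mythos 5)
Your proof is correct and follows essentially the same route as the paper's: observe connectedness via the new edges joining each coset vertex $gH$ to $gu\in V(\Gamma)$, note that the new vertex and edge stabilizers ($gHg^{-1}$ and $G_{gu}\cap gHg^{-1}$, conjugates of $H$ and $G_u\cap H$) are open, and deduce compactness of the new edge stabilizers from the fact that an open (hence closed) subgroup contained in the compact group $gHg^{-1}$ is compact. The only difference is presentational: the paper reduces to the orbit representatives $H$ and $\{u,H\}$, while you verify the same facts for arbitrary translates and additionally spell out simpliciality and the absence of inversions.
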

\begin{proof}
It is an observation that $\Delta$ is connected and the vertex $H$ of $\Delta$ has $G$-stabilizer the subgroup $H$ which is open by assumption.  The edge $\{u,H\}$ of $\Delta$ has $G$-stabilizer $G_u\cap H$ which is open since both $G_u$ and $H$ are open, and it is compact since $H$ is compact by assumption. Since any vertex or edge of $\Delta$ which is not in the $G$-orbits of the vertex $H$ or the edge $\{u,H\}$ is in $\Gamma$, we have that $\Delta$ is a discrete $G$-graph.
\end{proof}

\begin{lemma}[The vertex $H$ has finite degree]\label{lem:001}
The set $T_H\Delta$ of   vertices of $\Delta$ adjacent to the vertex $H$ is a finite subset of $V(\Gamma)$.
\end{lemma}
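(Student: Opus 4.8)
The plan is to identify the neighbor set $T_H\Delta$ explicitly and then show it is a finite $H$-orbit. By Definition~\ref{def:uH-Attachment}, the only edges of $\Delta$ incident to the vertex $H$ (viewed as the coset $eH \in G/H$) are those in the new orbit, since the old edges in $E(\Gamma)$ join vertices of $\Gamma$ and $H \in G/H$ is a newly added vertex. An edge $\{gu, gH\}$ is incident to $H$ precisely when $gH = H$, that is, when $g \in H$. Hence the other endpoint ranges over $\{gu \mid g \in H\}$, so that
\[ T_H\Delta = \{gu \mid g \in H\} = H\cdot u, \]
the $H$-orbit of $u$ in $\Gamma$. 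In particular $T_H\Delta \subseteq V(\Gamma)$, which gives the ``subset of $V(\Gamma)$'' part of the statement immediately.

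Next I would establish finiteness of this orbit via the orbit--stabilizer correspondence. The map $g(H\cap G_u) \mapsto gu$ gives a bijection between $H/(H\cap G_u)$ and $H\cdot u$, so it suffices to show the index $[H : H\cap G_u]$ is finite. Since the $G$-action on $\Gamma$ is discrete, the pointwise stabilizer $G_u$ of the vertex $u$ is an open subgroup of $G$; therefore $H\cap G_u$ is open in $H$. As $H$ is compact and $H\cap G_u$ is open in $H$, its cosets form a disjoint open cover of $H$ admitting no proper subcover, so there are only finitely many of them. Thus $[H : H\cap G_u] < \infty$ and $T_H\Delta$ is finite.

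The argument is short, and the only point requiring care is the source of openness of $G_u$: it comes from the discreteness hypothesis on the $G$-action (pointwise stabilizers of cells are open), rather than from compactness of edge stabilizers, which is not needed for this particular lemma. I do not expect a genuine obstacle here; the compactness of $H$ combined with openness of $H\cap G_u$ is exactly the standard fact that an open subgroup of a compact group has finite index, and assembling these pieces completes the proof.
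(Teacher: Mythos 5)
Your proof is correct and follows essentially the same route as the paper: identify the neighbours of the vertex $H$ as the $H$-orbit of $u$, so that the degree is the index $[H : H\cap G_u]$, and conclude finiteness from compactness of $H$ together with openness of $H\cap G_u$. The paper's argument is the same computation, just phrased in terms of the stabilizer of the edge $\{u,H\}$ rather than via an explicit orbit--stabilizer bijection.
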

\begin{proof}
By definition of $\Delta$, every vertex  adjacent to the vertex $H$ is a vertex of $\Gamma$. Observe that the vertex $H$ of $\Delta$ has stabilizer the subgroup $H$, and the edge $\{u,H\}$ has stabilizer $G_u\cap H$.
Since all vertices of $\Delta$ adjacent to the vertex $H$ are in the $G$-orbit of $u$, it follows that the vertex $H$  has degree equal to  the index of the  subgroup $H\cap G_u$ in the group $H$. Since $H$ is compact and $H\cap G_u$ is an open subgroup, the vertex $H$ of $\Delta$ has finite degree. \end{proof}
 
\begin{lemma}[Fineness criterion for $\Delta$]\label{lem:002}
If $\vec{ab}(k)_{\Delta}$ is  finite for  every $k\geq 1$ and every  $b \in V(\Gamma)$, then $\Delta$ is fine at $a$.
\end{lemma}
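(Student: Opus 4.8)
The plan is to apply the fineness criterion of Proposition~\ref{lem:4.4Farhan}, which reduces the assertion that ``$\Delta$ is fine at $a$'' to verifying that $\vec{ab}(k)_\Delta$ is a finite set for every integer $k\geq 1$ and every vertex $b\in V(\Delta)$. Since the attachment construction gives $V(\Delta)=V(\Gamma)\sqcup G/H$, the hypothesis already covers every $b\in V(\Gamma)$. Thus the whole content of the lemma is to treat the new coset vertices $b=gH\in G/H$.

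For these new vertices I would first record two structural facts. By Lemma~\ref{lem:001}, the set $T_H\Delta$ of neighbours of the vertex $H$ is a finite subset of $V(\Gamma)$; applying $g\in G$ and using $G$-equivariance of the construction, $T_{gH}\Delta=g\cdot T_H\Delta$ is again a finite subset of $V(\Gamma)$. In other words, every new coset vertex has finite degree and all of its neighbours are old vertices lying in $\Gamma$.

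Now fix $b=gH$. For $k=1$ the set $\vec{ab}(1)_\Delta$ is either empty or the singleton $\{b\}$, hence finite. For $k\geq 2$, Remark~\ref{rem:simplification} applied to $\Delta$ gives
\[
\vec{ab}(k)_\Delta=\bigcup\left\{\vec{aw}(k-1)_\Delta \colon w\in T_b\Delta\right\}.
\]
By the previous paragraph the index set $T_b\Delta$ is finite and contained in $V(\Gamma)$, so each $\vec{aw}(k-1)_\Delta$ is finite by hypothesis; being a finite union of finite sets, $\vec{ab}(k)_\Delta$ is finite. Together with the already-handled case $b\in V(\Gamma)$, this shows $\vec{ab}(k)_\Delta$ is finite for all $k\geq 1$ and all $b\in V(\Delta)$, and Proposition~\ref{lem:4.4Farhan} then yields fineness of $\Delta$ at $a$. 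There is no serious obstacle here: the only point requiring care is that the recursion in Remark~\ref{rem:simplification} replaces the target vertex $b$ by its neighbours, which for a new coset vertex are genuinely vertices of $\Gamma$, so the hypothesis applies directly and no circular dependence on the new vertices arises.
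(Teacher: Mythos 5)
Your proof is correct and follows essentially the same route as the paper's: reduce to Proposition~\ref{lem:4.4Farhan}, observe via Lemma~\ref{lem:001} (and $G$-equivariance) that each coset vertex $gH$ has finitely many neighbours, all lying in $V(\Gamma)$, and then apply Remark~\ref{rem:simplification} to express $\vec{a(gH)}(k)_\Delta$ as a finite union of sets $\vec{aw}(k-1)_\Delta$ with $w\in V(\Gamma)$, which are finite by hypothesis. Your explicit treatment of the $k=1$ case and of the equivariance step is a small refinement of the paper's argument, not a different approach.
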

\begin{proof}
Let $v$ be a vertex of $\Delta$ and let $k\geq 1$. If $v$ is not in the $G$-orbit of the vertex $H$, then by assumption $\vec{av}(k)_{\Delta}$ is  finite for  every $k\geq 1$.
Suppose that $v$ is in the $G$-orbit of the vertex $H$. Then Lemma~\ref{lem:001} implies that $v$ has finite degree and $T_v\Delta=\{b_1,\ldots , b_m\}$ is a subset of $V(\Gamma)$. Then Remark~\ref{rem:simplification} implies that  
\[  \vec{av}(k)_{\Delta} = \bigcup_{i=1}^m \vec{ab_i}(k-1).    \]
Since each $b_i\in V(\Gamma)$, the hypothesis implies that $\vec{ab_i}(k-1)$ is a finite set for each $b_i$. Therefore $\vec{av}(k)_{\Delta}$ is a finite set for every $v\in V(\Delta)$ and $k\geq1$. By Proposition~\ref{lem:4.4Farhan}, $\Delta$ is fine at $a$.
\end{proof}

Suppose that  $\Gamma$ is fine at the vertex $a$. 
Let $b$ be a vertex of $\Gamma$ and let $k\geq 1$. We prove below that $\vec{ab}(k)_\Delta$ is finite. Observe that this implies that $\Delta$ is fine at $a$ in view of Lemma~\ref{lem:002}. The argument follows the skeleton of the proof of~\cite[Proposition 4.2]{MR21}. The rest of this subsection proves that $\vec{ab}(k)_\Delta$ is finite.
 
\subsubsection*{The paths $\alpha_{ij}$ and the constants $\ell$ and $n$.}

By Lemma~\ref{lem:001}, 
\[T_H\Delta=\{v_1,v_2,\ldots , v_m\} \subset V(\Gamma).\] 
For each $v_i,v_j\in T_H\Delta$, let $\alpha_{ij}$ be a minimal length (embedded) path from $v_i$ to $v_j$ in $\Gamma$, note that such a path exists since $\Gamma$ is connected. Let $\ell$ be an upper bound for the length of the paths $\alpha_{ij}$, that is
\[  |\alpha_{ij}| \leq \ell \]
for any $v_i,v_j\in T_H\Delta$. Let 
\begin{equation}\label{eq:defn}  n = k\ell \end{equation}

\subsubsection*{The finite sets $W_i$ and $Z_i$.}
A subpath of length two of a path $P$ is called a  \emph{corner of $P$}.  Let 
\begin{align*}
W_n &= \vec{ab}(n)_\Gamma.
\end{align*}
Let $j\leq n$ and suppose $W_j$ has been defined. Let 
\begin{equation}\nonumber
    \begin{split}
    Z_{j-1} = W_j \cup  \{z \in T_a \Gamma  \mid  \exists & w\in W_j\ \exists g\in G \  \exists v_i,v_j \in T_H\Delta\     \\
    & \exists c \text{ corner of }  \alpha_{ij}\  \text{such that } g.c = [z,a,w] \}.
    \end{split}
\end{equation}
\begin{align*}
W_{j-1} &= \{w \in T_a \Gamma \mid \exists z \in Z_{j-1} \text{ such that } \angle_{T_a \Gamma} (z,w) \leq n \}.
\end{align*}
Observe that  
\begin{align} \label{w}
W_j \subseteq Z_{j-1} \subseteq W_{j-1}, \qquad \text{for all } 1\leq j \leq n
\end{align}

\begin{lemma} \label{lem:WtoZ}
If $W_j$ is finite, then $Z_{j-1}$ is finite.
\end{lemma}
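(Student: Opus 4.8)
The plan is to reduce the finiteness of $Z_{j-1}$ to finitely many applications of Lemma~\ref{lem:compactness}. By definition $Z_{j-1} = W_j \cup E$, where $E$ denotes the set of $z \in T_a\Gamma$ for which there exist $w \in W_j$, $g \in G$, vertices $v_i, v_j \in T_H\Delta$, and a corner $c$ of $\alpha_{ij}$ with $g.c = [z,a,w]$. Since $W_j$ is finite by hypothesis, it suffices to prove that $E$ is finite.

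First I would check that the index data defining $E$ range over a finite set. By Lemma~\ref{lem:001} the set $T_H\Delta = \{v_1,\ldots,v_m\}$ is finite, so there are only finitely many paths $\alpha_{ij}$; each has length at most $\ell$ and hence at most $\ell-1$ corners, so the collection of all corners $c$ arising from the $\alpha_{ij}$ is finite. Together with the finiteness of $W_j$ this exhibits $E = \bigcup_{w,c} E_{w,c}$ as a finite union, where $E_{w,c} = \{z \in T_a\Gamma \mid \exists g \in G \text{ with } g.c = [z,a,w]\}$ and the union runs over $w \in W_j$ and over the finitely many corners $c$.

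The key step is to show each $E_{w,c}$ is finite via Lemma~\ref{lem:compactness}. Writing the corner as $c = [p_0,p_1,p_2]$, the condition $g.c = [z,a,w]$ reads $g.p_0 = z$, $g.p_1 = a$, and $g.p_2 = w$, so $E_{w,c}$ is contained in $\{g.p_0 \mid g \in G,\ g.p_1 = a,\ g.p_2 = w\}$. Reading the corner backwards gives the $2$-path $[p_2,p_1,p_0]$, and since every $w \in W_j \subseteq T_a\Gamma$ is adjacent to and distinct from $a$, the sequence $[w,a]$ is a genuine $1$-path. Applying Lemma~\ref{lem:compactness} to the $2$-path $[p_2,p_1,p_0]$ and the $1$-path $[w,a]$ shows this set is finite, hence so is $E_{w,c}$. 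Therefore $E$, and with it $Z_{j-1}$, is finite.

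The only genuine obstacle is recognizing that the present configuration matches the hypotheses of Lemma~\ref{lem:compactness}: one must fix the images $g.p_1 = a$ and $g.p_2 = w$ of the middle and endpoint of the reversed corner and count the possible images $g.p_0$, which is exactly the finiteness controlled by compactness of edge stabilizers. Once this reformulation is in place, the remaining bookkeeping is routine.
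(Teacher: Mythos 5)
Your proof is correct and follows essentially the same route as the paper: both arguments reduce finiteness of $Z_{j-1}$ to the finiteness of $W_j$, of $T_H\Delta$ (hence of the corners of the paths $\alpha_{ij}$), and to an application of Lemma~\ref{lem:compactness} with the corner read backwards against the $1$-path $[w,a]$. The only difference is presentational: the paper phrases it as a contradiction-plus-pigeonhole argument, while you give the equivalent direct decomposition of $Z_{j-1}\setminus W_j$ into a finite union of sets $E_{w,c}$, each finite by the same lemma.
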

\begin{proof} This is a consequence of the assumption that $G$ acts discretely on $\Gamma$ and edges have compact $G$-stabilizers.  By contradiction, assume that $Z_{j-1}$ is infinite and  $W_j$ is finite. Since there are finitely choices for $\alpha_{ij}$ and each of these paths has finitely many corners,  the pigeon-hole argument shows that there is $w\in W_j$, there is an $\alpha_{ij}$, and there is  corner $c=[v,x,y]$ of $\alpha_{ij}$ such that the set
\[ B=\{ g.v\in V(\Gamma) \mid g\in G,\quad g.x=a,\quad g.y=w  \}\]
is infinite. Since $\Gamma$ is a discrete $G$-graph with compact edge stabilizers, Proposition~\ref{lem:compactness} implies that $B$ is finite, a contradiction.
\end{proof}

\begin{lemma}\label{lem:ZtoW}\label{g1}
For $1\leq j< n$, $W_j$ and $Z_j$ are finite subsets of $T_a\Gamma$. In particular, $W_1$ is finite.
\end{lemma}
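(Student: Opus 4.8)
The plan is to prove the two finiteness claims simultaneously by a downward induction on $j$, starting at $j=n$ and descending to $j=1$, using the fineness of $\Gamma$ at $a$ to supply both the base case and the key control at each inductive step.

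For the base case I would observe that by definition $W_n=\vec{ab}(n)_\Gamma$. Since $\Gamma$ is fine at $a$ and $b\in V(\Gamma)$, Proposition~\ref{lem:4.4Farhan} gives that $\vec{ab}(n)_\Gamma$ is finite, so $W_n$ is finite. Note also that $W_n\subseteq T_a\Gamma$ directly from the definition of $\vec{ab}(n)_\Gamma$.

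For the inductive step, suppose $W_j$ is finite for some $j$ with $1<j\leq n$. Then Lemma~\ref{lem:WtoZ} immediately yields that $Z_{j-1}$ is finite. To pass from $Z_{j-1}$ to $W_{j-1}$, I would use that fineness of $\Gamma$ at $a$ means precisely that $(T_a\Gamma,\angle_{T_a\Gamma})$ is a locally finite metric space; hence for each $z\in Z_{j-1}$ the ball $\{w\in T_a\Gamma\mid \angle_{T_a\Gamma}(z,w)\leq n\}$ is finite. Since $W_{j-1}$ is, by definition, the union of these balls as $z$ ranges over the finite set $Z_{j-1}$, it is a finite union of finite sets and therefore finite. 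Both $Z_{j-1}$ and $W_{j-1}$ are subsets of $T_a\Gamma$ by construction.

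Iterating this step from $j=n$ down to $j=2$ produces the chain $Z_{n-1},W_{n-1},\dots,Z_1,W_1$, each finite, which is exactly the assertion that $W_j$ and $Z_j$ are finite subsets of $T_a\Gamma$ for all $1\leq j<n$; in particular $W_1$ is finite. The only substantive inputs are Lemma~\ref{lem:WtoZ} (whose proof rests on discreteness of the action and compactness of edge stabilizers through Lemma~\ref{lem:compactness}) and the defining local finiteness of the angle metric at $a$. Accordingly, I do not expect a genuine obstacle here: the subtlety is purely organizational, namely carrying out the induction in the \emph{descending} direction and using the single constant $n$ uniformly as the ball radius at every stage so that the passage $Z_{j-1}\rightsquigarrow W_{j-1}$ stays inside a fixed finite radius.
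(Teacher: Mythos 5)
Your proposal is correct and is essentially identical to the paper's own proof: the paper also argues by downward induction, with base case $W_n=\vec{ab}(n)_\Gamma$ finite via Proposition~\ref{lem:4.4Farhan}, and with the two alternating steps given by Lemma~\ref{lem:WtoZ} (finiteness of $W_j$ implies finiteness of $Z_{j-1}$) and local finiteness of the angle metric $\angle_{T_a\Gamma}$ at $a$ (finiteness of $Z_{j-1}$ implies finiteness of $W_{j-1}$). If anything, your bookkeeping of the indices is slightly more careful than the paper's statement of its Claim~1.
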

\begin{proof}\cite[Proof of Lemma 4.6]{MR21}. The conclusion follows by an inductive argument using Lemma~\ref{lem:WtoZ} and the following pair of claims.

\emph{Claim 1: If $Z_j$ is finite, then $W_j$ is finite.} Since $\Gamma$ is fine at $a$,  for each $z\in Z_{j-1}$, there are finitely $w\in T_a\Gamma$ such that $\angle_{T_a\Gamma}(w,z)\leq n$. Hence if  $Z_j$ is finite, then $W_j$ is finite.

\emph{Claim 2. $W_n$ is finite.}  By hypothesis,  $\Gamma$ is fine at $a$. Then   Lemma~\ref{lem:4.4Farhan} implies that $\vec{ab}(n)_\Gamma = W_n$ is  finite.
\end{proof}

\subsubsection*{Projecting  paths from $\Delta$ to $\Gamma$.}
 Let $\delta$ be a  $k$-path in $\Delta$ with initial vertex  in $\Gamma$. 
An \emph{$\alpha$-replacement of $\delta$} is a path
$\gamma$ in $\Gamma$ obtained as follows:
Replace each corner of $\delta$ of the form $[g.v_i, gH, g.v_j]$ for some $v_i,v_j\in T_H\Delta$ and $g \in G$ by the path $g.\alpha_{ij}$ (make a choice of $g$ if necessary); if the terminal vertex of the resulting path is not in $\Gamma$ then remove that vertex. Observe that $\gamma$ is a path of length at most $n$; recall $n$ is  defined in~\eqref{eq:defn}. 

\begin{lemma}  \label{g}
Let $\delta$ be an escaping $k$-path in $\Delta$ from $a$ to $b$ and let $\gamma$ be an $\alpha$-replacement.  Then
\[ \delta\cap T_a\Gamma    \subseteq \gamma \cap T_a\Gamma \subseteq W_1,\]
where $\delta \cap T_a\Gamma$ is the set of vertices of $\delta$ that belong to $T_a
\Gamma$, and $\gamma\cap T_a\Gamma$ is defined analogously.
\end{lemma}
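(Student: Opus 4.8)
I would split the statement into its two inclusions and treat them in turn.

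The inclusion $\delta\cap T_a\Gamma\subseteq\gamma\cap T_a\Gamma$ is the routine one. The $\alpha$-replacement modifies $\delta$ only at its corners $[g.v_i,gH,g.v_j]$ centered at vertices of the orbit $G/H$: it deletes the middle vertex $gH$ and splices in the path $g.\alpha_{ij}$, whose endpoints are precisely the two $\Gamma$-vertices $g.v_i$ and $g.v_j$ of that corner. Hence every vertex of $\delta$ already lying in $V(\Gamma)$ persists as a vertex of $\gamma$; the only vertex the procedure can discard is a terminal vertex lying outside $V(\Gamma)$, and the vertices of $\delta\cap T_a\Gamma\subseteq V(\Gamma)$ are never of that type. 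Since the replacement does not disturb adjacency to $a$ inside $\Gamma$, every vertex of $\delta$ in $T_a\Gamma$ is a vertex of $\gamma$ in $T_a\Gamma$.

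For the inclusion $\gamma\cap T_a\Gamma\subseteq W_1$ I would trace $\gamma$ backwards from $b$, exploiting the two-phase recursion $W_j\subseteq Z_{j-1}\subseteq W_{j-1}$ recorded in \eqref{w}. Write $\gamma=[w_0=a,w_1,\dots,w_N=b]$ with $N\le n$, and let $0=p_0<p_1<\cdots<p_s$ be the positions at which $\gamma$ visits $a$; set $p_{s+1}=N$ and call $\{\,w_q : p_r<q<p_{r+1}\,\}$ the $r$-th segment. Two structural facts drive the argument. First, since $\delta$ is an escaping path its $\Gamma$-vertices other than the initial one avoid $a$, so every visit $w_{p_r}=a$ with $r\ge 1$ lies in the \emph{interior} of some inserted embedded path $g.\alpha_{ij}$; consequently the corner $[w_{p_r-1},a,w_{p_r+1}]$ of $\gamma$ at that visit is the $g$-translate of a corner of $\alpha_{ij}$. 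Second, each segment avoids $a$, so any two of its vertices $w_q,w_{q'}\in T_a\Gamma$ satisfy $\angle_{T_a\Gamma}(w_q,w_{q'})\le n$, the connecting subpath of $\gamma$ having length at most $N\le n$ and missing $a$.

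With these two facts in hand I would prove, by downward induction on $r$ from $s$ to $0$, that every vertex of $T_a\Gamma$ on the $r$-th segment lies in $W_{\,n-(s-r)}$. For $r=s$ the suffix $[a,w_{p_s+1},\dots,w_N]$ is an escaping path from $a$ to $b$ of length at most $n$, so all of its $T_a\Gamma$-vertices lie in $\vec{ab}(n)_\Gamma=W_n$ by Proposition~\ref{lem:4.4Farhan} and the definition of $W_n$. For the inductive step, the first vertex $w_{p_{r+1}+1}$ of segment $r+1$ is adjacent to $a$, hence lies in $W_{\,n-(s-r-1)}$ by the inductive hypothesis; feeding the $\alpha$-corner $[w_{p_{r+1}-1},a,w_{p_{r+1}+1}]$ into the definition of $Z$ then places $w_{p_{r+1}-1}\in Z_{\,n-(s-r)}$, and finally the angle estimate above, inserted into the definition of $W$, propagates membership along the whole $r$-th segment, giving each of its $T_a\Gamma$-vertices in $W_{\,n-(s-r)}$. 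Because consecutive visits of $\gamma$ to $a$ are at least two positions apart we have $s\le N/2\le n/2$, so every index $n-(s-r)$ is at least $1$ and $W_{\,n-(s-r)}\subseteq W_1$ via \eqref{w}. As every vertex of $\gamma\cap T_a\Gamma$ lies on some segment, this yields $\gamma\cap T_a\Gamma\subseteq W_1$.

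The main obstacle is exactly that $\gamma$ fails to be escaping: the spliced-in paths $\alpha_{ij}$ may return to $a$, so membership in $W_1$ cannot be read off directly from fineness of $\Gamma$ as a single set $\vec{ab}(\cdot)_\Gamma$. The recursion $W_j\to Z_{j-1}\to W_{j-1}$ is designed to digest one return to $a$ at a time, the $Z$-step accounting for a crossing of $a$ through a corner of one of the finitely many fixed paths $\alpha_{ij}$ and the $W$-step for travel along an $a$-avoiding segment. The crux, therefore, is the structural dichotomy that each non-initial visit of $\gamma$ to $a$ is interior to an inserted $\alpha$-path; this is what guarantees that every crossing corner is a genuine translate of an $\alpha_{ij}$-corner and hence is absorbed by the $Z$-step, and it is here that the embeddedness of the paths $\alpha_{ij}$ and the escaping hypothesis on $\delta$ are both used.
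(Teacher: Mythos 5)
Your proof is correct and follows essentially the same route as the paper's: the same decomposition of $\gamma$ into maximal $a$-avoiding segments, the same key observation that each non-initial return of $\gamma$ to $a$ must lie in the interior of an inserted path and hence gives a translated corner of some $\alpha_{ij}$ (forced by $\delta$ being escaping), and the same downward induction from the terminal segment alternating the $Z$-step (corner crossing) with the $W$-step (angle propagation). The only differences are cosmetic: you index so that the last segment lands in $W_n$ whereas the paper places its $i$-th segment $\gamma_i$ in $W_i$, and your base case absorbs every $T_a\Gamma$-vertex of the terminal segment directly into $W_n$, which handles that segment slightly more cleanly than the paper's combination of its Claims 1 and 5.
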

\begin{proof} 
The following argument is taken almost verbatim  from~\cite[Proof of Lemma 4.7]{MR21}. It only requires minor modifications due to our definition of $\alpha$-replacement.

By construction,  $\delta\cap T_a\Gamma\subseteq \gamma \cap T_a\Gamma$.  Observe that $\gamma$ is a path  of the form \[ \gamma=[a,\gamma_1, a,\gamma_2, a, \dots ,a, \gamma_m],\] where each $\gamma_i$ is a path that does not contain the vertex $a$. Note that $m\leq n$ and that $\gamma$ is not escaping when $m>1$.  In order to prove $\gamma\cap T_a\Gamma \subseteq W_1$ is enough to show that  $\gamma_i \cap T_a\Gamma \subset W_i$ for $1\leq i\leq m$ in view of~\eqref{w} and  that  $\gamma \cap T_a\Gamma =\bigcup_{i=1}^m\gamma_i \cap T_a\Gamma$.

Let $w_i$ and $z_i$ denote the initial and terminal vertices of $\gamma_i$, respectively. The main observation: since $\delta$ is an escaping path from $a$, it follows that the corner $[z_i,a,w_{i+1}]$ of $\gamma$ is a translation of a corner of a path  $\alpha_{ij}$ for some $v_i,v_j\in T_H\Delta$.

\emph{Claim 1. $w_m \in W_m$.} Note that  $[a,\gamma_m]$ is an escaping path of length at most $n$ from $a$ to $b$ in $\Gamma$. Therefore $w_m \in \vec{ab}_\Gamma (n) = W_n$. Since $m\leq n$, it follows that  $w_m \in W_n \subseteq W_m$.

\emph{Claim 2. $z_{m-1} \in Z_{m-1}$.} Note that 
$[z_{m-1},a,w_m]$ is the translation of a corner of $\alpha_{ij}$;   since $w_m \in W_m$, we have that $z_{m-1} \in Z_{m-1}$.

\emph{Claim 3. If $z_{i} \in Z_{i}$  then $w_{i}\in W_{i}$}.
Indeed, since 
\[\angle_{T_a \Gamma} (z_{i}, w_{i}) \leq |\gamma_i |\leq  n \] and $z_{i} \in Z_{i}$, it follows that $w_{i} \in W_{i}$.

\emph{Claim 4. If $w_{i+1}\in W_{i+1}$ then $z_i\in Z_i$.}  As $[z_{i},a,w_{i+1}]$ is the translation of a corner of an $\alpha_{ij}$, if  $w_{i+1} \in W_{i+1}$, then by  definition we have that $z_{i} \in Z_{i}$.

\emph{Claim 5. If $z_{i} \in Z_{i}$  then $\gamma_{i} \cap T_a\Gamma$ is a subset of $W_{i}$.}
Let $x\in \gamma_{i} \cap T_a\Gamma$. Observe that $\angle_{T_a \Gamma} (z_{i}, x) \leq n$. Since $z_{i} \in Z_{i}$, it follows that $x \in W_{i}$.

To conclude, observe that the first four claims imply that $z_i\in Z_i$ for $1\leq i\leq m$. Then the last claim implies that $\gamma_i\cap T_a\Gamma$ is a subset of $W_i \subset W_1$ for $1\leq i\leq m$.  
\end{proof}

\subsubsection*{The finite set $X_0$}
Let 
\[X_0 = \{ x\in \vec{ab}(k)_{\Delta} \mid x \not\in T_a\Gamma \}.\]

\begin{lemma}\label{lem:eduardo}
$X_0$ is a finite set. 
\end{lemma}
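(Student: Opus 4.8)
The plan is to identify precisely what $X_0$ contains and then bound the number of its elements using the escaping condition together with compactness of edge stabilizers. First I would observe that if $x\in\vec{ab}(k)_\Delta$ and $x\notin T_a\Gamma$, then the edge $\{a,x\}$ of $\Delta$ cannot lie in $\Gamma$, so it belongs to the newly attached $G$-orbit; since $a\in V(\Gamma)$ while the new vertices are the cosets in $G/H$, this forces $x=gH$ for some $g$ with $g.u=a$. The difficulty is that the vertex $a$ may be incident to infinitely many new edges (the index $[G_u:G_u\cap H]$ need not be finite), so the finiteness of $X_0$ must genuinely use the hypothesis that $x$ lies on a short escaping path; this is the crux of the argument.

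By Remark~\ref{rem:thanks}, for each $x=gH\in X_0$ I may fix an escaping path $\delta=[a,gH,y,\dots,b]$ from $a$ to $b$ of length $\le k$ in which $gH$ is the vertex immediately after $a$. Choosing the representative $g$ of the coset $x$ so that $g.u=a$, the next vertex $y$ is a neighbour of $gH$, hence lies in $V(\Gamma)$ and has the form $y=g.v_j$ with $v_j\in T_H\Delta$; moreover $y\neq a$ because $\delta$ escapes, so $v_j\neq u$. Here $T_H\Delta=\{v_1,\dots,v_m\}$ is finite by Lemma~\ref{lem:001}. Now I would form an $\alpha$-replacement $\gamma$ of $\delta$, using this $g$ to replace the corner $[a,gH,y]=g.[u,H,v_j]$ by the path $g.\alpha_{i_0 j}$, where $v_{i_0}=u$. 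Letting $t$ denote the vertex of $\alpha_{i_0 j}$ immediately after $u$ and setting $w:=g.t$, we have $\{u,t\}\in E(\Gamma)$, hence $w\in T_a\Gamma$, and $w$ is a vertex of $\gamma$; thus Lemma~\ref{g} yields $w\in\gamma\cap T_a\Gamma\subseteq W_1$, where $W_1$ is finite by Lemma~\ref{g1}.

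It remains to count the cosets $gH$ that can arise. The data $(v_j,t,w)$ ranges over a finite set: there are finitely many $v_j\in T_H\Delta$, each determines the chosen path $\alpha_{i_0 j}$ and hence its second vertex $t$, and $w\in W_1$. For a fixed such triple, every admissible representative $g$ satisfies $g.u=a$ and $g.t=w$, so the set of such $g$ is empty or a single left coset of $K:=G_u\cap G_t$, which is compact because $\Gamma$ has compact edge stabilizers. Mapping this coset by $g\mapsto gH$, two elements $g_0 s$ and $g_0 s'$ with $s,s'\in K$ yield the same coset exactly when $s^{-1}s'\in K\cap H$, so the number of resulting cosets $gH$ equals $[K:K\cap H]$. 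Since $H$ is open in $G$, the subgroup $K\cap H$ is open in the compact group $K$ and therefore of finite index. Hence each triple produces only finitely many cosets $gH$, and as there are finitely many triples, $X_0$ is finite. The main obstacle, the potentially infinite number of new edges at $a$, is overcome precisely by this step: the escaping hypothesis forces the representative $g$ to carry a fixed edge $\{u,t\}$ of $\Gamma$ onto an edge $\{a,w\}$ with $w$ in the finite set $W_1$, after which compactness of $K$ and openness of $H$ bound the number of cosets.
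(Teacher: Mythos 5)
Your proof is correct and follows essentially the same route as the paper's: both use Remark~\ref{rem:thanks} to place $x=gH$ immediately after $a$ on a short escaping path, then an $\alpha$-replacement together with Lemma~\ref{g} to pin down a neighbour of $a$ lying in the finite set $W_1$ (finite by Lemma~\ref{g1}), and finally compactness of edge stabilizers plus openness of $H$ to bound the number of cosets $gH$. The only differences are organizational: the paper argues by contradiction with two pigeonhole steps and invokes Lemma~\ref{lem:compactness} for the concluding finiteness, whereas you count directly, fibering $X_0$ over the finite set of pairs $(v_j,w)\in T_H\Delta\times W_1$ and re-deriving the content of Lemma~\ref{lem:compactness} inline via $[K:K\cap H]<\infty$.
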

\begin{proof}
If $x\in X_0$ then $x$ is a translate of the vertex $H$ of $\Delta$. Hence if $X_0$ is nonempty, then   $a$ is adjacent to a translate of the vertex $H$. Suppose that $X_0$ is nonempty and without loss of generality, assume that $a$ is adjacent to $H$, specifically
\[ T_H\Delta=\{v_1,v_2\ldots ,v_m\},\qquad \text{and}\qquad a\in T_H\Delta .\]

Suppose that $X_0$ is infinite. For each $x\in X_0$ choose an escaping path $\delta_x$ from $a$ to $b$ in $\Delta$ of length at most $k$ that contains $x$ as the vertex immediately after $a$, see Remark~\ref{rem:thanks}. Then there is $g_x\in G$ and $v_{i_x}, v_{j_x} \in T_H\Delta$ such that  $[g_x.v_{i_x}, g_xH, g_x.v_{j_x}]$ is the initial 2-subpath of $\delta_{x}$. 

Since $T_H\Delta$ is finite, by the Pigeon hole argument there is a fixed pair, after renumbering if necessary, $v_1,v_2 \in T_H\Delta$ such that 
\[  X_1=\{x \in X_0 \mid v_{i_x}=v_1 \text{ and } v_{j_x}=v_2 \}  \]
is an infinite set. Observe that 
\[  g_x.v_1 = a \text{ and }  g_x.H=x    \qquad \text{for all $x\in X_1$}. \]

For $x\in X_1$, let $\gamma_x$ be an $\alpha$-replacement of $\delta_x$ that has $g_x.\alpha_{12}$ as an initial subpath. By Lemma~\ref{g},
\[ \gamma_x \cap T_a\Gamma \subset W_1 .\]
We showed that $W_1$ is finite, see Lemma~\ref{g1}.  By the Pigeon-hole argument, there is a $z\in T_a\Gamma$ such that 
\[ X_2=\{ x\in X_1 \colon \text{$\gamma_x$ has $z$ as its second vertex.}\} \]
is an infinite set. Let $w$ be the second vertex of $\alpha_{12}$. Then  for every $x\in X_2$, $g_x.v_1=a$ and $g_x.w=z$. Since $g_x.H=x$ for all $x\in X_2$, it follows that 
\[ C=\{ gH \in V(\Delta) \mid g\in G,\quad g.w=z,\quad   g.v_1=a\}\]
is an infinite set. Since $\Delta$ is a discrete $G$-graph with compact edge stabilizers, $[w,v_1, H]$ is a 2-path in $\Delta$, and $[a,z]$ is a 1-path of $\Delta$;
Lemma~\ref{lem:compactness} implies that  $C$ is finite, a contradiction.
\end{proof}

\subsubsection*{Conclusion.}
It is left to verify that $\vec{ab}(k)_{\Delta}$ is finite. Let $\delta$ be an escaping path from $a$ to $b$ in $\Delta$ of length $\leq k$. Then Lemma~\ref{g} implies that $\delta\cap T_a\Gamma \subseteq W_1$. Since any vertex in $\delta\cap T_a\Delta$ is either in $T_a\Gamma$ or $X_0$, it follows that $\delta\cap T_a\Delta \subseteq W_1\cup X_0$. Since $\delta$ was arbitrary, \[ \vec{ab}(k)_\Delta \subseteq W_1\cup X_0.\]
By Lemmas~\ref{g1} and~\ref{lem:eduardo}, $W_1\cup X_0$ is a finite set, and hence  $\vec{ab}(k)_{\Delta}$ is finite.

\subsection{Proof of Theorem~\ref{thm.2hs}, Quasi-isometry}\label{rem:quasi-isometry}
 
Under the assumptions of Theorem~\ref{thm.2hs}, the  inclusion $\Gamma\hookrightarrow\Delta$ is a $G$-equivariant quasi-isometry.  Indeed, for any path $\delta$ in $\Delta$ with endpoints in $\Gamma$, its $\alpha$-replacement $\gamma$ is a path in $\Gamma$ with the same endpoints and $|\gamma|\leq \ell|\delta|$. Hence for any pair of vertices $u,v$ of $\Gamma$, $\dist_\Gamma(u,v)\leq \ell \dist_\Delta(u,v)\leq \ell\dist_\Gamma(u,v)$. On the other hand, every vertex of $\Delta$ is adjacent to a vertex of $\Gamma$.

\section{Invariance of Cayley-Abels graphs} \label{sec:QI-Fineness}

The main results of this section are Theorem~\ref{thm:uniqueCAgraph} and Corollary~\ref{cor:QI-Fineness}. These results generalize the fact that any two Cayley graphs with respect to finite generating sets of a group are quasi-isometric~\cite[Chapter I, Example 8.17(3)]{Br99}.

\begin{theorem}\label{thm:uniqueCAgraph}
Let $(G,\mathcal{H}_1)$ and $(G,\mathcal{H}_2)$ be proper pairs. Suppose the symmetric difference of $\mathcal{H}_1$ and $\mathcal{H}_2$ consists only of compact subgroups.

If $\Gamma_1$ and $\Gamma_2$ are  Cayley-Abels graphs of $G$ with respect to $\mathcal{H}_1$ and $\mathcal{H}_2$ respectively,  then 
\begin{enumerate}
    \item $\Gamma_1$ and $\Gamma_2$ are quasi-isometric; and
    \item $\Gamma_1$ is fine if and only if $\Gamma_2$ is fine.
\end{enumerate}
\end{theorem}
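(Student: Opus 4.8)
The plan is to reduce Theorem~\ref{thm:uniqueCAgraph} to Corollary~\ref{prop:EdgeAttachments}, whose hypotheses concern a single pair $(G,\mathcal H)$ but whose conclusion is exactly the quasi-isometry and fineness invariance we want. The key structural fact to exploit is that both $\Gamma_1$ and $\Gamma_2$ are cocompact, connected, discrete simplicial $G$-graphs with compact edge stabilizers (this is built into the definition of Cayley-Abels graph, or follows from Theorem~\ref{thm:Cayley-Abels}). By Lemma~\ref{lemm:infiniteValence}, a vertex of such a graph incident to an edge has non-compact stabilizer if and only if it has infinite degree; the non-compact vertex stabilizers are precisely the conjugates of the non-compact members of $\mathcal H_i$. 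So the first thing I would pin down is the $G$-set $\mathcal V_\infty(\Gamma_i)$ of infinite-valence vertices, together with its stabilizer data.

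First I would handle the hypothesis that the symmetric difference $\mathcal H_1 \triangle \mathcal H_2$ consists only of compact subgroups. The point of this assumption is that the \emph{non-compact} members of $\mathcal H_1$ and $\mathcal H_2$ coincide as a collection, so the non-compact peripheral structures agree. From property~(4) of the Cayley-Abels graph definition (any two vertices with the same non-compact stabilizer $H$ lie in one $G$-orbit) together with property~(3) (each $H\in\mathcal H_i$ stabilizes a vertex), I would show that $\mathcal V_\infty(\Gamma_1)$ and $\mathcal V_\infty(\Gamma_2)$ are $G$-equivariantly isomorphic as $G$-sets. Concretely, each non-compact $H$ in the common non-compact part of $\mathcal H_1,\mathcal H_2$ determines a single $G$-orbit of vertices in each $\Gamma_i$, namely $G/H$; assembling these over all such $H$ gives a canonical $G$-equivariant bijection $\eta\colon \mathcal V_\infty(\Gamma_1)\to\mathcal V_\infty(\Gamma_2)$. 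I would be careful here: properness of the pairs (no two distinct non-compact subgroups conjugate) is what guarantees that distinct non-compact $H$ give distinct, non-overlapping orbits, so the matching is well-defined and bijective.

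Once the $G$-equivariant bijection $\eta$ is in hand (or once I observe that $\mathcal V_\infty(\Gamma_i)$ is empty for both, which happens exactly when every member of each $\mathcal H_i$ is compact), the theorem follows immediately by invoking Corollary~\ref{prop:EdgeAttachments} with $\Gamma_1$ and $\Gamma_2$: that corollary gives both that $\Gamma_1$ and $\Gamma_2$ are quasi-isometric and that $\Gamma_1$ is fine if and only if $\Gamma_2$ is fine. Thus the entire analytic content — the edge-attachment arguments preserving fineness and quasi-isometry type, Theorem~\ref{thm.2hs} — is black-boxed, and what remains for this theorem is purely the bookkeeping of peripheral orbits.

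The main obstacle I anticipate is the construction and verification of the $G$-equivariant bijection $\eta$ on infinite-valence vertices. The subtlety is that a vertex with non-compact stabilizer conjugate to some $H\in\mathcal H_i$ need not itself have stabilizer \emph{equal} to a chosen representative, so I must track conjugacy classes carefully and use property~(4) to collapse each conjugacy class to a single orbit $G/H$. I would need to confirm that the symmetric-difference hypothesis really does force the non-compact parts of $\mathcal H_1$ and $\mathcal H_2$ to be literally the same finite set of subgroups (not merely conjugate), and that the resulting orbit-by-orbit matching is genuinely $G$-equivariant rather than just a bijection of underlying sets. If instead one only knows the non-compact parts agree up to conjugacy, an extra reconciliation step using properness would be required to choose compatible basepoints. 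Everything else is a direct appeal to Corollary~\ref{prop:EdgeAttachments}.
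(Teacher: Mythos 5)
Your proposal is correct and takes essentially the same route as the paper: the paper's proof also builds a $G$-equivariant bijection $\mathcal{V}_\infty(\Gamma_1)\to\mathcal{V}_\infty(\Gamma_2)$ (via Lemma~\ref{lem:infiniteValence2}, which identifies $\mathcal{V}_\infty(\Gamma_i)$ with $G/(\mathcal{H}_i)_\infty$ exactly by the orbit bookkeeping you describe, using properness and properties (3)--(4) of the definition) and then concludes by invoking Corollary~\ref{prop:EdgeAttachments}. The obstacle you flag is handled by that lemma, and the symmetric-difference hypothesis does make the non-compact parts of $\mathcal{H}_1$ and $\mathcal{H}_2$ literally equal as sets of subgroups, so no conjugacy reconciliation is needed.
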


The following corollary follows directly from Theorem~\ref{thm:uniqueCAgraph}. 

\begin{corollary}[Quasi-isometry Invariance of Cayley-Abels Graphs]\label{cor:QI-Fineness}
Let $(G,\mathcal H)$ be a proper pair. 
\begin{enumerate}
    \item Any two  Cayley-Abels graphs of $G$ with respect to $\mathcal{H}$ are quasi-isometric.
    \item If one Cayley-Abels graph  of $G$ with respect to $\mathcal{H}$   is fine, then all are fine.
\end{enumerate}
\end{corollary}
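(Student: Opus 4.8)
The plan is to reduce both conclusions to Corollary~\ref{prop:EdgeAttachments} by showing that the Cayley-Abels hypotheses force the vertex sets with non-compact stabilizer, $\mathcal{V}_\infty(\Gamma_1)$ and $\mathcal{V}_\infty(\Gamma_2)$, to be $G$-equivariantly isomorphic (or both empty). First I would invoke Theorem~\ref{thm:Cayley-Abels} to record that each $\Gamma_i$ is a connected cocompact discrete simplicial $G$-graph with compact edge stabilizers, so both graphs meet the standing hypotheses of Corollary~\ref{prop:EdgeAttachments}.

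Next I would analyze $\mathcal{V}_\infty(\Gamma_i)$ using the defining conditions of a Cayley-Abels graph. By condition (2) a vertex with non-compact stabilizer has stabilizer a conjugate of some $H \in \mathcal{H}_i$, and such an $H$ is necessarily non-compact. Conversely, using condition (4) together with the properness of the pair (no two distinct non-compact members of $\mathcal{H}_i$ are conjugate), I would show that for each non-compact $H \in \mathcal{H}_i$ the vertices whose stabilizer is a conjugate of $H$ form a single $G$-orbit $G\cdot v_H$, where $v_H$ is the vertex with stabilizer exactly $H$ supplied by condition (3): if $v$ has stabilizer $gHg^{-1}$ then $g^{-1}v$ has stabilizer $H$, and condition (4) places $g^{-1}v$ in the orbit of $v_H$. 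Thus $\mathcal{V}_\infty(\Gamma_i)$ is a disjoint union, over the non-compact $H \in \mathcal{H}_i$, of orbits each $G$-equivariantly identified with $G/H$.

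The symmetric-difference hypothesis then does the essential work: any non-compact subgroup lying in exactly one of $\mathcal{H}_1,\mathcal{H}_2$ would be a non-compact member of the symmetric difference, which is excluded, so $\mathcal{H}_1$ and $\mathcal{H}_2$ contain precisely the same non-compact subgroups. Writing $v_H^{(1)}$ and $v_H^{(2)}$ for the vertices of $\Gamma_1,\Gamma_2$ with stabilizer exactly $H$, I would define $\eta(g\cdot v_H^{(1)}) = g\cdot v_H^{(2)}$ on each orbit and verify it is well-defined and bijective: the common stabilizer $H$ guarantees that $g\cdot v_H^{(1)} = g'\cdot v_H^{(1)}$ iff $g^{-1}g' \in H$ iff $g\cdot v_H^{(2)} = g'\cdot v_H^{(2)}$. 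Assembling these orbit maps yields a $G$-equivariant bijection $\mathcal{V}_\infty(\Gamma_1) \to \mathcal{V}_\infty(\Gamma_2)$, unless there are no non-compact peripheral subgroups, in which case both sets are empty. Either way Corollary~\ref{prop:EdgeAttachments} applies and delivers both conclusions at once.

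I expect the main obstacle to be the orbit decomposition of $\mathcal{V}_\infty(\Gamma_i)$ in the second step. This is exactly the point where condition (4) of the Cayley-Abels definition and the properness of the pair are indispensable: without them a single non-compact stabilizer could be realized by several distinct $G$-orbits of vertices, and no canonical equivariant matching between $\Gamma_1$ and $\Gamma_2$ would be available. Once the decomposition is established, the construction of $\eta$ and the appeal to Corollary~\ref{prop:EdgeAttachments} are routine.
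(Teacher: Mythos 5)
Your proposal is correct and takes essentially the same route as the paper: the paper's Lemma~\ref{lem:infiniteValence2} establishes precisely your orbit decomposition, in the form of a $G$-equivariant bijection $G/\mathcal{H}_{\infty}\to\mathcal{V}_\infty(\Gamma)$ for each Cayley-Abels graph (proved from conditions (2)--(4) and properness exactly as you outline), and the proof of Theorem~\ref{thm:uniqueCAgraph} then composes these bijections and invokes Corollary~\ref{prop:EdgeAttachments}, just as you do. The only cosmetic difference is that you match the orbits of $\Gamma_1$ and $\Gamma_2$ directly instead of factoring through $G/\mathcal{H}_{\infty}$, which also lets you work with non-compact stabilizers throughout rather than translating between infinite degree and non-compactness via Theorem~\ref{thm:Cayley-Abels}.
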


\subsection{Proof of Theorem~\ref{thm:uniqueCAgraph}}
Let us fix some notation. For a graph $\Gamma$, let  $\mathcal{V}_\infty(\Gamma)$ denote its set of vertices with infinite degree. For a collection of subgroups $\mathcal{H}$ of $G$, let $\mathcal{H}_{\infty}= \{H \in \mathcal{H} \mid \text{ $H$ is non-compact} \}$ and $G/\mathcal{H}_{\infty} = \{gH \mid g \in G, H \in \mathcal{H}_{\infty}\}$.  
\begin{lemma}\label{lem:infiniteValence2}
If  $\Gamma $ is a Cayley-Abels graph for a proper pair $(G, \mathcal H)$, then there exists a $G$-equivariant bijection between $G/\mathcal{H}_{\infty}$ and $\mathcal{V}_\infty(\Gamma)$.
\end{lemma}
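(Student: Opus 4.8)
The plan is to build an explicit $G$-equivariant map $\Phi\colon G/\mathcal{H}_\infty \to \mathcal{V}_\infty(\Gamma)$ and to verify directly that it is a bijection. First I would recast the statement, which is phrased in terms of infinite degree, into a statement about stabilizers: assuming $\Gamma$ has at least one edge, Lemma~\ref{lemm:infiniteValence} identifies $\mathcal{V}_\infty(\Gamma)$ with the set of vertices whose $G$-stabilizer is non-compact, so it suffices to produce a $G$-equivariant bijection from $G/\mathcal{H}_\infty$ onto that set. The remaining case, in which $\Gamma$ has no edges and is therefore a single vertex, is degenerate and would be disposed of by a direct inspection of the two sides.

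For each $H\in\mathcal{H}_\infty$, condition (3) in the characterization of Cayley-Abels graphs (Theorem~\ref{thm:Cayley-Abels}, Definition~\ref{defx:CayleyAbels}) guarantees a vertex $v_H$ of $\Gamma$ with $G_{v_H}=H$; I would fix one such $v_H$ for every $H\in\mathcal{H}_\infty$ and set $\Phi(gH)=g.v_H$. Well-definedness is immediate, since $gH=g'H$ gives $g^{-1}g'\in H=G_{v_H}$ and hence $g.v_H=g'.v_H$, and $\Phi$ is $G$-equivariant by construction. The image lies in the correct set because $G_{g.v_H}=gHg^{-1}$ is non-compact whenever $H$ is.

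The two substantive points are injectivity and surjectivity, and both hinge on the proper-pair hypotheses together with condition (4). For injectivity, suppose $g.v_H=g'.v_{H'}$ with $H,H'\in\mathcal{H}_\infty$; comparing stabilizers yields $gHg^{-1}=g'H'g'^{-1}$, so $H$ and $H'$ are conjugate non-compact members of $\mathcal{H}$, and property (3) of Definition~\ref{def:properPair} forces $H=H'$, hence $v_H=v_{H'}$ and then $g^{-1}g'\in G_{v_H}=H$, i.e.\ $gH=g'H$. For surjectivity, I would take $w\in\mathcal{V}_\infty(\Gamma)$, note that $G_w$ is non-compact, and use condition (2) to write $G_w=gHg^{-1}$ with $H\in\mathcal{H}_\infty$; then $g^{-1}.w$ and $v_H$ share the non-compact stabilizer $H$, so condition (4) places them in one $G$-orbit, say $g^{-1}.w=k.v_H$, giving $w=\Phi\big((gk)H\big)$.

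The main obstacle is not any single computation but ensuring that the stabilizer bookkeeping invokes the two malnormality-type hypotheses in exactly the right spots: properness is precisely what collapses conjugate peripheral subgroups in the injectivity step, while condition (4) of the Cayley-Abels definition is precisely what recovers a coset from a vertex in the surjectivity step. I would also be careful to apply the translation between ``infinite degree'' and ``non-compact stabilizer'' from Lemma~\ref{lemm:infiniteValence} only after confirming that $\Gamma$ carries an edge, so that the degenerate single-vertex situation is quarantined and treated on its own.
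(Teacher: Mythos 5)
Your argument is correct and is essentially the paper's own proof: both fix a vertex $v_H$ with $G_{v_H}=H$ for each $H\in\mathcal{H}_\infty$ (supplied by Theorem~\ref{thm:Cayley-Abels}), define the map $gH\mapsto g.v_H$, check well-definedness and equivariance, and then prove injectivity from condition (3) of Definition~\ref{def:properPair} and surjectivity from the orbit condition (4), with the translation between infinite degree and non-compact stabilizer coming from Lemma~\ref{lemm:infiniteValence} (equivalently, the ``moreover'' clause of Theorem~\ref{thm:Cayley-Abels}). The only divergence is your explicit quarantine of the edgeless case, about which the paper is silent; note, however, that there ``direct inspection'' does not rescue the statement but rather exposes a corner-case defect in the lemma itself: if $\Gamma$ is a single vertex and $\mathcal{H}=\{G\}$ with $G$ non-compact, then $G/\mathcal{H}_\infty$ is a singleton while $\mathcal{V}_\infty(\Gamma)$ is empty, a case in which the paper's proof also silently breaks down, since it asserts each $v_i$ has infinite degree by citing a characterization that requires $\Gamma$ to contain at least one edge.
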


\begin{proof} 
By Theorem~\ref{thm:Cayley-Abels}, for any $H_i \in \mathcal{H}_{\infty}$ there exists a vertex $v_i \in \Gamma$ such that $G_{v_i} = H_i$.  Since $(G,\mathcal H)$ is proper, no two distinct subgroups of $\mathcal{H}_{\infty}$ are conjugate. It follows that  if $H_i$ and $H_j$ are distinct subgroups in $\mathcal H$, then $v_i$ and $v_j$ are in distinct $G$-orbits. 
By Theorem~\ref{thm:Cayley-Abels}, each $v_i$ has infinite degree in $\Gamma$;  if $w$ is a vertex of $\Gamma$ with $G$-stabilizer a conjugate of $H_i$ then $w$ and $v_i$ are in the same $G$-orbit; and any vertex in $V_\infty(\Gamma)$ has non-compact $G$-stabilizer.  Consider the $G$-map $\pi\colon G/\mathcal{H}_{\infty} \to \mathcal{V}_\infty$ given by $gH_i \mapsto gv_i$. Then
\begin{enumerate}
    \item $\pi$ is well defined since   $gH_i = fH_i$ if and only if $f^{-1}g \in H_i = G_{v_i}$ if and only if  $gv_i= fv_i$.
    \item $\pi$ is injective.
    Suppose $gH_i, fH_j \in G/\mathcal{H}_{\infty} $ and $gv_i = fv_j$. Then
    $v_i$ and $v_j$ are in the same $G$-orbit and hence $H_i=H_j$. Thus $gv_i = fv_i$ implies $gH_i = fH_i$.

    \item $\pi$ is surjective.
    Let $w \in \mathcal{V}_{\infty}$. 
    Then   $G_w$ is non-compact and therefore $G_w$ is conjugate to some $H_i \in \mathcal{H}_\infty$. Hence $v_i$ and $w$ are in the same $G$-orbit, say $w = gv_i$. Then $gH_i \in G/\mathcal{H}_{\infty}$ maps to $w$.\qedhere\end{enumerate}
\end{proof}

\begin{proof}[Proof of Theorem~\ref{thm:uniqueCAgraph}]
Let $(\mathcal{G}_1, \Lambda_1, \phi_1)$ and $(\mathcal{G}_2, \Lambda_2, \phi_2)$ be two compact generating graphs of $G$ relative to $\mathcal{H}_1$ and $\mathcal{H}_2$ respectively, and let $\Gamma_1(\mathcal{G}_1, \Lambda_1, \phi_1)$ and  $\Gamma_2(\mathcal{G}_2, \Lambda_2, \phi_2)$ be their corresponding Cayley-Abels graphs. By Lemma~\ref{lem:infiniteValence2}, there exists a $G$-equivariant bijection $\eta$ between $V_{\infty}(\Gamma_1)$ and $V_{\infty}(\Gamma_2)$. The result follows from Corollary~\ref{prop:EdgeAttachments}.  
\end{proof}

\section{Compact relative presentations}\label{sec:TopCharComplex}

A discrete group is finitely presented if and only if it acts cellularly, cocompactly and with finite vertex stabilizers on a simply connected space. This result is credited to ~\cite{Mac}, see~\cite[Corollary 8.11]{BrHa99}. The main results of this section, Theorem~\ref{prop:TopCharComplex} and Corollary~\ref{cor:TopcharCWcomplex}, generalize this fact for proper pairs $(G, \mathcal H)$. We introduce the notions of compact generalized presentation and  Cayley-Abels complex for pairs $(G, \mathcal H)$ in order to state our main result.  

\begin{definition}[Compact generalized presentation]
Let $G$ be a topological group and let $\mathcal{H}$ be a finite collection of open subgroups. A \emph{compact generalized presentation  of $G$ relative to $\mathcal{H}$} is a pair 
\begin{equation}\label{eq:relpr}
  \langle \  (  \mathcal{G}, \Lambda, \phi)\ |\  R \   \rangle
\end{equation} 
where $(  \mathcal{G}, \Lambda, \phi)$ is a compact generating graph of $G$ relative to $\mathcal{H}$, and $R \subseteq \pi_1(\mathcal{G},\Lambda) $ is a finite subset such that $ \nclose{R} = ker(\phi)$.
In particular, \[ 1 \to \nclose{R} \to \pi_1(\mathcal{G},\Lambda) \xrightarrow{\phi} G \to 1      \] is a short exact sequence and  $\phi$ is a continuous open surjective homomorphism.
\end{definition}
Compact generalized presentations have  been previously studied for TDLC groups~\cite{CaIWei16}.

\begin{definition}
    A topological group $G$ is said to be \emph{compactly presented relative to a finite collection of open subgroups $\mathcal{H}$} if there exists a compact generalized presentation relative to $\mathcal{H}$. 
\end{definition}

\begin{remark}[Compact generalized presentation $\Longrightarrow$ Compact open subgroup]\label{rem:GenGraph2} If  a topological group $G$ is compactly presented relative to a finite collection of open subgroups $\mathcal{H}$, then $G$ has a compact open subgroup or $\mathcal{H}=\{G\}$.  This is a consequence of Remark~\ref{rem:GenGraph}.
\end{remark}

The following are some motivating examples  of compact generalized presentations.  

\begin{example}
   \label{item:dicreteGroup} Let $G$ be a finitely presented group with presentation $\mathcal{P}= \langle X | R \rangle$. Then a compact generalized presentation of $G$ relative to the empty collection is given by $ \langle \  (  \mathcal{G}, \Lambda, \phi)\ |\  R \   \rangle$, where $(\mathcal{G}, \Lambda, \phi)$ is the graph of groups described in Example~\ref{exemp:fgGroup}.
\end{example}

\begin{example}
    Let $G$ be a discrete group finitely generated with respect to a finite collection of groups $\mathcal{H}= \{H_1, H_2\cdots H_n\}$. Let $X \subseteq G$ such that $(\bigcup\limits_{i=1}^{n}H_i) \cup X$ generates $G$. Let  $\langle X, \mathcal{H} | R \rangle$ be the relative presentation of $G$ with respect to $H$ as defined by Osin \cite{Os06}. Then a compact generalized presentation of $G$ relative to $H$ is given by $ \langle \  (  \mathcal{G}, \Lambda, \phi)\ |\  R \   \rangle$, where $(\mathcal{G}, \Lambda, \phi)$ is the graph of groups described  in Example~\ref{exemp:RelfgGroup}. 
\end{example}

\begin{example}\label{example:AmalgamatedFreeProductPresentation}
  Let $A \ast_C B$ be an amalgamated free product of topological  groups $A$ and $B$ over a common compact open subgroup with the  topology such that the  inclusions $A\hookrightarrow A\ast_C B$ and 
$B\hookrightarrow A\ast_C B$ are open topological embeddings, see   Proposition~\ref{prop:GTopology}.  Let $R \subset A\ast_C B$  satisfy the $C'(1/12)$ small cancellation condition,  let $N =\nclose{R}$ be the normal subgroup of $A\ast_CB$ generated by $R$, let $G =(A \ast_C B) /N$, and let $\phi\colon A\ast_CB\to G$ the quotient map.   
The small cancellation condition implies that the compositions $A\hookrightarrow A\ast_C B \to G$ and $B\hookrightarrow A\ast_C B \to G$ are injective, see~\cite[Ch. V. Theorem 11.2]{LySc01}. In particular,  $N\cap A$ is the trivial subgroup and therefore, since $A$ is open, it follows that $N$ is a discrete subgroup of $A\ast_CB$. In Example~\ref{examp:CompGenGraphAmalgamation}, we saw $(A\ast_C B, \phi)$ is compact generating graph of $G$ with respect to $\{A,B\}$, and therefore $\langle (A\ast_C B, \phi) \mid R \rangle$ is a compact generalized presentation of $G$ with respect to $\{A,B\}$, see Figure~\ref{fig:3}.

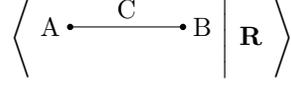
\begin{figure}
    \centering
    $\Braket{
    \begin{tikzpicture}
\draw (0,0) -- node[above]{C} (1.5,0) ;
\draw[black,thick, fill] (0,0) circle(0.8pt) node[left]{A};
\draw[black,thick, fill] (1.5,0) circle(0.8pt) node[right]{B};
\end{tikzpicture} \Bigg| \textbf{ R }
}$
    \caption{A compact generalized presentation  of the quotient of an amalgamated free product.}\label{fig:3}   
\end{figure}
\end{example}

\begin{definition}[Cayley-Abels complex]\label{def:RelCAComplex}
    Let $G$ be a topological group compactly generated relative to a finite collection $\mathcal{H}$ of open subgroups. A \emph{Cayley-Abels complex  of $G$ with respect to $\mathcal{H}$} is a discrete simply connected cocompact 2-dimensional $G$-complex with  1-skeleton a Cayley-Abels graph of $G$ with respect to $\mathcal{H}$.
\end{definition}

\begin{theorem}[Topological Characterization] \label{prop:TopCharComplex}      
Let $G$ be a topological group and let $\mathcal{H}$ be  a finite collection  of open subgroups. The following statements are equivalent:

\begin{enumerate}
    \item $G$ is compactly presented with respect to $\mathcal{H}$.
\item There exists a  Cayley-Abels complex of $G$ with respect to $\mathcal{H}$.
    \end{enumerate}
\end{theorem}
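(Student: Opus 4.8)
The plan is to prove both implications by passing through the short exact sequence $1 \to \nclose{R} \to \pi_1(\mathcal{G},\Lambda) \xrightarrow{\phi} G \to 1$ attached to a compact generating graph, together with the identification $\ker(\phi)\cong\pi_1(\Gamma)$ coming from Proposition~\ref{rem:RelativeCayley}. This identification converts the algebraic statement ``$R$ normally generates $\ker(\phi)$'' into the topological statement ``the attached $2$-cells kill $\pi_1(\Gamma)$,'' so the whole argument reduces to matching a normal closure in $\pi_1(\mathcal{G},\Lambda)$ with a normal closure of loops in $\pi_1(\Gamma)$. Note that, because a Cayley-Abels graph is by Definition~\ref{def:RelativeGraphs} one induced by a compact generating graph, I never need properness of the pair: both directions have a compact generating graph at hand directly.

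For (1) $\Rightarrow$ (2), write $\widetilde{G}=\pi_1(\mathcal{G},\Lambda)$ and let $\Gamma=\Gamma(\mathcal{G},\Lambda,\phi)$ be the induced Cayley-Abels graph, so that $\rho\colon\mathcal{T}\to\Gamma$ is the universal cover with deck group $\ker(\phi)$ and $\ker(\phi)\cong\pi_1(\Gamma,x)$ by Proposition~\ref{rem:RelativeCayley}. For each $r\in R$ let $\gamma_r$ be the based loop in $\Gamma$ corresponding to $r$, and let $K_r=\bigcap_{v\in\gamma_r}G_v$ be its pointwise stabilizer, which is open as a finite intersection of (open) vertex stabilizers. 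I would form $X$ by attaching to $\Gamma$, for each $r\in R$, a $G$-orbit of $2$-cells indexed by $G/K_r$, gluing the cell $gK_r$ along $g\cdot\gamma_r$; this is well defined since $K_r$ fixes $\gamma_r$ pointwise, each cell then has open pointwise stabilizer $K_r$, and there are finitely many orbits, so $X$ is a discrete cocompact $2$-dimensional $G$-complex with $1$-skeleton $\Gamma$. To see $\pi_1(X)=1$, note $\pi_1(X)=\pi_1(\Gamma)/N$ with $N$ the normal closure in $\pi_1(\Gamma)$ of $\{g\cdot\gamma_r : g\in G,\ r\in R\}$; under $\ker(\phi)\cong\pi_1(\Gamma)$ this $N$ corresponds to $\nclose{R}=\ker(\phi)=\pi_1(\Gamma)$, so $N=\pi_1(\Gamma)$ and $X$ is a Cayley-Abels complex.

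For (2) $\Rightarrow$ (1), the $1$-skeleton $\Gamma$ of the given Cayley-Abels complex $X$ is by Definition~\ref{def:RelCAComplex} induced by some compact generating graph $(\mathcal{G},\Lambda,\phi)$, so I again have the short exact sequence and $\ker(\phi)\cong\pi_1(\Gamma)$. Cocompactness of $X$ yields finitely many $G$-orbits of $2$-cells; choosing representatives with based attaching loops $\beta_1,\dots,\beta_m$, let $r_i\in\ker(\phi)$ correspond to $[\beta_i]$ and set $R=\{r_1,\dots,r_m\}$, a finite subset of $\pi_1(\mathcal{G},\Lambda)$. Since $\pi_1(X)=1$, the loops $\{g\cdot\beta_i\}$ normally generate $\pi_1(\Gamma)$, and by the same correspondence this gives $\nclose{R}=\ker(\phi)$. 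Hence $\langle(\mathcal{G},\Lambda,\phi)\mid R\rangle$ is a compact generalized presentation and $G$ is compactly presented relative to $\mathcal{H}$.

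The main obstacle I anticipate is the bookkeeping in this correspondence: one must check that the normal closure $\nclose{R}$ taken in $\widetilde{G}=\pi_1(\mathcal{G},\Lambda)$ matches the normal closure of the translated attaching loops taken in $\pi_1(\Gamma)$. The key point is that, under $\ker(\phi)\cong\pi_1(\Gamma)$, conjugation by $\ker(\phi)$ is exactly inner conjugation in $\pi_1(\Gamma)$, while the $G$-action on $\pi_1(\Gamma)$ by translation corresponds to conjugation by lifts; since $\phi$ is surjective, these two families of conjugations jointly realize conjugation by all of $\widetilde{G}$, so the two normal closures agree. The only genuinely topological verification is that the equivariantly attached $2$-cells have open pointwise stabilizers, which holds because a finite loop in the discrete $G$-graph $\Gamma$ has open pointwise stabilizer.
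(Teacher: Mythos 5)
Your proof is correct and follows essentially the same route as the paper's, which splits the theorem into Propositions~\ref{prop:RelPresComplex} and~\ref{prop:ComPresTop}: both directions run through the isomorphism $\ker(\phi)\cong\pi_1(\Gamma)$ of Proposition~\ref{rem:RelativeCayley}, attaching $2$-cells along $G$-translates of relator loops for one implication and reading relators off $G$-orbit representatives of $2$-cells for the other, with your final paragraph making explicit the normal-closure bookkeeping that the paper leaves implicit. The only cosmetic difference is that you index the attached $2$-cells by cosets $G/K_r$ rather than by the distinct translated loops themselves; this is harmless for the theorem but forgoes the extra property (no two distinct $2$-cells share a boundary) that the paper's construction records and later uses in its small cancellation arguments.
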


The following corollary  strengthens the conclusion of the previous theorem by imposing an  additional hypothesis; it is proven in Section~\ref{subsec:CPUnique}.

\begin{corollary}\label{cor:TopcharCWcomplex}
Let $(G,\mathcal{H})$ be a proper pair. Suppose there is a fine Cayley-Abels graph of $G$ with respect to $\mathcal{H}$. Then the following statements are equivalent:
\begin{enumerate}
    \item \label{item:CP} $G$ is compactly presented with respect to $\mathcal{H}$.
 \item \label{item:topComplex} Any Cayley-Abels graph of $G$ with respect to $\mathcal H$ is the 1-skeleton of a Cayley-Abels complex of $G$ with respect to $\mathcal H$.
\end{enumerate}
\end{corollary}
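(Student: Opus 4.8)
The plan is to prove the two implications separately, the engine being the quasi-isometry and fineness invariance of Cayley-Abels graphs (Corollary~\ref{cor:QI-Fineness}) together with the existence criterion of Theorem~\ref{prop:TopCharComplex} and the fineness/Dehn-function dictionary of Proposition~\ref{thm:Deltafine}. The implication \eqref{item:topComplex} $\Rightarrow$ \eqref{item:CP} is immediate: the standing hypothesis supplies a fine Cayley-Abels graph of $G$ with respect to $\mathcal{H}$, so some such graph exists; by \eqref{item:topComplex} it is the $1$-skeleton of a Cayley-Abels complex, hence a Cayley-Abels complex exists, and Theorem~\ref{prop:TopCharComplex} yields \eqref{item:CP}.

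For the converse \eqref{item:CP} $\Rightarrow$ \eqref{item:topComplex}, fix an arbitrary Cayley-Abels graph $\Gamma$ of $G$ with respect to $\mathcal{H}$. Since the hypothesis guarantees that some Cayley-Abels graph is fine, Corollary~\ref{cor:QI-Fineness} shows that $\Gamma$ is fine. Assumption \eqref{item:CP} together with Theorem~\ref{prop:TopCharComplex} produces a Cayley-Abels complex $Y$ whose $1$-skeleton $\Gamma_0$ is a (necessarily fine) Cayley-Abels graph; as $Y$ is cocompact, each edge meets only finitely many $2$-cells, so Proposition~\ref{thm:Deltafine} gives that the combinatorial Dehn function of $Y$ is finite-valued. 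Concretely, there is a constant $L_0$ so that $\pi_1(\Gamma_0)$ is normally generated by finitely many $G$-orbits of loops in $\Gamma_0$ of length at most $L_0$, and every loop in $\Gamma_0$ bounds a combinatorial disk whose $2$-cells have boundary length at most $L_0$.

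The remaining task is to transport this bounded cocompact filling from $\Gamma_0$ to $\Gamma$. First I would apply Lemma~\ref{lem:infiniteValence2} to obtain a $G$-equivariant bijection $\mathcal{V}_\infty(\Gamma)\to\mathcal{V}_\infty(\Gamma_0)$ and then, as in Corollary~\ref{prop:EdgeAttachments}, form the fine $G$-graph $\Gamma'$ by gluing $\Gamma$ and $\Gamma_0$ along their infinite-valence vertices, so that the inclusions $\Gamma\hookrightarrow\Gamma'\hookleftarrow\Gamma_0$ are $G$-equivariant quasi-isometries realized by finitely many equivariant edge attachments of the type treated in Theorem~\ref{thm.2hs}. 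Now let $X_L$ be the cocompact discrete $G$-complex obtained from $\Gamma$ by attaching $2$-cells along every $G$-orbit of loops of length at most $L$; fineness of $\Gamma$ together with cocompactness guarantees that there are only finitely many such orbits, so $X_L$ has finitely many $G$-orbits of $2$-cells. To see that $X_L$ is simply connected for $L$ large, I would take any loop $c$ in $\Gamma$, view it as a loop in $\Gamma'$, use the $\alpha$-replacement mechanism of Section~\ref{sec:appendix} to push it to a loop $c_0$ in $\Gamma_0$ whose length is controlled by the quasi-isometry constants, fill $c_0$ by a disk of bounded-boundary $2$-cells in $Y$, and finally $\alpha$-replace the boundaries of those $2$-cells back into $\Gamma$, producing loops of uniformly bounded length that bound in $X_L$ once $L$ exceeds the resulting bound. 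This shows $c$ is null-homotopic in $X_L$, so $X_L$ is a Cayley-Abels complex in the sense of Definition~\ref{def:RelCAComplex} with $1$-skeleton exactly $\Gamma$.

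The main obstacle is precisely this last transfer step: one must track boundary lengths under the two quasi-isometries $\Gamma\hookrightarrow\Gamma'\hookleftarrow\Gamma_0$ and verify that filling $c$ back in $\Gamma$ requires only finitely many $G$-orbits of $2$-cells of uniformly bounded boundary length. This is where the fineness hypothesis is indispensable, since without it a fixed vertex could lie on infinitely many orbits of short embedded loops, and coning them off would destroy cocompactness of the resulting complex; the careful length bookkeeping via $\alpha$-replacement is the technical heart of the argument.
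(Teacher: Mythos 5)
Your overall route coincides with the paper's: the implication \eqref{item:topComplex}$\Rightarrow$\eqref{item:CP} via Theorem~\ref{prop:TopCharComplex}, and for the converse, fixing an arbitrary Cayley-Abels graph $\Gamma$, transferring fineness to it via Corollary~\ref{cor:QI-Fineness}, obtaining a Cayley-Abels complex $Y$ with $1$-skeleton $\Gamma_0$ from Theorem~\ref{prop:TopCharComplex}, and finally building the complex on $\Gamma$ by coning off short circuits, with fineness plus cocompactness guaranteeing finitely many $G$-orbits of $2$-cells. However, the one step you flag as ``the main obstacle'' --- transporting the bounded filling from $\Gamma_0$ to $\Gamma$ --- is exactly where the proposal stops being a proof. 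The $\alpha$-replacement of Section~\ref{sec:appendix} is an operation on individual edge-paths, defined corner by corner and involving choices; it is not a map of graphs. So pushing an entire van Kampen diagram for $c_0$ through it (consistently across $2$-cells that share edges), and then filling the annulus between $c$ and the round-trip image of $c$ under the two replacements, requires a genuine argument that you do not supply. As written, the proof is incomplete precisely at its technical heart.

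The paper avoids this bookkeeping altogether: the property being transferred is large-scale simple connectedness, which is a quasi-isometry invariant by Remark~\ref{rem:LargeSCinvariant}, citing \cite[Theorem 2.2]{Salle}. Since $\Gamma_0$ is the $1$-skeleton of the simply connected cocompact $G$-complex $Y$, it is large-scale simply connected; Corollary~\ref{cor:QI-Fineness} makes $\Gamma$ quasi-isometric to $\Gamma_0$, hence $\Omega_k(\Gamma)$ is simply connected for some $k$, and then fineness and cocompactness finish the argument exactly as you describe. So your missing step is recoverable by a citation the paper has already recorded, rather than by hand-crafted $\alpha$-replacement. Two smaller points. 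First, your appeal to Proposition~\ref{thm:Deltafine} is superfluous: the bounded filling of loops in $\Gamma_0$ follows from cocompactness of $Y$ alone (finitely many orbits of $2$-cells gives a uniform bound on boundary lengths); no Dehn-function dictionary is needed in this direction. Second, the $2$-cells must be attached only along \emph{simple} loops (circuits), as in Definition~\ref{def:OmeganGamma}: attaching along all closed paths of length at most $L$ can already destroy cocompactness at a vertex of infinite degree, since pairs of edges at such a vertex typically fall into infinitely many orbits of its stabilizer, and fineness controls only embedded paths. Your phrasing ``every $G$-orbit of loops of length at most $L$'' needs this correction for the finiteness claim to hold.
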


\begin{definition}[Compactly presented]\cite[Definition 8.A.1]{CoHa16}
A topological group $G$ is \emph{compactly presented} if $G$ has a group presentation $\langle S|R\rangle$ with $S$ compact and $R$ a set of words in $S$ with uniformly  bounded length. 
\end{definition}

The proof of the following corollary is implicit in the results of~\cite{CoHa16} and ~\cite[Section 5.8]{CaIWei16}. A proof in the class of TDLC groups is discussed  in~\cite[Corollary 3.5]{ACCMP}. We discuss the argument in Section~\ref{subsec:cor5.5}. 

\begin{corollary}\label{prop:CompactPres}
The following statements are equivalent for any topological group $G$.
\begin{enumerate}
    \item $G$ admits a compact generalized presentation relative to the empty collection.
    \item $G$ has a compact open subgroup and is  compactly presented.
\end{enumerate}
\end{corollary}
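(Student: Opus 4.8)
The plan is to deduce both implications from two standard facts about compact presentations of locally compact groups, both essentially contained in~\cite[\S8.A, \S8.B]{CoHa16}: (i) the fundamental group $\pi_1(\mathcal{G},\Lambda)$ of a finite graph of compact groups is compactly presented; and (ii) if $\phi\colon \widetilde{G}\to G$ is a continuous open epimorphism with $\widetilde{G}$ compactly presented, then $G$ is compactly presented if and only if $\ker(\phi)$ is the normal closure in $\widetilde{G}$ of a compact subset.

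For fact (i), I would first observe that when $\mathcal{H}$ is empty the conditions in Definition~\ref{def:GenGraph} force every vertex group and every edge group of the compact generating graph $(\mathcal{G},\Lambda,\phi)$ to be compact (the clause ``$v\neq v_i$'' is vacuous). Consequently the associated Bass-Serre tree $\mathcal{T}$ is locally finite, since the degree of any vertex is a finite sum of indices of the form $[\mathcal{G}_v:\eta_e(\mathcal{G}_e)]$, each finite because $\eta_e(\mathcal{G}_e)$ is open in the compact group $\mathcal{G}_v$; moreover $\pi_1(\mathcal{G},\Lambda)$ acts on $\mathcal{T}$ continuously, cocompactly (as $\Lambda$ is finite) and properly (cell stabilizers are compact). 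Since a tree is simply connected, the geometric characterization of compact presentation~\cite[Cor.~8.A.9]{CoHa16} gives that $\pi_1(\mathcal{G},\Lambda)$ is compactly presented.

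Granting (i) and (ii), the two implications are short. For $(1)\Rightarrow(2)$, a compact generalized presentation relative to the empty collection supplies $\widetilde{G}=\pi_1(\mathcal{G},\Lambda)$ together with a finite set $R$ satisfying $\ker(\phi)=\nclose{R}$; by Remark~\ref{rem:GenGraph2} the group $G$ has a compact open subgroup (as $\mathcal{H}=\emptyset\neq\{G\}$), and by (i) the group $\widetilde{G}$ is compactly presented, so the ``if'' direction of (ii), applied with the compact (finite) set $R$, shows that $G$ is compactly presented. For $(2)\Rightarrow(1)$, the group $G$ is compactly generated (being compactly presented) and has a compact open subgroup, so Corollary~\ref{cor:CompactGenerationEmptyCollection} furnishes a compact generating graph $(\mathcal{G},\Lambda,\phi)$ of $G$ relative to $\emptyset$, and $\widetilde{G}=\pi_1(\mathcal{G},\Lambda)$ is compactly presented by (i). The ``only if'' direction of (ii) then produces a compact subset $K\subseteq \widetilde{G}$ with $\nclose{K}=\ker(\phi)$; since $\ker(\phi)$ is a discrete closed subgroup of $\widetilde{G}$ by Proposition~\ref{rem:RelativeCayley} and $K\subseteq\nclose{K}=\ker(\phi)$, the set $K$ is both compact and discrete, hence finite, and taking $R=K$ yields the required compact generalized presentation $\langle\, (\mathcal{G},\Lambda,\phi)\mid R\,\rangle$.

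I expect the main obstacle to be the careful justification of fact (ii) in the topological category rather than the abstract group-theoretic one: specifically, one must use that $\phi$ is open to identify $G\cong\widetilde{G}/\ker(\phi)$ \emph{as topological groups}, so that the quotient presentation is genuinely a presentation of the topological group $G$ with compact generating set and uniformly bounded relators, and one must match the notion of ``normal closure of a compact set'' with the finite set $R$ demanded by the definition, which is exactly where discreteness of $\ker(\phi)$ from Proposition~\ref{rem:RelativeCayley} is used.
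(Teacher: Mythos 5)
Your proof is correct. Your direction $(2)\Rightarrow(1)$ coincides with the paper's own argument: obtain a compact generating graph from Theorem~\ref{prop:TopCharGraph} (your Corollary~\ref{cor:CompactGenerationEmptyCollection}), apply Proposition~\ref{prop:ConShortExact}(2) --- which is precisely the ``only if'' half of your fact (ii), i.e.\ \cite[Prop.~8.A.10]{CoHa16} --- to make $\ker(\phi)$ compactly generated as a normal subgroup, and then use discreteness of $\ker(\phi)$ from Proposition~\ref{rem:RelativeCayley} to turn the compact normal generating set into a finite one. Where you genuinely diverge is in $(1)\Rightarrow(2)$: the paper does not pass through your fact (i) or through \cite[Prop.~8.A.10(1)]{CoHa16}, but instead writes down a bounded presentation of $\pi_1(\mathcal{G},\Lambda)$ explicitly --- generators the union of the (compact) vertex groups together with the finitely many stable letters, relators the multiplication tables of the vertex groups and the HNN relations, all of length at most four --- and then appends the finite set $R$ to get a bounded presentation of $G$ over a compact set. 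You instead establish fact (i) geometrically, via the locally finite Bass--Serre tree (open subgroups of compact groups have finite index), the proper cocompact action on it, and the geometric characterization of compact presentation in \cite{CoHa16}, and then descend to the quotient with Prop.~8.A.10(1). Both routes are sound; note that your geometric fact (i) is the compact-vertex-group special case of the paper's later Lemma~\ref{lem:GisCP}, and your overall strategy is essentially the alternative the paper itself flags in the remark following its proof (deducing the corollary from Theorem~\ref{prop:TopCharComplex} and \cite[Theorem 8.10]{BrHa99}). The paper's computation buys self-containedness (it needs \cite{CoHa16} only for the converse direction) and explicit control on relator length; your version buys brevity and avoids presentation bookkeeping, at the cost of invoking two external results instead of one. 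One small economy: in $(2)\Rightarrow(1)$ you only need $\pi_1(\mathcal{G},\Lambda)$ to be compactly \emph{generated} (immediate from compactness of the vertex groups and finiteness of $\Lambda$), not compactly presented, which is all the paper uses there.
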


As a consequence, we also recover the following corollary from~\cite[Proposition 3.6]{ACCMP}, the proof of which relied on a result from~\cite[Proposition 3.4]{Cook}.

\begin{corollary}\cite{Cook}
    A TDLC group $G$ is compactly presented if and only if there exists a simply connected cellular $G$-complex $X$ with compact open cell stabilizers, finitely many $G$-orbits of cells of dimension at most 2, and such that elements of $G$ fixing a cell setwise fix it pointwise (no inversions).
\end{corollary}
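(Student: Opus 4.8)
The plan is to specialize the two earlier characterizations to the empty peripheral collection and to invoke van Dantzig's theorem. Since $G$ is TDLC, by van Dantzig's theorem~\cite{Vd36} it contains a compact open subgroup. Hence Corollary~\ref{prop:CompactPres} applies and tells us that $G$ is compactly presented if and only if $G$ admits a compact generalized presentation relative to the empty collection, that is, if and only if $G$ is compactly presented relative to $\mathcal H=\emptyset$. Applying Theorem~\ref{prop:TopCharComplex} with $\mathcal H=\emptyset$, this is in turn equivalent to the existence of a Cayley-Abels complex of $G$ with respect to the empty collection. It therefore only remains to check that, for $\mathcal H=\emptyset$, the notion of a Cayley-Abels complex coincides with the complex $X$ described in the statement.

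First I would verify the forward translation. By Definition~\ref{def:RelCAComplex}, a Cayley-Abels complex relative to $\emptyset$ is a discrete, simply connected, cocompact, $2$-dimensional $G$-complex whose $1$-skeleton is a Cayley-Abels graph relative to $\emptyset$. Since $\mathcal H$ is empty, Definition~\ref{defx:CayleyAbels} forces every vertex stabilizer, as well as every edge stabilizer, to be compact; discreteness of the action makes all these stabilizers open, so they are compact and open. Cocompactness of a cellular action is exactly finiteness of the number of $G$-orbits of cells. For the $2$-cells, I would extract compactness of stabilizers and absence of inversions from the simplicial $1$-skeleton: the setwise stabilizer of a $2$-cell permutes the finitely many boundary edges, its pointwise stabilizer is contained in an edge stabilizer and hence is compact open, and simpliciality of the $1$-skeleton rules out inversions on edges and vertices. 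This produces a complex $X$ of the required form.

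Conversely, given a complex $X$ as in the statement, I would first replace $X$ by its $2$-skeleton, which retains all the hypotheses and the same fundamental group, so it is still simply connected, cocompact, with compact open cell stabilizers and without inversions. The only discrepancy with Definition~\ref{def:RelCAComplex} is that the $1$-skeleton of $X$ need not be simplicial. To remedy this I would pass to a barycentric subdivision, which preserves simple connectedness, cocompactness, the dimension bound, and compactness and openness of cell stabilizers, while removing multiple edges, loops, and inversions; the resulting complex is a Cayley-Abels complex of $G$ relative to $\emptyset$. Combining the two translations with the chain of equivalences above completes the proof.

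I expect the main obstacle to be the bookkeeping in this last reconciliation step: confirming that the hypotheses in the statement, namely compact open cell stabilizers together with the no-inversion condition, are exactly equivalent, after subdivision, to the package ``discrete, with simplicial $1$-skeleton that is a Cayley-Abels graph'' built into Definition~\ref{def:RelCAComplex}, and in particular that $2$-cell stabilizers remain compact open throughout. Everything else is a direct application of Corollary~\ref{prop:CompactPres} and Theorem~\ref{prop:TopCharComplex}.
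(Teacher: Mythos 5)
Your overall route---van Dantzig's theorem, Corollary~\ref{prop:CompactPres}, Theorem~\ref{prop:TopCharComplex}, and then a reconciliation of ``Cayley-Abels complex relative to $\emptyset$'' with the complex $X$ in the statement---is exactly the chain of results the paper intends, and your converse direction (pass to the $2$-skeleton, then barycentrically subdivide to obtain a Cayley-Abels complex) is sound. The gap is in the forward direction, where you claim the Cayley-Abels complex supplied by Theorem~\ref{prop:TopCharComplex} already has the required properties. You verify that pointwise stabilizers of $2$-cells are compact open, but you never show that an element stabilizing a $2$-cell setwise fixes it pointwise, and this is false in general for the complex built in Proposition~\ref{prop:RelPresComplex}: Example~\ref{exp:Amalgamation} exhibits a $2$-cell $D$ whose setwise stabilizer $G^D$ contains $\phi(r)$, which acts as a nontrivial rotation of $\partial D$ (of order $m>1$), so $G^D\neq G_D$. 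Thus the complex you exhibit does not satisfy the no-inversion hypothesis of the statement. Moreover, your justification for the absence of inversions on edges---simpliciality of the $1$-skeleton---is not valid reasoning: a group can act on a simplicial graph and still swap the two endpoints of an edge. (What actually rules out edge inversions here is that the Cayley-Abels graph is the quotient of a Bass-Serre tree, on which the action is inversion-free, by the free action of $\ker(\phi)$; but that argument says nothing about $2$-cells.)

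The repair is the same device you already use in the converse direction: replace the Cayley-Abels complex by its barycentric subdivision, exactly as the paper does at the start of the proof of Theorem~\ref{prop:FiniteRelPresGisCGimpliesHisCG}\eqref{item:CP2}, where it is noted that the subdivided action has no inversions. One must then check that the subdivision still satisfies the remaining hypotheses: it is simply connected, at most $2$-dimensional, and has finitely many $G$-orbits of cells; and its cell stabilizers remain compact and open, because the setwise stabilizer $G^D$ of an original $2$-cell permutes the finitely many edges of $\partial D$ and hence contains the compact open pointwise stabilizer $G_D$ with finite index (so $G^D$ is itself compact open), while every cell of the subdivision has stabilizer squeezed between the pointwise stabilizer of its carrier and such a setwise stabilizer, hence open (and closed) inside a compact group. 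With this one additional step the forward implication closes, and the rest of your argument is correct and coincides with the paper's intended derivation.
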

The rest of the section is subdivided into three subsections covering the proofs of the Theorem~\ref{prop:TopCharComplex}, Corollary~\ref{cor:TopcharCWcomplex}, and Corollary~\ref{prop:CompactPres} respectively.

\subsection{Proof of the Theorem~\ref{prop:TopCharComplex}}
The proof follows directly from Proposition~\ref{prop:RelPresComplex} and Proposition~\ref{prop:ComPresTop} below.

Proposition~\ref{prop:RelPresComplex}   is a standard construction for discrete groups, see~\cite[Lemma 8.9]{BrHa99}, and it appears as~\cite[Theorem 7.1]{ACCMP} for TDLC groups that split as an amalgamated free  product over a compact open subgroup.  

\begin{proposition}\label{prop:RelPresComplex}
Let $G$ be a topological group compactly presented relative to a finite collection $\mathcal{H}$ of open subgroups. Then there exists a Cayley-Abels complex $X$ of $G$ relative to $\mathcal{H}$ such that no distinct 2-cells of $X$ have the same boundary. 
\end{proposition}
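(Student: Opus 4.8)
The plan is to take a compact generalized presentation $\langle (\mathcal{G},\Lambda,\phi) \mid R\rangle$ of $G$ relative to $\mathcal{H}$, form the induced Cayley-Abels graph $\Gamma = \Gamma(\mathcal{G},\Lambda,\phi) = \mathcal{T}/\ker(\phi)$, and build $X$ by attaching a single $G$-orbit of $2$-cells for each relator. By Proposition~\ref{rem:RelativeCayley}, $\Gamma$ is a connected discrete cocompact simplicial $G$-graph with compact edge stabilizers whose vertex stabilizers are compact or conjugates of members of $\mathcal{H}$, so $\Gamma$ is a Cayley-Abels graph of $G$ relative to $\mathcal{H}$; moreover there is an isomorphism $\ker(\phi) \xrightarrow{\sim} \pi_1(\Gamma,x)$, $g \mapsto [\gamma_g]$, with $\gamma_g = \rho\circ\alpha_g$. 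Since $\langle (\mathcal{G},\Lambda,\phi)\mid R\rangle$ is a compact generalized presentation, $\nclose{R} = \ker(\phi)$, so each $r\in R$ determines an edge-loop $\gamma_r$ at $x$, and the classes $[\gamma_r]$ together with all their based $G$-translates normally generate $\pi_1(\Gamma,x)$. After deleting redundant relators I may assume the loops $\{\gamma_r\}_{r\in R}$ lie in pairwise distinct $G$-orbits.

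First I would attach the $2$-cells equivariantly. For $r\in R$ let $K_r = \bigcap_{w\in\gamma_r} G_w$ be the pointwise stabilizer of the finite vertex set of $\gamma_r$, an open subgroup since $\Gamma$ is a discrete $G$-graph, and let $\widehat{S}_r$ be the setwise stabilizer of the edge-loop $\gamma_r$. The quotient $\widehat{S}_r/K_r$ is finite, embedding in the dihedral symmetry group of the cyclic edge-loop $\gamma_r$. I would realize $e_r$ as a polygonal $2$-cell carrying the induced $\widehat{S}_r$-action by symmetries, attach it along $\gamma_r$, and take its orbit $G\cdot e_r \cong G/\widehat{S}_r$; then set $X = \Gamma \cup \bigcup_{r\in R} G\cdot e_r$. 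The reason the cell stabilizer must be the full setwise stabilizer $\widehat{S}_r$ rather than $K_r$ is precisely the unique-boundary requirement: within each orbit the assignment $g\widehat{S}_r \mapsto g\gamma_r$ is then injective, so distinct $2$-cells of a given orbit carry distinct boundaries, while having arranged the relator loops to lie in distinct $G$-orbits settles the same across orbits.

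Next I would verify the four defining properties of a Cayley-Abels complex together with the unique-boundary conclusion. The complex $X$ is $2$-dimensional with $1$-skeleton $\Gamma$, and is cocompact since it has $|R|$ orbits of $2$-cells over the cocompact $\Gamma$. The action is discrete because the pointwise stabilizer of $e_r$ equals $K_r$, which is open, while lower-dimensional cells inherit discreteness from $\Gamma$. Simple connectivity follows from van Kampen's theorem: $\pi_1(X,x)$ is the quotient of $\pi_1(\Gamma,x)$ by the normal closure of the based translates of the $\gamma_r$, and under $\ker(\phi)\cong\pi_1(\Gamma,x)$ this normal closure corresponds to $\nclose{R} = \ker(\phi)$, whence $\pi_1(X,x) = 1$. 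Finally, the orbit analysis above shows no two distinct $2$-cells share a boundary.

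The hard part is reconciling $G$-equivariance with the unique-boundary condition. A naive attachment using pointwise stabilizers yields a perfectly good Cayley-Abels complex but generally produces several $2$-cells sharing one boundary loop, and there is no $G$-equivariant way to pick one cell from each such family, since a family is a nontrivial transitive $\widehat{S}_r/K_r$-set with no fixed point (a fixed point would force $\widehat{S}_r = K_r$). The resolution, and the step needing most care, is to collapse each family to a single cell by enlarging the cell stabilizer to $\widehat{S}_r$ and letting the finite group $\widehat{S}_r/K_r$ act on the disk through its dihedral image; one then checks this does not disturb discreteness, since the pointwise stabilizer of the cell remains the open subgroup $K_r$, even though the action now has inversions on $2$-cells. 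The remaining bookkeeping — that the based $G$-translates of the relator loops normally generate $\pi_1(\Gamma,x)$, and that a relator whose loop is a $G$-translate of another may be discarded without changing $\nclose{R}$ — is routine covering-space theory.
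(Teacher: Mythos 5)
Your proof is correct and takes essentially the same approach as the paper: the paper builds $X$ on the Cayley-Abels graph $\Gamma=\mathcal{T}/\ker(\phi)$ by attaching one $2$-cell for each element of the $G$-set $\Omega=\{g\gamma_r \mid r\in R,\ g\in G\}$ of \emph{unbased} closed paths, which is precisely your construction with cell stabilizers equal to the setwise loop stabilizers $\widehat{S}_r$. The only difference is presentational — defining $\Omega$ as a set collapses duplicate boundaries (both within and across orbits) automatically, whereas you achieve the same effect explicitly via the dihedral action on polygonal cells and the preliminary deletion of redundant relators.
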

\begin{proof}
 Let $\mathcal{P}= \langle \  (  \mathcal{G}, \Lambda, \phi) \mid   R \   \rangle$ be a compact generalized presentation of $G$ relative to $\mathcal{H}$, and let $\mathcal{T}$ be the corresponding Bass-Serre tree. 
 Consider a $G$-complex $X$ constructed as follows. 
   Let 1-skeleton $X^{(1)}$ be the Cayley-Abels graph $ \Gamma = \mathcal{T}/ \ker(\phi)$, and let $\rho \colon \mathcal{T} \to \Gamma$ be the natural quotient map. Suppose $x \in \Gamma$ be any vertex, then by Proposition~\ref{rem:RelativeCayley}, there is a group isomorphism from $\ker(\phi) \to \pi_1(\Gamma, x)$ given by $g \mapsto [\gamma_g]$, where $\gamma_g$ is an combinatorial closed path in $\Gamma$ based at $x$. For $g\in G$ and $h\in \ker(\phi)$,  let $g\gamma_h$ be the translated closed path without an initial point, i.e., these are cellular maps from $S^1 \to X$. Consider the $G$-set $\Omega=\{g\gamma_{r}\mid r\in R, g\in G\}$ of closed paths in $X^{(1)}$.
 
  Let $X$ be the $G$-complex obtained by attaching a 2-cell to $X^{(1)}$ for every closed path in $\Omega$. In particular, the pointwise $G$-stabilizer  of a 2-cell of $X$ coincides with the pointwise $G$-stabilizer of its boundary path. By Proposition~\ref{rem:RelativeCayley}, the $G$-action is discrete on $X$.
  Observe that the natural isomorphism from $\ker(\phi)$ to  $\pi_1(X^{(1)}, \rho(x_0))$ implies that $X$ is simply connected and since $G$ acts cocompactly on $X^{(1)}$ and $R$ is finite, $X$ is a cocompact $G$-complex. Note that the definition of $\Omega$, implies that no two distinct 2-cells of $X$ have the same boundary.
\end{proof}

\begin{example}\label{exp:Amalgamation}
Let $G=A\ast_CB/\nclose{r^m}$, where $A$ and $B$ are locally compact  groups, $C$ is a common compact open subgroup, and $r\in G$ such that (the symmetric set induced by)  $r^m$ satisfies the $C'(1/12)$ small cancellation condition. Consider a compact generalized presentation $\langle \  (  A\ast_CB, \phi) \mid  r^m \   \rangle$ of $G$ relative to $\{A,B\}$ as in Example~\ref{example:AmalgamatedFreeProductPresentation}. Let $\mathcal{T}$ be the Bass-Serre tree corresponding to $A\ast_CB$, and let $\phi \colon \pi_1(\mathcal{G}, \Lambda) \to G$ be the natural quotient map. 
Then the corresponding Cayley-Abels complex $X$ is a simply connected 2-complex with 1-skeleton given by $\mathcal{T}/\ker(\phi)$. The $G$-complex $X$ has a single orbit of 2-cells. Let $D$ be a 2-cell such that $\phi(r) \subseteq G^D$, where $G^D$ is the setwise stabilizer of $D$. Observe that $G^D$ is a subgroup of $\langle r \rangle$, and the $G^D$-translates of the  path in $X^{(1)}$ corresponding to $r$ cover   $\partial D$. 
\end{example}

\begin{proposition}\label{prop:ComPresTop}
Let $G$ be a topological group with compact generating graph  $(\mathcal{G}, \Lambda, \phi )$  relative to a finite collection of open subgroups $\mathcal{H}$, and let $\Gamma$ be the corresponding Cayley-Abels graph.
If  $\Gamma$ is the 1-skeleton of a discrete simply connected cocompact $G$-complex $X$, then $G$ has a compact generalized relative presentation relative to $\mathcal{H}$ with generating graph $(\mathcal{G}, \Lambda, \phi )$.
\end{proposition}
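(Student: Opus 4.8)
The plan is to run the construction of Proposition~\ref{prop:RelPresComplex} in reverse: the $2$-cells of $X$ already encode relators, and the real content is to show that finitely many of them — one per $G$-orbit — suffice to normally generate $\ker(\phi)$ inside $\widetilde G := \pi_1(\mathcal G,\Lambda)$.

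First I would record the set-up supplied by Proposition~\ref{rem:RelativeCayley}: the Bass--Serre tree $\mathcal T$, the covering map $\rho\colon\mathcal T\to\Gamma$, and, after fixing a base vertex $x$ of $\Gamma$, the isomorphism $\Phi\colon\pi_1(\Gamma,x)\xrightarrow{\ \cong\ }\ker(\phi)$ of item~\eqref{item:iso}, which sends a loop to the deck transformation it induces. Since $\pi_1(X)=\pi_1(X^{(2)})$ I may discard cells of dimension $\ge 3$ and assume $X$ is $2$-dimensional. Because $X$ is a cocompact $G$-complex there are finitely many $G$-orbits of $2$-cells; choose representatives $D_1,\dots,D_k$. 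For each $j$ the attaching loop $\partial D_j$ is a closed combinatorial path in $\Gamma=X^{(1)}$; choosing a path in $\Gamma$ from $x$ to a vertex of $\partial D_j$ and conjugating produces a based loop $c_j\in\pi_1(\Gamma,x)$. Set $r_j:=\Phi([c_j])\in\ker(\phi)\subseteq\widetilde G$ and $R:=\{r_1,\dots,r_k\}$, a finite set.

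The heart of the argument is to show $\nclose R=\ker(\phi)$, where the normal closure is taken in $\widetilde G$. The inclusion $\nclose R\subseteq\ker(\phi)$ is immediate, since each $r_j\in\ker(\phi)$ and $\ker(\phi)\trianglelefteq\widetilde G$. For the reverse inclusion I would invoke the standard presentation of the fundamental group of a $2$-complex: since $X$ is simply connected, $\pi_1(\Gamma,x)=\pi_1(X^{(1)},x)$ equals the normal closure, taken inside $\pi_1(\Gamma,x)$ itself, of the set of all suitably based $2$-cell attaching loops $\{[g\cdot\partial D_j]\mid g\in G,\ 1\le j\le k\}$. Transporting this through $\Phi$ shows that $\ker(\phi)$ is generated, as a normal subgroup of $\ker(\phi)$, by $\{\Phi([g\cdot\partial D_j])\}$. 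The key lemma I must establish is the equivariance relating the $G$-action on loops to $\widetilde G$-conjugation on $\ker(\phi)$: for $g\in G$ with any lift $\tilde g\in\widetilde G$ one has $\Phi([g\cdot c])=\tilde g\,\Phi([c])\,\tilde g^{-1}$ modulo the choices of connecting paths, which differ only by elements of $\ker(\phi)$. This is exactly the way the extension $\widetilde G$ of the deck group $\ker(\phi)$ acts on $\pi_1(\Gamma,x)$ through covering-space theory, and it is the reverse of the identity $g\gamma_h$ used in Proposition~\ref{prop:RelPresComplex}. Granting it, each $\Phi([g\cdot\partial D_j])$ is a $\widetilde G$-conjugate of $r_j$ and hence lies in $\nclose R$; since these elements normally generate $\ker(\phi)$ within $\ker(\phi)$ and $\nclose R$ is normal in $\widetilde G$, and a fortiori in $\ker(\phi)$, we conclude $\ker(\phi)\subseteq\nclose R$.

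The crucial gain is precisely this passage from $\pi_1$ to $\widetilde G$: inside $\pi_1(\Gamma,x)$ one genuinely needs the whole infinite $G$-orbit $\{[g\cdot\partial D_j]\}$ as a normal generating set, because the available conjugations come only from $\ker(\phi)=\pi_1(\Gamma,x)$, whereas passing to $\widetilde G$ supplies conjugation by all lifts $\tilde g$, collapsing each orbit to a single relator $r_j$. Once $\nclose R=\ker(\phi)$ with $R$ finite, the short exact sequence $1\to\nclose R\to\pi_1(\mathcal G,\Lambda)\xrightarrow{\phi}G\to 1$ together with the given compact generating graph $(\mathcal G,\Lambda,\phi)$ exhibits $\langle(\mathcal G,\Lambda,\phi)\mid R\rangle$ as a compact generalized presentation of $G$ relative to $\mathcal H$, as required. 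I expect the main obstacle to be the careful bookkeeping in the equivariance lemma — tracking basepoints and connecting-path choices so that the identification of loop-translation with conjugation holds up to elements that are absorbed by the normal closure.
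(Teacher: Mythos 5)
Your proposal is correct and follows essentially the same route as the paper's proof: choose finitely many $G$-orbit representatives of $2$-cells, convert their based boundary loops into elements $r_j\in\ker(\phi)$ via the covering-space isomorphism $\ker(\phi)\cong\pi_1(\Gamma,x)$ of Proposition~\ref{rem:RelativeCayley}, and show that this finite set normally generates $\ker(\phi)$ inside $\pi_1(\mathcal{G},\Lambda)$. The equivariance lemma you isolate --- that translating a loop by $g\in G$ corresponds to conjugating its image by a lift $\tilde g\in\pi_1(\mathcal{G},\Lambda)$, up to $\ker(\phi)$-conjugations absorbed by the normal closure --- is precisely the step the paper leaves implicit when it asserts that $R$ normally generates $\ker(\phi)$ ``as a consequence of Proposition~\ref{rem:RelativeCayley}'', so your write-up supplies a justified version of the same argument.
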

\begin{proof}
Since $X$ is simply connected, if there are no 2-cells in $X$, $\ker(\phi)$ is trival and there is compact generalized presentation $\langle \  (  \mathcal{G}, \Lambda, \phi) \ \rangle$. Suppose that $\ker(\phi)$ is not trivial.  Since $G$ acts cocompactly on $X$, there exists a finite collection $\{\Delta_1, \Delta_2, \cdots \Delta_k\}$ of $G$-orbit representatives of 2-cells in $X$. Let $\{\gamma_1, \gamma_2, \cdots, \gamma_k\}$ be their corresponding boundary paths. Let $x_0$ be a fixed vertex in $\mathcal{T}$ and for any $\gamma_i$, let $\alpha_i$ be a path in 1-skeleton of $X$ from $\rho(x_0)$ to some fixed vertex of $\gamma_i$. Lift the concatenated paths $\alpha_i \ast \gamma_i$ to $\widetilde{\alpha_i \ast\gamma_i}$ in $\mathcal{T}$ starting at $x_0$. Then there exist $r_i \in \ker(\phi)$ such that $r_i.x_0$ is the endpoint of $\widetilde{\alpha_i \ast\gamma_i}$. As a consequence of Proposition~\ref{rem:RelativeCayley}, $R = \{r_1, r_2, \cdots,r_k\}$ is a finite set normally generating $\ker(\phi)$.
Hence we have a compact relative presentation $\langle \  (  \mathcal{G}, \Lambda, \phi)\mid   R \   \rangle$ of $G$.
\end{proof}

\subsection{Proof of the Corollary~\ref{cor:TopcharCWcomplex} }\label{subsec:CPUnique}

We  use the following definition of large-scale simply connectedness introduced in~\cite[Definition 1.3]{Salle}.

\begin{definition}\label{def:OmeganGamma}
Let $\Gamma$  be a connected graph. For $k \in \N$, define a 2-complex $ \Omega_k(\Gamma)$ with 1-skeleton $\Gamma$ and 2-cells as $m$-gons for any simple loop of length $m$ up to cyclic permutation, where $0 \leq m \leq k$.
\end{definition}

\begin{definition}\cite[Definition 1.3]{Salle}
A graph $\Gamma$ is said to be $k$-simply connected if $\Omega_k(\Gamma)$ is simply connected. If there exists such a $k$, then we shall say that $\Gamma$ is large-scale simply connected.
\end{definition}

\begin{remark}\label{rem:LargeSCinvariant}We note the following observations:
\begin{enumerate}
 \item If $\Gamma$ is a $G$-graph, then $\Omega_k(\Gamma)$ is a $G$-complex.
\item Large-scale simple connectedness is preserved by quasi-isometry, see~\cite[Theorem 2.2]{Salle}.
 \item 
 If a graph $\Gamma$ is a 1-skeleton of a simply connected cocompact $G$-complex, then $\Gamma$ is large-scale simply connected.
\end{enumerate}
\end{remark}

\begin{proof}[Proof of the Corollary~\ref{cor:TopcharCWcomplex}]
\eqref{item:topComplex} implies \eqref{item:CP} is direct from Theorem~\ref{prop:TopCharComplex}. 
Conversely, let $\Gamma$ be any Cayley-Abels graph of $G$ with respect to $\mathcal H$. Since $G$ is compactly presented, Theorem~\ref{prop:TopCharComplex} implies there exists a Cayley-Abels graph $\Gamma_1$ that is large-scale simply connected, see Remark~\ref{rem:LargeSCinvariant}. Since there exists a fine Cayley-Abels graph of $G$ with respect to $\mathcal{H}$,   Corollary~\ref{cor:QI-Fineness} implies that  $\Gamma$ is large-scale simply connected and fine. Let $k>0$ such that $\Omega_k(\Gamma)$ is simply-connected.  Let $\mathcal{E}$ be a finite set of $G$-orbit representatives of edges in $\Gamma$. Since $\Gamma$ is fine, for any $e \in \mathcal{E}$, there exists finitely many circuits of length at most $k$ containing $e$. By construction, $\Omega_k(\Gamma)$ has finitely many 2-cells up to the $G$-action. Hence $\Omega_k(\Gamma)$ is a discrete simply connected cocompact $G$-complex with 1-skeleton $\Gamma$.
\end{proof}

\subsection{Proof of Corollary~\ref{prop:CompactPres}}\label{subsec:cor5.5}
We   use the following proposition for the proof.
\begin{proposition}\cite[Prop. 8.A.10]{CoHa16}\label{prop:ConShortExact}
Let $1 \to N \to G \to Q \to 1$ be a short exact sequence of locally compact groups  and continuous homomorphisms.
\begin{enumerate}

\item  Assume that $G$ is compactly presented and that $N$ is compactly generated as a normal subgroup of $G$. Then $Q$ is compactly presented.
\item Assume that $G$ is compactly generated and that $Q$ is compactly presented. Then $N$ is compactly generated as a normal subgroup of $G$.
\item If $N$ and $Q$ are compactly presented, then so is $G$.
\end{enumerate}

\end{proposition}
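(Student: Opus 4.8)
The plan is to treat the three statements as the locally compact analogues of the classical facts about finitely presented discrete groups — present a quotient by adjoining relators, normally generate a kernel by lifting relators, and splice two presentations to present an extension — replacing every appeal to finiteness by an appeal to compactness. The single recurring device is: if $S$ is a symmetric compact generating set with $1\in S$ and nonempty interior, and $K\subseteq G$ is compact, then $K\subseteq S^{m}$ for some $m$ (the interiors of the increasing sets $S^{j}$ cover $G$, so finitely many cover $K$), whence every element of $K$ is a word in $S$ of length at most $m$; this is exactly what turns ``compact'' into ``boundedly long''. Throughout I fix such an $S$, write $p\colon F_S\to G$ for the canonical surjection of the abstract free group on the set $S$, and use that the map $\pi\colon G\to Q$ is continuous, surjective and \emph{open} by the open mapping theorem (each of $G$, $N$, $Q$ is $\sigma$-compact, being compactly generated). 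I also record the elementary compact-lifting lemma: every compact $C\subseteq Q$ equals $\pi(C')$ for some compact $C'\subseteq G$ — take a compact identity neighbourhood $V$, cover $C$ by finitely many translates of the open set $\pi(V)$, lift their centres to $g_1,\dots,g_k$, and set $C'=\bigl(\bigcup_i g_iV\bigr)\cap\pi^{-1}(C)$, which is compact since $N$ and hence $\pi^{-1}(C)$ is closed.

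For part (1), start from a compact presentation $G=\langle S\mid R\rangle$ with $|r|_S\le n$ and a compact $K\subseteq N$ with $N=\nclose{K}$ in $G$. By the length bound, each $k\in K$ is a word $w_k$ in $S$ with $|w_k|\le m$ for a fixed $m$. Put $q=\pi\circ p\colon F_S\to Q$, so $\ker q=p^{-1}(N)$. Since $p(\nclose{R\cup\{w_k\}})=\nclose{p(R)\cup K}=N$ in $G$ and $\ker p=\nclose{R}$ is contained in $\nclose{R\cup\{w_k\}}$, saturation gives $\ker q=\nclose{R\cup\{w_k\}}$ in $F_S$. Thus $q$ exhibits $Q=\langle S\mid R\cup\{w_k\}\rangle$ (with $S$ marked in $Q$ via $\pi$), a compact set of generators with all relators of length at most $\max(n,m)$; so $Q$ is compactly presented.

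For part (2), let $T$ be a symmetric compact generating set of $G$ with $1\in T$; then $T$, marked in $Q$ via $\pi$, generates $Q$. Invoking invariance of compact presentability under change of marked compact generating set (the analogue of independence of a finite generating set), write $Q=\langle T\mid \tilde R\rangle$ through $q=\pi\circ p\colon F_T\to Q$, with each $\tilde r\in\tilde R$ of length at most $n$, so $\ker q=\nclose{\tilde R}$ in $F_T$. Since $p$ is surjective, $N=p(\ker q)=\nclose{p(\tilde R)}$ in $G$, and $p(\tilde R)\subseteq T^{n}\cap N$, which is compact because $N$ is closed. Hence $K:=T^{n}\cap N$ is a compact normal generating set for $N$, i.e. $N$ is compactly generated as a normal subgroup of $G$.

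For part (3), fix compact presentations $N=\langle S_N\mid R_N\rangle$ and $Q=\langle S_Q\mid R_Q\rangle$, use the lifting lemma to choose a compact $\tilde S_Q\subseteq G$ with $\pi(\tilde S_Q)=S_Q$, and take the compact set $S:=\tilde S_Q\cup S_N$ as a generating set of $G$ (any $g$ is $\tilde S_Q$-matched modulo $N$, then finished in $S_N$). The relators are: all of $R_N$; for each $r\in R_Q$ its $\tilde S_Q$-lift set equal to a word in $S_N$ representing the resulting element of the compact set $\tilde S_Q^{\,|r|}\cap N$; and conjugation relators $xsx^{-1}=(\text{word in }S_N)$ for $x\in\tilde S_Q,\ s\in S_N$, where the right-hand element lies in the compact set $(\tilde S_Q\,S_N\,\tilde S_Q^{-1})\cap N\subseteq S_N^{\,m'}$. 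Each such relator is boundedly long, and the completeness of the presentation is the classical rewriting argument: in a word trivial in $G$, use the conjugation relators to collect all $S_N$-letters, project to $Q$ and kill the $\tilde S_Q$-part using (lifts of) $R_Q$, then finish inside $N$ using $R_N$. I expect the main obstacle to be precisely this rewriting step together with the bookkeeping — verifying that the listed relators, which form infinite sets indexed by the compact sets $\tilde S_Q,\,S_N$ but of uniformly bounded length, genuinely normally generate $\ker(F_S\to G)$ — so that the real work lies in keeping the length bounds uniform through each reduction and in the repeated use of the compact-to-bounded device and the compact-lifting lemma, none of the individual steps being deep.
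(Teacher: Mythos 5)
The paper itself does not prove this proposition: it is quoted from Cornulier--de la Harpe \cite[Prop.\ 8.A.10]{CoHa16}, so the only fair comparison is with the standard argument given there, which your proposal reproduces faithfully. Parts (1) and (2) are correct as written: the kernel computation $\ker q=\nclose{R\cup\{w_k\}}$ via saturation over $\ker p=\nclose{R}$, and the extraction of the compact normal generating set $T^{n}\cap N$ from a bounded presentation of $Q$ over the marked generating set $\pi(T)$, are exactly the right mechanisms, and your appeal to invariance of compact presentability under change of compact generating set is a legitimate (earlier and more elementary) quotable fact. Two harmless slips there: the parenthetical ``$N$ is $\sigma$-compact, being compactly generated'' is not available in (1), where $N$ is only compactly generated \emph{as a normal subgroup} (but only $\sigma$-compactness of $G$ is needed for the open mapping theorem); and in (1) the generating set coming with the given presentation need not have nonempty interior --- use the Baire-category form of your device (some power $S^{j}$ already has interior, so compact sets still lie in bounded powers) rather than assuming interior of $S$ itself.

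The genuine gap is in part (3), and it is precisely where you flag the least worry: your justification that $\pi$ is open reads ``each of $G$, $N$, $Q$ is $\sigma$-compact, being compactly generated'', but in (3) compact generation of $G$ is part of what is being proved, so this is circular --- and openness of $\pi$ is indispensable there, since your compact-lifting lemma (needed to produce the compact set $\tilde S_Q$ with $\pi(\tilde S_Q)=S_Q$) uses that $\pi(V)$ is a neighbourhood of $1$ in $Q$. Nor can openness be recovered from the hypotheses as literally stated in the paper (``continuous homomorphisms''): the sequence $1\to\{1\}\to \R_{d}\to\R\to 1$, with $\R_d$ the reals made discrete and the identity map $\R_d\to\R$, has $N$ and $Q$ compactly presented while $G$ is not even compactly generated, so under the literal reading (3) is false. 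The repair is a convention, not a new idea: in \cite{CoHa16} a short exact sequence of locally compact groups means a topological extension, i.e.\ $N\hookrightarrow G$ is a closed embedding and $Q$ carries the quotient topology, so $\pi$ is open \emph{by definition}; with that in place your lifting lemma, the bounded relator lists ($R_N$, lifted $R_Q$-relators set equal to $S_N$-words via $\tilde S_Q^{\,|r|}\cap N\subseteq S_N^{m}$, and the conjugation relators), and the classical collection-and-projection rewriting argument all go through, since only the relator set --- not the derivations --- must have bounded length. State the convention explicitly at the outset and delete the $\sigma$-compactness justification in (3), and the proof is complete.
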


\begin{proof}[Proof of Corollary~\ref{prop:CompactPres}]
Suppose $\langle \  (  \mathcal{G}, \Lambda, \phi)\mid  R \   \rangle$ is a compact generalized presentation of $G$ relative to the empty collection. By Remark~\ref{rem:GenGraph2}, $G$ has a compact open subgroup. On the other hand, all vertex and edge groups of  $(  \mathcal{G}, \Lambda, \phi)$ are compact, and hence $\pi_1(\mathcal{G}, \Lambda)$ has a  compact generating set $S$ consisting of the union of all vertex groups and a finite set of elements (stable letters for HNN-extensions). Then a group presentation of $\pi_1(\mathcal{G}, \Lambda)$ with generating set $S$  is obtained by considering all relations given by the multiplication tables of the vertex groups and the HNN relations; note that all relations have length at most four. Since $R$ is a finite set, we have a bounded presentation for $G$. 
Conversely, assume $G$ has a bounded presentation $\langle S|R\rangle$ and a compact open subgroup. Theorem~\ref{prop:TopCharGraph} implies that   $G$ admits a compact generating graph $(  \mathcal{G}, \Lambda, \phi)$ relative to the empty collection. Since $\pi_1(\mathcal{G}, \Lambda)$ is compactly generated and $G$ is compactly presented, Proposition~\ref{prop:ConShortExact} implies that $\ker(\phi)$ is compactly generated as a normal subgroup. By Proposition~\ref{rem:RelativeCayley}, $\ker(\phi)$ is discrete and therefore finitely generated as a normal subgroup.
\end{proof}
\begin{remark}
 Corollary~\ref{prop:CompactPres} can also be obtained as a consequence of Theorem~\ref{prop:TopCharComplex} and \cite[Theorem 8.10]{BrHa99}
\end{remark}

\section{Relatively compactly presented groups}\label{Sec:Osin2}

In this section we prove two results whose restriction to discrete groups were proven by Osin in~\cite[Theorem 2.40 and Theorem 1.1]{Os06}. 

\begin{theorem} \label{prop:FinRelPresHisCPimpliesGisCP} \label{prop:FiniteRelPresGisCGimpliesHisCG}
Let $G$ be a topological group compactly presented relative to a finite collection $\mathcal{H}$ of open subgroups.
\begin{enumerate}
    \item \label{item:CP1} If each $H \in \mathcal{H}$ is compactly presented then $G$ is compactly presented.
    \item \label{item:CP2} If $G$ is compactly generated, then each $H \in \mathcal{H}$ is compactly generated. 
\end{enumerate}
\end{theorem}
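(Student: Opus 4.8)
The plan is to treat both parts through the short exact sequence
\[ 1 \to \nclose{R} \to \pi_1(\mathcal{G},\Lambda) \xrightarrow{\phi} G \to 1 \]
supplied by a compact generalized presentation of $G$ relative to $\mathcal{H}$, writing $\widetilde{G}=\pi_1(\mathcal{G},\Lambda)$ and $N=\ker(\phi)=\nclose{R}$. The point is that $N$ is normally generated by the finite set $R$, hence is compactly generated as a normal subgroup of $\widetilde{G}$, so Proposition~\ref{prop:ConShortExact} converts statements about $\widetilde{G}$ into statements about $G$. The vertex groups of $(\mathcal{G},\Lambda)$ are the peripheral subgroups $H_i$ together with compact groups, and the Bass--Serre tree $\mathcal{T}$ of $(\mathcal{G},\Lambda)$ is a one-dimensional simply connected cocompact discrete $\widetilde{G}$-complex whose edge stabilizers are compact and whose vertex stabilizers are the conjugates of the $H_i$ and of the compact vertex groups; thus $\mathcal{T}$ already exhibits $\widetilde{G}$ as compactly presented relative to $\{H_i\}$ via Theorem~\ref{prop:TopCharComplex}.

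For part~\eqref{item:CP1} I would first prove the auxiliary statement that $\widetilde{G}$ is compactly presented. Since each $H_i$ is compactly presented and, being open in $G$, contains a compact open subgroup, Corollary~\ref{prop:CompactPres} and Theorem~\ref{prop:TopCharComplex} give a Cayley--Abels complex $Y_i$ of $H_i$ relative to the empty collection: a simply connected cocompact discrete $H_i$-complex with compact cell stabilizers. I would then build a \emph{tree of complexes} $Z$: start from $\mathcal{T}$, and at every vertex whose stabilizer is a conjugate $gH_ig^{-1}$ graft in the corresponding translate $gY_i$ in place of that vertex, attaching each edge of $\mathcal{T}$ (which carries a compact stabilizer, embedded as a compact open subgroup of the vertex group) to a vertex of $gY_i$ whose stabilizer contains that edge stabilizer. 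The resulting $\widetilde{G}$-complex $Z$ is discrete, cocompact (finitely many orbits, since $\mathcal{T}/\widetilde{G}=\Lambda$ is finite and each $Y_i$ is cocompact), has all cell stabilizers compact, and is simply connected because it is assembled over the contractible tree $\mathcal{T}$ from simply connected vertex complexes along simply connected edge complexes (a van Kampen / graph-of-spaces argument, where $\pi_1(Z)$ is controlled by $\pi_1(\mathcal{T})$ and the vertex complexes, all trivial). By Corollary~\ref{prop:CompactPres} together with Theorem~\ref{prop:TopCharComplex} for the empty collection, $\widetilde{G}$ is compactly presented. Since $N$ is compactly generated as a normal subgroup of $\widetilde{G}$, Proposition~\ref{prop:ConShortExact}(1) then yields that $G=\widetilde{G}/N$ is compactly presented.

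For part~\eqref{item:CP2} I would argue geometrically. Fix a non-compact $H\in\mathcal{H}$. By Theorem~\ref{prop:TopCharComplex} there is a Cayley--Abels complex $X$ of $G$ relative to $\mathcal{H}$: a two-dimensional simply connected cocompact discrete $G$-complex with compact edge (hence $2$-cell) stabilizers, containing a vertex $v$ with $G_v=H$. The subgroup $H$ acts on the link $\mathrm{lk}(v,X)$, whose vertices are the edges of $X$ at $v$ and whose edges are the corners of $2$-cells at $v$; cocompactness of $X$ gives finitely many $H$-orbits of vertices and edges of $\mathrm{lk}(v,X)$, all with compact stabilizers. If $\mathrm{lk}(v,X)$ is connected, then $H$ acts cocompactly and discretely with compact stabilizers on a connected graph, so $H$ is compactly generated by Theorem~\ref{prop:TopCharGraph} (equivalently Proposition~\ref{prop:LastOne}). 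The role of the hypothesis that $G$ is absolutely compactly generated is exactly to force this connectivity: by Theorem~\ref{prop:TopCharGraph} there is a locally finite Cayley--Abels graph $\Gamma_0$ of $G$ relative to the empty collection, and I would enrich $X$ to a complex $X'$ by equivariantly attaching the finitely many $G$-orbits of edges realizing the adjacency of $\Gamma_0$ among the compact-stabilizer neighbors of $v$ (Theorem~\ref{thm.2hs}, first construction), together with finitely many orbits of $2$-cells filling the newly created loops. This keeps $X'$ discrete, cocompact, simply connected, and a Cayley--Abels complex relative to $\mathcal{H}$, while the new $2$-cells contribute corners at $v$ that link its neighbors through the locally finite connected graph $\Gamma_0$, making $\mathrm{lk}(v,X')$ connected.

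I expect the main obstacle to be precisely these two gluing constructions in the presence of infinite-valence peripheral vertices, where local finiteness fails. For part~(1) the delicate point is to graft the complexes $Y_i$ into $\mathcal{T}$ so that the attaching maps respect the compact edge stabilizers (this is where one arranges $Y_i$ to carry the relevant compact open subgroups as vertex stabilizers) and the total complex is simultaneously discrete, cocompact and simply connected; this is the topological content behind assuming compact rather than finite edge stabilizers. For part~(2) the crux is the enrichment step: verifying that finitely many equivariant edge- and $2$-cell-attachments can be chosen so as to preserve simple connectivity and cocompactness while making every peripheral link connected. This is where absolute compact generation is genuinely used, as the degenerate pair $(G,\{G\})$ already shows that relative compact presentation alone does not control $H$.
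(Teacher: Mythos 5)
Your part (2) contains a genuine gap, and it sits exactly where the real content of the theorem lies. The reduction is fine up to the point where $\mathrm{lk}(v,X)$ may be disconnected: if the link were connected, then cocompactness of the $H$-action on it (Lemma~\ref{lem:CocompactAction}) plus Theorem~\ref{prop:TopCharGraph} would indeed give compact generation of $H$; this is precisely Case~1 of the paper's proof (Proposition~\ref{lem:stabLinkisCG}). The problem is the enrichment step. Attaching finitely many $G$-orbits of edges and $2$-cells can never be \emph{shown} to connect $\mathrm{lk}(v,X')$ from the stated hypotheses by pure construction: the new link-edges created at $v$ are only the $H$-orbits of finitely many corners, so the new portion of the link has the form $H\cdot F$ for a finite subgraph $F$, and $H\cdot F$ is connected if and only if $H$ is generated by the compact set $\{h\in H \mid hF\cap F\neq\emptyset\}$, i.e.\ if and only if $H$ is compactly generated --- the very conclusion you are after. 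Moreover, there is no equivariant way to ``realize the adjacency of $\Gamma_0$ among the neighbors of $v$'': a compact open stabilizer $G_u$ of a neighbor $u$ need not fix any vertex of $\Gamma_0$ (already in the discrete case, vertex stabilizers of a Cayley graph are trivial, so a finite subgroup fixes nothing), so the neighbors of $v$ do not map $G$-equivariantly into $\Gamma_0$; and connectivity of $\Gamma_0$ only provides paths from $u$ to $hu$ of unbounded length as $h$ ranges over $H$, which cannot be coned to $v$ by finitely many equivariant attachments. This is exactly where the paper has to work: it accepts that the link may be disconnected, proves that the setwise stabilizers of its components are compactly generated (Proposition~\ref{lem:stabLinkisCG}, using simple connectivity of $X$ via Lemma~\ref{lem:ConnectedComponentCorrespondence}), builds the cut-point tree $T$ whose edge stabilizers are these component stabilizers, and then transfers compact generation from $G$ to the vertex stabilizer $H$ by the normal-form argument for graphs of topological groups (Proposition~\ref{prop:VertexGroupsAreCG}, Corollary~\ref{cor:IlariaGeeralisation}). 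That algebraic transfer step has no counterpart in your proposal; without it the argument is circular.

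Your part (1) follows the paper's reduction at the top level --- prove $\widetilde{G}=\pi_1(\mathcal{G},\Lambda)$ compactly presented, then pass to the quotient by the discrete, finitely normally generated kernel via Proposition~\ref{prop:ConShortExact} --- but you establish compact presentability of $\widetilde{G}$ geometrically rather than algebraically. The delicate point you flag is a real gap, though a fillable one: a compact open edge group $E\leq H_i$ need not fix any vertex of an arbitrary Cayley--Abels complex $Y_i$ of $H_i$ (same torsion phenomenon as above), so the tree edges cannot be attached equivariantly to $Y_i$ as it stands; one must first enlarge $Y_i$, for instance by attaching the coset orbit $H_i/E$ as in Definition~\ref{def:uH-Attachment}(1) together with finitely many orbits of filling $2$-cells, so as to create a vertex with stabilizer $E$, and only then graft. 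The paper sidesteps all of this with the short algebraic Lemma~\ref{lem:GisCP}: bounded presentations of amalgams and HNN extensions over compact edge groups are written down directly, all edge relations having length at most four. So for part (1) your route is viable but strictly more laborious; for part (2) the key idea is missing.
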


\begin{remark}
Note that the statements of Theorem~\ref{prop:FinRelPresHisCPimpliesGisCP}  are trivial if $G$ has no compact open subgroup since in this case  Remark~\ref{rem:GenGraph2} implies that $\mathcal{H}=\{G\}$.  
\end{remark}

The section consists of three parts. The first subsection discusses  the proof of Theorem~\ref{prop:FinRelPresHisCPimpliesGisCP}\eqref{item:CP1}, then the other two sections contain the proof of Theorem~\ref{prop:FinRelPresHisCPimpliesGisCP}\eqref{item:CP2}.

\subsection{Normal forms}\label{subsec:NormalForms}
Let us recall the notion of normal form for  amalgamated free products and HNN extensions. For details we refer the reader to~\cite{LySc01}. 

\subsubsection*{Normal form for Amalgamated free product} 
Let $G = A \ast_CB$ be an amalgamated free product. 
Choose a system of representatives $T_A$ of right cosets of $C$ in $A$ and a system of
representatives $T_B$ of right cosets of $C$ in $B$.  Assume that 1 represents the coset of $C$ in $A$ and $B$. A \emph{normal} form in the amalgamated free product $A\ast_C B$ is a sequence $x_0x_1 \cdots x_n$ such that 
\begin{enumerate}
    \item $x_0 \in C$.
    \item $x_i \in T_A \setminus \{1\}$ or $x_i \in T_B \setminus \{1\}$. for $i \geq 1$, and consecutive terms $x_i$ and $x_{i+1}$ lie in distinct system of representatives.
\end{enumerate}

\subsubsection*{Normal form for HNN extension} 
Let $A$  be a group, let $B$ and $C$ be subgroups of $A$, and let $\alpha\colon C \to B$ be an isomorphism. Suppose $G$ is the group defined as the HNN extension
\[G = A\ast_{\alpha}= \langle A,t \mid t^{-1}ct=\alpha(c), c\in C \rangle. \] 
Choose a system of representatives $T_C$ of right cosets of $C$ in $A$ and a system of
representatives $T_B$ of right cosets of $B$ in $A$.  Assume that 1 represents the coset of $C$ and $B$. A \emph{normal form} in an HNN extension is a 
$g_0t^{\epsilon}_1g_1 \cdots t^{\epsilon_n}g_n$ such that 

\begin{enumerate}
    \item $g_0$ is an arbitrary element of $A$,
    \item if $\epsilon_i = -1$, then $g_i \in T_B$,
    \item if $\epsilon_i = 1$, then $g_i \in T_C$,
    \item there is no consecutive subsequence $t^{-1}1t$.
\end{enumerate}

\begin{theorem}\cite{LySc01}
Suppose $G=A\ast_C B$ or $G=A\ast_\alpha$. 
Every element $g \in G$ can be uniquely written as the product of sequence in   normal form. 
\end{theorem}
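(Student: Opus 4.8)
This is the classical Normal Form Theorem of Lyndon and Schupp, and the plan is to prove it via the van der Waerden permutation trick, handling existence and uniqueness separately. For existence, I would use that $G = A \ast_C B$ is generated by $A \cup B$ (respectively $G = A\ast_\alpha$ is generated by $A \cup \{t\}$), so every $g \in G$ is a finite product of syllables drawn from the factors. I would then describe a terminating reduction procedure: repeatedly fuse adjacent syllables lying in the same factor, and decompose each syllable through its coset representative system, pushing the $C$-part (respectively the $B$-part produced by a $t^{-1}$) leftward. Each step either shortens the syllable length or moves a $C$-element further left, so the process halts at a sequence satisfying the defining conditions, giving existence.

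For uniqueness, which is the substantive part, I would realize $G$ as permutations of the set $\mathcal{N}$ of all normal forms. In the amalgamated case I would define, for each $a \in A$, a permutation $\sigma_a$ of $\mathcal{N}$ that prepends $a$ to a normal form $x_0 x_1 \cdots x_n$ and renormalizes according to whether $x_1$ lies in $T_A$ or $T_B$, using the coset decomposition of the product $a x_0$; symmetrically one defines permutations $\tau_b$ for $b \in B$. The key verifications are that $a \mapsto \sigma_a$ and $b \mapsto \tau_b$ are homomorphisms into $\mathrm{Sym}(\mathcal{N})$ and that they agree on the common subgroup $C$, so that the universal property of the amalgamated free product assembles them into a homomorphism $\Phi \colon G \to \mathrm{Sym}(\mathcal{N})$. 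The HNN case runs in parallel: I would define the action of $A$ and of the stable letter $t^{\pm 1}$ on $\mathcal{N}$ and check that the relation $t^{-1} c t = \alpha(c)$ is respected, invoking the universal property of the HNN extension.

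With $\Phi$ in hand, uniqueness reduces to a single computation: for any normal form $w = x_0 x_1 \cdots x_n$, applying $\Phi(x_0 x_1 \cdots x_n)$ to the trivial normal form (the length-zero sequence given by $1 \in C$) returns $w$ itself. Consequently, two normal forms representing the same element $g$ are both equal to $\Phi(g)$ evaluated at the trivial form, and therefore coincide. The main obstacle is the bookkeeping involved in defining these permutations and confirming that the $A$- and $B$-actions agree on $C$ (respectively that the stable-letter action respects the HNN relation): this demands a careful case analysis on the type of the leading syllable of each normal form to establish well-definedness, which is routine but error-prone. I note that one could alternatively derive the theorem from the geometry of the Bass-Serre tree by identifying reduced paths with normal forms, but since the tree is typically constructed using normal forms, I would prefer the self-contained permutation argument to avoid circularity.
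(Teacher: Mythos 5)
Your proposal is correct: the paper itself gives no proof of this statement, citing it directly from Lyndon and Schupp~\cite{LySc01}, and your van der Waerden permutation argument (existence by a terminating syllable-reduction, uniqueness by assembling the factor actions on the set of normal forms via the universal property and evaluating at the trivial form) is essentially the classical proof found in that reference. Your remark about avoiding circularity with the Bass--Serre tree is also well taken, since the paper uses this normal form theorem as an ingredient in its Bass--Serre--theoretic arguments rather than the other way around.
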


\subsection{Proof of Theorem~\ref{prop:FinRelPresHisCPimpliesGisCP}\eqref{item:CP1}}\label{Sec:Osin1}

\begin{lemma} \label{lem:GisCP}
If $(\mathcal{G}, \Lambda)$ is a finite  graph of topological groups with compactly presented vertex groups and compact edge groups, then $\pi_1(\mathcal{G},\Lambda)$ is compactly presented.
\end{lemma}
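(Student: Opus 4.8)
The plan is to reduce the statement to two elementary building blocks and then prove each by exhibiting an explicit compact presentation. Writing $\widetilde G = \pi_1(\mathcal{G},\Lambda)$, I would first recall the standard Bass--Serre description of $\widetilde G$ in terms of $\Lambda$ (see~\cite{Serre,BASS1993}): fix a spanning tree $T\subseteq\Lambda$; then $\widetilde G$ is obtained from the tree of groups $(\mathcal{G},T)$ by one HNN extension for each geometric edge of $\Lambda$ not lying in $T$, and $\pi_1(\mathcal{G},T)$ is itself built from the vertex groups by iterated amalgamated free products, one for each edge of $T$. All of these amalgamations and HNN extensions are taken over compact open subgroups: every edge group is compact by hypothesis and embeds as an \emph{open} subgroup of each incident vertex group, hence as an open subgroup of the relevant subgroup of $\widetilde G$ by Proposition~\ref{prop:GTopology}. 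Thus it suffices to prove: (i) if $A,B$ are compactly presented and $C$ is a common compact open subgroup, then $A\ast_C B$ is compactly presented; and (ii) if $A$ is compactly presented and $C,B\leq A$ are compact open subgroups with a continuous isomorphism $\alpha\colon C\to B$, then $A\ast_\alpha$ is compactly presented. The lemma then follows by induction on $|E(\Lambda)|$: a non-tree edge exhibits $\widetilde G$ as an HNN extension (case (ii)) of the graph of groups with that edge deleted, and once $\Lambda$ is a tree, a leaf edge exhibits $\pi_1(\mathcal{G},\Lambda)$ as an amalgam (case (i)) over the leaf edge group of a smaller tree of groups.

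For the building blocks, the key observation is that a compact subset of a compactly generated locally compact group has bounded word length with respect to any compact generating set. Hence for (i) I would start from compact presentations $\langle S_A'\mid R_A'\rangle$ and $\langle S_B'\mid R_B'\rangle$ and enlarge the generating sets to $S_A=S_A'\cup C$ and $S_B=S_B'\cup C$; since each $c\in C$ equals a word of bounded length in $S_A'$, adjoining $C$ to the generators together with the corresponding bounded relators is a Tietze modification that keeps a compact generating set and uniformly bounded relators. This produces compact presentations of $A$ and $B$ in which the two copies of $C$ appear among the generators, so that the amalgam has the presentation
\[
A\ast_C B=\big\langle\, S_A\sqcup S_B \ \big|\ R_A,\ R_B,\ \{\,c^{(A)}(c^{(B)})^{-1} : c\in C\,\}\,\big\rangle,
\]
whose generating set $S_A\cup S_B$ is compact in $A\ast_C B$ (as $A,B$ are open subgroups) and whose relators have length bounded by the maximum of the bounds for $R_A$, $R_B$, and $2$. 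For (ii) I would likewise arrange a compact presentation $\langle S_A\mid R_A\rangle$ of $A$ with $C\cup B\subseteq S_A$ and write
\[
A\ast_\alpha=\big\langle\, S_A\cup\{t\}\ \big|\ R_A,\ \{\,t^{-1}ct\,\alpha(c)^{-1} : c\in C\,\}\,\big\rangle,
\]
where each HNN relator has length $4$ because $c$ and $\alpha(c)$ are generators, and the generating set gains only the single element $t$. In both cases the relator set is typically infinite (since $C$ may be infinite), which is harmless: a compact presentation requires only a compact generating set together with relators of uniformly bounded length.

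The main obstacle I anticipate is not the algebra but the topological bookkeeping needed to match the precise notion of compact presentation in~\cite[Definition~8.A.1]{CoHa16}: one must verify that enlarging a compact generating set by a compact set, and performing the above Tietze moves, genuinely preserves compact presentability, and that the generating sets exhibited are compact for the topology on $\widetilde G$ supplied by Proposition~\ref{prop:GTopology}. The first point rests on the independence of compact presentability from the choice of compatible compact generating set and on the boundedness of word length on compact sets; the second follows because each vertex group, and inductively each subgroup $\pi_1(\mathcal{G},\Lambda')$ occurring in the induction, is an open, hence topologically embedded, subgroup of $\widetilde G$. An alternative would be to invoke the general machinery of~\cite{CoHa16} for compact presentability under amalgams and HNN extensions over compact subgroups, but the direct presentation argument keeps the length bounds explicit.
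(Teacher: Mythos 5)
Your proof is correct and follows essentially the same route as the paper's: reduce by induction on the edges of $\Lambda$ to the two base cases of an amalgamated free product and an HNN extension over a compact open edge group, then exhibit an explicit bounded presentation over a compact generating set containing the edge group, with edge-identification relators of length $2$ (resp.\ $4$). The only difference is that you justify, via Tietze moves and boundedness of word length on compact subsets, the step the paper takes for granted---that one may choose the bounded presentations of the vertex groups so that $C$ lies in their compact generating sets---which is a minor but legitimate elaboration of the same argument.
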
	
\begin{proof}
It is enough to prove the result for the graph of groups corresponding to the amalgamated free products  and HNN extension. The general case follows by induction. Let $\Lambda$ be a single edge with compactly presented vertex groups $A$ and $B$ and compact edge group $C$. Let $i_A \colon C \to A $ and $i_B \colon C \to B $ be open topological embeddings. A presentation of $\pi_1(\mathcal{G}, \Lambda ) \simeq A\ast_CB$ is given by $\langle A, B \mid i_A(c)i_B(c)^{-1} \text{ for } c \in C \rangle$. Let $\langle S_A \mid R_A\rangle$ and $\langle S_B \mid R_B\rangle$ be bounded presentations of $A$ and $B$ over compact sets $S_A$ and $S_B$ respectively such that $C \subseteq S_A$ and $C\subseteq S_B$. Then   $\langle S_A \cup S_B \mid i_A(c)i_B(c)^{-1} \text{ for } c \in C \rangle$ is a bounded presentation of $G$ over a compact set, and hence $G$ is compactly presented. For HNN extension, let $\Lambda$ be an edge loop with compactly presented vertex group $A$ and compact edge group $C$. Let $i_1\colon C \to A$ and $i_2 \colon C \to A$ be open topological embeddings. Let $\langle S \mid R\rangle$ be a bounded presentation of $A$ over compact set $S$ such that $C \subseteq S$.
Let $y$ be a symbol not in $S$. Then a presentation of $\pi_1(\mathcal{G},\Lambda)$ is given by $\langle S, y\mid R, i_1(c)^{-1} yi_2(c)y^{-1} \text{ for } c \in C \rangle$. This is a bounded presentation over a compact set. Hence $\pi_1(\mathcal{G},\Lambda)$ is compactly presented.
\end{proof}

\begin{proof}[Proof of Theorem \ref{prop:FinRelPresHisCPimpliesGisCP}\eqref{item:CP1}] 
Suppose each $H\in \mathcal{H}$ is compactly presented. Let 
$\langle \  (  \mathcal{G}, \Lambda, \phi) \mid  R \   \rangle$ be a compact generalized presentation of $G$ relative to $\mathcal{H}$, and let $\widetilde{G} = \pi_1(\mathcal{G}, \Lambda)$ be the fundamental group of the graph of groups $(\mathcal{G}, \Lambda )$. Since $(\mathcal{G}, \Lambda)$ is a finite graph of topological groups with compactly presented vertex groups and compact edge groups, by Lemma~\ref{lem:GisCP} , $\widetilde{G}$ is compactly presented. Consider the short exact sequence $1 \to \ker(\phi) \to \widetilde{G} \to G \to 1$. Since $R$ is finite, $\ker(\phi)$ is compactly generated as a normal subgroup. By Proposition~\ref{prop:ConShortExact}, $G$ is compactly presented.
\end{proof}

\subsection{
Compactly generated topological graphs of groups.
}

In this part, we prove the following proposition which is a particular case of Theorem~\ref{prop:FiniteRelPresGisCGimpliesHisCG}\eqref{item:CP2}.

\begin{proposition}\label{prop:VertexGroupsAreCG}
Let $G$ be the fundamental group of a finite graph of topological groups $(\mathcal{G}, \Lambda)$ such that edge groups are compactly generated. If $G$ is compactly generated, then vertex groups of $(\mathcal{G}, \Lambda)$  are compactly generated.
\end{proposition}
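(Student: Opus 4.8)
The plan is to reduce the statement to the two one-edge cases --- a single amalgamated product and a single HNN extension --- and to treat these by marrying the normal form theory recalled in Section~\ref{subsec:NormalForms} with a compactness argument tailored to the canonical topology of Proposition~\ref{prop:GTopology}.

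\textbf{Reduction to one edge.} I would induct on the number of edges of $\Lambda$, the case of no edges being vacuous. Given an edge $e$, if $\Lambda\smallsetminus e$ is disconnected then $G$ splits as an amalgamated product $G=G_1\ast_{\mathcal G_e}G_2$ over the edge group, where the $G_i=\pi_1(\mathcal G,\Lambda_i)$ are the fundamental groups of the two components; otherwise (in particular when $e$ is a loop) $G$ splits as an HNN extension $G=G'\ast_{\mathcal G_e}$ with $G'=\pi_1(\mathcal G,\Lambda\smallsetminus e)$. In each case the amalgamated/associated subgroup is $\mathcal G_e$, which is compactly generated by hypothesis. The smaller factors carry the topology of Proposition~\ref{prop:GTopology}; since $\mathcal G_e$ is open in $G$ (it is open in a vertex group, which is open in $G$) and is contained in each factor, each factor is an open, hence closed, subgroup of $G$ whose subspace topology agrees with its intrinsic one. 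Granting the one-edge statement below, the factors are compactly generated, the induction hypothesis applies to them because they are graphs of topological groups with strictly fewer edges and edge groups among those of $(\mathcal G,\Lambda)$, and their vertex groups are exactly the vertex groups of $(\mathcal G,\Lambda)$.

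\textbf{The one-edge case.} Consider $G=A\ast_C B$ with $A,B$ open and $C$ compactly generated (the HNN case is identical in spirit). Let $\ell\colon G\to\Z_{\ge 0}$ be the syllable length of the normal form. The essential topological observation is that $\{\,g:\ell(g)\le n\,\}=(A\cup B)^n$, which is open because $A\cup B$ is open and multiplication in a topological group is an open map; as these sets increase to $G$, compactness of a generating set $S$ forces $S\subseteq(A\cup B)^N$ for some $N$. I would then establish a compact-lifting lemma: for open subgroups $U,V$ of a locally compact group (available here by the paper's standing hypothesis) and a compact $K\subseteq UV$, there are compact $K_U\subseteq U$, $K_V\subseteq V$ with $K\subseteq K_U K_V$; this follows since the multiplication $U\times V\to UV$ is an open continuous surjection of locally compact spaces, through which compact sets lift. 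Iterating gives $S\subseteq S_1\cdots S_N$ with each $S_i\subseteq A\cup B$ compact, and since $A,B$ are open and therefore closed, $S_i^A:=S_i\cap A$ and $S_i^B:=S_i\cap B$ are compact.

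\textbf{Conclusion in the one-edge case.} Fixing a compact generating set $S_C$ of $C$, set $A_1=\langle S_C\cup\bigcup_i S_i^A\rangle\le A$ and $B_1=\langle S_C\cup\bigcup_i S_i^B\rangle\le B$. Their generating sets are finite unions of compact sets, hence compact, so $A_1,B_1$ are compactly generated and $C\le A_1\cap B_1$. Since every $S_i\subseteq A_1\cup B_1$, we get $\langle A_1,B_1\rangle=\langle S\rangle=G$. Because $C$ lies in both $A_1$ and $B_1$, normal form theory identifies $\langle A_1,B_1\rangle$ with the sub-amalgam $A_1\ast_C B_1\hookrightarrow A\ast_C B$ and yields $A\cap\langle A_1,B_1\rangle=A_1$; as $\langle A_1,B_1\rangle=G$ this gives $A=A_1$, which is compactly generated, and symmetrically $B=B_1$. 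The HNN case runs the same way, decomposing the pieces of $S$ lying in $A$, $AtA$, and $At^{-1}A$ into compact subsets of $A$ and concluding $A=A_1$ from the normal-form identity $A\cap\langle A_1,t\rangle=A_1$.

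\textbf{Main obstacle.} The genuinely new content over the discrete case is the compactness bookkeeping: bounding the syllable length on the compact set $S$ (which I obtain from openness of the sublevel sets $(A\cup B)^n$) and, above all, the compact-lifting lemma that converts the abstract normal-form decomposition into a decomposition by \emph{compact} pieces, so that the generators of $A_1$ and $B_1$ form a compact set. Once these topological points are secured, the Bass-Serre subgroup identity $A\cap\langle A_1,B_1\rangle=A_1$ is purely algebraic and mirrors the classical argument.
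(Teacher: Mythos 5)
Your proof is correct, and it shares the paper's skeleton: induction on the number of edges, reduction to a single amalgam or a single HNN extension, construction of compactly generated sub-factors containing the edge group(s), and a purely algebraic normal-form identity to conclude $A_1=A$. (The paper packages that last step slightly differently: the map $\psi\colon A'\ast_C B\to G$ induced by inclusion is surjective and preserves normal-form length, hence $\psi(A')=A$; this is the same fact as your $A\cap\langle A_1,B_1\rangle=A_1$.) Where you genuinely diverge is the compactness bookkeeping. The paper exploits the standing hypothesis of a compact open subgroup $U$ inside the edge group: a compact generating set of $G$ is covered by finitely many cosets of $U$, so $G=\langle X\cup U\rangle$ with $X$ \emph{finite}, and the syllable sets are then finite because only finitely many normal forms ever enter the argument. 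You instead keep the compact generating set $S$ itself, bound its syllable length by covering $S$ with the increasing open sets $(A\cup B)^n$, and make the syllable sets compact via the lifting lemma for open continuous surjections of locally compact Hausdorff spaces. Your route buys generality: it uses only local compactness, not the existence of a compact open subgroup, so it would survive in a broader locally compact setting; the paper's route buys simplicity, since all bookkeeping is finite and no lifting lemma is needed.

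Two small repairs are needed in your write-up, both fixable. First, your lifting lemma must be stated for open \emph{subsets} rather than open subgroups, because the iteration applies it to $(A\cup B)^{N-1}$, which is not a subgroup; the proof you sketch uses only openness of the multiplication map, so nothing changes (alternatively, lift $S$ in one shot through the $N$-fold multiplication $(A\cup B)^{\times N}\to (A\cup B)^N$). Second, in the HNN case the identity $A\cap\langle A_1,t\rangle=A_1$ requires $A_1$ to contain \emph{both} associated subgroups $C$ and $B=\alpha(C)$, so a compact generating set of $B$ (which exists since $\alpha$ is an isomorphism of topological groups) must be added to the generators of $A_1$; this is exactly the role of the set $W\subset B$ in the paper's second one-edge lemma in Section~\ref{Sec:Osin2}.
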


This proposition can also be stated as follows:
\begin{corollary}\label{cor:IlariaGeeralisation}
Let $G$ be a topological group acting discretely, cocompactly and without inversions on a tree such that the edge stabilizers are compactly generated. If $G$ is compactly generated, then the vertex stabilizers are compactly generated.
\end{corollary}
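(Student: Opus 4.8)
The plan is to pass to the Bass--Serre tree and reduce to the two single-edge cases, imitating the inductive scheme of Lemma~\ref{lem:GisCP}. By Bass--Serre theory $G=\pi_1(\mathcal G,\Lambda)$ acts on the Bass--Serre tree $\mathcal T$ discretely (edge and vertex stabilizers are the open edge and vertex groups), cocompactly, and without inversions, the edge stabilizers being compactly generated by hypothesis; this is the tree reformulation in Corollary~\ref{cor:IlariaGeeralisation}. Fixing a maximal subtree of $\Lambda$, the non-tree edges exhibit $G$ as an iterated HNN extension over $\pi_1(\mathcal G,\Lambda_0)$, where $\Lambda_0$ is that subtree, and $\pi_1(\mathcal G,\Lambda_0)$ is in turn obtained by successively amalgamating leaf vertex groups. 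Each intermediate group is the fundamental group of a finite graph of topological groups supported on a connected subgraph of $\Lambda$, hence carries the canonical topology of Proposition~\ref{prop:GTopology}, and I would first check that the displayed splittings are compatible with these topologies (the structure maps remain open embeddings). It then suffices to prove the two single-edge statements: if $C$ is compactly generated and $A\ast_C B$ (resp.\ the HNN extension $A\ast_C$) is compactly generated, then $A$ and $B$ (resp.\ $A$) are compactly generated; peeling one edge at a time and feeding the conclusion back into the induction yields the proposition.

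The single topological input I would isolate is that a compact subset of $G$ meets only finitely many cosets of any open subgroup (the cosets form an open partition, so one extracts a finite subcover). Consider the amalgam case, with $\mathcal T$ the tree of $A\ast_C B$ and $\tilde a$ the vertex with $G_{\tilde a}=A$, so that the edges of $\mathcal T$ form the single orbit $G/C$. Let $X=X^{-1}$ be a compact generating set. Since $A$ is open, $\{x\tilde a: x\in X\}$ is finite, so $D_0:=\bigcup_{x\in X}[\tilde a,x\tilde a]$ is a finite subtree. Reading off the normal form of each $x\in X$ (Section~\ref{subsec:NormalForms}) along its geodesic in $D_0$, every $A$-syllable and every $B$-syllable is determined by a transition between two edges of $D_0$ meeting at a common vertex; as $D_0$ is finite, only finitely many syllables occur. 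I would set $A_0=\langle F_A\cup C\rangle\le A$ and $B_0=\langle F_B\cup C\rangle\le B$, where $F_A,F_B$ are these finite syllable sets and $C$ is adjoined via a compact generating set of the edge group. Since $X\subseteq\langle F_A\cup F_B\cup C\rangle$ we get $\langle A_0,B_0\rangle=G$, and $A_0\cap B_0\le A\cap B=C\le A_0\cap B_0$ forces $A_0\cap B_0=C$; thus $G=A_0\ast_C B_0$ with $A_0,B_0$ compactly generated.

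The crux is to upgrade $A_0\le A$ to $A_0=A$. Write $\tilde a_0$ for the base vertex of the Bass--Serre tree $\mathcal T_0$ of $A_0\ast_C B_0$, so $G_{\tilde a_0}=A_0$. The inclusions $A_0\hookrightarrow A$, $B_0\hookrightarrow B$, all containing $C$, induce a $G$-equivariant simplicial map $\phi\colon\mathcal T_0\to\mathcal T$ with $\phi(\tilde a_0)=\tilde a$. Both trees have edge set the single orbit $G/C$ and $\phi$ is the identity on $G/C$, hence a bijection on edges. A $G$-equivariant simplicial map between trees that is bijective on edges must be injective on vertices: otherwise a nontrivial geodesic of $\mathcal T_0$ joining two vertices with the same image would map to a backtracking path in the tree $\mathcal T$, forcing two consecutive edges to have equal image, against edge-injectivity. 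Given this, any $a\in A=G_{\tilde a}$ satisfies $\phi(a\tilde a_0)=a\tilde a=\tilde a=\phi(\tilde a_0)$, so $a\tilde a_0=\tilde a_0$ and $a\in G_{\tilde a_0}=A_0$; hence $A_0=A$, and likewise $B_0=B$, so $A$ and $B$ are compactly generated. The HNN case is identical: one collects the finitely many $A$-syllables occurring in the Britton normal forms of the elements of $X$, adjoins compact generating sets of the two associated edge subgroups to form $A_0\le A$, checks $G=\langle A_0,t\rangle=A_0\ast_C$, and runs the same edge-bijective, no-folding argument on $\phi\colon\mathcal T_0\to\mathcal T$ to conclude $A_0=A$.

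The two places where real work is needed, and which I expect to be the main obstacle, are the finiteness of the syllable set and the folding argument promoting $A_0$ to $A$. The former is exactly where discreteness, cocompactness, and the coset-finiteness of compact sets against open subgroups enter, replacing the automatic finiteness available in the discrete setting; the latter is where the tree being a tree is essential. The remaining care is topological bookkeeping: ensuring each intermediate fundamental group carries the canonical topology of Proposition~\ref{prop:GTopology} so that the single-edge cases apply inductively, and that the adjoined generating sets (finite syllable sets together with compact generating sets of the edge groups) are genuinely compact.
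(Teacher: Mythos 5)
Your overall scheme is sound: the induction on edges, the reduction to the amalgam and HNN cases, and the final upgrade of $A_0$ to $A$ via the $G$-equivariant, edge-bijective map $\mathcal T_0\to\mathcal T$ (a correct no-folding argument) would all work. The genuine gap is the central finiteness claim: it is \emph{not} true that ``only finitely many syllables occur'' in the normal forms of the elements of a compact generating set $X$, nor that $X\subseteq\langle F_A\cup F_B\cup C\rangle$ for your transition sets $F_A,F_B$. The tree only records cosets: the endpoint of the geodesic of $x$ is $x\tilde a=xA$, so the \emph{terminal} syllable of $x$ --- the part of $x$ lying in the stabilizer of that endpoint --- is invisible to $D_0$ and to every edge transition. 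Concretely, any $x\in X\cap(A\setminus C)$ has trivial geodesic $[\tilde a,x\tilde a]=\{\tilde a\}$ and contributes nothing to $F_A\cup F_B$; if, say, $X=K_A\cup K_B$ with $K_A\subseteq A$ compact and infinite, then infinitely many distinct syllables occur and $\langle F_A\cup F_B\cup C\rangle$ need not contain $X$. The same failure occurs in the HNN case with the unconstrained first syllable $g_0$ of a Britton normal form. The leftover elements $a_x=w_x^{-1}x$ lie in sets of the form $w^{-1}X\cap A$, which are bounded but (since $A$ is open, not closed) not contained in any evident compact subset of $A$, so you cannot simply absorb them.

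The repair uses the ``single topological input'' you isolated, but applied to $C$ rather than to $A$: since $C$ is open and $X$ compact, $X\subseteq\bigcup_{i=1}^k g_iC$ for finitely many $g_i$, hence $G=\langle\{g_1,\dots,g_k\}\cup C\rangle$. Working with the finite set $\{g_i\}$ (and recalling $C\le A_0\cap B_0$ anyway), only finitely many normal forms occur, and you then adjoin their finitely many terminal syllables to $F_A$ and $F_B$ by hand --- they are still not given by transitions. With this repair your argument (including $G=A_0\ast_C B_0$ and the folding step) goes through, and in fact needs only that $C$ is open and compactly generated. This reduction is exactly how the paper's own proof begins: it fixes a \emph{finite} set $X$ with $G=\langle X\cup U\rangle$ ($U$ a compact open subgroup of $C$), defines $Z$ to be \emph{all} syllables of the chosen normal forms --- terminal ones included --- sets $A'=\langle Y\cup Z\cup U\rangle$, and then upgrades $A'$ to $A$ not by folding but by observing that the induced map $A'\ast_C B\to G$ is surjective, preserves normal-form length, and fixes $B$; keeping $B$ whole makes that step immediate, whereas you cut down both factors and compensate with the tree argument.
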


Corollary~\ref{cor:IlariaGeeralisation} in the case that $G$ is a TDLC group and the edge stabilizers are compact is a result of Castellano~\cite[Proposition 4.1]{castellano_2020}.  The proof of Proposition~\ref{prop:VertexGroupsAreCG} follows by induction on the number of edges of $\Lambda$, so it reduces to proving the result for amalgamated free products and HNN extensions.

\begin{lemma}
Let $G=A\ast_C B$ be a topological group such that $G$ and $C$ are compactly generated, and $C$ is an open subgroup containing a compact open subgroup. Then $A$ and $B$ are  compactly generated. 
\end{lemma}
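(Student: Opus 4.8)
The plan is to establish the stronger purely algebraic statement that there exist \emph{finite} subsets $F_A \subseteq A$ and $F_B \subseteq B$ with $A = \langle C \cup F_A\rangle$ and $B = \langle C \cup F_B\rangle$. Granting this, if $K_C$ is a compact generating set of $C$, then $K_C \cup F_A$ and $K_C \cup F_B$ are compact and generate $A$ and $B$ respectively, so $A$ and $B$ are compactly generated. Thus the whole problem reduces to controlling generation over $C$.

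First I would replace the given compact generating set of $G$ by a more convenient one. Let $U \leq C$ be a compact open subgroup supplied by the hypothesis; since $C$ is open in $G$, the subgroup $U$ is compact and open in $G$. A standard covering argument (a compact generating set $K$ of $G$ is covered by the open cosets $\{gU\}$, hence by finitely many of them) then produces a \emph{finite} set $X \subseteq G$ with $G = \langle U \cup X\rangle$. The essential point of this step is that every element of $U$ lies in $C$, so the compact, possibly infinite, part of a generating set will contribute nothing to the syllables appearing below.

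Next I would invoke normal forms. Fix coset representative systems $T_A, T_B$ as in the normal form theorem for $A \ast_C B$, see \cite{LySc01}. Each of the finitely many elements $x \in X$ has a normal form $x = c\, x_1 \cdots x_{m_x}$ with $c \in C$ and each syllable $x_j$ in $T_A \setminus \{1\}$ or $T_B \setminus \{1\}$. Let $F_A$ be the finite set of all $A$-syllables and $F_B$ the finite set of all $B$-syllables occurring across all $x \in X$, and set $A_0 = \langle C \cup F_A\rangle \leq A$ and $B_0 = \langle C \cup F_B\rangle \leq B$. Then every element of $U \cup X$ is a product of elements of $A_0 \cup B_0$ (the elements of $U$ and all the $C$-parts lie in $C \leq A_0$), so $G = \langle A_0, B_0\rangle$. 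Since $C \leq A_0 \leq A$ and $C \leq B_0 \leq B$, the normal form theorem identifies $\langle A_0, B_0\rangle$ with the amalgam $A_0 \ast_C B_0$, and hence $G = A_0 \ast_C B_0$. Finally I would compare the two decompositions $A \ast_C B = A_0 \ast_C B_0$ using uniqueness of normal forms: given $a \in A$, its $A_0 \ast_C B_0$ normal form maps to a normal form in $A \ast_C B$ (syllables from $A_0$, resp.\ $B_0$, remain nontrivial modulo $C$ in $A$, resp.\ $B$), while the $A \ast_C B$ normal form of $a$ has length at most one and lies in $A$; uniqueness forces the $A_0 \ast_C B_0$ form of $a$ to have length at most one with syllable in $A_0$, so $a \in A_0$. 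Thus $A_0 = A$, and symmetrically $B_0 = B$, producing the desired finite sets.

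The main obstacle is exactly the finiteness of the syllable set $F_A \cup F_B$. A general compact generating set of $G$ may be infinite and could a priori contribute infinitely many distinct syllables, in which case $A_0$ need not be generated over $C$ by a finite set. This is precisely where the hypothesis that $C$ contains a compact open subgroup $U$ is used: it allows one to absorb the infinite compact part of a generating set into $U \subseteq C$ and to run the normal form argument with only the finitely many generators in $X$. I expect the remaining points (the covering argument, and that $\langle A_0,B_0\rangle \cong A_0 \ast_C B_0$ injects normal forms into those of $A \ast_C B$) to be routine consequences of the normal form theorem recalled above.
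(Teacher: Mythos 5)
Your proposal is correct and follows essentially the same route as the paper: both replace the compact generating set by a finite set $X$ together with the compact open subgroup $U \leq C$, collect the syllables of chosen normal forms of the elements of $X$, and use the normal form theorem for amalgams to conclude that these syllables together with $C$ generate the factors. The only cosmetic difference is that the paper keeps $B$ whole and studies the length-preserving surjection $A' \ast_C B \to G$ (handling $B$ by symmetry), whereas you shrink both factors at once to get $G = A_0 \ast_C B_0$ and then compare the two amalgam decompositions; the underlying normal-form argument is the same.
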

\begin{proof}
Let $U$ be a compact open subgroup of $C$. Let $C_A$ and $C_B$ denote the copies of $C$ in $A$ and $B$ respectively. Since $G$ and $C$ are compactly generated,   there are  finite subsets $X\subset G$ and $Y\subset C$ such that $G=\langle X\cup U\rangle$ and $C=\langle Y \cup U \rangle$. For each element of $X$ choose a normal form; and let $Z \subset A$ consists of all $a\in A$ such that $a$ appears in a chosen normal form of an element of $X$. Observe that $Z$ is a finite set. 
 We claim that $A=\langle Y\cup Z \cup U\rangle$ and hence $A$ is compactly generated. Let $A'$ be the subgroup of $A$ generated by $Y\cup Z \cup U$. Since $C_A=\langle Y\cup U \rangle$, it follows that $C_A\leq A'$. Let  $\psi\colon A'\ast_C B \to G$ the morphism induced by the inclusions $A'\hookrightarrow A \hookrightarrow G$ and   $B\hookrightarrow G$. Then $\psi$ is surjective since its image contains $\psi(X\cup U)=X\cup U$ which generates $A\ast_C B$. Note that $\psi$ preserves the length of normal forms and $\psi(B)=B$; therefore surjectivity of $\psi$ implies that $\psi(A')=A$. By symmetry, $B$ is also compactly generated.
\end{proof}

\begin{lemma}
Let $A$ be a topological group, let $B$ and $C$ be  open subgroups of $A$ containing compact open subgroups and let $\alpha\colon C \to B$ be an isomorphism of topological groups. Let $G$ be the TDLC group defined as the HNN extension
\[G = A\ast_{\alpha}= \langle A,t \mid t^{-1}ct=\alpha(c), c\in C \rangle. \]
If $G$ and $C$ are compactly generated, then $A$ is compactly generated.
\end{lemma}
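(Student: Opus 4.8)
The plan is to mirror the proof of the preceding lemma for amalgamated free products, replacing the amalgam $A'\ast_C B$ by an auxiliary HNN extension and using Britton's Lemma (the normal form theorem stated above) in place of the normal form theorem for amalgams. First I would fix a compact open subgroup $U$ of $C$; since $C$ is open in $A$ and $A$ is open in $G$ by Proposition~\ref{prop:GTopology}, $U$ is compact open in $A$ and in $G$. Because $C$ is compactly generated there is a finite set $Y\subset C$ with $C=\langle Y\cup U\rangle$, and applying the isomorphism $\alpha$ gives $B=\alpha(C)=\langle \alpha(Y)\cup\alpha(U)\rangle$ with $\alpha(U)$ compact open in $B$; in particular $B$ is compactly generated. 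Since $G$ is compactly generated and $U$ is compact open in $G$, there is a finite set $X\subset G$ with $G=\langle X\cup U\rangle$. For each $x\in X$ I choose a normal form $x=g_0 t^{\epsilon_1}g_1\cdots t^{\epsilon_n}g_n$ in the HNN extension $G$, and let $Z\subset A$ be the finite set of all the $A$-syllables $g_i$ occurring in these forms.

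Next I would set $A'=\langle U\cup\alpha(U)\cup Y\cup\alpha(Y)\cup Z\rangle\leq A$, which is generated by a compact set. Here is the key structural difference from the amalgam case: there one factor $B$ is kept intact, whereas for an HNN extension both associated subgroups $C$ and $B=\alpha(C)$ live inside the single vertex group $A$, so I must arrange $C\leq A'$ \emph{and} $B\leq A'$ by hand. Both inclusions hold because $C=\langle Y\cup U\rangle$ and $B=\langle\alpha(Y)\cup\alpha(U)\rangle$. I can then form the HNN extension $\widetilde{A}=A'\ast_\alpha=\langle A',s\mid s^{-1}cs=\alpha(c),\ c\in C\rangle$ together with the homomorphism $\psi\colon\widetilde{A}\to G$ that restricts to the inclusion $A'\hookrightarrow A\hookrightarrow G$ and sends $s\mapsto t$; it is well defined because the defining relations hold in $G$. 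Its image contains $U$, contains $t=\psi(s)$, and, via the chosen normal forms (whose syllables lie in $Z\subset A'$), contains every element of $X$, so $\psi$ is surjective.

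The heart of the argument is that $\psi$ preserves $t$-length. Because $C$ and $B$ are contained in $A'$ and $\psi$ is the identity inclusion on $A'$, a subword $s^{-1}g_i s$ (resp.\ $s g_i s^{-1}$) is a pinch in $\widetilde{A}$ if and only if its image $t^{-1}g_i t$ (resp.\ $t g_i t^{-1}$) is a pinch in $G$; hence $\psi$ carries reduced words to reduced words of the same $t$-length. By Britton's Lemma, any $w\in\widetilde{A}$ with $\psi(w)\in A$ must then have $t$-length zero, i.e.\ $w\in A'$. Given $a\in A\leq G$, surjectivity provides $w\in\widetilde{A}$ with $\psi(w)=a$; the length argument forces $w\in A'$, and since $\psi|_{A'}$ is the inclusion, $a=w\in A'$. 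Therefore $A=A'=\langle U\cup\alpha(U)\cup Y\cup\alpha(Y)\cup Z\rangle$ is generated by a compact set, so $A$ is compactly generated.

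The step I expect to be the main obstacle is the length-preservation argument via Britton's Lemma: one must check carefully that pinches in the auxiliary extension $\widetilde{A}$ correspond exactly to pinches in $G$, and it is precisely this correspondence that forces both associated subgroups $C$ and $B$ into $A'$ (the feature distinguishing the HNN case from the amalgamated case). The remaining points — that the generating set $U\cup\alpha(U)\cup Y\cup\alpha(Y)\cup Z$ is compact, and that $B$ is compactly generated via $\alpha$ — are routine topological bookkeeping.
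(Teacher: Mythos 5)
Your proposal is correct and follows essentially the same route as the paper's proof: both build the compactly generated subgroup $A'$ containing $C$ and $B$ together with the syllables of a finite relative generating set of $G$, form the auxiliary HNN extension $A'\ast_\alpha$ mapping onto $G$, and conclude $A'=A$ from surjectivity plus preservation of normal-form length. The only cosmetic differences are that the paper picks $U$ inside $C\cap B$ and a separate finite set $W$ generating $B$, where you use $\alpha(Y)$ and $\alpha(U)$, and that you spell out the pinch/Britton's Lemma detail that the paper leaves as the one-line remark that $\psi$ preserves the length of normal forms.
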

\begin{proof}
Let $U$ be a compact open subgroup in $C \cap B$. Since $G$, $C$, and $B$ are compactly generated, there are  finite subsets $X\subset G$, $Y\subset C$, and $W \subset B$ such that $G=\langle X\cup U\rangle$, $C=\langle Y \cup U \rangle$, and $B=\langle W \cup U \rangle$. For each element of $X$ choose a normal form; and let $Z \subset A$ consists of all $a\in A$ such that $a$ appears in a chosen normal form of an element of $X$. Observe that $Z$ is a finite set. 
 We claim that $A=\langle W \cup Y\cup Z \cup U\rangle$ and hence $A$ is compactly generated. Let $A'$ be the subgroup of $A$ generated by $W \cup Y\cup Z \cup U$. 
 Since $B= \langle W\cup U \rangle $ and $C=\langle Y\cup U \rangle$, it follows that $B \leq A'$ and $C\leq A'$. Consider the group $G' = A'\ast_{\alpha}$ and the morphism $\psi\colon G' \to G$ induced by the inclusions $A'\hookrightarrow A \hookrightarrow G$. Then $\psi$ is surjective since its image contains $\psi(X\cup U)=X\cup U$ which generates $G$. Note that $\psi$ preserves the length of normal forms; therefore surjectivity of $\psi$ implies that $\psi(A')=A$.
\end{proof}

\subsection{Proof of the Theorem~\ref{prop:FiniteRelPresGisCGimpliesHisCG}\eqref{item:CP2}}
\begin{definition}[Link]
 Let $X$ be a simplicial 2-complex and let $\sigma$ be a cell in $X$. The \emph{star} $\st(\sigma)$ of $\sigma$ is defined as the set of all closed cells of $X$ incident to $\sigma$.  
 The \emph{link} of $\sigma$ in $X$, denote as $\link_X(\sigma)$,  is the subcomplex of $X$ defined as the set of  closed cells of $\st(\sigma)$ that do not intersect $\sigma$. 
 If $\sigma$ is a 0-cell, $\link_X(\sigma)$ can also be interpreted as 
 a graph $\Gamma = \link_X(\sigma)$ defined with the set of vertices \[V(\Gamma) = \{e \mid e \text{ is a 1-cell in }\st(\sigma) \text{ adjacent to } \sigma \}\] and the set of edges $E(\Gamma) = \{(e_1,e_2)\}$, where $e_1,e_2$ are pair of edges in $\st(\sigma)$ adjacent to $\sigma$ contained in the boundary of same 2-cell in $\st(\sigma)$.
\end{definition}

\begin{remark}\label{rem:barycentric}  We note the following facts without proof.
\begin{enumerate}
\item Let $X$ be a 2-dimensional $G$-complex with cellular and cocompact action of $G$. Then $G$ acts cocompactly on the barycentric subdivision of $X$.  
\item Let $X$ be a 2-dimensional $G$-complex. After sufficient subdivisions, $X$ can be considered a simplicial $G$-complex. In particular, for any cell $\sigma$ in $X$, $\link_X(\sigma)$ is well-defined and we can consider the $G_{\sigma}$-action on the $\link_X(\sigma)$.
\item Let $X$ be a 2-dimensional simplicial $G$-complex and let $X'$ be the barycentric subdivision of $X$. For any 0-cell $\sigma \in X$, there is an $G_{\sigma}$-equivariant isomorphism  $\link_X(\sigma) \to \link_{X'}(\sigma)$.
\end{enumerate}	
\end{remark}

\begin{lemma}\label{lem:CocompactAction}
Let $X$ be a 2-dimensional cocompact simplicial $G$-complex without inversions. Let $\sigma \in X$ be a 0-cell. Then $G_{\sigma}$ acts cocompactly on $\link_X(\sigma)$. 
\end{lemma}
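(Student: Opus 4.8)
The plan is to reduce the statement to a purely combinatorial orbit-counting fact about the $G$-action on the simplices containing $\sigma$, and then to establish that fact via a short coset computation.

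First I would recall the combinatorial description of $\link_X(\sigma)$ from the definition: its vertices are the $1$-simplices of $X$ containing $\sigma$, and its edges are the $2$-simplices of $X$ containing $\sigma$ (each such triangle has exactly two edges at $\sigma$, joined by the corresponding link-edge). Thus the cells of $\link_X(\sigma)$ are in $G_\sigma$-equivariant bijection with the set $\mathcal S_\sigma$ of simplices of $X$ of dimension $1$ or $2$ that contain $\sigma$ as a vertex: $h\in G_\sigma$ sends a simplex $\tau\ni\sigma$ to $h\tau\ni h\sigma=\sigma$, matching the induced action on link-cells. Since $G_\sigma$ fixes $\sigma$ it preserves $\st(\sigma)$ and acts on $\link_X(\sigma)$, and cocompactness of this action is equivalent to $\mathcal S_\sigma$ having finitely many $G_\sigma$-orbits. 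So it suffices to prove the latter.

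Next I would invoke cocompactness of the $G$-action on $X$ to choose finitely many $G$-orbit representatives $\bar\tau_1,\dots,\bar\tau_r$ among the simplices of dimension $1$ and $2$. Every $\tau\in\mathcal S_\sigma$ has the form $\tau=g\bar\tau_i$ for some $i$ and $g\in G$, and the condition $\sigma\in\tau$ means exactly that $g^{-1}\sigma$ is one of the (at most three) vertices of $\bar\tau_i$. The heart of the argument is then the claim that, for a fixed representative $\bar\tau_i$ and a fixed vertex $v$ of $\bar\tau_i$, the set $\{\,g\bar\tau_i : g^{-1}\sigma=v\,\}$ is either empty or a single $G_\sigma$-orbit. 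To prove this, if the set is nonempty pick $g_0$ with $g_0v=\sigma$; then $\{g:gv=\sigma\}=g_0G_v$, while conjugation gives $G_v=g_0^{-1}G_\sigma g_0$, so writing an arbitrary $k\in G_v$ as $k=g_0^{-1}hg_0$ with $h\in G_\sigma$ yields $g_0k\bar\tau_i=hg_0\bar\tau_i$. Hence the set equals the single orbit $G_\sigma\cdot(g_0\bar\tau_i)$.

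Finally I would assemble the count: each $\bar\tau_i$ has at most three vertices, so it contributes at most three $G_\sigma$-orbits to $\mathcal S_\sigma$, giving at most $3r$ orbits in total, which is finite; this establishes that $G_\sigma$ acts cocompactly on $\link_X(\sigma)$. I expect the one delicate point to be the orbit identity in the claim — specifically the conjugation relation $G_v=g_0^{-1}G_\sigma g_0$ together with the reindexing $k\mapsto g_0kg_0^{-1}$ that collapses the coset $g_0G_v$ onto a single $G_\sigma$-orbit; everything else is routine bookkeeping, and notably the non-local-finiteness of $X$ causes no trouble since the conclusion is phrased in terms of orbits rather than finiteness of the link. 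The hypothesis that the action is without inversions enters only to ensure that $\link_X(\sigma)$ is a genuine subcomplex on which $G_\sigma$ acts simplicially, so that cocompactness refers to finitely many orbits of honest cells.
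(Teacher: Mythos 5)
Your proof is correct, and it takes a genuinely different route from the paper's. The paper's proof passes to the barycentric subdivision $X'$ of $X$: there every simplex incident to $\sigma$ has $\sigma$ as its \emph{unique} vertex that is a barycenter of a $0$-cell of $X$ (a simplex of $X'$ is a strictly increasing flag of cells of $X$), so any $g\in G$ carrying one cell of $\st(\sigma)$ to another cell of $\st(\sigma)$ automatically satisfies $g\sigma=\sigma$; cocompactness of the induced $G$-action on $X'$ (Remark~\ref{rem:barycentric}) then immediately gives finitely many $G_\sigma$-orbits on $\st(\sigma)$, and the $G_\sigma$-equivariant identification $\link_X(\sigma)\cong\link_{X'}(\sigma)$ from Remark~\ref{rem:barycentric} concludes. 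You instead work directly in $X$, where the type-preservation trick is unavailable (a translate $g$ with $\sigma\in g\bar\tau_i$ need not fix $\sigma$, e.g.\ it may carry $\sigma$ to the other endpoint of an edge), and you compensate by fixing not just the orbit representative $\bar\tau_i$ but also the vertex $v$ of $\bar\tau_i$ with $g^{-1}\sigma=v$: your coset computation $\{g : gv=\sigma\}=g_0G_v$ together with $G_v=g_0^{-1}G_\sigma g_0$ shows each pair $(\bar\tau_i,v)$ contributes at most one $G_\sigma$-orbit of cells of the link, giving at most $3r$ orbits in total. What the paper's route buys is brevity, given that the subdivision machinery of Remark~\ref{rem:barycentric} is set up anyway for the surrounding arguments; what yours buys is self-containedness (no subdivision, no equivariant identification of links) and an explicit orbit bound. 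Both arguments correctly reduce cocompactness to finiteness of the number of orbits of cells, and both are insensitive to the non-local-finiteness of $X$.
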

\begin{proof}
Let $X'$ be the barycentric subdivision of $X$ with the induced cellular $G$-action. Then $G$ acts on $X'$ without inversions and, by Remark~\ref{rem:barycentric}, the action is cocompact. 
Let $st(\sigma)$ be the set of closed cells of $X'$ incident to $\sigma$. Since the $G$-action on $X'$ is cocompact, there exists finitely many cells $\{\tau_i\}$ in  $st(\sigma)$ such that for any $\tau \in st(\sigma) $ there exists $g_{\tau} \in G$ such that $g_{\tau}\tau = \tau_i$ for some $i$. By the definition of induced action on $X'$, $g_{\tau}\sigma = \sigma$. Therefore, $g_{\tau} \in G_{\sigma}$. Hence $G_{\sigma}$ acts cocompactly on $st(\sigma)$ and thus on $\link_{X'}(\sigma)$. 
\end{proof}

The proof of the following lemma uses disc diagrams. The  remark below recalls the notion of disc diagram and makes some observations.

\begin{remark}[Boundary cycle of Disc diagrams] \label{rem:DiscDiag}
A \emph{disc diagram} $D$ is a nonempty contractible finite 2-complex that has an specified embedding 
in a 2-sphere. If $D$  consists of a single 0-cell then it is called trivial.
For a non-trivial disc diagram $D$, roughly speaking, the boundary cycle $S^1\to D$ is the closed path (up to orientation) 
around the component of the complement of $D$ in the sphere. In particular, 
a disc diagram $D$ is homeomorphic to a disc if and only if its boundary cycle  $\partial D \to D$
is an embedding of the circle. For background on disc diagrams we refer the reader to~\cite[Section 2]{mccammond_wise_2002}.
\end{remark}

\begin{lemma}\label{lem:ConnectedComponentCorrespondence}
Let $X$ be a simply connected 2-dimensional simplicial $G$-complex and let $v$ be a 0-cell. Let $\mathcal{K}$ be the set of connected components of $\link_X(v)$ and $\Omega$ be the set of
connected components of $X\setminus\{gv \mid g \in G\}$ whose closure contains $v$.
Then there is a $G_v$-equivariant bijection between $\mathcal{K}$ and $\Omega$.
\end{lemma}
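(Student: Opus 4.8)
The plan is to build the correspondence $\Psi\colon\mathcal K\to\Omega$ from the local structure at $v$, prove it is a well-defined, surjective, $G_v$-equivariant map by soft arguments, and then obtain injectivity from a disc diagram argument that exploits simple connectedness. Writing $V_G=\{gv\mid g\in G\}$, I would first record that $X\setminus V_G$ is open, so its components are open and path connected, and that the open star $\st^{\circ}(v)$ (the union of the relative interiors of the simplices having $v$ as a vertex) is a neighbourhood of $v$ whose only point of $V_G$ is $v$ itself; hence $\st^{\circ}(v)\setminus\{v\}\subseteq X\setminus V_G$. Since $\st^{\circ}(v)\setminus\{v\}$ deformation retracts onto $\link_X(v)$, its components — the \emph{sectors} at $v$ — biject with $\mathcal K$, and each sector $P_K$, being connected and disjoint from $V_G$, lies in a unique component $\Psi(K)\in\Omega$ (it lies in $\Omega$ because $v\in\overline{P_K}$). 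I would then check surjectivity (any $C\in\Omega$ meets a sector, which is then contained in $C$) and $G_v$-equivariance (since $G_v$ fixes $v$, it preserves $\st^{\circ}(v)$, permutes sectors compatibly with its action on $\mathcal K$, and sends components of $X\setminus V_G$ to components). The remaining and only substantial point is injectivity.

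For injectivity I would argue by contradiction: assume $K_1\neq K_2$ but $\Psi(K_1)=\Psi(K_2)=C$. By Remark~\ref{rem:barycentric} I may pass to the barycentric subdivision without changing $X$ topologically, $V_G$, $\Omega$, or (up to $G_v$-equivariant isomorphism) $\mathcal K$. The gain is that after subdivision \emph{no positive–dimensional simplex has two of its vertices in $V_G$}: a simplex of the subdivision is a flag of simplices of $X$, so at most one of its vertices is a barycentre of a $0$-simplex, and $V_G$ consists of such barycentres. From this one checks that $X\setminus V_G$ deformation retracts onto the full subcomplex $Y$ spanned by the vertices off $V_G$, so that components of $X\setminus V_G$ agree with components of $Y$. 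Choosing edges $a_i=\{v,c_i\}$ at $v$ representing $K_i$, after subdivision each neighbour $c_i$ is a barycentre of a positive–dimensional simplex and hence lies off $V_G$; since $a_i\setminus\{v\}$ joins $c_i$ to the sector $P_{K_i}\subseteq C$, both $c_1,c_2$ lie in $C\cap Y$, and I obtain an edge path $\beta$ in $Y\subseteq X^{(1)}$ from $c_1$ to $c_2$ none of whose vertices meet $V_G$.

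Finally I would form the combinatorial loop $\ell=[v,c_1]\cdot\beta\cdot[c_2,v]$ based at $v$, whose only occurrence of a vertex of $V_G$ is the basepoint, flanked by $a_1$ and $a_2$. As $X$ is simply connected, $\ell$ bounds a singular disc diagram $\psi\colon D\to X$ (Remark~\ref{rem:DiscDiag}), which I take of minimal area, hence reduced. Let $\tilde v\in\partial D$ be the unique boundary vertex over $v$; its two incident boundary edges map to $a_1$ and $a_2$. Reading the cells of $D$ incident to $\tilde v$ in planar order from the edge over $a_1$ to the edge over $a_2$, the edges at $\tilde v$ project to edges of $X$ at $v$ (vertices of $\link_X(v)$) and the $2$-cells at $\tilde v$ project to $2$-cells of $X$ at $v$ (edges of $\link_X(v)$); thus this corner of $D$ yields an edge path in $\link_X(v)$ from $a_1$ to $a_2$, contradicting $K_1\neq K_2$. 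I expect the main obstacle to be precisely this last step: one must ensure the minimal disc diagram is nondegenerate at the basepoint (no edge incident to $\tilde v$ is collapsed by $\psi$) so that the corner of $D$ at $\tilde v$ genuinely projects to a walk in the link; the point-set preliminaries — subdividing so that $X\setminus V_G$ retracts onto a full subcomplex, which converts the topological path in $C$ into a combinatorial loop — are routine but indispensable, since disc diagrams are available only for combinatorial loops.
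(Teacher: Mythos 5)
Your proposal is correct and follows essentially the same strategy as the paper's proof: the forward map is the natural one, surjectivity and $G_v$-equivariance are soft, and injectivity comes from filling a loop through $v$, all of whose other vertices avoid the orbit $\{gv \mid g\in G\}$, by a disc diagram and reading the link of the vertex over $v$ in the diagram to produce a path in $\link_X(v)$ joining the two given components. Your point-set preliminaries are in fact more careful than the paper's: the paper simply asserts that $\link_X(v)$ is a subspace of $X\setminus\{gv\mid g\in G\}$ (false if some translate $gv$ is adjacent to $v$, though true after barycentric subdivision, which is the situation where the lemma is applied) and takes a combinatorial path $p$ in $X\setminus\{gv\mid g\in G\}$ without comment; your sector construction and the retraction of $X\setminus\{gv\mid g\in G\}$ onto the full subcomplex spanned by the surviving vertices supply exactly the justifications these steps need.

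The one step you leave open --- nondegeneracy at the basepoint --- is where the paper proceeds differently, and it is worth seeing both fixes. First, your stated worry (an edge at $\tilde v$ being collapsed by $\psi$) is vacuous: in the notion of disc diagram the paper uses (Remark~\ref{rem:DiscDiag}, following McCammond--Wise), the map $D\to X$ sends open cells homeomorphically onto open cells, so nothing is collapsed. The genuine issue is that the cells of $D$ at $\tilde v$ lying between the edge over $a_1$ and the edge over $a_2$ must form a single fan, i.e.\ $\link_D(\tilde v)$ must be connected; otherwise the planar order does not yield a walk in $\link_X(v)$. The paper secures this by choosing its three constituent paths of minimal combinatorial length, so that the resulting loop is an \emph{embedded} circuit; then the boundary cycle of $D$ is an embedding, $D$ is homeomorphic to a disc by Remark~\ref{rem:DiscDiag}, and the link of the basepoint preimage is automatically an arc. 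You can do the same: take $\beta$ of minimal length, hence embedded; since $\beta$ avoids $v$ and $c_1\neq c_2$, the loop $\ell$ is embedded and you are in the paper's situation. Alternatively, your loop works as is: the boundary cycle of $D$ visits $\tilde v$ exactly once because $\ell$ visits $v$ exactly once, and for a boundary vertex of a contractible planar complex each component of its link forces a separate visit of the boundary cycle, so $\link_D(\tilde v)$ is connected; connectedness excludes isolated link vertices (spurs at $\tilde v$) except in the impossible case $a_1=a_2$, and the fan argument goes through. Note that with either patch, minimality of area plays no role.
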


\begin{proof}
Since $\link_X(v)$ is a subspace of $X\setminus\{gv \mid g \in G\}$, there is a natural $G_v$-equivariant inclusion $\eta \colon \mathcal{K} \to \Omega$. We claim that $\eta$ is a bijection.
 Let $k_1,k_2 \in \mathcal{K}$ and $x, y$ be vertices of $X$ in $k_1$ and $k_2$ respectively. Suppose $\eta(k_1) =\eta(k_2)$.
 Then there exists a path $p$ in $X\setminus\{gv \mid g \in G\}$ between $x,y$. Since $k_1,k_2 \in \mathcal{K}$, there are paths $p_1,p_2$ in $X$ joining $v, x$ and $y, v$ respectively.
 Choose $p_1,p_2,p$ of minimal combinatorial length.  Let $\gamma$ be the loop in $X$ obtained by concatenating $p_1, p, p_2$ and observe that minimality  implies that $\gamma$ is an embedding in $X$. Since $X$ is simply connected, there exists a disc diagram $D$ and a simplicial map $\phi\colon D \to X$ such that  $\gamma = \phi \circ \gamma'$, where $\gamma' \colon S^1 \to D$ is the boundary cycle of $D$, see for example~\cite[Lemma 2.17]{mccammond_wise_2002}. Since $\gamma$ is an embedding, we have that $D$ is homeomorphic to a disc, see Remark~\ref{rem:DiscDiag}. Let  $w$ be the vertex on the boundary of $D$ mapping to $v$ and observe that $\link_D(w)$ is homeomorphic to an interval. Then $\phi$ induces a path   $\link_D(w)\to \link_X(v)$ between $x$ and $y$ in $\link_X(v)$. Thus $k_1 = k_2$ and $\eta$ is injective.   
 
Let $\omega \in \Omega$. Then $\omega$ contains the interior of a cell $\sigma$ incident to $v$. Let $k$ be the connected component of $\link_X(v)$ containing $\link_X(v) \cap \sigma$. Then $k$ maps to $\omega$, and hence $\eta$ is surjective.  
\end{proof}

\begin{proposition}\label{lem:stabLinkisCG}
Let $X$ be a simply connected 2-dimensional simplicial $G$-complex with discrete cocompact $G$-action such that every cell of dimension greater than 0 has compact stabilizer. Then for any vertex $v$, the setwise $G_v$-stabilizer of  any connected component of $\link_X(v)$ is compactly generated. 
\end{proposition}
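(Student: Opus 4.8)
The plan is to show that the setwise stabilizer $(G_v)_L$ of a connected component $L$ of $\link_X(v)$ acts on $L$ discretely and cocompactly on a connected graph with compact cell stabilizers, and then feed this into the topological characterization of relative compact generation, Theorem~\ref{prop:TopCharGraph}, applied to the proper pair $((G_v)_L,\emptyset)$, to conclude that $(G_v)_L$ is compactly generated.

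First I would reduce to the case that $G$ acts on $X$ without inversions. By Remark~\ref{rem:barycentric} the barycentric subdivision $X'$ is a simplicial $G$-complex without inversions, the action stays cocompact, and $X'$ stays simply connected; moreover each positive-dimensional cell of $X'$ corresponds to a chain of cells of $X$ that includes a cell of positive dimension, so its stabilizer is contained in a compact cell stabilizer of $X$, and the hypothesis on positive-dimensional stabilizers is preserved. Since $v$ is still a vertex of $X'$ with the same stabilizer $G_v$, and Remark~\ref{rem:barycentric} supplies a $G_v$-equivariant isomorphism $\link_X(v)\to\link_{X'}(v)$, it suffices to treat $X'$, so I assume henceforth that $X$ has no inversions. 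Then Lemma~\ref{lem:CocompactAction} applies and $G_v$ acts cocompactly on the simplicial graph $\link_X(v)$.

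Next I would pass to a single component. Fix a component $L$ of $\link_X(v)$ and set $K=(G_v)_L$. Since $G_v$ acts cocompactly there are finitely many $G_v$-orbits of cells, and as $G_v\cdot L$ is a union of such orbits, the map $L/K\to (G_v\cdot L)/G_v$ sending a $K$-orbit of a cell of $L$ to its $G_v$-orbit is a bijection: it is injective because any $g\in G_v$ carrying a cell of $L$ to a cell of $L$ sends the component $L$ to the component containing the image, which is again $L$, so $gL=L$ and $g\in K$. Hence $K$ acts cocompactly on $L$. Translating link cells back to $X$, a vertex of $L$ is an edge $e$ of $X$ incident to $v$ and an edge of $L$ is a $2$-cell of $X$ incident to $v$, so the $K$-stabilizer of any cell of $L$ is contained in the stabilizer of a positive-dimensional cell of $X$, hence compact; and it is open since the $G$-action on $X$ is discrete. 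In particular $G_v\cap G_e$ is a compact open subgroup of $K$, so $K$ is open in $G$ and $(K,\emptyset)$ is a proper pair.

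Finally, $L$ is a connected discrete cocompact simplicial $K$-graph with compact edge and vertex stabilizers, so condition~\eqref{TopoChar03} of Theorem~\ref{prop:TopCharGraph} holds for the proper pair $(K,\emptyset)$; the implication \eqref{TopoChar03}$\Rightarrow$\eqref{TopoChar01} gives that $K=(G_v)_L$ is compactly generated relative to the empty collection, i.e.\ compactly generated. (Alternatively, the cocompact action with compact stabilizers could be fed directly into Proposition~\ref{thm:Bridson}.) I expect the only genuinely delicate point to be establishing cocompactness of the restricted $K$-action on the single component $L$, together with the bookkeeping identifying the cells of $L$ with positive-dimensional cells of $X$ so as to guarantee that all the relevant stabilizers are compact and open; once these are in place, the appeal to Theorem~\ref{prop:TopCharGraph} is immediate.
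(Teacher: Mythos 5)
Your proposal is correct and follows essentially the same route as the paper's proof: invoke Lemma~\ref{lem:CocompactAction} for cocompactness of the link action, observe that discreteness of the $G$-action and compactness of positive-dimensional cell stabilizers make the component a connected discrete cocompact graph with compact open cell stabilizers, and conclude via Theorem~\ref{prop:TopCharGraph}. The only difference is that you spell out details the paper leaves implicit (the barycentric-subdivision reduction to an action without inversions, the descent of cocompactness from $\link_X(v)$ to a single component, and the verification that $((G_v)_L,\emptyset)$ is a proper pair), all of which are handled correctly.
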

\begin{proof}
Let $\Delta$ be a  connected component of $\link_X(v)$ and let $K$ be its setwise $G_v$-stabilizer. Since $X$ is a discrete $G$-complex and stabilizers of 1-cells and 2-cells are compact, $\Delta$ is a connected discrete $K$-graph with compact cell stabilizers. By Lemma~\ref{lem:CocompactAction}, we also have that  $\Delta$ is a cocompact $K$-graph. Hence by Theorem~\ref{prop:TopCharGraph}, $K$ is compactly generated.
\end{proof}

\begin{proof}[Proof of the Theorem~\ref{prop:FiniteRelPresGisCGimpliesHisCG}\eqref{item:CP2}]
Let $\mathcal{P} = \langle \  (  \mathcal{G}, \Lambda, \phi) \mid  R \   \rangle$ be a compact relative generalized presentation of $G$ with  respect to the collection $\mathcal{H}$ of open subgroups, and let $X$ be the barycentric subdivision of the corresponding Cayley-Abels complex. In particular, the $G$-action on $X$ has no inversions, links of cells are well defined, and every cell of dimension greater than 0 has compact stabilizer. 

 Let $H \in \mathcal{H}$  and let $v_H$ denote a 0-cell of $X$ such that $G_{v_H} = H$, note that such vertex exists by Proposition~\ref{prop:RelPresComplex}. The proof is divided into  two cases: 
  
  \textbf{Case 1:} $v_H$ is not a cut-point in $X$. 
 
  By Lemma~\ref{lem:ConnectedComponentCorrespondence}, $\link_{X}(v_H)$ is connected.
  In this case $H$ acts on $\link_{X}(v_H)$  discretely, cocompactly and with compact stabilizers.  By Proposition~\ref{lem:stabLinkisCG}, $H$ is compactly generated.
  
  \textbf{Case 2:} $v_H$ is a cut-point in $X$. 
   
  Observe that for any $g \in G$, the 0-cell $gv_H$  is a cut-point in $X$. Let $\Omega$ be the set of connected components of the set $X \setminus \{gv_H \mid g \in G\}$. For any $\Delta \in \Omega$, denote $\overline{\Delta}$ be the closure of $\Delta$ in $X$.
Construct a tree $T$ with vertex set
$V(T)= \{gv_H \mid g \in G\} \cup \{v_{\Delta} \mid \Delta \in \Omega\} $ and edge set $E(T) = \{\{gv_H,v_{\Delta}\}\mid gv_H \in \overline{\Delta}, g \in G, \Delta \in \Omega\}$. Observe that   $T$ is  a tree with a natural $G$-action and $G_{v_H}= H$. We claim that the edge $G$-stabilizers in $T$ are compactly generated. Without loss of generality, consider an edge $e = \{v_H,v_{\Delta}\}$ incident to vertex $v_H$ in $T$. Then  $G_e$ is the setwise $H$-stabilizer  of $\Delta$. By Lemma~\ref{lem:ConnectedComponentCorrespondence}, $G_e$ is the setwise $H$-stabilizer of a connected component of $\link_{X}(v_H)$. Thus Proposition~\ref{lem:stabLinkisCG}
implies $H_{\Delta}$ is compactly generated. Therefore $G$-stabilizers of edges of $T$   are compactly generated; and since $G$ is compactly generated, Proposition~\ref{prop:VertexGroupsAreCG} implies vertex stabilizers are compactly generated. In particular, $H$ is compactly generated.
\end{proof}

\section{Relatively hyperbolic groups}\label{sec:relhp}

In this section, we generalize the notion of relatively hyperbolic group for proper pairs $(G, \mathcal H)$. Our definition extends Bowditch's approach to relative hyperbolicity for discrete groups~\cite{Bo12}. 
The main result of the section is that relatively  hyperbolic groups  admit compact relative presentations, see Theorem~\ref{prop:RelHypimpliesFinitePres}.

\begin{definition}\label{def:RelHyp}
Let $(G,\mathcal H)$ be a proper pair.  The group $G$ is \emph{relatively hyperbolic} 
with respect to $\mathcal{H}$ if there exists a Cayley-Abels graph $\Gamma$ of $G$ with respect to $\mathcal{H}$ which is fine and hyperbolic.
\end{definition}

 \begin{remark}\label{rem:relhyp}
  Let $(G, \mathcal{H})$ be a proper pair. If $G$   is  hyperbolic relative to   $\mathcal H$, then: 
 \begin{enumerate}
\item the group $G$ is compactly generated relative to $\mathcal H$ by Theorem~\ref{prop:TopCharGraph};  and 
\item Cayley-Abels graphs of $G$ with respect to $\mathcal{H}$ are fine and hyperbolic by  Theorem~\ref{cor:QI-Fineness}.
\end{enumerate}
 \end{remark}
 
\begin{example}
Let $G$ be the fundamental group  of a finite graph of topological groups $(\mathcal{G}, \Lambda)$ with compact open edge groups. Then  $G$ is   hyperbolic relative to the collection $\mathcal{H}$ of vertex groups $\mathcal{G}_v$. Indeed,  the Bass-Serre tree of $(\mathcal{G}, \Lambda)$ is a   Cayley-Abels graph of $G$ with respect to $\mathcal H$ which is hyperbolic and fine. 
\end{example}

\begin{theorem}\label{prop:RelHypimpliesFinitePres}
Let $(G,\mathcal H)$ be a proper pair. If $G$ is hyperbolic relative to $\mathcal{H}$, then $G$ is compactly presented relative to $\mathcal{H}$.
\end{theorem}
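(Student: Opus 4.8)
The plan is to produce a Cayley-Abels complex of $G$ with respect to $\mathcal{H}$ and then invoke Theorem~\ref{prop:TopCharComplex}, which identifies the existence of such a complex with $G$ being compactly presented relative to $\mathcal{H}$. By Definition~\ref{def:RelHyp}, relative hyperbolicity supplies a Cayley-Abels graph $\Gamma$ of $G$ with respect to $\mathcal{H}$ that is both fine and hyperbolic, and by cocompactness $\Gamma$ has finitely many $G$-orbits of edges. The complex I would build is $\Omega_k(\Gamma)$ of Definition~\ref{def:OmeganGamma}, for a suitably large $k$; the two properties of $\Gamma$ feed into the two facts one must check, namely that $\Omega_k(\Gamma)$ is simply connected (this uses hyperbolicity) and that it is a discrete cocompact $G$-complex (this uses fineness together with cocompactness). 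This is exactly the mechanism already exploited in the converse direction of the proof of Corollary~\ref{cor:TopcharCWcomplex}.

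First I would record that a hyperbolic graph is large-scale simply connected. Viewing $\Gamma$ as a geodesic metric space with its path metric (note that $\Gamma$ need not be locally finite, but this is irrelevant, since shortest edge-paths furnish geodesics between vertices), Gromov hyperbolicity yields a linear isoperimetric inequality and hence a constant $k=k(\delta)$ such that every combinatorial loop in $\Gamma$ is null-homotopic through loops of length at most $k$; equivalently $\Omega_k(\Gamma)$ is simply connected. Concretely, thinness of geodesic triangles lets one subdivide any cycle into cycles of uniformly bounded length, and these are precisely the loops filled by the $2$-cells of $\Omega_k(\Gamma)$. I would cite the standard isoperimetric characterization of hyperbolic geodesic spaces for this step.

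It remains to see that $\Omega_k(\Gamma)$ is a Cayley-Abels complex in the sense of Definition~\ref{def:RelCAComplex}. By Remark~\ref{rem:LargeSCinvariant}(1) it is a $G$-complex with $1$-skeleton the Cayley-Abels graph $\Gamma$, so it is a $2$-dimensional $G$-complex of the required form. For cocompactness: each $2$-cell is glued along a simple loop (circuit) of length at most $k$, and every such circuit meets one of the finitely many $G$-orbit representatives $e_1,\dots,e_m$ of edges of $\Gamma$; since $\Gamma$ is fine, each $e_i$ lies in only finitely many circuits of length at most $k$, so $\Omega_k(\Gamma)$ has finitely many $G$-orbits of $2$-cells. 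For discreteness: the pointwise stabilizer of a $2$-cell equals the pointwise stabilizer of its boundary circuit, which is a finite intersection of vertex and edge stabilizers of $\Gamma$, hence an intersection of finitely many open subgroups and therefore open. Together with the simple connectedness of the previous paragraph, $\Omega_k(\Gamma)$ is a discrete simply connected cocompact $2$-complex whose $1$-skeleton is a Cayley-Abels graph of $G$ with respect to $\mathcal{H}$, i.e.\ a Cayley-Abels complex. Theorem~\ref{prop:TopCharComplex} then gives that $G$ is compactly presented relative to $\mathcal{H}$.

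The main obstacle is the single external input of the second paragraph: the passage from Gromov hyperbolicity of the possibly non-locally-finite graph $\Gamma$ to a uniform filling radius $k$, that is, to large-scale simple connectedness. Everything else is bookkeeping with fineness and with openness of stabilizers, paralleling the proof of Corollary~\ref{cor:TopcharCWcomplex}. One should be careful that hyperbolicity is being used for the graph metric and that the standard filling argument does not require local finiteness, relying only on thin triangles; this is precisely why fineness (rather than local finiteness) is enough to keep the resulting $2$-complex cocompact.
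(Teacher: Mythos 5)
Your proposal is correct and is essentially the paper's own argument: take the fine hyperbolic Cayley-Abels graph $\Gamma$, form $\Omega_k(\Gamma)$, use hyperbolicity for simple connectedness, fineness plus cocompactness for finitely many $G$-orbits of $2$-cells, and conclude via Theorem~\ref{prop:TopCharComplex}. The only cosmetic difference is that where you sketch the standard thin-triangles/isoperimetric filling argument for the existence of the constant $k$, the paper simply invokes this fact as Lemma~\ref{lem:hypCircuits} (Bowditch's~\cite[Proposition 3.1]{Bo12}), which indeed holds without any local finiteness assumption, exactly as you note.
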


Small cancellation quotients of free products are a source of relatively hyperbolic groups in the discrete case~\cite[Example(II) Page 4]{Os06}.  Proposition~\ref{lem:AmalgamationHyperbolic}  generalizes this construction.  For background on small cancellation quotients of amalgamated free products we refer the reader to the book by Lyndon and Schupp~\cite[Ch.V]{LySc01}.

\begin{proposition}\label{lem:AmalgamationHyperbolic}
 Let $A \ast_C B$ be a topological group that splits as an amalgamated free product  over a common compact open subgroup $C$. If $R \subseteq A\ast_CB$ is  symmetrized set satisfying $C'(\lambda)$ condition and   $G = (A\ast_CB)/\nclose{R}$ then: 
 \begin{enumerate}
 \item  For the compact generalized presentation
 $\langle (  A\ast_CB, \phi) \mid R    \rangle$ of $G$ relative to $\{A,B\}$, 
 the presentation complex is $C'(2\lambda)$ small cancellation complex.
 \item If $\lambda \leq  1/12$ then any compact generating graph of $G$ relative to $\{A,B\}$, then the Cayley-Abels graph is fine and hyperbolic. In particular, $G$ is relatively hyperbolic with respect to $\{A,B\}$. 
 \end{enumerate}
\end{proposition}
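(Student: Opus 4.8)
The plan is to prove part (1) by transporting the combinatorial small cancellation hypothesis on $R$ into a geometric small cancellation condition on the Cayley-Abels complex, and then to derive part (2) from the standard consequences of geometric small cancellation together with the fineness criterion of Proposition~\ref{thm:Deltafine} and the quasi-isometry invariance of Corollary~\ref{cor:QI-Fineness}.

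For part (1), I would start from the Cayley-Abels complex $X$ associated to the compact generalized presentation $\langle (A\ast_C B,\phi)\mid R\rangle$ as constructed in Proposition~\ref{prop:RelPresComplex}; its $1$-skeleton is the Cayley-Abels graph $\Gamma=\mathcal{T}/\ker\phi$, where $\mathcal{T}$ is the Bass-Serre tree of $A\ast_C B$, and its $2$-cells are the $G$-translates of the loops $\gamma_r$, $r\in R$. Since distinct $2$-cells have distinct boundaries (Proposition~\ref{prop:RelPresComplex}), a geometric piece is a maximal common subpath of the boundary cycles of two distinct $2$-cells. First I would describe $\partial D_r$ explicitly: lifting to $\mathcal{T}$ it is the image of the geodesic joining a base vertex to its $r$-translate, so its length is governed by the syllable length of a normal form of $r$, with the vertices alternating between $A$-type and $B$-type peripheral vertices joined by edges carrying conjugates of $C$. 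A geometric piece then projects to a common reduced subword of two relators, that is, to a piece in the sense of~\cite[Ch.~V.11]{LySc01}, and the $C'(\lambda)$ hypothesis bounds its syllable length by $\lambda\lvert r\rvert$. The delicate point is the bookkeeping relating the graph length of a geometric piece in $\Gamma$ to the syllable length of the corresponding combinatorial piece: because the endpoints of a geometric piece may be peripheral (cone) vertices at which the two relators agree without their adjacent syllables agreeing, this conversion incurs a factor of two, which is exactly what upgrades $C'(\lambda)$ to the geometric condition $C'(2\lambda)$.

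For part (2), assuming $\lambda\le 1/12$, part (1) gives that $X$ is a $C'(2\lambda)$, hence $C'(1/6)$, small cancellation complex. I would then invoke the small cancellation theory over amalgamated free products~\cite[Ch.~V]{LySc01}: the $C'(1/6)$ condition yields a Greendlinger-type lemma and Dehn's algorithm, and therefore a linear combinatorial isoperimetric inequality for $X$. Since the $G$-action on $X$ is cocompact, the boundary lengths of $2$-cells are uniformly bounded, so a linear Dehn function forces $\Gamma=X^{(1)}$ to be Gromov hyperbolic. For fineness, I would first check that each edge of $X$ lies in only finitely many $2$-cells: if $e\subseteq gD_r$, then $g^{-1}e$ is one of the finitely many edges of $\partial D_r$, so the $2$-cells through $e$ coming from the orbit of $D_r$ are indexed by $G_{e}/(G_{e}\cap G^{D_r})$, which is finite because the edge stabilizer $G_e$ is compact while $G^{D_r}$ is open; summing over the finitely many orbits of $2$-cells gives the claim. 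With this, the linear (in particular finite-valued) Dehn function and Proposition~\ref{thm:Deltafine} show that $\Gamma$ is fine. Thus $\Gamma$ is a fine hyperbolic Cayley-Abels graph, so $(G,\{A,B\})$ is relatively hyperbolic by Definition~\ref{def:RelHyp}, and finally Corollary~\ref{cor:QI-Fineness} propagates fineness and, via quasi-isometry invariance of hyperbolicity, hyperbolicity to the Cayley-Abels graph induced by any compact generating graph of $G$ relative to $\{A,B\}$.

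The main obstacle is the factor-of-two bookkeeping in part (1): translating the syllable-based $C'(\lambda)$ condition of~\cite[Ch.~V.11]{LySc01} into a statement about path lengths in $\Gamma$ requires a careful analysis of how geometric pieces sit across peripheral vertices of the Bass-Serre tree and its quotient, and it is here that the topological hypotheses (that $C$ is compact open, so that edge stabilizers of $\Gamma$ are compact open) must be used to control overlaps. The secondary technical point, genuinely new compared with the discrete case treated in~\cite{MccWise05} and~\cite{MARTINEZPEDROZA20112396}, is the verification that each edge meets only finitely many $2$-cells, where compactness of the edge stabilizers replaces finiteness of edge stabilizers in the discrete setting.
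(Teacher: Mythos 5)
Your proposal is correct in outline and follows the same skeleton as the paper's proof; the differences lie in what is proved versus what is cited. For part (1) the paper does not reconstruct the piece-comparison argument at all: it simply notes that the proof of \cite[Theorem 7.1]{ACCMP}, written there for profinite $A$ and $B$, goes through without changes, so the factor-of-two bookkeeping you identify as the delicate point is outsourced to that reference. The mechanism you describe (geometric pieces in $X^{(1)}$ versus syllable-length pieces in the sense of \cite[Ch. V.11]{LySc01}, with the endpoint discrepancy absorbed using the lower bound on $|r|$ coming from the $C'(\lambda)$ condition) is indeed the right one, but in your write-up it remains a plan; to make the proposal complete you would either carry out that computation or cite \cite{ACCMP} as the paper does. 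For part (2) your argument coincides with the paper's: the $C'(1/6)$ condition gives a linear combinatorial Dehn function by \cite{LySc01}, hence hyperbolicity of $X^{(1)}$ by Gromov, fineness by Proposition~\ref{thm:Deltafine}, and the statement for all Cayley-Abels graphs by Corollary~\ref{cor:QI-Fineness}. Here your proposal actually improves on the paper's exposition: you verify the hypothesis of Proposition~\ref{thm:Deltafine} that each edge of $X$ lies in only finitely many $2$-cells, playing compactness of the edge stabilizer $G_e$ against openness of the setwise stabilizer of a $2$-cell (open because it contains the pointwise stabilizer, a finite intersection of open edge stabilizers), a point the paper's proof leaves implicit. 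One minor correction to that step: the $2$-cells through $e$ in the orbit of $D_r$ are indexed not by a single coset space $G_e/(G_e\cap G^{D_r})$ but by a union, over the edges $t$ of $\partial D_r$ that can be translated to $e$, of coset spaces $G_e/(G_e\cap G^{gD_r})$ with $g$ chosen so that $g.t=e$; each of these is finite for the reason you give, so the conclusion stands.
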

\begin{corollary}\label{exp:AmalSmallCancellation}
Let $G$ be as in Proposition~\ref{lem:AmalgamationHyperbolic} and $R = \{r^m\}$, where $r$ is a reduced and weakly reduced element in $G$. If $m \geq 12$ and $R$ satisfies the $C'(1/12)$ small cancellation condition, then $G$ is relatively hyperbolic with respect to $\{A, B\}$.
\end{corollary}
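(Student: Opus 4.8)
The plan is to obtain this corollary as a direct specialization of Proposition~\ref{lem:AmalgamationHyperbolic}, which already contains all the geometric work; the corollary merely records the case where the symmetrized set is the symmetrization of a single proper power $r^m$ and where $\lambda$ is taken to be $1/12$. First I would unwind the convention fixed in the introduction: the assertion that $R = \{r^m\}$ satisfies the $C'(1/12)$ condition means that the smallest symmetrized subset $\hat R \subseteq A \ast_C B$ containing $r^m$ satisfies $C'(1/12)$ in the sense of~\cite[Ch. V]{LySc01}. Since forming $\hat R$ only adjoins inverses and cyclic conjugates of $r^m$, the normal closures agree, $\nclose{r^m} = \nclose{\hat R}$, so that $G = (A \ast_C B)/\nclose{\hat R}$ is exactly the quotient to which Proposition~\ref{lem:AmalgamationHyperbolic} applies with symmetrized set $\hat R$ and parameter $\lambda = 1/12$.

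Second, I would explain the roles of the hypotheses $m \geq 12$ and of $r$ being reduced and weakly reduced, which are the natural conditions guaranteeing the piece-length bound underlying the $C'(1/12)$ assumption. Because $r$ is reduced and weakly reduced, $r^m$ has no long self-overlaps in normal form, so every piece of $\hat R$ has syllable length at most $|r|$, whereas each element of $\hat R$ has length $m|r|$; hence every piece is shorter than $\tfrac{1}{m}|r^m| \le \tfrac{1}{12}|r^m|$ once $m \geq 12$. I would cite the normal-form material in~\cite[Ch. V]{LySc01} for these estimates rather than reproduce them, since they are precisely the content of the $C'(1/12)$ hypothesis and not the substance of the corollary.

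Finally, I would invoke Proposition~\ref{lem:AmalgamationHyperbolic}(2) with $\lambda = 1/12$: as $\lambda \le 1/12$, every compact generating graph of $G$ relative to $\{A,B\}$ (such as the one from Example~\ref{example:AmalgamatedFreeProductPresentation}) induces a Cayley-Abels graph that is fine and hyperbolic, so $(G,\{A,B\})$ is a proper pair and $G$ is relatively hyperbolic with respect to $\{A,B\}$ in the sense of Definition~\ref{def:RelHyp}. The one point I would be most careful about is the bookkeeping between the single relator $r^m$ and its symmetrization $\hat R$, together with the injectivity of $A$ and $B$ into $G$ needed for $\{A,B\}$ to be a genuine peripheral collection; both are consequences of the $C'(1/12)$ hypothesis via~\cite[Ch. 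V, Theorem 11.2]{LySc01}, exactly as used in Example~\ref{example:AmalgamatedFreeProductPresentation}, and so I would expect no essential difficulty beyond this bookkeeping.
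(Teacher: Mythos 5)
Your proposal is correct and coincides with the paper's treatment: the paper gives no separate proof of this corollary, regarding it as the immediate specialization of Proposition~\ref{lem:AmalgamationHyperbolic}(2) to the symmetrization $\hat R$ of $\{r^m\}$ with $\lambda = 1/12$ (using $\nclose{r^m}=\nclose{\hat R}$), which is exactly what your first and third paragraphs carry out. One caution: the claim in your middle paragraph that reducedness and weak reducedness of $r$ together with $m \geq 12$ already force every piece of $\hat R$ to have length at most $|r|$ is not correct as stated (such bounds require additional control, e.g.\ on the primitive root of $r$ and on overlaps between $r$ and $r^{-1}$); but, as you yourself note, nothing in the argument depends on this, since the $C'(1/12)$ condition is an explicit hypothesis of the corollary rather than something to be derived.
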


The rest of the section is divided into two subsections containing the proofs of  Theorem~\ref{prop:RelHypimpliesFinitePres}  and  Proposition~\ref{lem:AmalgamationHyperbolic} respectively.

\subsection{Proof of Theorem~\ref{prop:RelHypimpliesFinitePres}}\label{subsec:RelHypimpliesCP}

The proof uses the following result of Bowditch. For a definition of     $\Omega_n(\Gamma)$ see Definition~\ref{def:OmeganGamma}.

\begin{lemma} \label{lem:hypCircuits} \cite[Proposition 3.1]{Bo12}
 Let $\Gamma$ be a hyperbolic graph with hyperbolicity constant $k$. Then there is a constant $n = n(k)$ such that $\Omega_n(\Gamma)$ is simply-connected. 
\end{lemma}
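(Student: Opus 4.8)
The plan is to reduce simple-connectivity of $\Omega_n(\Gamma)$ to a purely combinatorial statement — every closed edge-path in $\Gamma$ is, rel basepoint, a product of loops of length bounded by a constant depending only on $k$ — and then to produce such a decomposition by filling the path with a ``geodesic comb'' whose cells are controlled by thinness of triangles. First I would regard $\Gamma$ as a geodesic metric space by giving each edge length one, so that geodesics between vertices are realized by embedded edge-paths. Hyperbolicity with constant $k$ is equivalent, after enlarging to some $k'=k'(k)$, to the assertion that geodesic triangles are $k'$-thin; I would invoke the standard equivalence of the definitions of hyperbolicity for geodesic spaces (see, e.g., \cite{BrHa99}). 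I would also record the elementary reduction that any closed edge-path of length at most $n$ bounds in $\Omega_n(\Gamma)$: cutting such a path at each repeated vertex expresses it as a concatenation of simple cycles of length at most $n$, each of which bounds an $m$-gon $2$-cell of $\Omega_n(\Gamma)$ by Definition~\ref{def:OmeganGamma}.

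The key geometric input is a fellow-traveling estimate, which I would state as follows. There is a constant $M=M(k)$ such that whenever $p,a,b$ are vertices with $\dist(a,b)\le 1$ and $\alpha,\beta$ are edge-geodesics from $p$ to $a$ and from $p$ to $b$, then $\alpha$ and $\beta$ synchronously $M$-fellow-travel: writing $\alpha(i)$ and $\beta(j)$ for their vertices indexed by distance from $p$, there is a non-decreasing matching $\sigma$ with $\dist(\alpha(i),\beta(\sigma(i)))\le M$ and $|i-\sigma(i)|\le M$. I would deduce this from $k'$-thinness of the triangle with sides $\alpha$, $\beta$, and the edge $[a,b]$: since the third side has length at most one, each vertex of $\alpha$ lies within $k'+1$ of $\beta$, and comparing distances to $p$ — which differ by at most $\dist(a,b)\le 1$ along the matching — upgrades this to a synchronous, non-decreasing matching.

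Using $M$, I would fill the loop $\alpha\cdot[a,b]\cdot\beta^{-1}$ by a ladder of bounded cells. Joining $\alpha(i)$ to $\beta(\sigma(i))$ by a geodesic rung of length at most $M$, two consecutive rungs together with the edge $[\alpha(i),\alpha(i+1)]$ and the segment of $\beta$ between $\beta(\sigma(i))$ and $\beta(\sigma(i+1))$ bound a closed path of length at most $4M+2$; so with $n_0=4M+2$ each such path, and hence the whole loop $\alpha\cdot[a,b]\cdot\beta^{-1}$, is null-homotopic in $\Omega_{n_0}(\Gamma)$. To fill an arbitrary closed edge-path $c=[v_0,v_1,\dots,v_L=v_0]$, I would fix the basepoint $v_0$, choose geodesics $\gamma_i$ from $v_0$ to $v_i$ (with $\gamma_0$ and $\gamma_L$ constant), and observe that each loop $\ell_i=\gamma_i\cdot[v_i,v_{i+1}]\cdot\gamma_{i+1}^{-1}$ is of the type just filled (here $\dist(v_i,v_{i+1})=1$), hence bounds in $\Omega_{n_0}(\Gamma)$. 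Telescoping, $c=\ell_0\ell_1\cdots\ell_{L-1}$ rel $v_0$ since the factors $\gamma_{i+1}^{-1}\gamma_{i+1}$ cancel, so $c$ is null-homotopic; taking $n=n(k)=n_0$ gives that $\Omega_n(\Gamma)$ is simply connected.

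I expect the main obstacle to be the fellow-traveling estimate together with the ladder bookkeeping: making the matching $\sigma$ genuinely non-decreasing, and ensuring the rungs and the intervening $\beta$-segments have uniformly bounded length, requires combining $k'$-thinness with the comparison of distances to $p$ in a careful way. Once the uniform bound $M(k)$ on the ladder cells is established, both the reduction to short loops and the telescoping argument are formal.
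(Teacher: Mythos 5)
Your proposal is correct and follows essentially the same route as the source on which the paper relies: the paper gives no proof of its own here, citing Bowditch~\cite{Bo12}*Proposition 3.1, whose argument is likewise to cone an arbitrary edge-loop off a basepoint by geodesics and to fill each resulting triangle $\gamma_i\cdot[v_i,v_{i+1}]\cdot\gamma_{i+1}^{-1}$ by a ladder of cells whose perimeters are bounded in terms of the thinness constant, so that every loop decomposes into simple cycles of length at most $n(k)$. The two points you flag as delicate are in fact routine: taking the synchronous matching $\sigma(i)=\min(i,|\beta|)$ is automatically non-decreasing and satisfies your bounds (since $\bigl||\alpha|-|\beta|\bigr|\leq 1$), and the decomposition of a short closed path at repeated vertices yields simple cycles together with backtracks, the latter being null-homotopic already in the graph.
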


\begin{proof}[Proof of Theorem~\ref{prop:RelHypimpliesFinitePres}]
Let $\Gamma$ be a Cayley-Abels graph of $G$ relative to $\mathcal{H}$.
By Remark~\ref{rem:relhyp}, $\Gamma$ is hyperbolic and fine. Let $n$ be the constant given by Lemma~\ref{lem:hypCircuits}. Since $\Gamma$ is fine and $G$-cocompact, there are finitely many $G$-orbits of circuits of length at most $n$. Then $X=\Omega_n(\Gamma)$ is a discrete  simply-connected cocompact 2-dimensional $G$-complex with 1-skeleton $\Gamma$. By  Theorem~\ref{prop:TopCharComplex}, $G$ is compactly presented with respect to   $\mathcal{H}$. 
\end{proof}

\subsection{Proof of Proposition~\ref{lem:AmalgamationHyperbolic}}
The combinatorial Dehn filling function of a simply-connected complex was introduced by~\cite{Gromov} as a generalization of isoperimetric functions. 
We  use the following result to prove fineness of certain complexes.

\begin{proposition} \cite{MP15} \cite{SamLouisEdu} \label{thm:Deltafine}
Let $X$ be a cocompact simply-connected $G$-complex. Suppose that each edge of $X$ is attached to finitely many 2-cells. Then the 1-skeleton of $X$ is a fine graph if and only if the combinatorial Dehn function $\delta_X$ of $X$ takes only finite values. 
\end{proposition}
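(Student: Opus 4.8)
The plan is to prove the two implications separately, exploiting the cocompactness of the action together with the standing hypothesis that every edge of $X$ lies in only finitely many $2$-cells. Since the $G$-action is cocompact, there are finitely many $G$-orbits of cells; in particular the $2$-cells have uniformly bounded boundary length, say at most $L$, and, because ``finitely many $2$-cells per edge'' is a $G$-invariant count over the finitely many edge orbits, there is a uniform bound $d$ on the number of $2$-cells of $X$ containing any given edge. These two constants are what convert the various local finiteness statements about $X$ into global ones, and the whole argument is a version of the relationship between fineness and isoperimetric behaviour first recorded by Groves and Manning.

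First I would show that fineness of $\Gamma = X^{(1)}$ forces $\delta_X(n) < \infty$ for every $n$. The key point is that fineness together with cocompactness yields, for each $n$, only finitely many $G$-orbits of embedded circuits of length at most $n$: by the Bowditch definition there are finitely many embedded paths of each length between two fixed vertices, so deleting a fixed edge $e=\{u,v\}$ shows there are finitely many embedded circuits of length $\le n$ through $e$, and there are finitely many $G$-orbits of edges. Each orbit representative is a loop in the simply connected complex $X$, hence bounds a disc diagram of finite area, and taking the maximum over the finitely many representatives gives a constant $C_n$. An arbitrary edge-loop of length $\le n$ reduces, after deleting backtracks, to a concatenation of at most $n$ embedded circuits of length $\le n$, so its filling area is at most $n\,C_n$; therefore $\delta_X(n)\le n\,C_n<\infty$.

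For the converse I would argue by contraposition: assuming $\Gamma$ is not fine, I would produce an $N$ with $\delta_X(N)=\infty$. Failure of fineness yields an edge $e$ and an integer $N$ such that infinitely many \emph{distinct} embedded circuits $C_1,C_2,\dots$ of length at most $N$ pass through $e$. The crux is the claim that, for any fixed $M$, there are only finitely many disc diagrams of area at most $M$ whose boundary passes through $e$: an abstract disc diagram with at most $M$ faces, each of size at most $L$, is one of finitely many planar complexes, and a combinatorial map of such a diagram into $X$ sending a chosen boundary edge to $e$ is determined by finitely many choices, since each of the $\le d$ faces of $X$ meeting a given edge propagates the map across edge-adjacent faces of the connected diagram. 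Consequently only finitely many loops through $e$ can be filled with area $\le M$; as the $C_i$ are infinitely many distinct loops through $e$ of length $\le N$, their filling areas cannot be bounded, so $\delta_X(N)=\infty$.

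The main obstacle is the counting argument in the converse direction: making precise that ``finitely many $2$-cells per edge'' plus ``bounded face size'' forces finitely many bounded-area disc diagrams through a fixed edge, and checking that distinct embedded circuits genuinely give distinct boundary loops. This is exactly where the hypothesis that each edge meets only finitely many $2$-cells is used, and it is the analogue, in the non-locally-finite but cocompact setting, of the elementary fact that a locally finite complex admits only finitely many diagrams of bounded area; extra care is needed precisely because $X$ is assumed only discrete and cocompact, not locally finite.
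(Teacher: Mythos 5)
Your forward direction (fineness implies $\delta_X$ is finite-valued) is correct: fineness together with cocompactness gives finitely many $G$-orbits of embedded circuits of length at most $n$, the filling area is constant on $G$-orbits, and decomposing an arbitrary loop of length at most $n$ into at most $n$ embedded circuits plus backtracks yields $\delta_X(n)\le nC_n$, where $C_n$ is the maximal filling area over the finitely many orbit representatives. This is essentially the argument the paper imports from~\cite{SamLouisEdu}. Your converse, by contrast, is a direct diagram count, which is a genuinely different route from the paper's (the paper quotes the homological statement of~\cite{MP15} together with Gersten's inequality $\FV_X(k)\preceq\delta_X(k)$); unfortunately, as written it has a real gap.

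The gap is your crux claim: ``for any fixed $M$, there are only finitely many disc diagrams of area at most $M$ whose boundary passes through $e$.'' This is false precisely in the regime the proposition is about, namely when $X^{(1)}$ is not locally finite. If a vertex $v$ of $e=\{u,v\}$ has infinite degree (which happens whenever some vertex stabilizer is non-compact), then for each of the infinitely many edges $\{v,w_i\}$ the length-$4$ loop $u,v,w_i,v,u$ passes through $e$ and is filled by an arc diagram of area $0$; so infinitely many bounded-length loops through $e$ have filling area $0$, and these arc diagrams are infinitely many distinct diagrams of area $0$ through $e$. The same phenomenon defeats the propagation step: a disc diagram can have cut vertices, cut edges and hanging trees, in which case it is connected but not \emph{face}-connected; edges lying on no $2$-cell are never reached by propagation across edge-adjacent faces, and in a non-locally-finite complex such an arc can be mapped into $X$ in infinitely many ways, so fixing the image of one boundary edge does not determine the map up to finitely many choices.

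The missing idea is to use that your loops $C_i$ are \emph{embedded}. If $\gamma$ is an embedded circuit and $D\to X$ is a filling diagram, then the boundary cycle $S^1\to D$ is injective (its composition with $D\to X$ is injective), so by the criterion recalled in Remark~\ref{rem:DiscDiag} the diagram $D$ is homeomorphic to a disc. For such $D$ every edge lies on a $2$-cell, so the area bound $M$ and the face-size bound $L$ bound the total number of cells of $D$, leaving only finitely many abstract cell structures; moreover the face-adjacency graph of a disc is connected, so your propagation (at most $d$ choices of a $2$-cell of $X$ over a given edge, and at most $2L$ alignments of a face boundary with an attaching map) does determine $D\to X$, hence its boundary circuit, up to finitely many choices once one boundary edge is sent to $e$. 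With this restriction the count is valid, finitely many embedded circuits through $e$ can be filled with area at most $M$, and the contradiction with the infinitely many $C_i$ gives $\delta_X(N)=\infty$ as you intended. So the strategy is salvageable, but the statement you isolate as the crux is false as stated, and the disc/face-connectivity observation is the substantive step that is missing.
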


The if direction is proved in~\cite[Proposition 2.1]{MP15} for homological Dehn functions $FV_{X}(k)$. In~\cite[Section 3]{Ge96}, Gersten observed that   $FV_{X}(k) \preceq  \delta_X^1(k)$ and hence the if direction of the proposition above follows. The proof of the only if direction is the same argument as~\cite[Proof of Lemma 2.3]{SamLouisEdu}.  

\begin{proof}[Proof of Proposition~\ref{lem:AmalgamationHyperbolic}]
 Let $X$ be the  presentation complex of $G$ corresponding to the generalized presentation $\langle (A\ast_CB,\phi)\mid R \rangle$, see Example~\ref{exp:Amalgamation}. The proof of~\cite[Theorem 7.1]{ACCMP} shows that 
 that $X$ is a  $C'(2\lambda)$ complex under the assumption that $A$ and $B$ are profinite groups; the same argument works in our case without changes. 

For the second statement, assume that  $\lambda\leq 1/12$. By the first statement of the proposition, $X$ is a $C'(1/6)$ complex. By a classical result~\cite[Theorem 11.2]{LySc01},  the combinatorial Dehn function   $\delta_X(k)$ is linear and hence the 1-skeleton $X^{(1)}$ is hyperbolic by \cite[Theorem 2.3.D]{Gromov}. Further $X^{(1)}$ is fine by Proposition~\ref{thm:Deltafine}.  Therefore, $G$ is relatively hyperbolic with respect to $\{A,B\}$.
\end{proof}

\section{Coherence of Small cancellation products}
\label{sec:main}

The main result of this section is Theorem~\ref{thm:mainCoherence2} which is slightly more general than Theorem~\ref{thmX:main} in the introduction of the article.

\begin{definition}
Let $G$ be a topological group, $N$ a normal subgroup, and $\phi\colon G \to G/N$ the quotient homomorphism. A class  $\mathcal J$ of subgroups of $G$ is $N$-stable if $\{\phi(Q)\mid Q\in \mathcal{J}\}$ is closed under finite intersections. 
\end{definition}

\begin{example}
Let $G$ be a topological group and let $N$ be a compact normal subgroup. Open subgroups and closed subgroups are examples of $N$-stable classes of subgroups. 
\end{example}

Theorem~\ref{thmX:main} in the introduction is obtained by taking $\mathcal J$ as the class of closed subgroups in the  result below.

\begin{theorem}\label{thm:mainCoherence2}
    Let $A \ast_C B$ be a topological group that splits as an amalgamated free product of two  open subgroups $A$ and $B$ with compact intersection $C$. Let $r\in A\ast_CB$. There is an integer $M=M(A\ast_CB, r)>0$ with the following property.

  Let $m>1$ such that  $r^m$ satisfies the $C'(\lambda)$ small cancellation condition for $\lambda$ such that $12 \lambda M<1$. Let $G$ be the  quotient topological group $(A\ast_BC)/\nclose{r^m}$, $\phi\colon A\ast_CB\to G$ the quotient map, and  $\mathcal J$  a collection of closed subgroups of $G$ such that: \begin{enumerate} 
    \item $\mathcal J$ is closed under conjugation,
 \item $\mathcal J$ contains the (images in $G$ of the) subgroups $A$ and $B$.
 \item $\mathcal{J}$ is $N$-stable where $N$ is the compact normal subgroup $\bigcap_{g\in G}g\phi(C)g^{-1}$.  
 \end{enumerate} 
If $A$ and $B$ are $\mathcal J$-coherent, then $G$ is $\mathcal J$-coherent. 
\end{theorem}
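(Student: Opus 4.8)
The plan is to verify $\mathcal J$-coherence directly from the definition, so fix a compactly generated subgroup $Q\le G$ with $Q\in\mathcal J$ and aim to show that $Q$ is compactly presented. First I would record that the hypothesis $12\lambda M<1$ forces $\lambda<1/12$ (since $M$ is a positive integer, $M\ge 1$); hence by Proposition~\ref{lem:AmalgamationHyperbolic} the group $G$ is hyperbolic relative to $\{A,B\}$, and the compact generalized presentation $\langle (A\ast_CB,\phi)\mid r^m\rangle$ has presentation complex $X$ (Example~\ref{exp:Amalgamation}) which is a $C'(1/6)$ small cancellation complex with fine, hyperbolic $1$-skeleton $\Gamma=X^{(1)}$. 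The group $G$ acts on $X$ discretely and cocompactly, with compact stabilizers of cells of positive dimension and with vertex stabilizers that are compact or conjugates of $A$ or $B$.

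The technical heart of the argument is the McCammond--Wise perimeter method, adapted to this topological framework in the spirit of~\cite{MccWise05} and~\cite{MARTINEZPEDROZA20112396}. The group $Q$ acts on $X$, but not cocompactly in general, so the plan is to build a connected, $Q$-invariant, $Q$-cocompact, \emph{simply connected} subcomplex $Y\subseteq X$ that is itself a $C'(1/6)$ small cancellation complex. The integer $M=M(A\ast_CB,r)$ enters precisely here: it is the perimeter constant controlling how the pieces of (the symmetrization of) $r^m$ can overlap across the amalgam structure, and the inequality $12\lambda M<1$ is exactly what guarantees that a minimal-perimeter such $Y$ inherits the small cancellation condition. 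Granting this, $Y^{(1)}$ is fine, being a subgraph of the fine graph $\Gamma$, and is hyperbolic because the $C'(1/6)$ condition gives it a linear combinatorial Dehn function (\cite[Theorem 11.2]{LySc01}, \cite[Theorem 2.3.D]{Gromov}, Proposition~\ref{thm:Deltafine}). By Theorem~\ref{thm:Cayley-Abels}, $Y^{(1)}$ is then a fine hyperbolic Cayley-Abels graph of $Q$ relative to the finite collection $\mathcal P$ of conjugacy-class representatives of the non-compact vertex $Q$-stabilizers of $Y$, so $Q$ is hyperbolic relative to $\mathcal P$. Each $P\in\mathcal P$ is a vertex $Q$-stabilizer, hence of the form $Q\cap A^{g}$ or $Q\cap B^{g}$ for some $g\in G$. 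I expect this construction to be \emph{the main obstacle}, since it requires re-running the perimeter bookkeeping with compact (rather than finite) edge stabilizers and with the amalgamation over $C$ built in.

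With $Q$ hyperbolic relative to $\mathcal P$ and $Q$ compactly generated, the second part of Corollary~\ref{cor:N} shows each $P\in\mathcal P$ is compactly generated, and it remains to upgrade each $P$ to compactly presented. This is where the $N$-stability hypothesis replaces the intersection-closure used in Theorem~\ref{thmx:conclusion1}. Write $\pi\colon G\to\bar G:=G/N$ for the quotient by the compact normal subgroup $N=\bigcap_{g\in G}g\phi(C)g^{-1}$, and set $\bar{\mathcal J}=\{\pi(J)\mid J\in\mathcal J\}$. Since $N$ is normal and $N\le\phi(C)\le A$, one has $N\le A^{g}$ for every $g$, and a short computation then gives $\pi(Q\cap A^{g})=\pi(Q)\cap\pi(A)^{\pi(g)}$; as $Q,A^{g}\in\mathcal J$ and $\mathcal J$ is $N$-stable, this lies in $\bar{\mathcal J}$, which is moreover conjugation-closed and contains $\bar A,\bar B$. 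The key transfer lemma is that $\mathcal J$-coherence of $A$ implies $\bar{\mathcal J}$-coherence of $\bar A$: given a compactly generated $\bar S\in\bar{\mathcal J}$ with $\bar S\le\bar A$, any $J\in\mathcal J$ with $\pi(J)=\bar S$ satisfies $J\le A$ (because $\pi^{-1}(\bar A)=AN=A$), and $J$ is compactly generated (it is a compact-by-compactly-generated extension of $\bar S$ via the compact kernel $J\cap N$); hence $J$ is compactly presented by $\mathcal J$-coherence of $A$, and so $\bar S=J/(J\cap N)$ is compactly presented by Proposition~\ref{prop:ConShortExact}. Applying this (and its conjugate version) to $\bar P=\pi(P)\in\bar{\mathcal J}$ inside $\bar A^{\pi(g)}$ or $\bar B^{\pi(g)}$ shows $\bar P$ is compactly presented, whence $P$ is compactly presented since $P\cap N$ is compact normal in $P$ with $P/(P\cap N)\cong\bar P$, using Proposition~\ref{prop:ConShortExact} once more.

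Finally, having shown that $Q$ is hyperbolic relative to $\mathcal P$ and that every member of $\mathcal P$ is compactly presented, the first part of Corollary~\ref{cor:N} yields that $Q$ is compactly presented. As $Q$ was an arbitrary compactly generated subgroup in $\mathcal J$, this proves that $G$ is $\mathcal J$-coherent. To summarize the difficulty: the genuinely new input is the perimeter-method construction of the small cancellation complex $Y$ and the resulting relative hyperbolicity of $Q$; everything after that is a bookkeeping combination of Corollary~\ref{cor:N} with the invariance of compact presentability under compact normal quotients, with $N$-stability supplying exactly the intersection-closure needed to locate the peripheral subgroups of $Q$ inside $\bar{\mathcal J}$.
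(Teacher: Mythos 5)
Your overall skeleton---extract a fine hyperbolic $Q$-cocompact object from the small cancellation complex $X$, take peripheral subgroups to be vertex stabilizers, then combine Corollary~\ref{cor:N} with a compact-kernel transfer argument powered by $N$-stability---has the right shape, and your endgame (the identity $\pi(Q\cap A^{g})=\pi(Q)\cap\pi(A)^{\pi(g)}$, the transfer lemma, and the two applications of Proposition~\ref{prop:ConShortExact}) is essentially sound bookkeeping, albeit organized differently from the paper. But there is a genuine gap exactly at the step you flag as the ``main obstacle,'' and for two reasons it cannot be waved through. First, the theorem asserts the existence of a specific integer $M=M(A\ast_CB,r)$, and you never produce one: calling $M$ ``the perimeter constant controlling how the pieces overlap'' is a description, not a construction. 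In the paper $M$ is not a perimeter-type quantity at all; it is a \emph{thinness} bound, $M=k|r|$, where $k$ is the maximal index $[\widetilde{G}_t:\widetilde{G}_{\gamma}]$ of the pointwise stabilizer of the path $\gamma$ from $y$ to $r^2y$ in the Bass--Serre tree inside the stabilizers of its edges, see~\eqref{eq:ConstantM}; the genuinely new technical work is Proposition~\ref{prop:M-thin}, which shows the Cayley--Abels complex $X$ is $M$-thin by counting $G_e$-orbits of 2-cells containing a fixed edge $e$ and bounding each orbit via $[G_e:G_K]\le k$. Your proposal contains no substitute for this.

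Second, your plan for the core step is both stronger than necessary and stronger than what the literature provides, which is why you (incorrectly) anticipate having to ``re-run the perimeter bookkeeping with compact edge stabilizers.'' The paper never re-runs anything: once $M$-thinness is established, the discrete perimeter-method theorem \cite[Theorem 3.3]{MARTINEZPEDROZA20112396} applies verbatim, as recorded in part (1) of Proposition~\ref{thm:EMPQuasi03}, because it is a statement about an abstract group acting cellularly and \emph{faithfully} on a $C'(\lambda)$, simply connected, uniformly circumscribed, $M$-thin complex with $6\lambda M<1$; its hypothesis ``finitely generated relative to finitely many 0-cell stabilizers'' is purely algebraic, so the topology on the acting group is irrelevant. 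Moreover its output is only a connected, quasi-isometrically embedded, cocompact \emph{subgraph} $Y$ of $X$: hyperbolicity of $Y$ then follows from the quasi-isometric embedding into the hyperbolic graph $X^{(1)}$, and fineness from being a subgraph of a fine graph, so the simply connected $C'(1/6)$ subcomplex you posit is not needed and is not what the perimeter method produces. Note finally that faithfulness is a standing hypothesis of Proposition~\ref{thm:EMPQuasi03}; this is precisely why the paper passes to the quotient $G/N$ acting on $X$, and why $N$-stability enters there---it makes $\mathcal I=\{\pi(J)\mid J\in\mathcal J\}$ closed under finite intersections, the hypothesis of part (3) of Proposition~\ref{thm:EMPQuasi03}. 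Your subgroup-by-subgroup use of $N$-stability to handle the peripheral subgroups is a workable alternative for that portion, but it does not repair the missing core, and your untreated faithfulness issue would resurface the moment you tried to carry out the perimeter argument you defer.
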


 The proof of the theorem reduces to prove that $G/N$ is $\mathcal I$-coherent where $\mathcal I=\{\phi(Q)\mid Q\in \mathcal{J}\}$ by invoking the following lemma.  

 \begin{lemma}\label{rem:CoCompactcoherent2}
    Let $G$ be a topological group,  $N \trianglelefteq G$  a compact normal subgroup, $\phi\colon G \to G/N$ the quotient map,    $\mathcal J$  a class of closed subgroups of $G$, and  $\mathcal I=\{\phi(Q)\colon Q\in \mathcal{J}\}$. Then   $G$ is $\mathcal J$-coherent if and only if $G/N$ is $\mathcal I$-coherent.
\end{lemma}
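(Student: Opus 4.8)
The plan is to reduce this relative statement to the following \emph{absolute} fact, which I will call $(\ast)$: if $G'$ is a locally compact group and $N'\trianglelefteq G'$ is a compact normal subgroup, then $G'$ is compactly generated if and only if $G'/N'$ is, and likewise $G'$ is compactly presented if and only if $G'/N'$ is. The presentation half of $(\ast)$ is precisely the mechanism behind Remark~\ref{rem:CoCompactcoherent}: since a compact group is compactly presented and compactly generated as a normal subgroup, part~(3) of Proposition~\ref{prop:ConShortExact} gives ``$G'/N'$ compactly presented $\Rightarrow$ $G'$ compactly presented,'' while part~(1) gives the converse. The generation half is elementary: the image of a compact generating set generates $G'/N'$, and conversely a compact generating set of $G'/N'$ lifts and, together with the compact set $N'$, generates $G'$.

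Next I would set up a subgroupwise correspondence. Let $Q$ be a closed subgroup of $G$. Because $N$ is compact, $N\cap Q$ is a compact normal subgroup of $Q$, the product $QN$ is closed, and $\phi|_Q$ induces a topological isomorphism $Q/(Q\cap N)\xrightarrow{\ \cong\ }\phi(Q)$; in particular $\phi(Q)$ is a closed, hence locally compact, subgroup of $G/N$, and we obtain a short exact sequence $1\to Q\cap N\to Q\to \phi(Q)\to 1$ of locally compact groups. Applying $(\ast)$ with $G'=Q$ and $N'=Q\cap N$ yields the two equivalences I will use: $Q$ is compactly generated if and only if $\phi(Q)$ is, and $Q$ is compactly presented if and only if $\phi(Q)$ is.

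With these in hand the two implications are bookkeeping. For the forward direction, assume $G$ is $\mathcal J$-coherent and let $P\in\mathcal I$ be compactly generated; by definition of $\mathcal I$ we have $P=\phi(Q)$ for some $Q\in\mathcal J$, and the generation equivalence shows $Q$ is compactly generated. Then $\mathcal J$-coherence makes $Q$ compactly presented, and the presentation equivalence transfers this to $P=\phi(Q)$. For the reverse direction, assume $G/N$ is $\mathcal I$-coherent and let $Q\in\mathcal J$ be compactly generated; then $\phi(Q)\in\mathcal I$ is compactly generated, hence compactly presented by $\mathcal I$-coherence, and the presentation equivalence returns that $Q$ is compactly presented. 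This shows $G$ is $\mathcal J$-coherent. Note that no closure properties of $\mathcal J$ are needed here, since $\mathcal I$ is defined exactly as the $\phi$-image of $\mathcal J$.

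The only genuinely non-formal point, and the step I would be most careful about, is the topological identification $Q/(Q\cap N)\cong\phi(Q)$ together with the verification that the resulting sequence is a short exact sequence of \emph{locally compact} groups, so that Proposition~\ref{prop:ConShortExact} applies; this relies on $N$ being compact, which makes $\phi$ a proper, open and closed map that restricts well to closed subgroups. I would also flag that the generation half of $(\ast)$ is not literally contained in Proposition~\ref{prop:ConShortExact}, which addresses only presentations, but it is standard for extensions by compact normal subgroups. Everything else is a direct diagram chase through the definition of $\mathcal J$-coherence.
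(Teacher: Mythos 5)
Your proof is correct and follows essentially the same route as the paper's: both arguments hinge on the short exact sequence $1\to Q\cap N\to Q\to\phi(Q)\to 1$ for each closed $Q\in\mathcal J$, the compactness of $Q\cap N$, and Proposition~\ref{prop:ConShortExact} to transfer compact presentability (plus the elementary transfer of compact generation), followed by the same bookkeeping. If anything, you are slightly more careful than the paper, which cites Proposition~\ref{prop:ConShortExact} for the compact-generation equivalence as well, even though that proposition only addresses presentations, and which leaves implicit the topological identification $Q/(Q\cap N)\cong\phi(Q)$ that you rightly single out as the one non-formal step.
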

\begin{proof}
For $Q\in \mathcal{J}$ consider the short exact sequence of continuous homomorphisms $1\to Q\cap N \to Q \to \phi(Q) \to 1$. Since $N$ is compact and $Q$ is closed, we have that $Q\cap N$ is compact. By Proposition~\ref{prop:ConShortExact}, we have that $Q$ is compactly generated (resp. compactly presented) if and only if $\phi(Q)$ is compactly generated (resp. compactly presented).
Therefore $G$ is $\mathcal{J}$-coherent if and only if $G/N$ is $\mathcal{I}$-coherent.
\end{proof}

\subsection{Consequences of McCammond and Wise's Perimeter method}
A  complex $X$  is  \emph{$M$-thin} if  for every 1-cell $e$ of $X$, the set
\[  \{D \mid D \text{ is a 2-cell in } X \text{ and   $e$ belongs to $\partial D$}\}    \]
has cardinality at most $M$. A complex $X$ is \emph{uniformly circumscribed} if there is $L$ such that for any 2-cell  of $X$, its   boundary cycle has length at most $L$.

\begin{proposition} \label{thm:EMPQuasi03}
Let $X$ be a $C'(\lambda)$ small cancellation complex that is simply connected, uniformly circumscribed, $M$-thin, and $6\lambda M < 1$.  Let $Q$ be a topological group that acts faithfully and cellularly on $X$.
\begin{enumerate}
    \item \cite[Theorem 3.3]{MARTINEZPEDROZA20112396} If $Q$ is finitely generated relative to a finite collection of 0-cell stabilizers. Then there is a connected and quasi-isometrically embedded $Q$-cocompact subcomplex of $X$.

    \item If $Q$ is  compactly generated and acts discretely on $X$ with compact 1-cell $Q$-stabilizers, then $Q$ is hyperbolic relative  to a finite collection $\mathcal{K}$ of 0-cell $Q$-stabilizers. In particular, $Q$ is compactly presented with respect to $\mathcal{K}$.
 
 \item \label{part3}  Let $\mathcal J$ be a class of closed subgroups of $Q$ such that
 \begin{enumerate}
 \item $\mathcal J$ is closed under  finite intersections,
 \item $\mathcal J$ contains all  $Q$-stabilizers of $0$-cells of $X$.
 \end{enumerate}
  If $Q$ acts
  discretely on $X$ such that 1-cell $Q$-stabilizers are compact, and 0-cell $Q$-stabilizers are $\mathcal J$-coherent, then $Q$ is $\mathcal{J}$-coherent.
\end{enumerate}
\end{proposition}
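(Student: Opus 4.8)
The plan is to verify the defining condition of $\mathcal{J}$-coherence directly: I take an arbitrary compactly generated subgroup $P\le Q$ with $P\in\mathcal J$ and show that $P$ is compactly presented. The strategy is to transfer the problem to the $0$-cell stabilizers, first producing a relatively hyperbolic structure on $P$ via the second part of the present proposition, and then feeding the resulting peripheral subgroups into the $\mathcal J$-coherence of the $0$-cell stabilizers.

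First I would restrict the $Q$-action on $X$ to $P$. Since $P$ is a subgroup of $Q$, this action is again faithful and cellular; it is discrete because pointwise cell stabilizers in $P$ have the form $P\cap S$ with $S$ open in $Q$, hence open in $P$; and the $1$-cell $P$-stabilizers have the form $P\cap G_e$ with $G_e$ compact, hence compact (as $P$ is closed). The complex $X$ is unchanged, so it remains a simply connected, uniformly circumscribed, $M$-thin $C'(\lambda)$ complex with $6\lambda M<1$. Thus all hypotheses of the second part of the proposition are met by $P$, and since $P$ is compactly generated I conclude that $P$ is hyperbolic relative to a finite collection $\mathcal K$ of $0$-cell $P$-stabilizers; in particular $(P,\mathcal K)$ is a proper pair.

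Next I would identify the peripheral subgroups. Each $K\in\mathcal K$ is the $P$-stabilizer of some $0$-cell $v$, that is $K=P\cap G_v$ where $G_v$ is the corresponding $0$-cell $Q$-stabilizer. The key point is that $K$ lands in the right place for both hypotheses on the $0$-cell stabilizers: since $P\in\mathcal J$, $G_v\in\mathcal J$, and $\mathcal J$ is closed under finite intersections, we have $K\in\mathcal J$; and trivially $K\le G_v$. Because $P$ is compactly generated, Corollary~\ref{cor:N}(2) shows each $K\in\mathcal K$ is compactly generated. Thus $K$ is a compactly generated subgroup of $G_v$ belonging to $\mathcal J$, so the $\mathcal J$-coherence of $G_v$ (which by definition means $\mathcal K'$-coherence for $\mathcal K'=\{R\in\mathcal J\mid R\le G_v\}$) forces $K$ to be compactly presented. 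Since every peripheral subgroup $K\in\mathcal K$ is then compactly presented, Corollary~\ref{cor:N}(1) yields that $P$ itself is compactly presented. As $P$ was an arbitrary compactly generated subgroup of $Q$ in $\mathcal J$, this proves $Q$ is $\mathcal J$-coherent.

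The genuine content of the argument is carried by the second part of the proposition (extracting relative hyperbolicity from the small cancellation geometry) and by Corollary~\ref{cor:N}; the step I expect to require the most care is the bookkeeping of the previous paragraph. One must check both that each peripheral subgroup $K$ lies in $\mathcal J$, so that $\mathcal J$-coherence can be invoked for it, and that it is a subgroup of the $0$-cell stabilizer $G_v$, so that it is precisely the coherence of $G_v$ that applies. This is exactly where the two structural hypotheses on $\mathcal J$, namely closure under finite intersections and containment of all $0$-cell stabilizers, are used, and everything else is a direct application of the two cited results.
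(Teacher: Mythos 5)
Your proposal is correct and follows essentially the same route as the paper's proof: restrict the action to the compactly generated subgroup $P\in\mathcal J$, invoke part (2) to get the relatively hyperbolic (hence relatively compactly presented) structure on finitely many $0$-cell $P$-stabilizers, place those peripherals in $\mathcal J$ via closure under intersection, and close the loop with the two halves of Theorem~\ref{prop:FinRelPresHisCPimpliesGisCP}. The only cosmetic difference is that you cite Corollary~\ref{cor:N} (the packaged combination of Theorems~\ref{prop:RelHypimpliesFinitePres} and~\ref{prop:FinRelPresHisCPimpliesGisCP}) where the paper applies Theorem~\ref{prop:FinRelPresHisCPimpliesGisCP} directly to the relative compact presentation already furnished by part (2).
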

\begin{proof}
To prove the second statement, let $U$ be the $Q$-stabilizer of a 1-cell of $X$. Observe that $U$ is a compact open subgroup of $Q$. Since $Q$ is compactly generated,  $Q$ is finitely generated with respect to $U$. Hence $Q$ is finitely generated with respect to the $Q$-stabilizer of a 0-cell of $X$.   By the first statement of the proposition,  there exists a connected $Q$-cocompact 1-dimensional subcomplex $Y$ quasi-isometrically embedded in $X$. The Dehn function of the $C'(1/6)$ simply connected small cancellation complex $X$ is linear~\cite[Ch.5 Thm 4.4]{LySc01} and hence the 1-skeleton  $X^{(1)}$ is a hyperbolic graph by \cite[Theorem 2.3.D]{Gromov}. Further $X^{(1)}$ is fine graph  by Theorem~\ref{thm:Deltafine}. Therefore, the $Q$-complex $Y$ is also hyperbolic and fine.  Since $Q$ acts cocompactly,   discretely and with compact edge stabilizers on $Y$, by Proposition~\ref{prop:BassTheory}, there exists a finite collection $\mathcal{K}$ of representatives of conjugacy classes of 0-cells $Q$-stabilizers such that there exists a compact generating graph of $Q$ with respect to $\mathcal K$. Moreover  the corresponding Cayley-Abels graph is $Q$-isomorphic to $Y$. Observe that $(Q,\mathcal K)$ is a proper pair.
Thus $Q$ is relatively hyperbolic with respect to $\mathcal{K}$ and by Theorem~\ref{prop:RelHypimpliesFinitePres}, $Q$ is compactly presented with respect to $\mathcal{K}$.

To prove the third statement, let $H \in \mathcal J$ be a compactly generated   subgroup of $Q$. Since $H$ is closed, $H$-stabilizers of 1-cells of $X$ are compact. Therefore, the second statement of the proposition implies that  
$H$ is compactly presented with respect to a finite collection $\mathcal{K}$ of $H$-stabilizers of $0$-cells of $X$. 
Since $H\in\mathcal J$ and $Q$-stabilizers of 1-cells of $X$ are in $\mathcal J$,  every subgroup  in $\mathcal{K}$ is in $\mathcal J$. 
Since $H$ is compactly generated, Theorem~\ref{prop:FiniteRelPresGisCGimpliesHisCG} implies that every subgroup in $\mathcal{K}$ is compactly generated. Since $Q$-stabilizers of $0$-cells of $X$ are $\mathcal J$-coherent, it follows that every subgroup in $\mathcal{K}$ is compactly presented. By Theorem~\ref{prop:FinRelPresHisCPimpliesGisCP}, $H$ is compactly presented. 
\end{proof}

\subsection{Proof of Theorem~\ref{thm:mainCoherence2}}
 Let $\widetilde{G}$ and $G$  denote the groups $A\ast_CB$ and $(A\ast_CB)/\langle \langle r^m\rangle \rangle $ respectively, and  let $\mathcal{T}$ be the  Bass-Serre tree of $\widetilde{G}$. Without loss of generality, assume  that the normal form $x_0x_1x_2 \cdots x_\ell$  of $r$ as an element of $A\ast_CB$ is a cyclically reduced word; see Section~\ref{Sec:Osin1} for a definition of normal form.  Let  $|r|$ denote the length of the normal form, that is, $|r|=\ell$.

Fix a vertex $y \in \mathcal{T}$, let $\gamma$ be the unique fixed path from $y$ to $r^2y$, and let \begin{equation}\label{eq:ConstantM} M = k|r|, \quad\text{where }  k = \max \left\{[\widetilde{G}_t\colon \widetilde{G}_{\gamma}] \mid t \text{ is a 1-cell in the image of } \gamma \right\}.\end{equation}
Note that $k$ is a finite integer.  Indeed, the pointwise $\widetilde G$-stabilizer  $\widetilde{G}_{\gamma}$ of the path $\gamma$ is the intersection of the $\widetilde G$-stabilizers of edges of $\gamma$. Since $\gamma$ is a finite path, $\widetilde{G}_{\gamma}$ is an open subgroup of $\widetilde{G}_t$. Since  $\widetilde{G}_t$ is a compact group, the index $[\widetilde{G}_t\colon \widetilde{G}_{\gamma}]$ is finite. 

Suppose that $m>1$ and $r^m$ satisfies the $C'(\lambda)$ small cancellation condition and $12\lambda M<1$. Consider a compact presentation 
\begin{equation}\label{eq:presentation} \langle \  (  A\ast_CB, \phi) \mid  r^m \   \rangle 
\end{equation} 
of $G$ relative to $\{A,B\}$, where $\phi\colon A\ast_CB \to G$ is a continuous epimorphism induced by the inclusions $A\hookrightarrow G$ and $B\hookrightarrow G$.  
 Let $X$ be the Cayley-Abels complex associated to~\eqref{eq:presentation}, see  Example~\ref{exp:Amalgamation}.
\begin{proposition}\label{prop:M-thin}
The $G$-complex $X$ is $M$-thin.
\end{proposition}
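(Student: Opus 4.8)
The plan is to bound, using $G$-equivariance of the $M$-thin condition, the number of $2$-cells containing a single representative $1$-cell, and to carry out the count in the Bass--Serre tree $\mathcal{T}$ of $A\ast_C B$ through the covering $\rho\colon\mathcal{T}\to\Gamma=X^{(1)}$ supplied by Proposition~\ref{rem:RelativeCayley}. First I would record the description of $X$ from Example~\ref{exp:Amalgamation}: there is a single $G$-orbit of $2$-cells with representative $D$, and $\partial D=\rho(\sigma)$ where $\sigma$ is the geodesic realizing $r^m$ based at $y$, i.e. $\sigma=[y,r^m y]$. Since the normal form of $r$ is cyclically reduced, $r$ acts on $\mathcal{T}$ as a hyperbolic isometry of translation length $|r|$ with invariant axis $L$; taking $y\in L$, the segment $\sigma$ is a subsegment of $L$ of combinatorial length $m|r|$ spanning $m$ consecutive periods, whose edges $\epsilon_0,\dots,\epsilon_{m|r|-1}$ satisfy $\epsilon_{j+|r|}=r\,\epsilon_j$. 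The setwise stabilizer $G^D$ surjects onto the cyclic group of rotations of $\partial D$ by one period, with $\phi(r)$ mapping to a generator.

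Next I would fix a $1$-cell $e$ of $\Gamma$ and a lift $\tilde e\in\mathcal{T}$. A $2$-cell containing $e$ has the form $gD$ with $g^{-1}e\in\partial D$, and two such elements give the same $2$-cell precisely when they differ by $G^D$. Writing $H_0=\phi^{-1}(G^D)$, and using that $\ker(\phi)$ acts freely so that $\phi$ restricts to an isomorphism on each edge stabilizer, the number of $2$-cells containing $e$ equals the number of cosets $gH_0$ meeting the set of elements carrying some edge of $\sigma$ onto $\tilde e$. For each fixed edge $\epsilon_j$ this set is a single coset $\tilde g_j\,\widetilde{G}_{\epsilon_j}$ of the compact open edge stabilizer, contributing $[\widetilde{G}_{\epsilon_j}:\widetilde{G}_{\epsilon_j}\cap H_0]$ cosets modulo $H_0$. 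Because $r\in H_0$ and $\epsilon_{j+|r|}=r\,\epsilon_j$, the edges $\epsilon_j$ and $\epsilon_{j+|r|}$ determine identical coset-sets; this periodicity collapse reduces the count from $m|r|$ edges to one period, giving
\[
\#\{2\text{-cells containing } e\}\ \le\ \sum_{j=0}^{|r|-1}\bigl[\widetilde{G}_{\epsilon_j}:\widetilde{G}_{\epsilon_j}\cap H_0\bigr].
\]

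It then remains to bound each of the $|r|$ summands by $k$, which yields the desired bound $k|r|=M$. Here I would exhibit, inside $\widetilde{G}_{\epsilon_j}\cap H_0$, the pointwise stabilizer $\widetilde{G}_{\gamma_j}$ of a two-period subsegment $\gamma_j\subset L$ through $\epsilon_j$: as $\gamma_j$ is a $\widetilde{G}$-translate of $\gamma$, the index $[\widetilde{G}_{\epsilon_j}:\widetilde{G}_{\gamma_j}]$ equals $[\widetilde{G}_t:\widetilde{G}_\gamma]$ for the corresponding edge $t$, hence is at most $k$ by the definition of $M$ in~\eqref{eq:ConstantM}. The hard part will be exactly this containment $\widetilde{G}_{\gamma_j}\le H_0$, that is, showing an element fixing two consecutive periods of the relator axis maps into the stabilizer of $\partial D$; this is the genuinely tree-geometric step and is where the choice of \emph{two} periods rather than one becomes essential, since a single period fails to pin down the relevant overlap of translates of $L$. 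I expect to establish it by controlling how $\widetilde{G}$-translates of the axis $L$ that share the subsegment $\gamma_j$ can diverge, in the spirit of the perimeter estimates of~\cite[Theorem 3.3]{MARTINEZPEDROZA20112396} and~\cite[Theorem 7.1]{ACCMP}, with finiteness of edge stabilizers replaced throughout by compactness and openness together with the compactness argument of Lemma~\ref{lem:compactness}.
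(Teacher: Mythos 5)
Your counting scaffold is sound, and it is in fact the same bookkeeping as the paper's proof: your bound $\sum_{j=0}^{|r|-1}\bigl[\widetilde{G}_{\epsilon_j}:\widetilde{G}_{\epsilon_j}\cap H_0\bigr]$, obtained from the coset description of $2$-cells through $e$ and the periodicity collapse via $r\in H_0$, corresponds exactly to the paper's two claims that the $G_e$-orbits of pairs $(e,K)$ inject into the $G^D$-orbits of edges of $\partial D$ (at most $|r|$ of them) and that each orbit has size at most $k$. The genuine gap is the step you yourself flag as hard, and the route you propose for it would not work. The containment $\widetilde{G}_{\gamma}\le H_0=\phi^{-1}(G^D)$ is not a tree-geometric fact: an element of $\widetilde{G}_{\gamma}$ fixes the two-period segment $\gamma$ pointwise but need not preserve the axis $L$ or the longer segment $\sigma$, and no control of how $\widetilde{G}$-translates of $L$ diverge in $\mathcal{T}$ can force it to, because in $\mathcal{T}$ the normal closure $\nclose{r^m}$ has not yet been quotiented out. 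The statement is genuinely about the quotient complex $X$, and it is precisely where the hypothesis that $r^m$ satisfies $C'(\lambda)$ with $12\lambda M<1$ must enter --- a hypothesis your outline never invokes at this step.

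The paper closes this step as follows. Since $\phi$ restricts to an isomorphism on edge stabilizers, $\phi(\widetilde{G}_{\gamma})=G_p$, where $p=\rho(\gamma)$ is the two-period subpath of $\partial D$. If $g\in G_p$, then $g.D$ and $D$ are $2$-cells of $X$ whose boundaries share the arc $p$, of length $2|r|=\frac{2}{m}|\partial D|$; because $X$ is a $C'(2\lambda)$ small cancellation complex (Proposition~\ref{lem:AmalgamationHyperbolic}) and no two distinct $2$-cells of $X$ have the same boundary (Proposition~\ref{prop:RelPresComplex}), such an overlap forces $g.D=D$ pointwise, so $G_p=G_D\le G^D$ and hence $\widetilde{G}_{\gamma}\le H_0$. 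This small cancellation argument --- not axis geometry --- is the real reason two periods, rather than one, are needed, and the same no-duplicate-boundary property is what justifies your unproved assertion that $\phi(r)\in G^D$ (the rotation by $\phi(r)$ produces a $2$-cell with the same boundary as $D$, hence equal to $D$). To complete your proof, replace the proposed perimeter/axis argument by: push $\widetilde{G}_{\gamma}$ down to $G_p$ via $\phi$, then apply the small cancellation condition on $X$ to conclude $G_p=G_D$; combined with $[\widetilde{G}_{\epsilon_j}:\widetilde{G}_{\gamma}]\le k$ from~\eqref{eq:ConstantM}, this yields the bound $k|r|=M$.
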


The proof of Proposition~\ref{prop:M-thin} is postponed to   Subsection~\ref{subsec:Mthin}. Let us  complete the proof of the theorem  using this proposition. 

Since $r^m$ satisfies the $C'(\lambda)$ small cancellation condition and $12\lambda M<1$,   Proposition~\ref{lem:AmalgamationHyperbolic} implies that  $X$ is a $C'(2\lambda)$ small cancellation complex and its 1-skeleton  is fine and hyperbolic. Since $G$ acts cocompactly, $X$ is uniformly circumscribed.

Consider the continuous homomorphism $G\to  Aut(X)$, observe that $N=\bigcap_{g\in G}g\phi(C)g^{-1}$ is its kernel, and let $Q$ be its image. Consider the short exact sequence $1\to N \to G \to Q \to 1$.  Then $Q$ acts faithfully, discretely, cocompactly and with compact 1-cell stabilizers on $X$. 

Since $A$ and $B$ are $\mathcal J$-coherent, Lemma~\ref{rem:CoCompactcoherent2} implies that   $A/N$ and $B/N$ are $\mathcal I$-coherent. On the other hand, $Q$-stabilizers of 0-cells  of $X$ are  conjugates in $Q$ of the subgroups $A/N$ and $B/N$. Therefore   $X$ has $\mathcal I$-coherent $Q$-stabilizers of 0-cells.
Since $\mathcal J$ is $N$-stable,    $\mathcal I$ is closed under finite intersections. Hence the  $Q$-action on $X$ and the collection of subgroups $\mathcal I$ satisfy the hypotheses of 
Proposition~\ref{thm:EMPQuasi03}\eqref{part3} and therefore $Q$ is $\mathcal I$-coherent. By Lemma~\ref{rem:CoCompactcoherent2}, $G$ is $\mathcal J$-coherent.

\subsection{Proof of Proposition~\ref{prop:M-thin}}\label{subsec:Mthin}

 Let $e$ be a 1-cell in $X$. The thinness of $e$ is given by the cardinality of the following $G_e$-set,
\[\mathcal{S}_e= \{(e,K) \mid K \text{ is a 2-cell in } X \text{ and } e \in \partial K\}.\] 
We prove below that the cardinality of $\mathcal{S}_e$ is bounded by $M$. 

Recall that $\mathcal{T}$ denotes the   Bass-Serre tree of $A\ast_CB$, $y$ is a fixed vertex of $\mathcal T$, and $\rho \colon \mathcal{T} \to X^{(1)} $ is the quotient map. Observe that for a reduced and cyclically reduced word $w$ of length $k>0$, the edge-length of the unique embedded path in the tree $\mathcal T$ from $y$ to $w.y$ is $k$.

By definition,  $X$ has a 2-cell $D$ whose boundary path is the image by $\rho$ of the path in $\mathcal T$ from $y$ to $r^my$. Let $G^D$ and $G_D$ denote the setwise and  pointwise stabilizer of $D$ respectively.
 Consider the finite set
\[\mathcal{U} = \{(t,D) \mid \text{ t is a 1-cell in }\partial D\}\]
and observe that $\mathcal{U}$ is a $G^D$-set. Let $U$ be the set of $G^D$-orbits of elements of  $\mathcal{U}$, and let  $S$ be the set of $G_e$-orbits in $\mathcal{S}_e$.

 \begin{claim*}
 The cardinality of $U$ is bounded by $|r|$.
 \end{claim*}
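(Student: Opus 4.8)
The plan is to exploit the description of $\partial D$ recorded in Example~\ref{exp:Amalgamation}: the closed boundary path of $D$ is covered by the $G^D$-translates of the edge-path in $X^{(1)}$ corresponding to $r$, where $G^D$ denotes the setwise stabilizer of $D$. First I would fix such a path $P=\rho(\tilde{P})$, where $\tilde{P}$ is the unique reduced path in $\mathcal{T}$ from $y$ to $r.y$. Since the normal form of $r$ has length $|r|$, the path $P$ consists of exactly $|r|$ one-cells, say $e_1,\ldots,e_{|r|}$, each of which lies in $\partial D$; in particular each pair $(e_i,D)$ is a genuine element of $\mathcal{U}$.

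Next I would show that every element of $\mathcal{U}$ lies in the $G^D$-orbit of one of the pairs $(e_i,D)$. Indeed, let $(t,D)\in\mathcal{U}$, so that $t$ is a 1-cell of $\partial D$. Because the $G^D$-translates of $P$ cover $\partial D$, there is $g\in G^D$ with $t\in g.P$, hence $t=g.e_i$ for some $i\in\{1,\ldots,|r|\}$. Since $g$ fixes $D$ setwise, we have $(t,D)=g.(e_i,D)$, so $(t,D)$ and $(e_i,D)$ represent the same element of $U$. Consequently the finitely many pairs $(e_1,D),\ldots,(e_{|r|},D)$ surject onto $U$, which yields the desired bound $|U|\leq|r|$.

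The point requiring the most care is the covering statement, namely that the \emph{setwise} stabilizer $G^D$ (rather than merely the pointwise stabilizer) acts on $\partial D$ so that the translates of the single $r$-subpath sweep out the entire boundary cycle. This is exactly the content of Example~\ref{exp:Amalgamation}: the image $\phi(r)$ lies in $G^D$ and acts on the boundary cycle of length $m|r|$ as the cyclic shift by $|r|$, so that the powers of $\phi(r)$ carry $\{e_1,\ldots,e_{|r|}\}$ onto all $m|r|$ edges of $\partial D$. I expect the only real work to be bookkeeping: confirming that $\partial D$ has exactly $m|r|$ one-cells, that $\phi(r)\in G^D$ shifts by precisely $|r|$, and that each $e_i$ is an edge of $\partial D$. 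Once these are in place, the orbit-counting argument above closes the claim.
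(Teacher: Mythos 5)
Your proposal is correct and follows essentially the same route as the paper: both arguments observe that $\phi(r)\in G^D$, take the length-$|r|$ subpath $\rho(q)$ of $\partial D$ corresponding to $r$ (your $P$), and use the fact that its $\langle\phi(r)\rangle$-translates cover $\partial D$ to conclude that the $|r|$ one-cells of that subpath supply representatives of every $G^D$-orbit in $\mathcal{U}$. The paper additionally spells out why $\phi(r)\in G^D$ (because $\phi(r)$ preserves $\partial D$ and no two distinct 2-cells of $X$ share a boundary), a point you delegate to Example~\ref{exp:Amalgamation}, where it is indeed recorded.
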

 \begin{proof}
 Since $\phi(r)$ setwise stabilizes $\partial D$ and no pair of distinct  2-cells of $X$ have the same boundary, $\phi(r)$ is an element of $G^D$.  Note that $\phi(r)$ has order $m$ in $G$. Let $q$ be the unique path in $\mathcal{T}$ from $y$ to $ry$. Then $\rho(q)$ is a subpath of $\partial D$ of length $|r|$. Since translates of the path $\rho(q)$ by $\langle \phi(r)\rangle $ cover $\partial D$, the set of elements $(t,D)$ where $t$ is a 1-cell in $\rho(q)$ contains representatives of all $G^D$-orbits of elements of $\mathcal U$. Hence 
 $U$ has at most $|r|$ elements.    
 \end{proof}

\begin{claim*}  There is an injective map from $S$ to $U$.
\end{claim*}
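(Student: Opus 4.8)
The plan is to write down the injective map explicitly, using the two structural facts about $X$ recorded in Example~\ref{exp:Amalgamation} and Proposition~\ref{prop:RelPresComplex}: the $G$-complex $X$ has a \emph{single} $G$-orbit of 2-cells, and no two distinct 2-cells of $X$ share the same boundary. Consequently every 2-cell $K$ of $X$ can be written as $K = g.D$ for some $g \in G$, and such a $g$ is unique up to right multiplication by the setwise stabilizer $G^D$ (if $g.D = g'.D$ then $g'^{-1}g \in G^D$). This uniqueness statement is exactly what makes the construction below well-defined.

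First I would define a map $\Phi \colon \mathcal{S}_e \to U$ as follows. Given $(e,K) \in \mathcal{S}_e$, choose $g \in G$ with $K = g.D$. Since $e \in \partial K = g.\partial D$, the 1-cell $g^{-1}.e$ lies in $\partial D$, so $(g^{-1}.e, D) \in \mathcal{U}$; I then set $\Phi(e,K)$ to be the $G^D$-orbit of $(g^{-1}.e, D)$. Independence of the choice of $g$ follows from the uniqueness observation above: if $K = g.D = g'.D$, then $g'^{-1}g \in G^D$, and applying $g'^{-1}g$ sends $(g^{-1}.e, D)$ to $(g'^{-1}.e, D)$, so both choices give the same element of $U$.

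Next I would verify that $\Phi$ is constant on $G_e$-orbits, and hence descends to a map $\bar\Phi \colon S \to U$. Indeed, for $h \in G_e$ and $K = g.D$ we have $h.K = (hg).D$, and since $h$ fixes $e$ the element $(hg)^{-1}.e = g^{-1}h^{-1}.e$ equals $g^{-1}.e$; thus $\Phi(h.(e,K)) = \Phi(e,K)$. Finally, for injectivity of $\bar\Phi$, suppose $\Phi(e,K) = \Phi(e,K')$ with $K = g.D$ and $K' = g'.D$. Then $(g^{-1}.e, D)$ and $(g'^{-1}.e, D)$ lie in one $G^D$-orbit, so there is $h \in G^D$ with $h.(g^{-1}.e) = g'^{-1}.e$. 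Setting $\eta = g' h g^{-1}$, one checks $\eta.e = e$ (so $\eta \in G_e$) while $\eta.K = g'h.D = g'.D = K'$ because $h \in G^D$; hence $(e,K)$ and $(e,K')$ share a $G_e$-orbit, giving injectivity.

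The argument is essentially orbit bookkeeping, so I do not expect a genuine obstacle; the only point requiring care is the well-definedness of $\Phi$, which rests entirely on the fact that the expression $K = g.D$ is unique up to $G^D$, i.e.\ on distinct 2-cells of $X$ having distinct boundaries. Combined with the previous claim that $|U| \le |r|$, this injective map yields $|S| \le |r|$, which is the ingredient needed (together with the per-orbit bound by $k$) to conclude $|\mathcal{S}_e| \le k|r| = M$.
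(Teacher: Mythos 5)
Your proof is correct and takes essentially the same route as the paper's: translate each 2-cell $K$ to the fixed cell $D$, send $(e,K)$ to the $G^D$-orbit of the translated pair, and verify that $G_e$-orbits correspond exactly to $G^D$-orbits of images (your $\eta = g'hg^{-1}$ is precisely the paper's $f = g_K^{-1}hg_{K'}$ up to notation). The only cosmetic differences are that you additionally verify independence of the choice of translating element, whereas the paper fixes one $g_K$ per cell, and that your appeal to distinct 2-cells having distinct boundaries is unnecessary for this step --- the uniqueness of $g$ up to $G^D$ is immediate from the definition of the setwise stabilizer.
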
 
\begin{proof}
 Since there is a single $G$-orbit of 2-cells in $X$, for any $(e,K) \in \mathcal{S}_e$, there exists a $g_K \in G$ such that $g_K.K = D$. This defines a map $\psi \colon S \to U$ given by $(e,K) \mapsto (g_K.e,g_K.K)$. 
To prove the claim, we show that for any pair of elements of $\mathcal{S}_e$, 
they are in the same $G_e$-orbit  if and only if their images under $\psi$ are in the same $G^D$-orbit. 

Let $(e,K)$ and $(e,K')$ be elements of $S_e$. First suppose there is $f \in G_e$ such that $(e,K)= f.(e,K')$. Then for $h = g_Kfg_{K'}^{-1} \in G^D$, we have $(g_K.e,D) = h.(g_{K'}.e,D)$.  Conversely, suppose that there exists $h \in G^D$ such that $\psi(e,K) = h.\psi(e,K')$. That is  $(g_K.e, g_K.K) = h.(g_{K'}.e,g_{K'}.K')$. Hence for $f =g_K^{-1}hg_{K'}  \in G_e$, we have that $(e,K)= f.(e,K')$. 
\end{proof} 
 
\begin{claim*} If $(e,K) \in \mathcal{S}_e$ then $[G_e \colon G_K] \leq k$.
\end{claim*}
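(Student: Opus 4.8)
The plan is to reduce the statement to an index computation in the Bass--Serre tree $\mathcal T$ of $A\ast_C B$, and to exploit the periodicity of the boundary cycle $\partial D$ coming from the element $\phi(r)$.

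First I would use that $X$ has a single $G$-orbit of $2$-cells to assume $K=D$: since $G$ acts transitively on $2$-cells and $G_{gK}=gG_Kg^{-1}$, bounding $[G_e:G_K]$ for an arbitrary $(e,K)\in\mathcal S_e$ is equivalent to bounding $[G_{e'}:G_D]$ for an edge $e'$ of $\partial D$. Recall from Example~\ref{exp:Amalgamation} that $\partial D=\rho(p)$, where $\rho\colon\mathcal T\to X^{(1)}$ is the quotient by $\ker\phi$ and $p$ is the geodesic in $\mathcal T$ from $y$ to $r^m y$; this geodesic lies on the axis of the hyperbolic element $r$ and is the concatenation of $m$ fundamental domains $q,\ rq,\ \dots,\ r^{m-1}q$, each of length $|r|$.

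Next I would identify the two stabilizers upstairs. By Proposition~\ref{rem:RelativeCayley} the map $\rho$ is a covering and $\phi$ restricts to an isomorphism $\widetilde{G}_{\tilde\sigma}\to G_{\rho(\tilde\sigma)}$ on every cell stabilizer; lifting $e'$ to an edge $\tilde e$ of $p$ gives $G_{e'}=\phi(\widetilde{G}_{\tilde e})$. A short lifting argument (fix one edge of $p$, then propagate along $p$ using that $\rho$ is injective on stars of vertices and that the element fixes every edge of $\partial D$) shows that the pointwise stabilizer satisfies $G_D=\phi(\widetilde{G}_p)$, where $\widetilde{G}_p=\bigcap_{t\in p}\widetilde{G}_t$ is the pointwise stabilizer of the whole segment $p$. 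Since $\phi$ is injective on $\widetilde{G}_{\tilde e}\supseteq\widetilde{G}_p$, this yields $[G_{e'}:G_D]=[\widetilde{G}_{\tilde e}:\widetilde{G}_p]$, so it remains to bound this tree index by $k$.

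Finally I would compare $\widetilde{G}_p$ with the two--fundamental--domain stabilizer $\widetilde{G}_\gamma$ used to define $k$. Translating $\tilde e$ by a suitable power of $r$ moves it into the first fundamental domain, hence onto an edge $t$ of $\gamma$; since the pointwise stabilizer of the whole axis is $\langle r\rangle$-invariant, this reduces the desired bound to $[\widetilde{G}_t:\widetilde{G}_\gamma]\le k$, which holds by the very definition of $k$. The crux --- and the step I expect to be the main obstacle --- is the stabilization statement that intersecting edge stabilizers along the axis beyond two fundamental domains does not shrink the stabilizer any further; equivalently, that an element of $\widetilde{G}$ fixing the two fundamental domains spanned by $\gamma$ already fixes the entire axis of $r$. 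In the tree this is the assertion that $\widetilde{G}_p$ agrees with the appropriate $r$--translate of $\widetilde{G}_\gamma$, and I would prove it from the amalgam normal form: an element fixing $\gamma$ conjugates, syllable by syllable along $r$, into a nested intersection of conjugates of the edge group $C$, and the period-$|r|$ pattern of these syllables forces the condition imposed at the third (and every further) fundamental domain to coincide with one already satisfied on $\gamma$. This periodicity is precisely why $\gamma$ is taken to span two fundamental domains rather than one.
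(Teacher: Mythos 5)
Your reduction steps are sound and in fact run parallel to the paper's argument: transitivity on $2$-cells reduces to $K=D$; your lifting argument (using that $\rho$ is a covering map, hence injective on stars of vertices) correctly identifies $G_D=\phi\bigl(\widetilde{G}_{[y,r^my]}\bigr)$, where $[y,r^my]$ is the lift of $\partial D$ along the axis of $r$; and injectivity of $\phi$ on edge stabilizers turns the claim into the tree-level bound $[\widetilde{G}_{\tilde e}:\widetilde{G}_{[y,r^my]}]\le k$. The gap is precisely at the step you yourself single out as the crux. Your ``stabilization statement'' --- that an element of $\widetilde{G}$ fixing the two fundamental domains spanned by $\gamma$ already fixes the entire axis, equivalently $\widetilde{G}_{\gamma}=\widetilde{G}_{[y,r^my]}$ --- is \emph{not} a consequence of amalgam normal forms and the period-$|r|$ pattern of edge stabilizers. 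Periodicity only says that the pointwise stabilizer of the $j$-th fundamental domain is $r^jH_0r^{-j}$, where $H_0$ is that of the $0$-th one; the stabilizer of the first $j+1$ domains is then $\bigcap_{i=0}^{j}r^iH_0r^{-i}$, and nothing forces such a chain of intersections of conjugates to stop shrinking after two steps: the inclusion $H_0\cap rH_0r^{-1}\subseteq r^2H_0r^{-2}$ fails for general amalgams. For instance, amalgamate $A=\mathrm{Sym}(\{1,\dots,n\})$ and $B=\mathrm{Sym}(\{1,\dots,n-1,n'\})$ over $C=\mathrm{Sym}(\{1,\dots,n-1\})$; for suitable $a\in A\setminus C$, $b\in B\setminus C$ and $r=ab$, each further fundamental domain of the axis forces fixing genuinely new points, so the pointwise stabilizers strictly decrease for on the order of $n$ periods. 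The conditions imposed at the third domain are \emph{conjugates} of those satisfied on $\gamma$, not the same conditions, and that difference is fatal to a purely syllable-by-syllable argument.

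What makes the statement true in the present setting --- and what your proposal never invokes --- is the hypothesis that $r^m$ satisfies $C'(\lambda)$ with $12\lambda M<1$. Indeed, via the isomorphisms $\phi\colon\widetilde{G}_\sigma\to G_{\rho(\sigma)}$, your stabilization statement is equivalent, in the form you need, to the paper's claim $G_p=G_D$ for $p=\rho(\gamma)$, and the paper proves that claim \emph{downstairs} in $X$: if $g\in G_p$, then $\partial D$ and $\partial(g.D)$ share an arc of length $2|r|=\tfrac{2}{m}|\partial D|$, and the small cancellation condition on $X$ (together with the fact that no two distinct $2$-cells of $X$ have the same boundary) forces $D=g.D$ pointwise, i.e.\ $g\in G_D$. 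That two full periods of overlap are what this argument needs is the actual reason $\gamma$ is taken from $y$ to $r^2y$ rather than to $ry$. Note also that without the small cancellation hypothesis the claim you are proving is itself false: $k$ is computed from just two periods, while in examples like the one above $[\widetilde{G}_{\tilde e}:\widetilde{G}_{[y,r^my]}]$ exceeds $[\widetilde{G}_{\tilde e}:\widetilde{G}_{\gamma}]$ and grows with $m$. So no argument that ignores the $C'(\lambda)$ condition can be completed; your outline is repairable only by replacing the normal-form step with this small cancellation step, at which point it becomes the paper's proof.
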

\begin{proof}
Recall that $\gamma$ is the unique path between $y$ and $r^2y$ in $\mathcal{T}$. Consider the path $p = \rho(\gamma)$ in $X^{(1)}$. Note that $p$ is a subpath of $\partial D$.
We claim that $G_p = G_D$ where $G_D$ is the pointwise stabilizer of $D$. Observe that $G_D \subseteq G_p$. Conversely, if $g \in G_p$, then $g.D$ is a 2-cell of $X$ such that $|\partial D \cap \partial (g.D)|= 2|r|=\frac{2}{m}|\partial D|$ and hence by the small cancellation condition on $X$, $D = g.D$ pointwise and thus $g \in G_D$.

Let $(e,K) \in \mathcal{S}_e$ be any pair. Since  there is a single orbit of 2-cells in $X$, there exists 
$g \in G$ such that $g.K = D$; and since $G^D$ translates of $p$ cover $\partial D$, $g$ can be chosen such that
$g.e$ is in the image of $p$. 
Let $\widetilde{g.e}$ be a 1-cell in $\gamma$ such that $\rho(\widetilde{g.e})=g.e$. Since $\phi$ restricted to the stabilizer of edges is an isomorphism, we have the following commutative diagram

\[\begin{tikzcd}[column sep=small]
\widetilde{G}_{\gamma} \arrow[hookrightarrow]{r}{}  \arrow[rightarrow]{d}{\phi}
& \widetilde{G}_{\widetilde{g.e}} \arrow{d}{\phi} \\
G_p \arrow[hookrightarrow]{r}{} & G_{g.e},
\end{tikzcd}\]
and therefore, \[[G_e\colon G_K]=[G_{g.e} \colon G_D]=[G_{g.e}\colon G_p]=[\widetilde{G}_{\widetilde{g.e}}\colon \widetilde{G}_{\gamma}]\leq k.\qedhere\]
\end{proof}

The number of $G_e$-orbits in $\mathcal{S}_e$ is bounded by $|r|$ as a consequence of the first two claims. On the other hand, each $G_e$-orbit in $\mathcal{S}_e$ is bounded by $k$ by the last claim.  
Therefore, the cardinality of $\mathcal{S}_e$ is bounded by $|r|k= M$ and this concludes the proof of the Proposition~\ref{prop:M-thin}. 

\bibliographystyle{plain}
\bibliography{xbib}

\end{document}